


\documentclass[12pt]{amsart}


\usepackage[top=1in, bottom=1in, left=1in, right=1in]{geometry}
\usepackage{times}
\usepackage[french,english]{babel}

\usepackage{amssymb,amsmath,amsthm}
\usepackage{mathrsfs} 
\usepackage{bbm} 
\usepackage{enumitem}

\usepackage{graphicx}
\usepackage{color}
\usepackage[hyphens]{url}
\usepackage[colorlinks=true, linkcolor=blue, citecolor=green, urlcolor=red]{hyperref}   

\definecolor{red}{rgb}{1,0,0}
\definecolor{orange}{rgb}{0.7,0.3,0}
\definecolor{blue}{rgb}{.2,.6,.75}
\definecolor{green}{rgb}{.4,.7,.4}





\numberwithin{equation}{section}



\newtheorem{thm}{Theorem}[section]
\newtheorem{lem}[thm]{Lemma}
\newtheorem{prop}[thm]{Proposition}

\theoremstyle{definition}
\newtheorem{dfn}[thm]{Definition}

\theoremstyle{remark}
\newtheorem{rem}{Remark}[section]
\newtheorem*{rem*}{Remark}



\newcommand{\N}{\mathbb{N}}
\newcommand{\Z}{\mathbb{Z}}
\newcommand{\Q}{\mathbb{Q}}
\newcommand{\R}{\mathbb{R}}
\newcommand{\C}{\mathbb{C}}

\newcommand{\F}{\mathbb{F}}

\newcommand{\CA}{\mathcal{A}}
\newcommand{\SA}{\mathscr{A}}

\newcommand{\CC}{\mathcal{C}}
\newcommand{\CD}{\mathcal{D}}
\newcommand{\CE}{\mathcal{E}}

\newcommand{\CI}{\mathcal{I}}
\newcommand{\CJ}{\mathcal{J}}

\newcommand{\CL}{\mathcal{L}}
\newcommand{\CM}{\mathcal{M}}
\newcommand{\CN}{\mathcal{N}}

\newcommand{\CP}{\mathcal{P}}

\newcommand{\CS}{\mathcal{S}}

\newcommand{\CX}{\mathcal{X}}

\newcommand{\CZ}{\mathcal{Z}}



\newcommand{\abs}[1]{\left\lvert #1 \right\rvert}

\newcommand{\ds}{\displaystyle}
\newcommand{\bs}\boldsymbol{}

\newcommand{\eq}[2]{ \begin{equation} \label{#1}\begin{split} #2 \end{split} \end{equation} }
\newcommand{\al}[1]{\begin{align} #1 \end{align} }
\newcommand{\als}[1]{\begin{align*} #1 \end{align*} }

\newcommand{\nn}{\nonumber \\}

\newcommand{\dee}{\mathrm{d}}

\DeclareMathOperator{\Span}{Span}
\DeclareMathOperator{\vol}{vol}

\newcommand{\fl}[1]{\left\lfloor#1\right\rfloor}

\renewcommand{\hat}{\widehat}
\renewcommand{\tilde}{\widetilde}

\renewcommand{\phi}{\varphi}

\renewcommand{\Re}{{\rm Re}}
\renewcommand{\Im}{{\rm Im}}

\renewcommand{\mod}[1]{\,({\rm mod}\,#1)}

\definecolor{red}{rgb}{1,0,0}

\definecolor{orange}{rgb}{0.7,0.3,0}

\definecolor{blue}{rgb}{.2,.6,.75}

\definecolor{green}{rgb}{.4,.7,.4}

\makeatletter
\newenvironment{abstracts}{%
	\ifx\maketitle\relax
	\ClassWarning{\@classname}{Abstract should precede
		\protect\maketitle\space in AMS document classes; reported}%
	\fi
	\global\setbox\abstractbox=\vtop \bgroup
	\normalfont\Small
	\list{}{\labelwidth\z@
		\leftmargin3pc \rightmargin\leftmargin
		\listparindent\normalparindent \itemindent\z@
		\parsep\z@ \@plus\p@
		
		\itemsep\medskipamount
	}%
}{%
	\endlist\egroup
	\ifx\@setabstract\relax \@setabstracta \fi
}

\newcommand{\abstractin}[1]{%
	\otherlanguage{#1}%
	\item[\hskip\labelsep\scshape\abstractname.]%
}
\makeatother

\begin{document}
	
	\title{Sieve weights and their smoothings  \\\hfill \\Poids de crible et leurs lissages}
	
	\author[A. Granville]{Andrew Granville}
	\address{AG: D\'epartement de math\'ematiques et de statistique\\
		Universit\'e de Montr\'eal\\
		CP 6128 succ. Centre-Ville\\
		Montr\'eal, QC H3C 3J7\\
		Canada; 
		and Department of Mathematics \\
		University College London \\
		Gower Street \\
		London WC1E 6BT \\
		England.
	}
	\email{{\tt andrew@dms.umontreal.ca}}
	
	\author[D. Koukoulopoulos]{Dimitris Koukoulopoulos}
	\address{DK: D\'epartement de math\'ematiques et de statistique\\
		Universit\'e de Montr\'eal\\
		CP 6128 succ. Centre-Ville\\
		Montr\'eal, QC H3C 3J7\\
		Canada}
	\email{{\tt koukoulo@dms.umontreal.ca}}
	
	\author[J. Maynard]{James Maynard}
	\address{JM: Mathematical Institute, Radcliffe Observatory quarter, Woodstock Road, Oxford OX2 6GG, England}
	\email{{\tt james.alexander.maynard@gmail.com}}

	\subjclass[2010]{Primary: 11N35, 11N37; Secondary: 11T06, 20B30, 05A05}
	\keywords{}
	
	\date{\today}

	\begin{abstracts}
		\abstractin{french}
		
		On obtient des formules asymptotiques pour les $2k$-i\`emes moments de quelques sommes partiellement liss\'ees de la fonction de M\"obius sur les diviseurs d'un entier. Quand $2k$ est petit en comparaison avec $A$, qui est le niveau de lissage, alors la contribution principale aux moments provient des entiers ayant que de grands facteurs premiers, comme on l'esp\'erait pour un poids de crible. Cependant, si $2k$ est plus grand en comparaison avec $A$, alors la contribution principale aux moments provient des entiers ayant beaucoup de facteurs premiers, ce qui n'est pas l'intention quand on cr\'ee des poids de crible. La valeur seuil pour ``petit'' est $A=\frac 1{2k} \binom{2k}{k}-1$.
		
		On peut aussi poser des questions analogues pour les polyn\^omes sur des corps finis et pour les permutations, et dans ces cas les moments se comportent de fa\c con assez diff\'erente, avec moins d'annulations dans les sommes de diviseurs. On donne, on esp\`ere, une explication plausible pour ce ph\'enom\`ene, en \'etudiant les sommes analogues pour les caract\`eres de Dirichlet, et en obtenant chaque type de comportement selon si le caract\`ere est ``exceptionnel'' ou non.
		\\
		\abstractin{English}
		We obtain asymptotic formulas for the $2k$th moments of partially smoothed divisor sums of the M\"obius function.
		When $2k$ is small compared with $A$, the level of smoothing,  then the main contribution to the moments come  from integers with only large  prime factors, as one would hope for in   sieve weights.  However if $2k$ is any larger, compared with $A$, then the main contribution to the moments come  from integers with quite a few prime factors, which is not the intention when designing sieve weights. The   threshold for ``small'' occurs when $A=\frac 1{2k} \binom{2k}{k}-1$.  
		
		One can ask  analogous questions for polynomials over finite fields and for permutations, and in these cases the moments  behave rather differently, with even less cancellation in the divisor sums. We give, we hope, a plausible explanation for this phenomenon, by studying the analogous sums for Dirichlet characters, and obtaining each type of behaviour depending on whether or not the character is ``exceptional''.
	\end{abstracts}

\thanks{We are grateful to the referee of the paper for an extraordinarily thorough reading of the paper that improved significantly the level of exposition. Our thanks also go to Maksym Radziwill for supplying several key references, and to him, Ben Green and Henryk Iwaniec for helpful discussions. AG is is funded by the National Science and Engineering Research Council of Canada and by the European Research Council. DK is funded by the National Science and Engineering Research Council of Canada and by the Fonds de recherche du Qu\'ebec -- Nature et technologies. JM is funded by a Clay research fellowship and a fellowship by examination of Magdalen College, Oxford.}

\maketitle

\setcounter{tocdepth}{2}
\tableofcontents

\section{Introduction}

Sieve methods are a set of techniques which give upper and lower bounds for the number of elements of a set of integers $\mathcal{A}$ which have no `small' prime factors. Their key benefit is that they are very flexible - one can obtain bounds of the correct order of magnitude for many interesting sets $\mathcal{A}$, even though obtaining asymptotic formulae looks completely hopeless. In particular, they are typically very effective at obtaining upper bounds for the number of primes in sets $\mathcal{A}$ of interest which are only worse than the conjectured truth by a constant factor.

One of the most important sieves in the Selberg sieve. Selberg's approach \cite{Sel} starts with the inequality
\eq{Selberg}{
\Bigl( \sum_{\substack{ d|n \\ P^+(d)\le z }} \lambda_d\Bigr)^2 \ge \sum_{\substack{d|n\\ P^+(d)\le z}} \mu(d)=\begin{cases}
1,\qquad & P^-(n)\ge z,\\
0,&\text{otherwise,}
\end{cases}
}
which is valid for any real numbers $\lambda_d$ with $\lambda_1=1$. Here $P^+(n)$ and $P^-(n)$ are the largest and smallest prime factors of $n$ respectively. 
Summing \eqref{Selberg} over $n\in\mathcal{A}$ gives
\[
\#\{n\in\mathcal{A}:P^-(n)\ge z\}\le \sum_{n\in\CA}  \Bigl( \sum_{\substack{d|n \\ P^+(d)\le z}} \lambda_d\Bigr)^2 
	=  \sum_{P^+(d_1),P^+(d_2)\le z} \lambda_{d_1}\lambda_{d_2} \cdot \#\{n\in\CA:[d_1,d_2]|n\},
\]
which is a quadratic form in the variables $\lambda_d$. Provided $d_1$ and $d_2$ are not too large, say at most $R$, one can hope to get a reasonable estimate for the coefficients $\#\{n\in\CA:[d_1,d_2]|n\}$ of this quadratic form. The best upper bound stemming from this method then comes from minimizing the quadratic form over all choices of $\lambda_d\in \mathbb{R}$ with $\lambda_1=1$ and $\lambda_d=0$ for $d>R$.

For typical sets $\CA$ that arise in arithmetic problems, one finds that the optimal choice for the $\lambda_d$ takes the form
\[
\lambda_d \approx \mu(d) \cdot \left(\frac{\log(R/d)}{\log R}\right)^A \quad (d\le R),
\]
where $A$ is some positive constant. We note that the weights $\lambda_d$ decay to 0, and the larger the value of $A$, the higher the level of smoothness at the truncation point $R$. In the optimal choice, the exponent $A$ is taken to be $\kappa$, the {\sl dimension} of the sieve problem. However, for a given dimension $\kappa$, it is known \cite[pg. 154]{SelCollected} that any exponent $A>\kappa-1/2$ yields weights $\lambda_d$ whose dominant contribution comes from numbers almost coprime to $m$, whereas this fails to be true for smaller $A$. See \cite[ch. 10]{opera} for further discussion.

More generally, one can consider the smoothed sieve weight
\[
M_f(n;R):= \sum_{d|n} \mu(d) f\left( \frac{\log d}{\log R}\right) ,
\]
where $f:\R\to\R$ is a function supported on $(-\infty,1]$, which corresponds to taking $\lambda_d=\mu(d)f(\log{d}/\log{R})$ for $d\le R$. In Selberg sieve arguments one typically chooses $f$ to be a polynomial in $[0,1]$, perhaps of high degree. Such an example is offered by the `GPY sieve' of Goldston-Pintz-Y\i ld\i r\i m \cite{GPY, Zhang}. In more recent  developments on gaps between primes by the third author \cite{maynard} and Tao \cite{tao} one works with general smooth functions $f$.

The main motivation of this paper is to understand the exact role of the smoothing in the structure of the Selberg sieve weights. To this end, we consider their moments
\[
\sum_{n\le x} M_f(n;R)^k
\]
as a tool of gaining additional insight on the distribution of the values of $M_f(n;R)$. On the practical side, higher moments naturally appear when applying H\"older's inequality, so it would be useful to know their behaviour\footnote{For example, Lemma 3.5 in Pollack's paper \cite{Pollack} is an example of a case where a fourth moment occurs because of the use of Cauchy's inequality, and a similar issue is encountered in Friedlander's work \cite{Friedlander} for the combinatorial sieve instead of the Selberg sieve.}.

From the discussion above, in the case $f(x)=\max(1-x,0)^A$ and $k=2$, we have seen that if $A$ is sufficiently large, then $M_f(n;R)^2$ `behaves like a sieve weight' in the sense that the sum $\sum_{n<x}M_f(n;R)^2$ is $O_f(x/\log{R})$ and the main contribution to this comes from numbers with few prime factors less than $R$. If $A$ is too small and so $f$ is not smooth enough, however, then $M_f(n;R)$ exhibits qualitatively different behavior; the sum is larger than $x/\log{R}$, and the main contribution is no longer from numbers with few prime factors $\le R$. 

How smooth should $f$ be so that $M_f(n;R)^{2k}$ behaves like a sieve weight when $k$ varies, that is to say the main contribution to the $2k$-th moment\footnote{We are typically interested in how large sieve weights get. If we took odd powers there might be an irrelevant cancellation, so we focus on even moments.} of $M_f(n;R)$ comes from integers $a$ that have very few prime factors $\le R$? What happens in the extreme case where $f$ is the discontinuous function ${\bf 1}_{(-\infty,1]}$? These are the types of questions that we will study in this paper.

\subsection{Some smoothing is necessary to behave like a sieve weight}

In order to gain a first understanding of the importance of smoothing, let us consider the sharp cut-off function
\[
f_0:= {\bf 1}_{(-\infty,1]} .
\]
If $n=2m$ with $m$ odd, then we have that
\eq{dyadic}{
M_{f_0}(n;R) 
	= \sum_{\substack{d|n \\ d\le R}}\mu(d)  
	=  \sum_{\substack{d|m \\ d\le R}}\mu(d) 
	+ \sum_{\substack{d|m \\ 2d\le R}}\mu(2d) 
	= \sum_{\substack{d|m \\ R/2<d\le R}}\mu(d) 
	= M_{\tilde{f_0}}(m;R),
}
where, with a slight abuse of notation, we have put
\eq{f_0}{
\tilde{f_0} := {\bf 1}_{(1-\log2/\log R,1]} .
}
In particular, if $m$ is square-free and has exactly one divisor $d\in(R/2,R]$, then $M_{f_0}(n;R)=\pm1$. An easy generalization of a deep result of Ford \cite[Theorem 4]{Ford} implies that\footnote{The key estimates in the proof of the lower bound of Theorem 4 in \cite{Ford} are the second part of Lemma 4.1, Lemma 4.3 (the parameters are $z=R\ll R/2=y$), Lemma 4.5, Lemma 4.8 and Lemma 4.9. A key observation is that only square-free integers are considered in Lemma 4.8, so that a stronger version of the lower bound of Theorem 4 of \cite{Ford} can be immediately deduced by the same proof, that counts square-free integers with exactly one divisor in $(R/2,R]$.}  the proportion of such $m\le x/2$ is $\gg (\log R)^{-\delta}(\log\log R)^{-3/2}$ with
\[
\delta= 1-\frac{1+\log\log 2}{\log 2}=0.086071332\ldots ,
\]
whence we conclude that
\[
\#\{  n\leq x:\  M_{f_0}(n;R)  \neq 0 \}	
	\gg \frac{x}{(\log R)^{\delta}(\log\log R)^{3/2}}  \quad 
		(x \ge R^{1+\epsilon} )  .
\]
In particular, we find that $M_{f_0}(n;R)$ is non-zero too often to behave like a sieve weight. This indicates that part of the importance of smoothing is to reduce the contribution of isolated divisors of $n$ to $M_f(n;R)$.

We will prove in Section \ref{support} that
\eq{NumberOfnonZeros}{
\#\{  n\leq x:\  M_{f_0}(n;R)  \neq 0 \}	
	\asymp \frac{x}{(\log R)^{\delta}(\log\log R)^{3/2}}  \quad (x \ge R^{5/2} )  .
}
This sharpens a result by Hall and Tenenbaum \cite{divisors}, who used a very similar argument and the best results about divisors of integers available at that time.

\subsection{A heuristic argument}
Going back to the study of $M_f(n;R)$ for a smooth function $f$, it is reasonable to believe that the smoother $f$ is, the larger the $k$ are for which $M_f(n;R)^{2k}$ behaves like a sieve weight. One way to explain this phenomenon is by noticing that various integral transformations have faster decay for smooth weights, which can help to tame the arithmetic issues at play. (See, for example, Section \ref{mellin}.) Nevertheless, we prefer to give a number theoretic explanation in terms of the underlying sieve questions rather than an analytic one focused more on the technical issues. Assume that $n=p_1^{\alpha_1}\cdots p_r^{\alpha_r}m$, where $p_1<\cdots<p_r$, $\alpha_i\ge 1$ and all of the prime divisors of $m$ are $>p_r$. Then
\eq{differencing1}{
M_f(n;R) &= \sum_{d|p_2\cdots p_rm} \mu(d) f\left( \frac{\log d}{\log R}\right) 
			+ \sum_{d|p_2\cdots p_rm} \mu(p_1d) f\left( \frac{\log(p_1d)}{\log R}\right)  \\
	&= \sum_{d|p_2\cdots p_rm} \mu(d) \left\{ f\left( \frac{\log d}{\log R}\right) 
			-  f\left( \frac{\log p_1}{\log R} + \frac{\log d}{\log R}\right) \right\} .
}
Continuing as above, we find that
\eq{differencing2}{
M_f(n;R) &= (-1)^r 
	\sum_{d|m} \mu(d) \Delta^{(r)} f\left( \frac{\log d}{\log R} ; \frac{\log p_1}{\log R},\dots,\frac{\log p_r}{\log R} \right) ,
}
where $\Delta^{(r)} f(x;h_1,\dots,h_r)$ denotes the multi-difference operator defined by
\[
\Delta^{(1)} f(x;h) = f(x+h)-f(x)
\]
and
\[
\Delta^{(r)} f(x;h_1,\ldots,h_r)
	= \Delta^{(r-1)} (x+h_r;h_1,\dots,h_{r-1}) - \Delta^{(r-1)}(x;h_1,\dots,h_{r-1}) .
\]
In particular, if $f \in C^r(\R)$, then
\eq{differencing3}{
\Delta^{(r)} f(x; h_1,\ldots,h_r) = 
	 \int_0^{h_r} \int_0^{h_{r-1}}\ldots \int_0^{h_1}  f^{(r)}(x+t_1+t_2+\ldots +t_r)dt_1\ldots dt_r,
}
Returning to \eqref{differencing2}, we see that if $f\in C^A(\R)$ and $n=p_1^{\alpha_1}\cdots p_r^{\alpha_r}m$, $r\le A$, is as above, then $M_f(n;R)$ should heuristically be $\ll M_{f^{(r)}}(m;R)\prod_{j=1}^r(\log p_j/\log R)$. Loosely, this indicates each additional degree of smoothness of the weight function $f$ cuts the average size of $M_f(n;R)$ by about a factor of $1/\log R$. 

The above discussion leads us to conjecture that if $f\in C^A(\R)$ with $f(0)\neq0$, then 
\eq{conj}{
\sum_{n\le x} M_f(n;R)^{2k}\ll 
	\max\left\{ \frac{x}{\log R} , \frac{1}{(\log R)^{2kA}} \sum_{n\le x} M_{f_0}(n;R)^{2k} \right\} .
}
Notice that the factor $x/\log R$ is necessary because $M_f(n;R)=f(0)$ for all integers $n$ that are free of prime factors $\le R$. 

Naturally, for this relation to be useful, we need to understand the asymptotics of $\sum_{n\le x} M_{f_0}(n;R)^{2k}$. Recall the relation \eqref{smoothedmain}. Expanding the $k$-th power and swapping the order of summation, we find that
\[
\sum_{n\leq x} M_f(n;R)^k
	= x\cdot \CM_{f,k}(R) +O((\|f\|_\infty R)^k) 
\]
for any $f:\R\to\R$ supported on $(-\infty,1]$, where
\eq{smoothedmain}{
\CM_{f,k}(R) := 
	\sum_{d_1,\dots,d_k\ge1} \frac{\prod_{j=1}^k \mu(d_j)f(\log d_j/\log R)}{[d_1,\dots,d_k]} 
	= \prod_{p\le R}\left(1-\frac{1}{p}\right) \sum_{p|n\, \Rightarrow\, p\le R} \frac{M_f(n;R)^k}{n} .
}
We are generally interested in the situation when $R$ is bounded by a small power of $x$, so that the error term $O((\|f\|_\infty R)^k)$ is negligible. Thus our focus is on the main term $\CM_{f,k}(R)$, which no longer depends on $x$. When $k=1$, Dress, Iwaniec and Tenenbaum \cite{DIT} showed
\eq{DDT}{
\CM_{f_0,2}(R) \sim c_1\quad(R\to\infty)
}
for some constant $c_1>0$, and when $k=2$, Motohashi \cite{Moto} showed that
\eq{motohashi}{
\CM_{f_0,4}(R)  \sim c_2(\log R)^2 \quad(R\to\infty) 
}
for some constant $c_2>0$. In general, Balazard,  Naimi, and P\'etermann \cite{BNP} proved that 
\[
\CM_{f_0,2k}(R)=P_k(\log R) + O(e^{-c(\log R)^{3/5}(\log\log R)^{-1/5}}) ,
\] 
for some polynomial $P_k$ and some constant $c=c(k)>0$. This built on work of de la Bret\`eche \cite{delaB}, who showed how a wide class of related sums can be evaluated asymptotically. However, when applying his technique to this question, one would need some strong understanding of the growth of $\zeta(s)$ near to $s=1$ to recover the result of  \cite{BNP} (which, for example, follows from the Riemann Hypothesis). 

Notice that if $\CE_k=\deg(P_k)$, so that $\CE_1=0$ and $\CE_2=2$, then \eqref{conj} becomes
\eq{conj2}{
\sum_{n\le x} M_f(n;R)^{2k} \ll 
	\max\left\{ \frac{x}{\log R} , x(\log R)^{\CE_k-2kA}  \right\} .
}
This suggests that $M_f(n;R)^{2k}$ acts like a sieve weight as long as $A>\CE_k/2k$. The big issue with the result of Balazard, Naimi and P\'etermann is that the degree $\CE_k$ is not determined for general $k$, and that is essential if one wishes to gain a better understanding of how the Selberg sieve weights work. Our attention thus turns to calculating $\CE_k$. 

But first, we study seemingly analogous questions (in different settings), that one might guess would be easier and indicate what kind of estimate we should be looking for

\subsection{Analogous settings}

It is well-known that many of the analytic properties of integers are shared by both polynomials of finite fields (c.f. \cite{Rosenbook}), and by permutations (c.f. \cite{Gra}). Moreover, polynomials and permutations are usually easier objects to understand, so in order to gain an understanding of the exponent $\CE_k$, it would be natural to consider what happens in these analogous settings first.

\medskip

\noindent {\it Permutations.} 
The easiest analogy to analyze concerns permutations. Every $\sigma \in S_N$ (the permutations on $N$ letters) can be decomposed in a unique way into a product of disjoint cycles. Those cycles cannot be decomposed any further and play the role of irreducibles. Divisors of $\sigma$ are precisely the set of possible products of cycles. If those cycles act on the subset $T$ of $[N]$, then $\sigma$ fixes $T$. Moreover, if $\sigma$ fixes $T$, then $\sigma$ is a product of cycles, a subproduct of which fixes $T$. Hence ``divisors'' correspond to sets $T\subset [N]$ for which $\sigma(T)=T$

To ``calibrate'' our understandings of the properties of integers and permutations, we note that for a typical integer $n$, its $j$-th largest prime factor is about $e^{e^j}$, whereas for a typical permutation $\sigma\in S_N$, its $j$-th largest cycle has length about $e^j$. Thus, the inequality $R/2<d\le R$ for a divisor $d$ of $n$ corresponds to having a set $T$ that is fixed by $\sigma$ of size $\#T=\log R+O(1)$. Hence we will study
\eq{perm-def}{
\text{\rm Perm}(N,m;k) :=\frac 1{N!} \sum_{\sigma\in S_N} \Bigl| \sum_{\substack{\ T\subset  [N],\ \#T=m,\\ \sigma(T)=T  }}\mu(\sigma\big|_T)\Bigr|^{2k} ,
}
where 
\[
[N]:=\{1,\dots,N\},
\]
and if $\sigma\big|_T=C_1C_2\ldots C_\ell$ is the product of $\ell$ disjoint cycles, then we have set $\mu(\sigma\big|_T)=(-1)^\ell$. We claim that $\text{\rm Perm}(N,m;k)$ is more natural than it appears at first sight. A usual function of permutations is the signature $\epsilon(\sigma)$ which counts the number of transpositions (i.e.~the  number of interchanges of two elements) needed to create $\sigma$.  For a cycle $C$, one knows that $\epsilon(C)=(-1)^{\#C-1}$ and hence $\epsilon(\sigma\big|_T)=\epsilon(C_1)\epsilon(C_2)\ldots \epsilon(C_\ell)=(-1)^{\#T-\ell}=(-1)^m \mu(\sigma|_T)$, since $\#T=m$ here. Therefore
\[ 
 \sum_{\substack{\sigma\in S_N,\ T\subset  [N],\\ \#T=m,\ \sigma(T)=T  }}\mu(\sigma\big|_T)
 =(-1)^m \sum_{\substack{\sigma\in S_N,\ T\subset  [N],\\ \#T=m,\ \sigma(T)=T  }}\epsilon(\sigma\big|_T),
 \]
 whence
\[
\text{\rm Perm}(N,m;k) =\frac 1{N!} \sum_{\sigma\in S_N} \Bigl| \sum_{\substack{\ T\subset  [N],\ \#T=m,\\ \sigma(T)=T  }}\epsilon(\sigma\big|_T)\Bigr|^{2k} .
\]
Arguing as in the work of Eberhard, Ford and Green \cite{EFG} that establishes the analogue for permutations of Ford's results \cite{Ford} for integers, it is possible to show that the summands on the right hand of \eqref{perm-def} (and, hence, of the above formula) are non-zero for a proportion $\asymp 1/m^\delta(\log m)^{3/2}$ of the permutations in $S_N$. The following theorem provides a formula and an asymptotic estimate for $\text{\rm Perm}(N,m;k)$.

\begin{thm} \label{thm-perm} For each integer $k\geq 1$ and each integer $m\geq 1$, if $N\geq 2mk$ then
\[
\text{\rm Perm}(N,m;k) = c(m,k) ,
\]
where $c(m,k)$ is the number of $(2^{2k}-1)$-tuples $(r_I)_{\emptyset\neq I\subset\{1,\dots,2k\}}$ of non-negative integers such that 	
\begin{itemize}
\item $r_I \in\{0,1\}$ for $\#I$ odd;
\item $\sum_{I:\ i\in I}\ r_I = m$, for each $i\in\{1,\dots,2k\}$.
\end{itemize}
Moreover, for fixed $k\in\Z_{\ge 1}$, the function $c(m,k)$ is increasing in $m$ and satisfies the estimate
\[
  c(m,k)  \asymp_k m^{2^{2k-1}-2k-1} +1.
\]
\end{thm}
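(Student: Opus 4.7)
The plan is to expand the $2k$th power directly and group the resulting sum by a natural partition of $[N]$ attached to each tuple of fixed sets.

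First, I would expand
\[
\text{\rm Perm}(N,m;k) = \frac{1}{N!}\sum_{\sigma\in S_N}\sum_{T_1,\dots,T_{2k}}\prod_{j=1}^{2k}\mathbf{1}[\#T_j=m,\,\sigma(T_j)=T_j]\,\mu(\sigma\big|_{T_j}),
\]
and for each tuple $(T_1,\dots,T_{2k})$ I would introduce the atoms $A_I := \bigcap_{i\in I}T_i\cap\bigcap_{i\notin I}T_i^c$ for $I\subset [2k]$. Setting $r_I:=\#A_I$, the constraint $\#T_j=m$ becomes $\sum_{I\ni j}r_I=m$, and the condition $\sigma(T_j)=T_j$ for every $j$ is equivalent to $\sigma$ preserving every atom, so $\sigma$ factors as $\prod_I\sigma_I$ with $\sigma_I\in\operatorname{Sym}(A_I)$. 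A short count gives $\mu(\sigma\big|_{T_j})=\prod_{I\ni j}\mu(\sigma_I)$, hence $\prod_j\mu(\sigma\big|_{T_j})=\prod_{I\neq\emptyset}\mu(\sigma_I)^{\#I}$. Averaging over the $(A_I)$ partitions of $[N]$ and then over each $\sigma_I$ turns $\text{\rm Perm}(N,m;k)$ into
\[
\sum_{(r_I)_{I\neq\emptyset}}\prod_{I\neq\emptyset}\frac{1}{r_I!}\sum_{\sigma\in S_{r_I}}\mu(\sigma)^{\#I},
\]
with the outer sum running over nonnegative integer tuples satisfying the row-sum conditions; the constraint $r_\emptyset\geq0$ is automatic once $N\geq2mk$ because $\sum_{I\neq\emptyset}r_I\leq\sum_{I\neq\emptyset}\#I\cdot r_I=2mk$.

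Next, I would evaluate the inner factor. Since $\mu(\sigma)=(-1)^{c(\sigma)}=(-1)^n\varepsilon(\sigma)$ on $S_n$, the sum $\sum_{\sigma\in S_n}\mu(\sigma)=0$ for $n\geq2$, while $\sum\mu(\sigma)^s=n!$ when $s$ is even. So the factor equals $1$ whenever $\#I$ is even, whereas for $\#I$ odd it equals $1$ at $r_I=0$, equals $-1$ at $r_I=1$, and vanishes for $r_I\geq2$. Therefore $\text{\rm Perm}(N,m;k)=\sum(-1)^{\sum_{\#I\text{ odd}}r_I}$, the sum being over tuples of the type described in the theorem. The signs all cancel by a parity check: summing the row-sum equations over $i\in[2k]$ gives $\sum_{I\neq\emptyset}\#I\cdot r_I=2km$, which is even; since $\#I\cdot r_I\equiv r_I\pmod2$ exactly when $\#I$ is odd (and with $r_I\in\{0,1\}$ this is automatic), we conclude $\sum_{\#I\text{ odd}}r_I$ is even, so every term contributes $+1$. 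This yields $\text{\rm Perm}(N,m;k)=c(m,k)$.

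For the monotonicity of $c(m,k)$ in $m$, I would exhibit the injection $(r_I)\mapsto(r_I')$ defined by $r'_{[2k]}=r_{[2k]}+1$ and $r'_I=r_I$ otherwise: since $[2k]$ has even cardinality its coordinate is unrestricted, and adding $1$ shifts every row-sum from $m$ to $m+1$. The map is clearly injective, giving $c(m,k)\leq c(m+1,k)$.

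For the asymptotic $c(m,k)\asymp_k m^{2^{2k-1}-2k-1}+1$, I would bound above and below by counting lattice points in the polytope carved out by the even-cardinality variables once the odd-cardinality variables are fixed. There are only $O_k(1)$ such fixings. For each, one has a system of $2k$ linear equations in $N_k:=2^{2k-1}-1$ nonnegative variables; since the $\binom{2k}{2}$ vectors $e_i+e_j$ ($i\neq j$) already span $\R^{2k}$, the constraints are independent, so solutions form a polytope of dimension $N_k-2k=2^{2k-1}-2k-1$ (or a single point for $k=1$). The upper bound $c(m,k)\ll_k m^{N_k-2k}+1$ follows from a standard lattice-point bound in a polytope whose facets are at distance $O(m)$ from the origin. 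For the matching lower bound, I would fix all odd-$\#I$ coordinates to $0$ and exhibit an interior point of the corresponding polytope (for instance $r_I=m/\binom{2k}{\#I}$ for each even $I$, rounded appropriately using that $[2k]\in$ support), so that a positive proportion of lattice translates lie inside and contribute $\gg_k m^{N_k-2k}$ solutions when $k\geq 2$, while for $k=1$ the explicit check $c(m,1)\in\{1,2\}$ closes the case. The main obstacle I expect is the nondegeneracy step: verifying that the polytope obtained after fixing the odd-cardinality variables has the full dimension $2^{2k-1}-2k-1$ and contains a lattice translate of a fixed nonempty open set, uniformly in $k$.
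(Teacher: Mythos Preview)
Your proposal is correct, and in a few places cleaner than the paper's argument, though the routes differ slightly.

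For the formula $\text{Perm}(N,m;k)=c(m,k)$, the paper first passes from $\mu(\sigma|_T)$ to the signature $\epsilon(\sigma|_T)=(-1)^m\mu(\sigma|_T)$, so that the inner sum $\sum_{\rho\in S_{r_I}}\epsilon(\rho)^{\#I}$ is $r_I!$ for even $\#I$ and $r_I!\cdot\mathbf{1}_{r_I\le 1}$ for odd $\#I$, with no residual sign. You instead keep $\mu$ throughout, obtain the signed sum $\sum(-1)^{\sum_{\#I\text{ odd}}r_I}$, and kill the sign by the parity identity $\sum_I \#I\,r_I=2km$. Both arguments are short; yours avoids introducing $\epsilon$ but requires the extra parity observation.

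For monotonicity, your injection $r_{[2k]}\mapsto r_{[2k]}+1$ is simpler than the paper's, which increments $r_{\{1,2\}}$ and $r_{\{3,\dots,2k\}}$ simultaneously. Both work.

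For the lower bound $c(m,k)\gg_k m^{2^{2k-1}-2k-1}$, the paper gives an explicit combinatorial construction of that many tuples, whereas you argue via the volume of the even-variable polytope. Your approach is fine, with one correction: your proposed interior point $r_I=m/\binom{2k}{\#I}$ does \emph{not} satisfy the row constraints (the $i$th row sum comes out to $m(k+1)/2$). A point that does work is $r_I=m/2^{2k-2}$ for every nonempty even $I$, since then each row sum is $\sum_{j\text{ even}}\binom{2k-1}{j-1}\cdot m/2^{2k-2}=m$. With that fix, homogeneity gives volume $\asymp_k m^{2^{2k-1}-2k-1}$, and since the integer kernel of the constraint matrix is a fixed rank-$(2^{2k-1}-2k-1)$ lattice independent of $m$, the lattice-point count follows. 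You correctly flagged this nondegeneracy step as the place needing care.
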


\begin{proof}[Proof of the formula for $\text{\rm Perm}(N,m;k)$]
Given sets $T_1,\dots,T_{2k}$, the sets
\[
R_I := \Bigl(\bigcap_{i\in I} T_i \Bigr)\setminus \Bigl( \bigcup_{i\in[2k]\setminus I}T_i\Bigr)
	\quad(I\subset[2k]) 
\]
form a partition of $[N]$; that is to say $[N]$ equals $\sqcup_I R_I$, the disjoint union of the sets $R_I$. Using this with $T_1,\dots T_{2k}$ fixed sets of $\sigma$ (i.e. $\sigma(T_i)=T_i$, so the $R_I$ are all fixed by $\sigma$ as well), we find
\[
\frac 1{N!} \sum_{\sigma\in S_N} \Bigl| \sum_{\substack{\ T\subset  [N],\ \#T=m,\\ \sigma(T)=T  }}\epsilon(\sigma\big|_T)\Bigr|^{2k} =
\sum_{\substack{r_I\geq 0\ \forall I \\ \sum_{I:\ i\in I}\ r_I = m }} \quad
\sum_{\substack{ [N]=\sqcup_I R_I \\ \#R_I=r_I\  \forall I }}
\frac 1{N!}
\prod_{I\subset [2k]}\Bigl( \sum_{\rho_I\in S_{r_I}} \epsilon(\rho_I)^{\#I}\Bigr)  .
\]
The inner sums are each $r_I!$ unless $\#I$ is odd and $r_I>1$, in which case we get $0$. Additionally, we get that the number of choices of sets of the given sizes is $N!/\prod_I r_I!$, and hence the above equals $c(m,k)$.

The bounds for $c(m,k)$ will be proven in Section \ref{permsect}.
\end{proof} 

Evidently, the above results suggest that $\CE_k=\max\{0,2^{2k-1}-2k-1\}$. Relation \eqref{DDT} implies that $\CE_1=0$, but relation \eqref{motohashi} implies that $\CE_2=2\neq 2^3-5$. This suggests that there is a discrepancy between the integer and the permutation setting, a very rare exception.

\medskip

\noindent
{\it Polynomials over finite fields}.
Positive integers are uniquely identifiable by their factorization into primes (the Fundamental Theorem of Arithmetic). Note though that every non-zero integer equals a unit (that is $1$ or $-1$) times one of those positive integers.  We will work with polynomials in $\mathbb F_q[t]$. Monic polynomials in $\mathbb F_q[t]$ are uniquely identifiable by their factorization into monic irreducible polynomials of degree $\geq 1$. Again, note that every non-zero polynomial in $\mathbb F_q[t]$ equals a unit (that is, any element $a\in \F_q\setminus\{0\}$) times a monic polynomial. We will work only with monic polynomials, for example when considering divisors of a given polynomial (rather like we only consider positive integer divisors of a given integer). The M\"obius function of a given polynomial is a multiplicative function, where $\mu(P)=-1$, and $\mu(P^k)=0$ if $k\geq 2$, whenever $P$ is irreducible. 

To ``calibrate'' our understandings of the arithmetic properties of integers and polynomials, we note that $\sim 1/\log x$ of integers around $x$ are prime, whereas $\sim 1/m$ of monic polynomials of degree $m$ are irreducible in $\mathbb F_q[t]$. Here the ``$\sim$'' symbol means as $q\to\infty$ running through prime powers. Thus, wherever we see $\log x$ in an estimate about the integers, we try to replace it with $m$ in an  estimate about degree $m$ polynomials.  Similarly  a divisor $d$ of $n$ that is close to $R$ is analogous to a polynomial divisor of $F(t)$ of degree $m$, where $m$ replaces $\log R$ in estimates. Hence we will study
\[
\text{Poly}_q(n,m;k):= \frac 1{q^n} \sum_{\substack{\text{ monic } N\in \mathbb F_q[t]\\ \deg N=n}} \Bigl|  \sum_{\substack{\text{monic } M|N  \\ \deg M=m}} \mu(M) \Bigr|^{2k},
\]
Here we have divided by $q^n$ because this is how many monic polynomials $N$ of degree $n$ are contained in $\F_q[t]$, which is the analogue of 
\[
\frac{1}{x} \sum_{n\le x} \Bigl( \sum_{\substack{d|n \\ R/2<d\le R}} \mu(d)\Bigr)^{2k} ,
\]
a quantity directly related to $\frac{1}{x} \sum_{n\le x} M_{f_0}(n;R)^{2k}$ via \eqref{dyadic}. We will prove below the following estimate:

\begin{thm} \label{thm-poly} For integers $k,m\ge1$ and $n\ge2mk$, we have that
\[
\text{\rm Poly}_q(n,m;k) = c(m,k) (1+O_k(1/q)) \asymp_k 1+m^{2^{2k-1}-2k-1} .
\]
\end{thm}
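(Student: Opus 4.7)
The plan is to mirror the permutation argument by decomposing the irreducible factors of $\operatorname{lcm}(M_1,\ldots,M_{2k})$ according to which of the $M_i$'s each divides, playing the role of the set-partition $[N]=\sqcup_I R_I$ in the proof of Theorem~\ref{thm-perm}. Expanding $|\cdot|^{2k}$, swapping the order of summation, and using that for $n\ge 2km$ the number of monic polynomials of degree $n$ divisible by a given polynomial $D$ of degree at most $n$ equals $q^{n-\deg D}$, I obtain
\[
\text{\rm Poly}_q(n,m;k)=\sum_{(M_1,\ldots,M_{2k})}\prod_{i=1}^{2k}\mu(M_i)\cdot q^{-\deg\operatorname{lcm}(M_1,\ldots,M_{2k})},
\]
where the outer sum runs over $2k$-tuples of monic squarefree polynomials of degree $m$ (non-squarefree tuples contribute zero).

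For each such tuple and each monic irreducible $P\mid\operatorname{lcm}(M_i)$, I record the incidence set $I_P=\{i:P\mid M_i\}$ and set $Q_I=\prod_{P:I_P=I}P$ for each nonempty $I\subset[2k]$. This establishes a bijection between squarefree tuples $(M_i)$ and pairwise coprime squarefree tuples $(Q_I)_{\emptyset\ne I\subset[2k]}$, under which $\deg M_i=\sum_{I\ni i}\deg Q_I$, $\deg\operatorname{lcm}(M_i)=\sum_I\deg Q_I=:T$, and $\prod_i\mu(M_i)=\prod_I\mu(Q_I)^{|I|}$. Writing $r_I=\deg Q_I$ yields $\text{\rm Poly}_q(n,m;k)=\sum_{(r_I)\in\mathcal R}q^{-T}F((r_I))$, where $\mathcal R=\{(r_I)\ge 0:\sum_{I\ni i}r_I=m\text{ for all }i\}$ and
\[
F((r_I))=\sum_{\substack{(Q_I)\text{ coprime sf monic}\\ \deg Q_I=r_I\ \forall I}}\prod_I\mu(Q_I)^{|I|}.
\]
Dropping the coprimality constraint yields the factored sum $\tilde F((r_I))=\prod_I\Phi_I(r_I)$ with $\Phi_I(r)=\sum_{Q\text{ sf monic deg }r}\mu(Q)^{|I|}$. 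For odd $|I|$ one has $\Phi_I(r)=\mathbf 1_{r=0}-q\,\mathbf 1_{r=1}$ (using $\sum_{Q\text{ monic, deg }r}\mu(Q)=0$ for $r\ge 2$), while for even $|I|$, $\Phi_I(r)$ is the count of monic squarefree polynomials of degree $r$, equal to $q^r(1-1/q)$ for $r\ge 2$. A direct count of non-coprime squarefree tuples (fix a pair $(I,J)$ and a shared irreducible factor $P$) gives $|F-\tilde F|=O_k(q^{T-1})$.

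For $(r_I)\in\mathcal R_{\mathrm{main}}:=\{(r_I)\in\mathcal R:r_I\in\{0,1\}\text{ for all odd }|I|\}$ one computes $\tilde F/q^T=(-1)^{S_o}(1-1/q)^{E_2}$, where $S_o=\#\{I:|I|\text{ odd},r_I=1\}$ and $E_2=\#\{I:|I|\text{ even},r_I\ge 2\}$. Reducing $2km=\sum_I|I|r_I$ modulo $2$ forces $S_o\equiv 0\pmod 2$, whence $\tilde F/q^T=1+O_k(1/q)$ and so $F/q^T=1+O_k(1/q)$ per tuple; summing over $\mathcal R_{\mathrm{main}}$ produces the main term $c(m,k)(1+O_k(1/q))$ via Theorem~\ref{thm-perm}. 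For $(r_I)\notin\mathcal R_{\mathrm{main}}$, some odd $|I|$ has $r_I\ge 2$, so $\tilde F=0$ and $F/q^T=O_k(1/q)$ per tuple.

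The chief obstacle is to show the total contribution of $\mathcal R\setminus\mathcal R_{\mathrm{main}}$ is $O_k(c(m,k)/q)$, since the naive per-tuple bound summed over $|\mathcal R|\asymp_k m^{2^{2k}-2k-1}$ tuples dwarfs $c(m,k)\asymp_k m^{2^{2k-1}-2k-1}$ for $k\ge 2$. I would handle this by passing to the generating function $G(v_1,\ldots,v_{2k})=\prod_P\varphi_P(v)$ with $\varphi_P(v)=\sum_{I\subset[2k]}(-1)^{|I|}(\prod_{i\in I}v_i/q)^{\deg P}$ (interpreting $\prod_{i\in\emptyset}v_i=1$), whose coefficient $[\prod_iv_i^m]G$ equals $\text{\rm Poly}_q(n,m;k)$, and comparing against the main generating function $G_{\mathrm{main}}(v)=\prod_{I\ne\emptyset}(1-\prod_{i\in I}v_i)^{(-1)^{|I|+1}}$. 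The polynomial identity $\prod_P(1-u^{\deg P})=1-qu$ factors $G_{\mathrm{main}}=\prod_PG_{\mathrm{main},P}$, a Taylor expansion shows $\varphi_P/G_{\mathrm{main},P}=1+O_k(q^{-2\deg P})$ for each $P$, and summing $\sum_P q^{-2\deg P}=O(1/q)$ via the polynomial prime number theorem $\pi(d)=q^d/d+O(q^{d/2}/d)$ gives $G=G_{\mathrm{main}}(1+O_k(1/q))$ in a suitable coefficient sense. Direct combinatorial expansion of $G_{\mathrm{main}}$ identifies $[\prod_iv_i^m]G_{\mathrm{main}}=c(m,k)$ by the same count as in the proof of Theorem~\ref{thm-perm} (the parity argument gives $S_o\equiv 0\pmod 2$). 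Extracting $[\prod_iv_i^m]$ then yields the claimed asymptotic, and $\asymp_k 1+m^{2^{2k-1}-2k-1}$ is inherited from Theorem~\ref{thm-perm}.
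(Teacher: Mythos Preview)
Your route is genuinely different from the paper's. The paper does \emph{not} work with the incidence decomposition $(Q_I)$ at all; instead it encodes each $F$ by the vector $\bs c=(c_1,\dots,c_m)$ of counts of irreducible factors of each degree, observes $\sum_{G\mid F,\deg G=m}\mu(G)=M(\bs c;m)$, and shows that the induced measure on $\bs c$ is $(1+O_k(1/q))$ times a product of Poisson$(1/j)$ laws. Proposition~\ref{perm-prob} then identifies the Poisson expectation of $M(\bs X;m)^{2k}$ with $c(m,k)$, and the error terms are controlled by the bound $T_i:=\mathbb E[X_iM(\bs X;m)^{2k}]\ll_k c(m,k)$, proved via the recursion $M(\bs e_i+\bs c;m)=M(\bs c;m)\pm M(\bs c_i;m-i(c_i+1))$ together with the monotonicity of $c(\cdot,k)$. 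Your approach is more direct and avoids the probabilistic detour; the paper's buys a clean mechanism (monotonicity) for the error.

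Your outline is correct up to the last step, but the coefficient extraction is not justified as written, and this is exactly where the whole difficulty lies. The pointwise estimate $H:=G/G_{\mathrm{main}}=1+O_k(1/q)$ on $|v_i|\le 1$ does not by itself control $[\prod_i v_i^m]G$, because $G_{\mathrm{main}}$ has poles on the unit polytorus (at $w_I=1$ for even $|I|$), so one cannot integrate there, and a naive circle of radius $r<1$ loses a factor $m^{2k}$. What makes your approach go through is two further inputs you have not stated:
(i) the coefficient-$\ell^1$ bound $\|H-1\|_1:=\sum_{\bs a}|[v^{\bs a}](H-1)|=O_k(1/q)$, which follows because each local ratio $h_P:=\varphi_P/G_{\mathrm{main},P}$ is analytic for $|v_i|\le 2$ once $q>4^{2k}$ and satisfies $\|h_P-1\|_1=O_k(q^{-2\deg P})$ there (the leading $u$-terms of $\varphi_P$ and $G_{\mathrm{main},P}$ agree), whence $\|H-1\|_1\le\prod_P(1+O_k(q^{-2\deg P}))-1=O_k(1/q)$;
(ii) the uniform bound $\bigl|[\prod_i v_i^{m_i'}]G_{\mathrm{main}}\bigr|\le c^*(\bs m';k)\ll_k 1+m^{2^{2k-1}-2k-1}\asymp_k c(m,k)$ for all $0\le m_i'\le m$, by the same polytope count underlying Theorem~\ref{thm-perm}.
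Combining, $\bigl|[\prod v_i^m](G-G_{\mathrm{main}})\bigr|\le\|H-1\|_1\cdot\sup_{\bs m'\le\bs m}\bigl|[\bs m']G_{\mathrm{main}}\bigr|=O_k(c(m,k)/q)$, which is what you need. One notational slip: your $\varphi_P$ should read $\varphi_P(v)=1+q^{-\deg P}\sum_{\emptyset\ne I}(-1)^{|I|}w_I^{\deg P}$ with $w_I=\prod_{i\in I}v_i$; as written, either parsing of $\prod_{i\in I}v_i/q$ gives a wrong $q$-power.
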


We thus see that polynomials behave similarly to permutations (and thus differently than integers).


\subsection{Two worlds apart and a bridge between them}

Our discussion of the permutation and polynomial analogues, rather than shedding more light on the value of the exponent $\CE_k$, gave rise to even more questions. It turns out that the integer setting is substantially more complicated than the permutation and polynomial settings. We now state our main results about integers. First, given $A\in\Z_{\ge1}$, we let 
\[
f_A(t):= 
	\begin{cases} 
		(1-t)^A 		& \text{for } t\le 1; \\
		0 			& \text{otherwise} ,
	\end{cases}
\]
an extension of the definition of $f_0$. Note that $f_A \in C^{A-1}(\R)\setminus C^A(\R)$ for all $A\ge1$. We then have the following result that determines the value of $\CE_k$: 

\begin{thm} \label{mainthm}
For fixed integers $k\ge1$ and $A\ge0$, there is a constant $c_{k,A}>0$ such that
\eq{f-asymp}{
 \CM_{f_A,2k}(R) = c_{k,A} (\log R)^{\CE_{k,A}}  
 	+ O((\log R)^{\CE_{k,A}-1}) ,
}
where 
\[
\CE_{k,A} := \max\left\{ \binom{2k}{k}-2k(A+1),-1\right\} .
\]
In particular, $\CE_k=\CE_{k,0}=\binom{2k}{k}-2k$. Additionally, we find that there is a constant $c_k'>0$ such that for $R^{2k}\le x$ we have
\eq{mainthm-dyadic}{
\frac{1}{x}\sum_{n\leq x} \Bigl(\sum_{\substack{d|n \\ R/2<d\leq R}}\mu(d)\Bigr)^{2k}
	= c_k' (\log R)^{\binom{2k}k -2k} + O\left((\log R)^{\binom{2k}k -2k-1}\right) .
}
All implied constants depend at most on $k$ and $A$.
\end{thm}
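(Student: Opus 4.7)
The plan is to evaluate $\CM_{f_A,2k}(R)$ via Mellin inversion, which reduces matters to a $2k$-dimensional complex contour integral of a ratio of Riemann zeta values, and then to extract the leading power of $\log R$ by an iterated residue analysis at the origin.

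First I would use the Perron-type identity
\[
f_A\!\left(\frac{\log d}{\log R}\right) = \frac{A!}{(\log R)^A}\cdot\frac{1}{2\pi i}\int_{(c)}\frac{(R/d)^s}{s^{A+1}}\,ds\qquad(d\ge 1,\ c>0)
\]
in the definition \eqref{smoothedmain}, obtaining the $2k$-fold integral representation
\[
\CM_{f_A,2k}(R) = \frac{(A!)^{2k}}{(\log R)^{2kA}}\cdot\frac{1}{(2\pi i)^{2k}}\int_{(c)}\!\cdots\int_{(c)}\frac{R^{s_1+\cdots+s_{2k}}\,F(\mathbf s)}{(s_1\cdots s_{2k})^{A+1}}\,d\mathbf s,
\]
where $F(\mathbf s) = \prod_p\left(1-1/p+(1/p)\prod_{i=1}^{2k}(1-p^{-s_i})\right)$. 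Taking logarithms in the Euler product and comparing against $\sum_p p^{-(1+s)} = \log\zeta(1+s)+O(1)$ gives the factorization
\[
F(\mathbf s) = G(\mathbf s)\prod_{\emptyset\neq J\subseteq [2k]}\zeta\!\left(1+\sum_{j\in J}s_j\right)^{(-1)^{|J|}},
\]
where $G$ is holomorphic, non-vanishing at $\mathbf 0$, and polynomially bounded in a tube $\{\Re s_i > -\eta\}$ for some $\eta>0$.

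Next, I would shift each contour leftward past the origin into the classical zero-free region of $\zeta$. Standard estimates show that the shifted integral contributes an error of size $O_k(R^{-\epsilon})$, so the main term is the residue collected at $\mathbf s = \mathbf 0$, where the entire singular locus is concentrated. Each factor $\zeta(1+\sigma_J)^{(-1)^{|J|}}$ with $\sigma_J = \sum_{j\in J}s_j$ contributes a simple pole if $|J|$ is even and a simple zero if $|J|$ is odd. Rescaling $s_i = u_i/\log R$ turns the problem into a finite-size $2k$-dimensional integral times an overall power of $\log R$, plus a relative error of size $O(1/\log R)$.

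The heart of the argument is the iterated residue computation that identifies the exponent $\CE_{k,A}$. Shifting contours one variable at a time produces, after rearrangement, a finite sum indexed by flags of subsets of $[2k]$; the combinatorics of these flags, coupled with the cancellations coming from the numerator zeros $\prod_{|J|\text{ odd}}\sigma_J$ and the denominator $(s_1\cdots s_{2k})^{A+1}$, should yield a leading power of $\log R$ equal to $\binom{2k}{k} - 2k(A+1)$, with the binomial coefficient reflecting the ``middle layer'' of the subset lattice of $[2k]$. When $\binom{2k}{k} - 2k(A+1) < 0$, the rescaled integral is absolutely convergent at the origin, yielding only a bounded $(\log R)^{-1}$ contribution as encoded by $\CE_{k,A}=-1$. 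Positivity of the leading constant $c_{k,A}$ then follows from an explicit expression for the residue integral. For the dyadic statement \eqref{mainthm-dyadic}, the same analysis applies with the Mellin factor $R^{s_i}/s_i$ replaced by $R^{s_i}(1-2^{-s_i})/s_i$; the vanishing of $1-2^{-s_i}$ to first order at $s_i = 0$ exactly compensates for the removal of the $1/s_i$ pole, preserving the leading exponent $\binom{2k}{k}-2k$ but producing a different constant $c_k'>0$.

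The main obstacle is the combinatorial identification of $\CE_{k,A}$. The hyperplane arrangement $\{\sigma_J = 0 : J\subseteq [2k],\ J\neq\emptyset\}$ in $\C^{2k}$ is highly non-generic, so iterated residues do not factor in any direct way, and one must carry out delicate bookkeeping to verify that the leading power is exactly $\binom{2k}{k} - 2k(A+1)$. In particular, explaining why the integer problem produces $\binom{2k}{k}$ rather than the $2^{2k-1}-2k-1$ that appears in the permutation and polynomial analogues (Theorems \ref{thm-perm} and \ref{thm-poly}) is the source of the discrepancy emphasized in the introduction; it should ultimately trace back to the specific form of the Euler factor $1 - 1/p + (1/p)\prod_i(1-p^{-s_i})$ and the cancellations it enforces amongst the zeta factors.
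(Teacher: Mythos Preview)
Your overall strategy---Mellin inversion, the factorization $F(\bs s)=G(\bs s)\prod_{J}\zeta(1+s_J)^{(-1)^{|J|}}$, and an iterated contour shift---matches the paper's approach (Sections~\ref{combinatorial}--\ref{lb}).  However, two of your claimed steps are genuine gaps, not merely bookkeeping.

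\textbf{Identifying the exponent.}  You correctly flag the iterated residue computation as the main obstacle, but offer no mechanism for resolving it.  The rescaling $s_i=u_i/\log R$ does not produce a convergent integral when $\binom{2k}{k}-2k(A+1)\ge 0$: the singular locus is the full hyperplane arrangement $\{s_J=0: J\in\CS^+(2k)\}$ through the origin, and the integrand is genuinely singular there.  The paper's solution is an independent combinatorial inequality (Proposition~\ref{CombProp}): for any $\Q$-subspace $V$ of linear forms in $s_1,\dots,s_{2k}$ containing $s_{[2k]}$,
\[
\sum_{\substack{J\subset[2k]\\ s_J\in V}}(-1)^{|J|}\;-\;\dim V \;\le\; \binom{2k}{k}-2k,
\]
with equality only for the specific subspaces $V=\Span(\{s_j-s_1\}_{j\in J},\{s_j+s_1\}_{j\notin J})$ with $|J|=k$.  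This is proved via H\"older's inequality on the moments $M(\lambda)=\int_0^1|1-e(\theta)|^\lambda\,d\theta$, using $M(2k)=\binom{2k}{k}$ and the log-convexity of $M$.  This inequality is exactly what bounds the pole orders encountered at every step of the contour shifting, and it is the non-obvious ingredient that produces $\binom{2k}{k}$ rather than the $2^{2k-1}$ of the permutation/polynomial analogues.  Without it, the ``delicate bookkeeping'' cannot be completed.

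\textbf{Positivity of the constant.}  Your claim that $c_{k,A}>0$ ``follows from an explicit expression for the residue integral'' is not what happens.  The paper explicitly states (Remark after Theorem~\ref{mainthm}) that no usable formula for $c_{k,A}$ is obtained; it emerges only as an enormous linear combination of complicated integrals whose sign cannot be read off.  Positivity is proved instead by a completely separate argument (Section~\ref{lb}): one restricts the sum defining $\CM_{f_A,2k}(R)$ to integers of the special shape $n=Q\cdot m\cdot p_1\cdots p_k$ (where $Q$ is a product of $A+1$ small primes, the $p_\ell$ are large primes, and $m$ is $y$-smooth), uses the differencing \eqref{differencing2} to kill the contribution of the $y$-smooth part, and then proves a lower bound of the correct order by a second Mellin argument combined again with Proposition~\ref{CombProp}.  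This positivity argument is roughly as long as the upper bound and is not a corollary of it.
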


\begin{rem}
We have no nice formula for the constants $c_{k,A}$ and $c_k'$ appearing in Theorem \ref{mainthm}; we only know how to write them as an enormous rational linear combination of complicated integrals, and leave it as a challenge to come up with an easy explicit description. 
\end{rem}

\begin{rem}
If the moments of a distribution grow slowly, then the distribution can be determined via its Laplace transform. However, in our case the moments are of rapidly increasing magnitude, indeed with different powers of $\log R$, so one cannot immediately deduce from them the distribution of the weights $M_{f_A}(n;R)$ as $n$ varies over the integers. 
\end{rem}

\begin{rem}
In this paper we only consider integral $A$, but we would expect analogous results to hold for all real $A>0$.
\end{rem}

\begin{rem}
In this paper we only consider Selberg-style sieve weights. We would expect something somewhat analogous to hold for combinatorial-style sieve weights (such as those used in the $\beta$-sieve) but we do not consider such situations here.
\end{rem}
For general functions $f$, we prove that $M_f(n;R)^{2k}$ behaves like a sieve weight as long $f\in C^A(\R)$ with $A>\binom{2k}{k}/2k=\CE_{2k}/2k+1$. Notice that this confirms a weak version of the heuristic estimate \eqref{conj2}.

\begin{thm} \label{thm-smooth}
Let $k\in\Z_{\ge 1}$, $\epsilon\in(0,1)$ and $f:\R\to\R$ be supported in $(-\infty,1]$. Assume further that for some integer $A\ge2$, $f\in C^A(\R)$ and that all functions $f,f',\dots,f^{(A)}$ are bounded.
\begin{enumerate}
\item If $A>\frac{1}{2k} \binom{2k}{k}$, then for $x\ge R\ge 2$ and $1\ge \eta \ge \log2/\log R$, we have that
\[
\sum_{\substack{n\le x \\ \exists p|n,\ p\le R^\eta }}  M_f(n;R)^{2k}
	\ll \frac{\eta x}{\log{R}}.
\]
If, in addition, $f(0)\neq0$, then there is a constant $c_{k,f}>0$ such that for $x\ge R^{2k}\log^2 R$ we have that
\[
\frac{1}{x}\sum_{n<x} M_f(n;R)^{2k}
	= \frac{c_{k,f}}{\log R} + O\left(\frac{1}{(\log R)^{2-\epsilon}} \right) .
\]
\item If $A\le \frac{1}{2k} \binom{2k}{k}$, then for $x \ge R\ge 2$ we have that
\[
\sum_{n\le x} 
	M_f(n;R)^{2k} \ll  x(\log R)^{\binom{2k}{k} -2kA} .
\]
\end{enumerate}
All implied constants depend at most on $f$, $k$ and $\epsilon$.
\end{thm}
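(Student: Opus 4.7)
The plan is to reduce the estimation of $\sum_{n\le x} M_f(n;R)^{2k}$ for a general smooth $f$ to the model case $f=f_A$ of Theorem \ref{mainthm} by means of the differencing identity \eqref{differencing1}--\eqref{differencing3}. For an integer $n$ with at least $A$ distinct prime factors below the relevant threshold (namely $R^\eta$ for part (1)(a) or $R$ for part (2)), write $n=p_1^{\alpha_1}\cdots p_r^{\alpha_r}m$ with $p_1<\cdots<p_r$ those distinct primes and $m$ the rough part. Iterating \eqref{differencing1} $A$ times on $p_1,\ldots,p_A$ and invoking \eqref{differencing3} yields
\[
M_f(n;R) = (-1)^{A}\int_{0}^{h_1}\!\cdots\!\int_{0}^{h_A}\sum_{d\mid M}\mu(d)\,f^{(A)}\!\left(\tfrac{\log d}{\log R}+s_1+\cdots+s_A\right)d\mathbf{s},
\]
with $h_j=\log p_j/\log R$ and $M=p_{A+1}^{\alpha_{A+1}}\cdots p_r^{\alpha_r}m$. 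Hölder's inequality in the $A$-fold integral then gives
\[
|M_f(n;R)|^{2k}\le(h_1\cdots h_{A})^{2k-1}\int_{[0,h_1]\times\cdots\times[0,h_A]}\Bigl|\sum_{d\mid M}\mu(d) f^{(A)}\!\bigl(\tfrac{\log d}{\log R}+|\mathbf{s}|\bigr)\Bigr|^{2k}d\mathbf{s},
\]
where $|\mathbf s|=s_1+\cdots+s_A$. The inner sum is a Möbius divisor sum with a bounded weight supported on $d\le R^{1-|\mathbf{s}|}$. (When $\omega_R(n)<A$ one iterates only $r_0=\omega_R(n)<A$ times; this case is handled analogously and contributes a lower-order term.)

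Next, I would sum over $n\le x$ with prescribed small primes $p_1<\cdots<p_A$ and residual $M$ with $P^-(M)>p_A$. The inner $2k$-th moment, summed over such $M$, is controlled by a \emph{sifted} analogue of Theorem \ref{mainthm}: expanding the power as in \eqref{smoothedmain} and invoking the analytic machinery of \cite{BNP} and \cite{delaB}, one obtains a bound of order $(1-|\mathbf s|)^{\binom{2k}{k}-2k}(\log R)^{\binom{2k}{k}-2k}\cdot x/(p_1\cdots p_A\,\log p_A)$, the extra factor $1/\log p_A$ being the Mertens contribution of the sift $P^->p_A$. The outer sum over prime tuples weighted by $\prod_j h_j^{2k}/p_j$ is handled by the partial summation identity
\[
\sum_{p\le R^\eta}\frac{1}{p}\Bigl(\frac{\log p}{\log R}\Bigr)^{2k}=\frac{\eta^{2k}}{2k}+O\!\left(\frac{1}{\log R}\right),
\]
which follows from $\sum_{p\le t}(\log p)/p=\log t+O(1)$. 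Combining, the contribution of integers with $\omega_R(n)\ge A$ small prime factors is $\ll x\,\eta^{2kA}(\log R)^{\binom{2k}{k}-2kA-1}/A!$. In part (1)(a) with $A>\binom{2k}{k}/(2k)$, the exponent on $\log R$ is negative and the bound collapses to $\ll\eta\, x/\log R$; in part (2) with $\eta=1$, the exponent $\binom{2k}{k}-2kA-1\le\binom{2k}{k}-2kA$ yields the desired bound. Part (1)(b) follows by noting that the $\omega_R(n)=0$ contribution (from $n$ with $P^-(n)>R$) is exactly $f(0)^{2k}\#\{n\le x:P^-(n)>R\}=f(0)^{2k}e^{-\gamma}x/\log R+O(x/\log^2 R)$ by Mertens, giving the main term with $c_{k,f}=f(0)^{2k}e^{-\gamma}$.

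The main obstacle is the sifted inner moment estimate: one needs a version of Theorem \ref{mainthm} valid uniformly in a lower bound on $P^-$ of the summation variable and in the shift $|\mathbf s|\in[0,A\eta]$, with enough quantitative control for the error terms in both parts. This requires adapting the analytic machinery behind \cite{BNP} and \cite{delaB}---essentially, tracking the behaviour of the associated Dirichlet series on a sifted Euler product---which is technical but standard. A secondary obstacle is the $O((\log R)^{-(2-\epsilon)})$ error demanded in part (1)(b): the naive bound on the $r_0=1$ contribution only gives $O(x/\log R)$, so one must extract an additional logarithmic saving, for instance by a second differencing against $f'$ paired with an integration-by-parts argument within the resulting integral.
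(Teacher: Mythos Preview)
Your approach via the multi-differencing identity \eqref{differencing2}--\eqref{differencing3} is genuinely different from the paper's. The paper instead uses Taylor's theorem with integral remainder to write
\[
M_f(n;R)=\frac{(-1)^A}{(A-1)!}\int_{\log 2/\log R}^{1} f^{(A)}(u)\,u^{A-1}\,M_{f_{A-1}}(n;R^u)\,\dee u + O\bigl((\log R)^{-A}\bigr),
\]
and then H\"older in the single variable $u$ (Lemma~\ref{holder}). This reduces everything to moments of $M_{f_{A-1}}(\,\cdot\,;R^u)$, for which Theorems~\ref{mainthm} and~\ref{thm-factors-prel} are already available; no sifted moment estimate is needed. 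Your route would require a sifted, shifted analogue of Theorem~\ref{mainthm}, uniformly in the sift level $p_A$ and the shift $|\mathbf s|$---you flag this as ``technical but standard'', but it is exactly the nontrivial input the paper bypasses.

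There is also a concrete error in part~(b) of your argument: the constant is \emph{not} $c_{k,f}=f(0)^{2k}e^{-\gamma}$. Your bound in part~(a) only shows that integers with a prime factor $\le R^\eta$ contribute $O(\eta x/\log R)$; it says nothing about integers whose prime factors below $R$ all lie in $(R^\eta,R]$. For such $n$ one has, for example, $M_f(pm;R)=f(0)-f(\log p/\log R)$ when $P^-(m)>R$, which is of size $\asymp 1$ and contributes to the main term. The paper's proof makes this explicit: the correct constant is $c_{k,f}=e^{-\gamma}F(0)$ with
\[
F(0)=1+\sum_{m\ge 1}\frac{1}{m!}\int_{[0,1]^m}\frac{g_m(t_1,\dots,t_m)^{2k}}{t_1\cdots t_m}\,\dee t_1\cdots\dee t_m,\qquad
g_m(\mathbf t)=\sum_{J\subset[m]}(-1)^{\#J}f\Bigl(\sum_{j\in J}t_j\Bigr),
\]
and the terms with $m\ge 1$ (coming precisely from integers with $m$ prime factors in $(R^\eta,R]$) do not vanish. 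So your identification of the main term with the $P^-(n)>R$ contribution alone is wrong, and the ``secondary obstacle'' you mention for the error term is in fact a symptom of having missed part of the main term.
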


The value of $\CE_k=\binom{2k}{k}-2k$ given by Theorem \ref{mainthm} is significantly smaller than the exponent $2^{2k-1}-2k-1$ in the polynomial/permutation setting. So we see the usual analogy breaking down in quite a severe way, something surprising. We devote Section \ref{heuristics} to the analysis of this discrepancy. In particularly, we will see that the underlying reason is the relation
\eq{mobius-sign}{
 \sum_{\substack{\sigma\in S_N,\ T\subset  [N],\\ \#T=m,\ \sigma(T)=T  }}\mu(\sigma\big|_T)
	 =(-1)^m \sum_{\substack{\sigma\in S_N,\ T\subset  [N],\\ \#T=m,\ \sigma(T)=T  }}\epsilon(\sigma\big|_T)
}
that we saw before. Notice here that while $\mu(\rho)=-1$ for all cycles $\rho$, we have that $\epsilon(\rho)$ takes the values $\pm1$ with equal probability as $\rho$ ranges over cycles of all possible lengths. The simplest example of a multiplicative function over $\Z$ demonstrating this kind of behaviour is that of a real Dirichlet character. To this end, we consider
\[
\CX_{2k}(R) =
	\prod_{p\le R}\left(1-\frac{1}{p}\right)
	\sum_{P^+(n)\le R} \frac{1}{n} \Bigl( \sum_{\substack{ d|n \\ R/2<d\le R}} \chi(d) \Bigr)^{2k} ,
\]
which, as in \eqref{smoothedmain}, is the main term of
\[
\frac{1}{x}\sum_{n<x}\Bigl|\sum_{\substack{d|n\\ R/2<d\le R}}\chi(d)\Bigr|^{2k}.
\]
We then have the following theorem, which shows that it is possible to bridge the gap between the two worlds of integers and of permutations/polynomials.  
All implied constants below depend at most on $k$, and we have set
\[
\CS^+(2k) := \{I\subset\{1,2,\dots,2k\}: \#I\ \text{even}\}\setminus\{\emptyset\} .
\]
 
\begin{thm}\label{thm-chars}
Let $\chi\mod q$ be a real non-principal character and $k\in\Z_{\ge 1}$. 
\begin{enumerate}
\item If $k=1$, then 
\[
\CX_2(R) = \frac{1}{2\pi}\int_{-\infty}^\infty \frac{P(t,\chi)|L(1+it,\chi)|^2\sin^2(t(\log 2)/2)}{t^2} \dee t
	+ O\left(\frac{1}{(\log R)^{100}}\right),
\]
where $P(\cdot,\chi)$ is a real-valued Euler product whose factors are $1+O(1/p^2)$. In particular, $P(t,\chi)\asymp1$ for all $t$, uniformly in $\chi$.
\item
Assume that $k\ge2$. Let $V_k(m)$  be the Lebesgue volume in $\R^{2^{2k-1}-1}$ given by 
\[
V_k(m)=\vol\{(x_I)_{I\in\CS^+(2k)}: x_I\ge0,\ m-\log 2\le \sum_{I\ni i} x_I \le m\},
\]
and let $\mathfrak{S}_k(\chi)$ be the singular series
\[
\mathfrak{S}_k(\chi)
	=\prod_p \left(1-\frac{1}{p}\right)^{2^{2k-1}}f_p,
\]
where 
\[
f_p=\begin{cases} \sum_{j\ge0} (j+1)^{2k}/p^j,\qquad&\text{if }\chi(p)=1,\\
(1-1/p^2)^{-1},&\text{if }\chi(p)=-1,\\
(1-1/p)^{-1},&\text{if }p|q.
\end{cases}
\]
Then $V_k(m)\asymp_k m^{2^{2k-1}-2k-1}$, 	$\mathfrak{S}_k(\chi)\asymp_k L(1,\chi)^{2^{2k-1}}$, and
\[
\CX_{2k}(R) 
	=\mathfrak{S}_k(\chi)  \cdot V_k(\log R)
		 + O\left((\log R)^{2^{2k-1}-2k-2}(\log(q\log R))^{O(1)}\right) .
\]
\item Assume that $k\ge2$ and that $L(\beta,\chi)=0$ for some $\beta>1-1/(100\log q)$. If $Q=e^{1/(1-\beta)}$ and $e^{(\log q)^C}\le R\le Q$ for some large enough $C=C(k)$, then there is a constant $c_k(\chi)=(\log q)^{O(1)}$ such that
\[
\CX_{2k}(R) = 
	 c_k(\chi) (\log R)^{\binom{2k}{k}-2k} 
	 \left( 1+O \left(\frac{(\log(q\log R))^{O(1)}}{\log R}\right) \right) .
\]
\end{enumerate} 
\end{thm}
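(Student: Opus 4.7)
Expanding the $2k$-th power and swapping the order of summation, as in \eqref{smoothedmain}, gives
\[
\CX_{2k}(R) = \sum_{R/2 < d_1,\dots,d_{2k} \le R} \frac{\chi(d_1)\cdots \chi(d_{2k})}{[d_1,\dots,d_{2k}]}.
\]
The plan is to analyze this via Mellin inversion, encoding the dyadic cutoffs as contour integrals of $(R^{s_i}-(R/2)^{s_i})/s_i$ and obtaining $\CX_{2k}(R)$ as a $2k$-fold integral of the multivariate Dirichlet series
\[
G_\chi(s_1,\dots,s_{2k}) := \sum_{d_1,\dots,d_{2k}\ge 1} \frac{\chi(d_1)\cdots\chi(d_{2k})}{d_1^{s_1}\cdots d_{2k}^{s_{2k}}[d_1,\dots,d_{2k}]}.
\]
Splitting the local Euler factor at each prime $p$ according to which of the $d_i$ are divisible by $p$, one finds that $G_\chi$ possesses a lattice of pole hyperplanes indexed by nonempty subsets $I\subseteq [2k]$, with the contribution of $I$ being (locally) of the form $(1-\chi(p)^{|I|}p^{-\sum_{i\in I}s_i-1})^{-1}$ and aggregating to an $L$-function factor $L(\sum_{i\in I}s_i+1,\chi^{|I|})$ times a convergent Euler correction. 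Since $\chi$ is real, $\chi^{|I|}$ is principal exactly when $|I|$ is even, so only the $I\in\CS^+(2k)$ carry actual poles (at $\sum_{i\in I}s_i=0$); odd-size $I$ contribute $L(\cdot,\chi)$-factors that are holomorphic but evaluate to $L(1,\chi)$ on the polar locus. This even/odd dichotomy is the Dirichlet-series analogue of \eqref{mobius-sign} and drives all three statements.

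\textbf{Parts (1) and (2).} For Part (1), the only element of $\CS^+(2)$ is $\{1,2\}$, giving a single polar hyperplane $s_1+s_2=0$. Shifting both contours there and parametrizing $s_1=it$, $s_2=-it$, the residue yields an integral of $|L(1+it,\chi)|^2$ against the dyadic kernel, which simplifies to a constant multiple of $\sin^2(t\log 2/2)/t^2$; the holomorphic correction of $G_\chi$ evaluated at $(it,-it)$ yields the Euler product $P(t,\chi)$, whose local factors one verifies directly to be $1+O(1/p^2)$. The error $O((\log R)^{-100})$ comes from pushing contours just past the pole and using standard zero-free-region bounds on $L(s,\chi)$. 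For Part (2), the same strategy applied iteratively collects residues at the full set $\CS^+(2k)$ of polar hyperplanes; the remaining free integration variables (numbering $(2^{2k-1}-1)-2k$) parametrize the polytope defining $V_k(\log R)$. The $L$-value product at the polar locus together with the Euler corrections yields the singular series $\mathfrak{S}_k(\chi)$: each of the $2^{2k-1}$ odd-size subsets of $[2k]$ contributes an $L(1,\chi)$ factor, giving $\mathfrak{S}_k(\chi)\asymp L(1,\chi)^{2^{2k-1}}$ upon direct inspection, while the polytope bound $V_k(m)\asymp m^{2^{2k-1}-2k-1}$ is elementary volume geometry. The error absorbs subdominant residue configurations.

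\textbf{Part (3) and main obstacle.} Under the exceptional-zero hypothesis, standard Landau--Siegel analysis gives the ``pretentious'' estimate $\sum_{p\le x}(1+\chi(p))(\log p)/p \ll \log\log(qx)$ uniformly for $x\le Q$, so essentially all primes $p\le Q$ satisfy $\chi(p)=-1$. For $R\le Q$, this means $\chi(n)\approx \mu(n)$ on squarefree $n$ with all prime factors $\le R$, so $\CX_{2k}(R)$ reduces to its Möbius analog \eqref{mainthm-dyadic}, with a bounded Euler-product correction at ramified and exceptional (i.e.\ $\chi(p)=1$) primes absorbed into the constant $c_k(\chi)=(\log q)^{O(1)}$. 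This yields the main term $c_k(\chi)(\log R)^{\binom{2k}{k}-2k}$. The hardest step I anticipate is the multivariable contour analysis of Part (2): organizing the iterated residue across the $2^{2k-1}-1$ polar hyperplanes of $\CS^+(2k)$ so that precisely the polytope integral $V_k$ emerges, with all subdominant configurations suppressed by the stated error. A secondary obstacle in Part (3) is making the approximation $\chi\approx\mu$ sufficiently quantitative to achieve the relative error $(\log(q\log R))^{O(1)}/\log R$, which requires not only the exceptional zero but also sharp repulsion estimates for nearby zeros of $L(s,\chi)$.
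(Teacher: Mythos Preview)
Your treatment of Part~(a) matches the paper's: a smoothed double Perron integral, the single polar hyperplane $s_1+s_2=0$ from $\zeta_q(1+s_1+s_2)$, and evaluation on the line $s_1=it$.

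For Part~(b) there is a real gap. You propose to ``iteratively collect residues at the full set $\CS^+(2k)$ of polar hyperplanes'' and claim that $(2^{2k-1}-1)-2k$ free integration variables remain to parametrize the polytope $V_k$. But the contour integral has only $2k$ variables $s_1,\dots,s_{2k}$; residue extraction can only \emph{decrease} that number, and the $2^{2k-1}-1$ hyperplanes $\{s_I=0:I\in\CS^+(2k)\}$ are not independent (for $k\ge2$ they vastly overdetermine the $s_j$). The polytope $V_k$ lives in the $(2^{2k-1}-1)$-dimensional space of the $\log D_I$, which is not the $s$-space. The paper does not use contour integration here at all: it makes the change of variables $d_i\mapsto(D_I)_{I\in\CS^*(2k)}$ directly on the sum (as in Section~\ref{heuristics}), observes that for $I\in\CS^-(2k)$ the factor $\chi(D_I)$ gives cancellation so those $D_I$ may be truncated at a small height, and then evaluates the remaining sum over $(D_I)_{I\in\CS^+(2k)}$ by a lattice-point count plus the fundamental lemma of sieve methods, which is what produces $V_k(\log R)$. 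The singular series then falls out of the sum over the small variables. Your Mellin picture does not produce this structure.

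For Part~(c), the paper does \emph{not} reduce to the M\"obius case via $\chi\approx\mu$. That reduction is tempting but delicate: the divisor sum $\sum_{R/2<d\le R,\,d\mid n}\chi(d)$ is not an Euler product, and the ``bad'' primes (those with $\chi(p)\ne-1$) shift the dyadic window in a way that does not factor out into a constant. Instead the paper reruns the contour-shifting machinery of Section~\ref{contour} directly on $\CX_{2k}(R)$, now with poles coming only from the $\zeta_q(1+s_I)$ factors ($I\in\CS^+(2k)$), while the $L(1+s_I,\chi)$ factors ($I\in\CS^-(2k)$) are holomorphic. The key mechanism replacing the $1/\zeta$ zeros of the M\"obius argument is that $L(1+s,\chi)$ is \emph{small}---of size $\asymp 1/\log Q$---near $s=0$ because of the exceptional zero; concretely the paper writes $L(1+s,\chi)=L(\beta+s,\chi)+O((1-\beta)\log(q+|t|))$ and exploits the genuine zero of $L(\beta+s,\chi)$ at $s=0$. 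This, combined with Proposition~\ref{CombProp} exactly as in Section~\ref{contour}, gives the exponent $\binom{2k}{k}-2k$ with constant $c_k(\chi)\ll(\log q)^{O(1)}$. The matching lower bound $c_k(\chi)\gg(\log q)^{-O(1)}$ requires a separate positivity argument parallel to Section~\ref{lb}, which your sketch omits entirely.
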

In the case of our polynomial and permutation models, we have an exponent of $2^{2k-1}-2k-1$ for the $2k^{th}$ moment with $k\ge2$, whilst over the integers we have an exponent $\binom{2k}{k}-2k$. We see that our Dirichlet character model interpolates between these two settings. If the Dirichlet $L$-function associated with the character has a zero very close to 1, then $\chi(p)=-1$ for many small primes $p$, and so by multiplicativity $\chi$ behaves similarly to $\mu$ (at least in appropriate ranges). This is represented by our exponent of $\binom{2k}{k}-2k$ in this case. On the other hand, $\chi$ is a periodic character, and if the $L$ function does not have a zero very close to 1, we see that we have an exponent $2^{2k-1}-2k-1$, matching the exponent of our polynomial and permutation models.  Notice that if $L(s,\chi)$ does have an exceptional zero, then the asymptotic of case (c) for $\CX_{2k}(R)$ holds for small $R$, and transitions into the asymptotic of case (b) as $R$ grows.

\medskip

\begin{rem}
Relation \eqref{mobius-sign} has a polynomial analogue whose consequences are worth exploring further. Given $I\subset\{ F\in \F_q[t] : \deg(F)=n\}$, we consider the sum
\[
\sum_{F\in  I}\mu(F) .
\]
For example, we could take $I=\{ F\in \F_q[t] : \deg(F)=n\}$, or $I=\{F\in \F_q[t] : \deg(F-F_0)\le h\}$ for some $F_0\in\F_q[t]$ of degree $n$ and for some integer $h\in[1,n-1]$, which can be seen as the polynomial analogue of a short interval. Then 
\[
\sum_{F\in  I}\mu(F) 
	= (-1)^n \sum_{F\in  I}\chi(F), 
\]
where $\chi(F)=(-1)^{\deg(F)}\mu(F)$, which is also a multiplicative function. However, we note that, even though $\mu(P)=-1$ for all irreducibles, we have that $\chi(P)=1$ for about half of the irreducibles $P$, and $\chi(P)=-1$ for the other half, that is to say $\chi$ behaves on average much more like a real Dirichlet character rather than the M\"obius function. 

This phenomenon is striking and sharply different than what happens over $\Z$, where there is a dichotomy between multiplicative functions that look like the M\"obius functions and other ones whose average prime value is 0, as is exemplified by Theorem \ref{thm-chars} (see, also, \cite{kou}).
\end{rem}

\subsection{Further analysis of truncated M\"obius divisor sums}
As we saw in Theorem \ref{thm-smooth}, if $f\in C^A(\R)$ with $A>\frac{1}{2k} \binom{2k}{k}=\CE_k/2k+1$, then $M_f(n;R)^{2k}$ behaves like a sieve weight. When $f=f_A$, we can be more precise: 

If $A> \frac{1}{2k} \binom{2k}{k}-1=\CE_k/2k$, then $\CE_{k,A}=-1$ in Theorem \ref{mainthm}, and so $\CM_{f_A,2k}(R)\asymp (\log{R})^{-1}$. Since integers $n\le x$ with no prime factors less than $R$ contribute a total $\gg x(\log{R})^{-1}$ to $\sum_{n\le x}M_{f_A}(n;R)^{2k}$, we see that $M_{f_A}(n;R)^{2k}$ is behaving like a sieve weight in this case.

If $A\leq \frac{1}{2k} \binom{2k}{k}-1=\CE_k/2k$, then $\CE_{k,A}\ge 0$, and so $\CM_{f_A,2k}(R)\gg 1$. In particular, $M_{f_A}(n;R)^{2k}$ no longer behaves like a sieve weight, and the main contribution is from numbers with several prime factors in $[1,R]$.

The following theorem illustrates further this distinction.

\begin{thm} \label{thm-factors}
Let $x\ge R\ge2$, $k\in\Z_{\ge 1}$ and $A\in\Z_{\ge0}$. Moreover, let $\Omega(n;R)$ denote the number of prime factors of $n$ in $[1,R]$, counted with multiplicity.
\begin{enumerate} 
\item If $A > \frac{1}{2k} \binom{2k}{k}-1$, then
\[
\sum_{\substack{n<x \\ \Omega(n;R)\ge C}}M_{f_A} (n;R)^{2k}\ll_{k,A} \frac{x}{C\log{R}}.
\]
\item If $A\le \frac{1}{2k} \binom{2k}{k}-1$ and $\epsilon>0$ is fixed, then there is a $\delta=\delta(\epsilon,k)>0$ such that
\[
\sum_{\substack{n<x \\ |\Omega(n;R)/\log\log{R}-\binom{2k}{k}|\ge \epsilon }} M_{f_A}(n;R)^{2k}
	\ll_{k,A}  x (\log{R})^{\binom{2k}{k}-2k(A+1)-\delta} .
\]
\end{enumerate}
\end{thm}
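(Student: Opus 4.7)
Both parts are established by analysing the $y$-twisted moment
\[
S(y;x) := \sum_{n<x} y^{\Omega(n;R)}\, M_{f_A}(n;R)^{2k}
\]
for $y$ in a real neighbourhood $[1-\eta,1+\eta]$ of $1$. The principal technical input is a Selberg--Delange-type asymptotic
\[
S(y;x) \sim x\,\CM^{(y)}_{f_A,2k}(R), \qquad \CM^{(y)}_{f_A,2k}(R) \asymp_y (\log R)^{E(y)},
\]
obtained by refining the Mellin/contour-integration argument underlying Theorem~\ref{mainthm}: expanding $M_{f_A}^{2k}$ as a multi-divisor sum, grouping terms by the LCM $N:=[d_1,\dots,d_{2k}]$, and noting that the $y$-twist multiplies each local Euler factor at $p\le R$ by a $y$-dependent quantity. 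Tracking the residue at $s=1$ shows that $E(y)$ is smooth in $y$, with $E(1)=\CE_{k,A}$ (recovering Theorem~\ref{mainthm}) and $E'(1)=\binom{2k}{k}$; the latter reflects the fact that the $M_{f_A}^{2k}$-weighted average of $\Omega(n;R)$ is asymptotically $\binom{2k}{k}\log\log R$.

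\textbf{Part (a).} The hypothesis $A>\binom{2k}{k}/(2k)-1$ forces $\CE_{k,A}=-1$, and continuity of $E$ yields $E(y)\le -1$ for all $y$ in some interval $[1,1+\eta_0]$. Fixing $y_0\in(1,1+\eta_0]$, we get $S(y_0;x)\ll_{k,A} x/\log R$, and Chernoff's inequality gives
\[
\sum_{\substack{n<x\\ \Omega(n;R)\ge C}} M_{f_A}(n;R)^{2k}
\;\le\; y_0^{-C}\, S(y_0;x)
\;\ll_{k,A}\; y_0^{-C}\cdot \frac{x}{\log R}.
\]
For large $C$ this is exponentially stronger than the claimed $x/(C\log R)$; for bounded $C$ the claim follows from Theorem~\ref{mainthm} applied to the trivial bound $\sum_{n<x} M_{f_A}(n;R)^{2k}\ll x/\log R$.

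\textbf{Part (b).} Here $\CE_{k,A}=\binom{2k}{k}-2k(A+1)\ge 0$. Set $\mu_R:=\binom{2k}{k}\log\log R$. Applying Chernoff's inequality with $y=1+\theta$ for small $\theta>0$,
\[
\sum_{\Omega(n;R)\ge \mu_R+\lambda\log\log R} M_{f_A}(n;R)^{2k}
\;\le\; (1+\theta)^{-\mu_R-\lambda\log\log R}\, S(1+\theta;x)
\;\ll\; x(\log R)^{E(1+\theta)-(\binom{2k}{k}+\lambda)\log(1+\theta)}.
\]
Taylor-expanding with $E(1)=\CE_{k,A}$, $E'(1)=\binom{2k}{k}$, and $\log(1+\theta)=\theta-\theta^2/2+O(\theta^3)$, the exponent reduces to $\CE_{k,A}-\lambda\theta+\tfrac12 c\theta^2+O(\theta^3)$ for some $c=c(k,A)>0$ coming from $E''(1)$. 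Optimising $\theta\asymp\lambda/c$ produces a quadratic gain $\lambda^2/(2c)$, and setting $\lambda=\epsilon$ gives the desired exponent $\CE_{k,A}-\delta$ with $\delta=\delta(\epsilon,k)>0$. The lower tail is handled symmetrically with $\theta<0$.

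\textbf{Main obstacle.} The crux is establishing the uniform-in-$y$ asymptotic for $S(y;x)$ with the correct values $E(1)=\CE_{k,A}$ and $E'(1)=\binom{2k}{k}$. The $y$-twist perturbs each local Euler factor at primes $p\le R$, and one must track how these perturbations aggregate in the contour-integration behind Theorem~\ref{mainthm} to produce the smooth interpolating exponent $E(y)$. Error terms must be controlled uniformly for $y$ in a neighbourhood of $1$, and the boundary case where $E(y)$ crosses $-1$ (the transition in the definition of $\CE_{k,A}$) requires additional care since the asymptotic regime of $\CM^{(y)}(R)$ switches there. This is essentially a quantitative refinement of Theorem~\ref{mainthm}, rather than a genuinely new analytic difficulty.
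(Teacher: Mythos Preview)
Your Rankin/Chernoff strategy for both parts is exactly what the paper does: part (b) is proved by bounding the $v$-twisted moment $\sum_{n<x} v^{\Omega(n;R)} M_{f_A}(n;R)^{2k}$ and optimising over $v$ near $1$, and part (a) uses a linear weight $\Omega(n;R)$ in place of your exponential one (either works). The key input you identify, namely an upper bound of the shape $S(v;x)\ll x(\log R)^{v\binom{2k}{k}-2k(A+1)+o(1)}$, is precisely the content of the paper's Theorem \ref{thm-factors-prel}.

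The genuine gap is in how you propose to obtain that input. You claim it is ``essentially a quantitative refinement of Theorem~\ref{mainthm}'' via the same contour integration, but that argument controls $\CM_{f_A,2k}(R)$, and the passage to $\sum_{n<x}$ incurs an error $O(R^{2k})$; thus it only delivers the bound for $x\ge R^{2k}$. Theorem~\ref{thm-factors}, however, is stated (and needed) for the full range $x\ge R$, and the paper explicitly flags this as nontrivial. To cover $R\le x<R^{2k}$ the paper first proves a logarithmic version (Theorem~\ref{thm-factors-log}, where the Euler-product structure absorbs the $v$-twist cleanly), and then transfers to uniform weights via an inductive argument on dyadic ranges of $x$: one writes $\log n=\sum_{p^j\|n} j\log p$, pulls out a prime, uses the submultiplicativity relation $M_{f_A}(mp^j;R)=M_{f_A}(m;R)-{\bf 1}_{p<R}(\log(R/p)/\log R)^A M_{f_A}(m;R/p)$, and closes the induction. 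Your proposal does not contain this step, and a direct ``Selberg--Delange'' approach fails because $y^{\Omega(n;R)}M_{f_A}(n;R)^{2k}$ is not multiplicative in $n$.

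A minor point on part (b): the paper's bound gives $E(v)=v\binom{2k}{k}-2k(A+1)$, which is \emph{linear} in $v$, so $E''(1)=0$. The quadratic gain in your optimisation comes entirely from the $-\theta^2/2$ in the expansion of $\log(1+\theta)$, not from $E''(1)$. This does not affect the conclusion, but your attribution of the $\theta^2$ term is incorrect.
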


In other words, if $A > \frac{1}{2k} \binom{2k}{k}-1$, then the main contribution to the sum defining 
$ \CM_{f_A,2k}(R)$ comes from integers with a bounded number of prime factors $\leq R$; whereas if 
$A\le \frac{1}{2k} \binom{2k}{k}-1$,   then the main contribution to the sum   comes from integers with 
$\left( \binom{2k}{k} +o(1)\right) \log\log{R}$ 
prime factors $\leq R$.

Analogous results hold with $\Omega(n;R)$ replaced by the function $\#\{p|n:p\le R\}$. We note that typically one requires $x>R^{2k}$, as in Theorem \ref{mainthm}, to estimate a $2k^{th}$ moment of a sum of divisors of size at most $R$, but the estimates of Theorem \ref{thm-factors} hold in the much wider range $x\ge R$. We can show similar (but slightly weaker) results for general weights $f$:

\begin{thm} \label{thm-smooth-factors}
Let $k\in\Z_{\ge 1}$ and $f:\R\to\R$ be supported in $(-\infty,1]$. Assume further that $f\in C^A(\R)$ and that all functions $f,f',\dots,f^{(A)}$ are uniformly bounded for some integer $A\ge2$, and fix some $\epsilon\in(0,1)$.
\begin{enumerate}
\item Assume that $A>\frac{1}{2k} \binom{2k}{k}$. For $x\ge R\ge 2$ and $C\ge1$, we have that
\[
\sum_{\substack{n<x \\ \Omega(n;R)\ge C}}M_f(n;R)^{2k}\ll_{k,f} \frac{x}{C\log{R}}.
\]

\item If $A\le \frac{1}{2k} \binom{2k}{k}$ and $\epsilon>0$ is fixed, then there is a $\delta=\delta(\epsilon,k)>0$ such that 
\[
\sum_{\substack{n<x \\ |\Omega(n;R)/\log\log{R}-\binom{2k}{k}|\ge \epsilon}} M_f(n;R)^{2k} 
	\ll_{k,f,\epsilon}  x(\log R)^{\binom{2k}{k} -2kA-\delta} \quad (x\ge R\ge 2) .
\]
\end{enumerate}
\end{thm}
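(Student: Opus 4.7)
The strategy mirrors that of Theorem~\ref{thm-factors}, but without the special identity $f_A^{(j)}\propto f_{A-j}$ for $f_A$, we use the shift-differencing identity \eqref{differencing1} combined with Theorem~\ref{thm-smooth} applied to auxiliary functions obtained from $f$ by differencing.

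For part~(a), I would start with Markov's inequality,
\[
\sum_{\substack{n<x\\ \Omega(n;R)\ge C}}M_f(n;R)^{2k}\le \frac1C\sum_{n<x}\Omega(n;R)\,M_f(n;R)^{2k},
\]
and decompose $\Omega(n;R)=\sum_{p\le R}v_p(n)$. For each prime $p\le R$, writing $n=p^am$ with $p\nmid m$ and $a\ge 1$, identity \eqref{differencing1} gives $M_f(n;R)=-M_{g_p}(m;R)$, where $g_p(t):=f(t+\log p/\log R)-f(t)$; note that $M_f(n;R)$ depends only on $m$, not $a$. Since $g_p\in C^A(\R)$ with $\|g_p^{(j)}\|_\infty\le 2\|f^{(j)}\|_\infty$ uniformly in $p$, Theorem~\ref{thm-smooth}(1) applies to $g_p$ and yields $\sum_{m<X}M_{g_p}(m;R)^{2k}\ll X/\log R$. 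To avoid a $\log\log R$ loss when summing $\sum_{p\le R}p/(p-1)^2$ over the prime parameter, I would apply instead the $\eta$-localised form of Theorem~\ref{thm-smooth}(1) for $g_p$ with $\eta=\log p/\log R$, exploiting the additional smallness $|g_p|\ll\log p/\log R$ inherited from the definition. Assembling the pieces gives the desired $O_{k,f}(x/(C\log R))$.

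For part~(b), when $A\le \binom{2k}{k}/(2k)$, Theorem~\ref{thm-smooth}(2) provides $\sum_{n<x}M_f(n;R)^{2k}\ll x(\log R)^{\binom{2k}{k}-2kA}$, and I would prove a Chernoff-type concentration of $\Omega(n;R)$ under the weighted measure $M_f(n;R)^{2k}$. The key step is the weighted moment bound
\[
S_f(z):=\sum_{n<x}z^{\Omega(n;R)}M_f(n;R)^{2k}\ll x(\log R)^{F_k(z)-2kA}\qquad (z>0 \text{ near }1),
\]
where $F_k$ is a smooth function with $F_k(1)=\binom{2k}{k}$ and $F_k'(1)=\binom{2k}{k}$. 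This is established by adapting the proof of Theorem~\ref{thm-smooth}(2): the weight $z^{\Omega(n;R)}=\prod_{p\le R}z^{v_p(n)}$ modifies the local Euler factors at primes $p\le R$ in the Mellin/contour analysis by substituting $1/p\mapsto z/p$ at the relevant step, and matching leading poles yields the exponent $F_k(z)$. The Chernoff estimate
\[
\sum_{\substack{n<x\\ \Omega(n;R)\ge(\binom{2k}{k}+\epsilon)\log\log R}}\!\!\!\!M_f(n;R)^{2k}
\le z^{-(\binom{2k}{k}+\epsilon)\log\log R}S_f(z)
=x(\log R)^{F_k(z)-(\binom{2k}{k}+\epsilon)\log z-2kA}
\]
for $z>1$ close to $1$ then produces, upon optimisation, a saving $\delta(\epsilon,k)>0$ of order $\epsilon^2$, since the linear part at $z=1$ equals $F_k'(1)-(\binom{2k}{k}+\epsilon)=-\epsilon$; the lower tail is handled symmetrically with $z<1$.

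The main technical obstacle in part~(a) is the removal of the $\log\log R$ factor from $\sum_{p\le R}p/(p-1)^2$, which requires combining the size bound and the $\eta$-localised version of Theorem~\ref{thm-smooth}(1) for $g_p$; this is precisely where the hypothesis $A>\binom{2k}{k}/(2k)$ is needed, one more derivative than in the specialised Theorem~\ref{thm-factors}(a). The main obstacle in part~(b) is computing $F_k$ precisely enough to verify $F_k(1)=F_k'(1)=\binom{2k}{k}$, i.e., that $M_f^{2k}$-weighted $\Omega(n;R)$ is concentrated at $\binom{2k}{k}\log\log R$; this follows the combinatorial blueprint of the proofs of Theorem~\ref{mainthm} and Theorem~\ref{thm-smooth}(2) with the extra multiplicative parameter $z$ carried throughout.
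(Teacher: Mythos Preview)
Your outline for part~(b) is essentially the paper's own argument: apply Rankin's trick with $z^{\Omega(n;R)}$, so that the problem reduces to the weighted moment bound $\sum_{n\le x}v^{\Omega(n;R)}M_f(n;R)^{2k}\ll x(\log R)^{v\binom{2k}{k}-2kA}(\log\log R)^{O(1)}$. In the paper this is Theorem~\ref{thm-factors-prel-general}(b), so your $F_k(v)=v\binom{2k}{k}$ and the optimisation $v=1\pm\alpha$ is exactly what is done.

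Part~(a), however, has a gap. After Markov you need $\sum_{n\le x}\Omega(n;R)M_f(n;R)^{2k}\ll x/\log R$, and your prime-by-prime decomposition via $g_p(t)=f(t+\tfrac{\log p}{\log R})-f(t)$ together with Theorem~\ref{thm-smooth}(1) only yields $\ll\frac{x}{\log R}\sum_{p\le R}\frac{p}{(p-1)^2}\asymp\frac{x\log\log R}{\log R}$. Your proposed fix does not close this: the $\eta$-localised estimate with $\eta=\log p/\log R$ controls the contribution of $m$ with a prime factor $\le p$, and the pointwise bound $|g_p(0)|\ll\log p/\log R$ handles $m$ with $P^-(m)>R$, but the range $p<P^-(m)\le R$ is left untreated. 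More fundamentally, the implied constant in Theorem~\ref{thm-smooth}(1) depends on $\|g_p^{(A)}\|_\infty$ (via Lemma~\ref{holder}), and $\|g_p^{(A)}\|_\infty\le 2\|f^{(A)}\|_\infty$ gives no saving; it is only $\|g_p^{(A-1)}\|_\infty$ that is $\ll\log p/\log R$, and using that would require the stronger hypothesis $A-1>\tfrac{1}{2k}\binom{2k}{k}$.

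The paper takes a different route for~(a): rather than differencing at a prime, it uses Lemma~\ref{holder}, the Taylor--H\"older identity
\[
M_f(n;R)^{2k}\ll\int_{\log 2/\log R}^{1}u^{2k(A-1)}M_{f_{A-1}}(n;R^u)^{2k}\,\dee u+(\log R)^{-2kA},
\]
to reduce to the specific weight $f_{A-1}$ at varying scales $R^u$. Since $A-1>\tfrac{1}{2k}\binom{2k}{k}-1$, Theorem~\ref{thm-factors-prel}(a) applies to each $M_{f_{A-1}}(\,\cdot\,;R^u)$, and the factor $u^{2k(A-1)}$ ensures the $u$-integral converges with no $\log\log R$ loss. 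This packaged result is Theorem~\ref{thm-factors-prel-general}(a), and Markov then finishes the proof.
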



\subsection{Outline of the paper}
 
We start the paper in Section \ref{heuristics} with a discussion of the discrepancy between the exponent of $\log R$ in \eqref{mainthm-dyadic} and the exponent of $m$ in Theorem \ref{thm-perm}, which is surprising at first sight. 

Sections \ref{permsect} and \ref{polysect} study the analogies for permutations and polynomials over finite fields, respectively. These analogies are considerably easier to analyze than the integer case.

The main body of the paper is then dedicated to the study of moments of $M_{f_A}(n;R)$ over Sections \ref{support} -- \ref{anatomy}. Specifically, in Section \ref{support} we establish relation \eqref{NumberOfnonZeros} for the size of the support of $M_{f_0}(n;R)$, and in Section \ref{mellin} we study inversion formulas for our divisor sums $M_f(n;R)$ that will be essential when dealing with their moments.  The proof of Theorem \ref{mainthm} is separated over three sections: in Section \ref{combinatorial}, we establish certain combinatorial inequalities that will be instrumental in understanding the leading term in the asymptotics for $\CM_{f_A,2k}(R)$. Then, in Section \ref{contour} we establish Theorem \ref{mainthm} by a multidimensional contour shifting argument, except for showing the positivity of the constants $c_{k,A}$ and $c_k'$. The latter will be accomplished with a different argument in Section \ref{lb}. Section \ref{anatomy} contains an analysis of the anatomy of the integers that give the main contribution to moments of $M_f(n;R)$. Specifically, we prove Theorems \ref{thm-smooth}, \ref{thm-factors} and \ref{thm-smooth-factors} there.
 
Finally, in Sections \ref{nonexcepsec} and \ref{excepsec} we study the moments of the sum weighted by Dirichlet characters, and establish Theorem \ref{thm-chars}, first for non-exceptional Dirichlet characters (where the proof is similar to Theorem \ref{thm-poly}), and then for exceptional Dirichlet characters (where the proof is similar to Theorem \ref{mainthm}).


\subsection{Notation}\label{notation}
Given an integer $N\ge1$, we set throughout the paper
\[
[N] := \{1,2,\dots,N\},
\]
\[
\CS^+(N) := \{\emptyset\neq I\subset[N]  : \#I\ \text{even}\},
\quad
\CS^-(N) := \{I\subset[N]  : \#I\ \text{odd}\},
\]
\[
\CS(N) := \{I\subset[N] \} 
\quad\text{and}\quad
\CS^*(N) := \CS^+(N)\cup\CS^-(N) . 
\]
Also, we recall that, given a integer $n\ge1$, we write $P^+(n)$ and $P^-(n)$ for its largest and smallest prime divisors, respectively, with the convention that $P^+(1)=1$ and $P^-(1)=\infty$.

Finally, given $2k$ variables $s_1,\dots,s_{2k}$ and $I\subset[2k]$, we will use the notation $s_I=\sum_{i\in I} s_i$. 


\section{The discrepancy between integers and polynomials}\label{heuristics}


The goal of this section is to analyze in detail why we have such a different behaviour when considering integers vs. polynomials or permutations

\subsection{Integer setting}\label{heuristics-integers}
Assume that $k\ge2$. We mimic the proof of Theorem \ref{thm-perm}. Recall the definition of $\tilde{f}_0$ in \eqref{f_0}. Given square-free integers $d_1,\dots,d_{2k}$ and $I\subset\CS^*(2k)$, we let $D_I$ be the product of those primes $p$ that divide each of the $d_i$'s with $i\in I$ but do not divide $\prod_{i\in[2k]\setminus I}d_i$. Then the integers $D_I$ for $I\in\CS^*(2k)$ are pairwise coprime and $d_i=\prod_{I:\, i\in I} D_I$ for each $i$, so that
\als{
\CM_{\tilde{f}_0,2k}(R) 
	&= \sum_{R/2<d_1,\dots,d_{2k}<R}
		\frac{\mu(d_1)\dots\mu(d_{2k})}{[d_1,\dots,d_{2k}]}\\
	&= \sideset{}{^\flat}\sum_{\substack{D_I\ (I\in\CS^*(2k)) \\ R/2<\prod_{I\ni i}D_I \le R\ (1\le i\le 2k)}}
		\left(\prod_{I\in\CS^-(2k)}\frac{\mu(D_I)}{D_I}\right)
			\left(\prod_{I\in\CS^+(2k)} \frac{1}{D_I} \right),
}
where the notation $\sideset{}{^\flat}\sum$ means that the summation is running over squarefree and pairwise coprime integers $D_I$. Set $L=e^{(\log\log R)^3}$. The contribution of those tuples with $D_I>L$ for some $I$ odd to $\CM_{\tilde{f}_0,2k}(R)$ can be seen to be $\ll 1/e^{c(\log\log R)^{3/2}}$ for some $c=c(k)>0$, by the Prime Number Theorem. So assume that $D_I\le L$ for all $I$ odd. Then it is natural to write
\[
\CM_{\tilde{f}_0,2k}(R) \sim 
	\sideset{}{^\flat}\sum_{\substack{D_I\le L\ (I\in\CS^*(2k))\\ D=\prod_{I\in \CS^-(2k)} D_I }} \frac{\mu(D)}{D} \cdot 
		T_{2k}(R_1,\dots,R_{2k} ; D ),
\]
where $R_i= R/\prod_{I\ni i,\ I\in \CS^+(2k)} D_I$ and 
\[
T_{2k}(\bs R;a)	
	= \sideset{}{^\flat}\sum_{\substack{(D_I,a)=1\ (I\in \CS^+(2k))  \\ 
		R_i/2<\prod_{I\in\CS^+(2k):\, i\in I} D_I \le R_i \ (1\le i\le 2k) }}
					\prod_{I\in\CS^+(2k)} \frac{1}{D_I} .
\]
When $\log R_i=\log R+O(\log L) = \log R+O((\log\log R)^{3/2})$, as it is here, we should be expecting that $T_{2k}(\bs R;a)$ has an asymptotic formula of the form
\[
T_{2k}(\bs R;a) = g(a) (\log R)^{2^{2k-1}-2k-1} + O\left((\log R)^{2^{2k-1}-2k-2} (\log\log R)^{O(1)} \right),
\]
since we have $2^{2k-1}-1$ variables on a logarithmic scale and $2k$ multiplicative constraints in dyadic intervals, where $g(a)$ is a multiplicative function with $g(p)=1+O(1/p)$. Since $\sum_{n=1}^\infty \mu(n)/n=0$, we then find that the total contribution of the main terms to $\CM_{\tilde{f}_0,2k}(R)$ is 
\[
(\log{R})^{2^{2k-1}-2k-1}\sideset{}{^\flat}\sum_{\substack{D_I\le L\ (I\in \CS^-(2k))  \\ D=\prod_{I\in \CS^-(2k)} D_I }}  \frac{\mu(D)g(D)}{D} 
	\ll e^{-c'(\log\log R)^{3/2}} .
\]
for some $c'=c'(k)>0$, which is negligible. Consequently,
\[
\CM_{\tilde{f}_0,2k}(R)
	\ll (\log\log R)^{O(1)} (\log R)^{2^{2k-1}-2k-2} ,
\]
whereas the power of $m$ in Theorem \ref{thm-perm} is $2^{2k-1}-2k-1$. So this heuristic indicates that we should get more cancellation in the integer setting than we will obtain in the analogous permutation question, as established in Theorem \ref{thm-perm}.

\subsection{Polynomial analogue}
The reader might be sceptical of the argument presented above, because a direct analogue exists for polynomials over finite fields too. Specifically, expanding the $2k$-th power in $\text{Poly}_q(n,m;k)$, we find that
\[
\text{Poly}_q(n,m;k)
	= \sum_{G_1,\dots,G_{2k}} \frac{\mu(G_1)\cdots\mu(G_{2k})}{q^{\deg([G_1,\dots,G_{2k}])}} 
\]
for $n\ge 2mk$. Given square-free, monic polynomials $G_1,\dots,G_{2k}$ over $\F_q[t]$ and $I\subset\CS^*(2k)$, we let $G_I$ be the product of those monic irreducibles $P$ that divide each of the $G_i$'s with $i\in I$ but do not divide $\prod_{i\in[2k]\setminus I}G_i$. Then the polynomials $G_I$ for $I\in\CS^*(2k)$ are pairwise coprime and $G_i=\prod_{I:\, i\in I} G_I$ for each $i$, so that
\als{
\text{Poly}_q(n,m;k)
	&=  \sideset{}{^\flat}\sum_{\substack{G_I\ (I\in\CS^*(2k)) \\
			\sum_{I\ni i}\deg(G_I) = m \ (1\le i\le 2k)}}
		\left(\prod_{I\in\CS^-(2k)}\frac{\mu(G_I)}{q^{\deg(G_I)}}\right)
			\left(\prod_{I\in\CS^+(2k)} \frac{1}{q^{\deg(G_I)}} \right).
}
where the notation $\sideset{}{^\flat}\sum$ means that the summation is running over squarefree and pairwise monic polynomials $G_I$. As in the integer case, the contribution to $\text{Poly}_q(n,m;k)$ of those tuples $(G_I)_{I\in\CS^*(2k)}$ such that $\deg(G_I)$ is large for some $I\in\CS^-(2k)$ is negligible, by the Prime Number Theorem over $\F_q[t]$. Hence, we may assume that $\deg(G_I)\le\log m$ for all $I\in\CS^-(2k)$. Then it is natural to write
\[
\text{Poly}_q(n,m;k) 
	= \sideset{}{^\flat}\sum_{\substack{\deg(G_I)\le \log m \ (I\in \CS^-(2k)) \\ G=\prod_{I\in \CS^-(2k)} G_I }} \frac{\mu(G)}{q^{\deg(G)}} 
		\cdot  \tilde{T}_{q,2k}(m_1,\dots,m_{2k};G),
\]
where $m_i=m-\sum_{I\ni i,\, I\in \CS^+(2k)} \deg(G_I)$ and
\[
\tilde{T}_{q,2k}(\bs m ;A)	
	:= \sideset{}{^\flat}\sum_{\substack{(G_I,A)=1\ (I\in \CS^+(2k)) \\ 
		\sum_{I\in\CS^+(2k):\, i\in I} \deg(G_I) = m_i \ (1\le i\le 2k) }}
					\prod_{I\in\CS^+(2k)} \frac{1}{q^{\deg(G_I)}} .
\]
As before, when $\ell_i=m+O(\log m)$, as above, we should be expecting that $\tilde{T}_{q,2k}(\bs\ell;A)$ has an asymptotic formula of the form
\eq{poly-count}{
\tilde{T}_{q,2k}(\bs\ell;A) = \tilde{g}(A) m^{2^{2k-1}-2k-1} + O\left( m^{2^{2k-1}-2k-2} (\log m)^{O(1)} \right),
}
where $\tilde{g}(A)$ is a multiplicative function with $\tilde{g}(P)=1+O(1/q^{\deg(P)})$ for irreducibles $P$. 

The above argument suggests that we should have an asymptotic behaviour of $\text{Poly}_q(n,m;k)$ that is smaller than what Theorem \ref{thm-poly} states, which is absurd. The problem is that if $\sum_{I\ni i}\deg(G_I) =m$ for all $i$, then we also have that
\[
2km = \sum_{i=1}^{2k} \sum_{I:\,i\in I}\deg(G_I) = \sum_I \#I\deg(G_I) .
\]
Reducing this formula mod 2, we find that
\eq{poly-condition}{
\sum_{I\in\CS^-(2k)} \deg(G_I) \equiv 0\mod 2,
}
a local constraint that is not present in the integer analogue. In particular, we see that \eqref{poly-count} is true only when \eqref{poly-condition} is satisfied. We thus find that the main term for $\text{Poly}_q(n,m;k)$ equals
\als{
&m^{2^{2k-1}-2k-1} \cdot  \sideset{}{^\flat}\sum_{\substack{\deg(G_I)\le \log m\ (I\in \CS^-(2k)) \\ G=\prod_{I\in \CS^-(2k)} G_I ,\ 2|\deg(G) }} \frac{\mu(G)\tilde{g}(G)}{q^{\deg(G)}} \\
		&\quad=\frac{m^{2^{2k-1}-2k-1}}{2} \cdot 
			\sideset{}{^\flat}\sum_{\substack{\deg(G_I)\le \log m\ (I\in \CS^-(2k))\\
				 G=\prod_{I\in \CS^-(2k)} G_I   }} \frac{\mu(G)(1+(-1)^{\deg(G)})}{q^{\deg(G)}} +O\Bigl(\frac{1}{q}\Bigr)\\
	&\quad \asymp m^{2^{2k-1}-2k-1},
}
because $\sum_F \mu(F)/q^{\deg(F)}=0$ and $\sum_F \mu(F)(-1/q)^{\deg(F)}=\prod_P (1-(-1/q)^{\deg(P)})>0$. Thus we see the local constraint associated to the discreteness of degrees in the polynomial setting means we have genuinely different asymptotic behavior.

\subsection{Further analysis}

The above arguments suggest a possible route to proving Theorem \ref{mainthm}, by working out the full asymptotic expansion of $T_{2k}(\bs x;a)$. Controlling the coefficients in this expansion is a highly non-trivial problem. Instead, we take another route, using a high-dimensional contour shifting argument. Our starting point is Perron's inversion formula which, ignoring convergence issues, yields
\[
T_{2k}(\bs x;a)	 \sim
	\frac{1}{(2\pi i)^{2k}} \idotsint\limits_{\substack{\Re(s_j)=1/\log R \\1\le j\le 2k}}
		\ \sideset{}{^\flat}\sum_{\substack{(D_I,a)=1 \\ I\in \CS^+(2k) }}
					\prod_{I\in\CS^+(2k)} \frac{1}{D_I^{1+s_I}} 
					\prod_{j=1}^{2k} \frac{x_j^{s_j}(1-2^{-s_j})}{s_j} \dee s_1\cdots \dee s_{2k} ,
\]
with the notational convention that $s_I=\sum_{j\in I} s_j$. Therefore
\[
\CM_{\tilde{f}_0,2k}(R) \sim
	\frac{1}{(2\pi i)^{2k}} \idotsint\limits_{\substack{\Re(s_j)=1/\log R \\1\le j\le 2k}}
		F(\bs s) \frac{\prod_{I\in\CS^+(2k)} \zeta(1+s_I)}{\prod_{I\in\CS^-(2k)} \zeta(1+s_I)}
					\prod_{j=1}^{2k} \frac{1-2^{-s_j}}{s_j} \dee s_1\cdots \dee s_{2k} ,
\]
where $F(\bs s)$ is analytic and non-zero when $\Re(s_j)>-1/4k$ for all $j$. As we will see in Section \ref{contour}, shifting contours, we pick up poles any time $s_I=0$ for some $I\in \CS^+(2k)$. What is the difficulty in proving Theorem \ref{mainthm} is that some of these poles can get annihilated by poles of the zeta factors in the denominator, which is an analytic way of saying that the higher order terms in the asymptotic expansions of $T_{2k}(\bs x;a)$ are cancelled out.

\medskip

It is clear from the above discussion that the underlying reason why we got a genuinely smaller main term for $\CM_{\tilde{f}_0,2k}(R)$ is the identity $\sum_{n=1}^\infty \mu(n)/n=0$, that is to say the fact that $1/\zeta$ has a zero at 1. This also explains the phenomenon we see in Theorem \ref{thm-chars}. If we replace $\mu$ by a real valued multiplicative function $f$ whose Dirichlet series $F(s)=\sum_{n=1}^\infty f(n)/n^s$ which is not very small at $s=1$, then the behaviour of the respective divisor sums should be similar as the permutation analogue, whilst if $F(s)$ is close to 0 at $s=1$ (which occurs if $F$ has a zero very close to 1) the behaviour is the same as in the original integer setting.

\subsection{Further obstructions to the analogy}
Is it possible that the local constraints at the prime 2 described above are the only thing separating integers and polynomials? In order to study this question, we consider the variations
\[
\text{Poly}_q(n,m,h;k)
	:= \frac 1{q^n} \sum_{\substack{N\in \F_q[t] \\ \deg N=n}} \Biggl|  \sum_{\substack{ M|N \\ m-h<\deg M\le m}} 
		\mu(M) \Biggr|^{2k} ,
\]
where $h\in\Z\cap[1,m+1]$. If $h\ge2$, then the local problems at the prime 2 should be resolved. However, we will see that this is not sufficient, and that the discrepancy between the integer and the polynomial analogues goes even deeper.

First, let us consider the case $h=m+1$ in order to convince the reader that resolving the constraints at the prime 2 is not sufficient. It is known that a positive proportion of polynomials $N\in \F_q[t]$ of degree at most $r$ have a simple zero over $\F_q$, and that the number of zeroes of such a polynomial over $\F_q$ is, on average, bounded. So we should expect
\[
\text{Poly}_q(n,m,m+1;k)
	\asymp \frac 1{q^n} \sum_{\substack{N\in \F_q[t] \\ \deg N=n}}\sum_{\substack{\alpha\in \F_q \\ N(\alpha)=0 \\ N'(\alpha)\neq0}} 
	\Biggl|  \sum_{\substack{ M|N \\ \deg M\le m}} \mu(M) \Biggr|^{2k}.
\]
If $N$ has a simple zero at $\alpha$, then we can factor $N(x)=(x-\alpha) \tilde{N}(x)$, where $\tilde{N}(\alpha)\neq0$, that is to say $x-\alpha$ and $\tilde{N}$ are co-prime. Then
\[
\sum_{\substack{ M|N \\ \deg M\le m}} \mu(M)
 = \sum_{\substack{ M|\tilde{N} \\ \deg M\le m}} \mu(M)
 	+ \sum_{\substack{ M|\tilde{N} \\ 1+\deg M\le m}} \mu((x-\alpha)\cdot M)
	=  \sum_{\substack{ M|\tilde{N} \\ \deg M=m}} \mu(M) ,
\]
so that 
\[
\text{Poly}_q(n,m,m+1;k)
	\asymp \frac 1{q^n} \sum_{\alpha\in\F_q} 
		\sum_{\substack{\tilde{N}\in \F_q[t] \\ \deg \tilde{N}=n-1 \\ \tilde{N}(\alpha)\neq0}}
	\Biggl|  \sum_{\substack{ M|\tilde{N} \\ \deg M=m}} \mu(M) \Biggr|^{2k}
	\asymp m^{2^{2k-1}-2k-1} +1
\]
for $n\ge2mk$, by an easy variation of Theorem \ref{thm-poly}. This argument can be made rigorous; we leave this task to the interested reader.

\medskip

Let us now study $\text{Poly}_q(n,m,h;k)$ more generally. For any $h\in\Z_{\ge 1}$ and $n\ge 2mk$, we note that
\als{
\text{Poly}_q(n,m,h;k)
	&=  \sum_{\substack{G_1,\dots,G_{2k} \\ m-h<\deg(G_i)\le m \\ 1\le i\le 2k}}
		\frac{\mu(G_1)\cdots \mu(G_{2k})}{q^{\deg([G_1,\dots,G_{2k}])}} \\
	&=  \sideset{}{^\flat}\sum_{\substack{G_I \ (I\in\CS^*(2k)) \\
			m-h< \sum_{I\ni i}\deg(G_I) \le m  \ (1\le i\le 2k)}}
		\left(\prod_{I\in\CS^-(2k)}\frac{\mu(G_I)}{q^{\deg(G_I)}}\right)
			\left(\prod_{I\in\CS^+(2k)} \frac{\mu^2(G_I)}{q^{\deg(G_I)}} \right),
}
as before. Applying Fourier inversion $2k$ times, we find that, for any $r\in(0,1)$, 
\als{
\text{Poly}_q(n,m,h;k)
	&=  \sum_{\substack{m-h<\ell_j\le m \\ 1\le j\le 2k}} 
		\ \sideset{}{^\flat}\sum_{G_I\,(I\in\CS^*(2k))}
		\left(\prod_{I\in\CS^-(2k)}\frac{\mu(G_I)}{q^{\deg(G_I)}}\right)
			\left(\prod_{I\in\CS^+(2k)} \frac{\mu^2(G_I)}{q^{\deg(G_I)}} \right) \\
	&\qquad\times \prod_{j=1}^{2k} 
				\int_0^1 (re(\theta_j))^{-\ell_j+\sum_{I\ni j} \deg(G_I)} \dee\theta_j .
}
So, if we set
\[
\CZ_q(w) = \sum_{G\in \F_q[t]} \left( \frac{w}{q}\right)^{\deg(G)} 
	= \prod_P (1 - (w/q)^{\deg(P)})^{-1} ,
\]
the $\F_q[t]$ analogue of the Riemann zeta function, then
\als{
\text{Poly}_q(n,m,h;k) 
		&= \sum_{\substack{m-h<\ell_j\le m \\ 1\le j\le 2k}}	
			 \int_{[0,1]^{2k}} \tilde{F}_q( (re(\theta_j))_j ) 
			 	\frac{\prod_{I\in \CS^+(2k)} \CZ_q(r^{\#I} e(\theta_I))}
					{\prod_{I\in \CS^-(2k)} \CZ_q(r^{\#I} e(\theta_I))}
					\prod_{j=1}^{2k} \frac{e(-\ell_j\theta_j)}{r^{\ell_j}}  \dee \bs\theta \\
		&= \int_{[0,1]^{2k}} \tilde{F}_q\left((re(\theta_j))_j\right)
			 	\frac{\prod_{I\in \CS^+(2k)} \CZ_q(r^{\#I} e(\theta_I))}
					{\prod_{I\in \CS^-(2k)} \CZ_q(r^{\#I} e(\theta_I))}
					\prod_{j=1}^{2k}\sum_{\ell=m-h+1}^m \frac{e(-\ell \theta_j )}{r^\ell}  \dee \bs\theta ,
}
where $\theta_I=\sum_{j\in I} \theta_j$ and $\tilde{F}_q(\bs w)$ is a certain function that is analytic and non-zero when $|w_j|<\sqrt{q}/2k$ for all $j$. 

We take $r=1-1/m$ and note that the main contribution to $\text{Poly}_q(n,m,h;k)$ should come from those values of $\bs\theta$ for which there are many $I\in\CS^+(2k)$ such that $\theta_I\equiv 0\mod 1$. This is the key difference with the integer case: before, we needed many $I\in\CS^+(2k)$ with $s_I=0$. So we see two different linear algebra problems: one over the group $\R/\Z$, which has torsion, and one over $\R$, which does not. The presence of torsion in $\R/\Z$ is a reflection of the discreteness of the polynomial setting (of the degree of the polynomials, more precisely), and the fact that $\R$ is a field reflects the continuous nature of the integer problem (of the logarithms of integers, more precisely).

When $h=1$, then the integrand is $\asymp 1/m$ when $\theta_j=O(1/m)$ mod 1 for all $j$, much like the integer analogue. However, if we take $\theta_j=1/2+O(1/m)$ for all $j$, then we see that $\theta_I=O(1/m)$ mod 1 for $I\in\CS^+(2k)$, whereas $\theta_I=1/2+O(1/m)$ mod 1 for $I\in\CS^-(2k)$, so the integrand has size $m^{2^{2k-1}-1}$ for such $\bs \theta$. The volume of this region is $\asymp 1/m^{2k}$, leading to a contribution of size $m^{2^{2k-1}-2k-1}$ to $\text{Poly}_q(n,m,1;k)$, which is precisely its order of magnitude for $k\ge2$. Note that the fact the main contribution comes from when $\theta_j\approx 1/2$ and not when $\theta_j\approx 0$ is a reflection of the local constraint at the prime 2 we noticed above.

Similarly to the above case, if $h=2$ and $\theta_j=1/2+O(1/m)$, then the integrand becomes 
\[
\tilde{F}_q\left(1/2,\dots,1/2\right)
			 	\frac{m^{2^{2k-1}-1}(1+O(1/m))}
					{\CZ_q(1/2)^{4^k}} 
					\prod_{j=1}^{2k} (1+e(\theta_j)) .
\]
By Taylor expansion, we have that 
\[
1+e(\theta_j)=1-e(\theta_j-1/2) 
	= -(\theta_j-1/2) -\frac{(\theta_j-1/2)^2}{2} - \cdots
\]
By symmetry, we should then have that 
\als{
&\idotsint\limits_{\substack{|\theta_j-1/2|\le 1/m \\ 1\le j\le 2k}}
	\tilde{F}_q\left(1/2,\dots,1/2\right)
			 	\frac{m^{2^{2k-1}-1}(1+O(1/m))}
					{\CZ_q(1/2)^{4^k}} 
					\prod_{j=1}^{2k} (1+e(\theta_j)) \dee \bs \theta \\
&\quad = (1+O(1/m)) \idotsint\limits_{\substack{|\theta_j-1/2|\le 1/m \\ 1\le j\le 2k}}
	\tilde{F}_q\left(1/2,\dots,1/2\right)
			 	\frac{m^{2^{2k-1}-1}}
					{\CZ_q(1/2)^{4^k}} 
					\prod_{j=1}^{2k} \frac{(\theta_j-1/2)^2}{2} \dee \bs \theta,
}
which leads to a contribution of size $m^{2^{2k-1}-6k-1}$ to $\text{Poly}_q(n,m,2;k)$. This should be the dominant contribution for large $k$, even though for small $k$ other regions can dominate. For example, if $k=2$ and we take $\theta_1,\theta_2\in[0.33,0.34]$, $\theta_3,\theta_4\in[0.66,0.67]$, and $\theta_1+\theta_2,\theta_3+\theta_4,\theta_1+\theta_4=O(1/m)$, then the integrand becomes $\asymp m^5$, and we are integrating over a region of volume $\asymp 1/m^3$, so we see that $\text{Poly}_q(n,m,1;2)\gg m^2$. In fact, this is the exact order of magnitude of $\text{Poly}(n,m,1;2)$. 

We conclude our discussion with another peculiar fact: if $h=3$, then 
\[
\sum_{\ell=m-h+1}^m e(\ell \theta )
	= e(m\theta) (1+e(\theta)+e(2\theta)) 
	= e(m\theta)+O(1/m)
\]
when $\theta=1/2+O(1/m)$. So $\text{Poly}_q(n,m,3;k)$ should have the same size as $\text{Poly}_q(n,m,1;k)$, whereas $\text{Poly}_q(n,m,h;k)$ is a bit smaller, by a factor of size $m^{O(k)}$. In general, no matter how we choose $h$, we cannot make the sum $\sum_{\ell=m-h+1}^m e(\ell \theta )$ small enough to cancel the contribution of the factors $\CZ_q(r^{\#I}e(\theta_I))$ for even $I$ in the region $\theta_j\sim 1/2$, so the quantities $\text{Poly}_q(n,m,h;k)$ do not behave in the same way as $\CM_{\tilde{f}_0,2k}(R)$ for $k$ large.

\section{The analogy for permutations}\label{permsect}

\begin{proof} [Completion of the proof of Theorem \ref{thm-perm}] It remains to prove the two claims for the quantity $c(m,k)$, which we recall is defined as the number of $(2^{2k}-1)$-tuples $(r_I)_{\emptyset\neq I\subset[2k]}$ of non-negative integers such that $r_I \in\{0,1\}$ for $\#I$ odd and such that $\sum_{I:\ i\in I}\ r_I = m$, for each $i\in[2k]$. 

Given any vector $\{ r_I:\ \emptyset \ne I\subset [2k]\}$ counted by $c(m,k)$, the vector
$\{ r_I':\ \emptyset \ne I\subset [2k]\}$ is counted by $c(m+1,k)$ where
$r_{\{ 1,2\}}'= r_{\{ 1,2\}}+1$ and $r_{\{ 3,4,\ldots,2k\}}'= r_{\{ 3,4,\ldots,2k\}}+1$, and $r_I'=r_I$ otherwise. Since $r_I\mapsto r_I'$ is injective, we see
$c(m,k)\leq c(m+1,k)$ for all $m\geq 0$, as claimed.

We now estimate $c(m,k)$. When $k=1$, we find immediately that $c(m,1)= 2$, so there is noting to prove. Assume now that $k\ge2$. Since $c(m,k)$ is increasing in $m$ and $c(0,k)=1$, we may assume that $m$ is even and large enough. We note that that there are $\asymp_k 1$ possibilities for the $r_I$ for the odd-sized $I$.  Otherwise we have to satisfy $2k$ equations with $2^{2k-1}-1$ variables. Hence the number of solutions should be 
\[
\asymp_k  m^{2^{2k-1}-2k-1} +1,
\]
as claimed. Certainly, this argument yields an appropriate upper bound. To prove the lower bound for $k\ge2$ we will construct this number of solutions.  Let
\[
\CI = \{ \{ i,j\}: 1\le i<j\le 4\} \cup \{ \{1,j\} : 5\le j\le 2k\}  ,
\]
so that $\#\CI=2k+2$. Set $r_I=0$ if $I\in\CS^-(2k)$ and, given $\delta>0$ to be chosen later, let $r_I$ be any even integer from the range $[0,\delta m/4^k]$ if $I\in\CS^+(2k)\setminus (\CI\cup\{\{5,6,\dots,2k\}\})$. Finally, if $k>2$, let $r_{\{5,6,\dots,2k\}}$ be an even integer from the range $[m-2\delta m,m-\delta m]$. There are $\asymp_{k,\delta} m^{2^{2k-1}-3-2k}$ such choices of $r_I$, $I\in\CS^+(2k)\setminus\CI$. Then select 
\[
r_{\{1,j\} }:=m -\sum_{I\in\CS^+(2k)\setminus\CI,\ j\in I} r_I \quad(5\le j\le 2k),
\]
which is an even integer lying in the interval $[0,2\delta m]$, so that $\sum_{I\in\CS^+(2k),\,j\in I}r_I=m$ for $5\le j\le 2k$. Now set 
\begin{align*}
\CI_2&=\{\{i,j\}:1\le i<j\le 4\},\\
m_j &= m - \sum_{I\in\CS^+(2k)\setminus \CI_2,\ j\in I} r_I \quad(1\le j\le 4).
\end{align*}
We note that the $m_j$ are even integers lying in the interval $[m-2k\delta m,m]$. It remains to choose $r_I$, $I\in\CI_2$, such that $\sum_{I\in\CI_2,\ j\in I} r_I = m_j$ for $1\le j\le 4$. Then, we select any even integers $r_{\{2,4\}},r_{\{3,4\}}$ from $[\sqrt{\delta} m-\delta m,\sqrt{\delta} m+\delta m]$, and we set 
\[ 
r_{\{1,4\}} = m_4 - r_{\{2,4\}}  -   r_{\{3,4\}} .
\]
Finally, we define $r_{ \{1,2\} }, r_{ \{1,3\} }, r_{\{2,3\} }$ such that
\begin{align*}
r_{\{1,2\}} + r_ {\{1,3\}} &= m_1 - r_{\{1,4\}} = m_1-m_4 +r_{\{2,4\}}+r_{\{3,4\}} ;    \\
r_{\{1,2\}}+ r_{\{2,3\}}  & = m_2  - r_{\{2,4\}} ;\ \text{and} \\ 
r_{\{1,3\}}+ r_{\{2,3\}}&= m_3 -  r_{\{3,4\}} .
\end{align*}
Note that the right-hand sides are all even so there is no parity problem, and the solutions we obtain are non-negative integers for $\delta$ small enough. We have thus constructed $\gg_k m^{2^{2k-1}-2k-1}$ solutions counted by $c(m,k)$. This completes the proof of the lemma.
\end{proof}

\begin{rem}
It should not be too difficult to determine $c(m,k)$ exactly in some special cases. For example, we have that $c(m,1)=2$ and $c(m,2)=\frac 13( 64m^3-135m^2+182m-66)$ for all $m\geq 1$.
\end{rem}

Finally, we prove a probabilistic interpretation for $c(m,k)$. In its statement, we have set with a slight abuse of notation
\eq{M-perm}{
M(\bs c ;  r) := \sum_{\substack{0\le b_j\le c_j \\ 1\le j\le m \\ \sum_j j b_j =r }} (-1)^{b_1+\cdots+b_m}
}
for an $m$-tuple of non-negative integers $\bs c=(c_1,\dots,c_m)$.

\begin{prop}\label{perm-prob} Let $\bs X=(X_1,X_2,\dots,X_m)$ be a vector of pairwise independent Poisson random variables, where $X_j$ has parameter $1/j$. For every $k\in\Z_{\ge 1}$, we have that
\[
c(m,k) = \mathbb{E}[M(\bs X;m)^{2k}] .
\]
\end{prop}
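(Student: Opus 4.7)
The plan is to connect $\mathbb{E}[M(\bs X;m)^{2k}]$ to the permutation moment $\text{\rm Perm}(N,m;k)$ from Theorem \ref{thm-perm}, and then transfer from the $S_N$-average to the Poisson average via the classical coupling between the cycle structure of a random permutation and independent Poisson variables.

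First, I would identify the inner sum in the definition of $\text{\rm Perm}(N,m;k)$ with $M(\bs C(\sigma);m)$, where $C_j(\sigma)$ is the number of cycles of length $j$ in $\sigma$. Every $\sigma$-invariant subset $T\subseteq[N]$ with $\#T=m$ is uniquely a disjoint union of cycles of $\sigma$: selecting $b_j$ cycles of length $j$, subject to $0\le b_j\le C_j(\sigma)$ and $\sum_j j b_j=m$, gives $\mu(\sigma|_T)=(-1)^{\sum_j b_j}$ with multiplicity $\prod_j\binom{C_j(\sigma)}{b_j}$. Consequently, $\text{\rm Perm}(N,m;k)=\mathbb{E}_\sigma[M(\bs C(\sigma);m)^{2k}]$.

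Second, I would invoke the classical Cauchy formula for the cycle structure of a uniformly random permutation: for any non-negative integers $(a_j)_j$ with $\sum_j ja_j\le N$,
\[
\mathbb{E}_\sigma\Bigl[\prod_j (C_j(\sigma))_{a_j}\Bigr] = \prod_j j^{-a_j},
\]
where $(n)_a:=n(n-1)\cdots(n-a+1)$. The identical formula holds with each $C_j(\sigma)$ replaced by the independent Poisson variable $X_j$ of parameter $1/j$, since $\mathbb{E}[(X_j)_{a_j}]=(1/j)^{a_j}$ and the $X_j$ are independent. Expanding $M(\bs c;m)^{2k}$ in the falling-factorial basis $\prod_j (c_j)_{a_j}$, only tuples satisfying $\sum_j j a_j\le 2km$ can appear: each of the $2k$ inner sums enforces $\sum_j j b^{(i)}_j=m$, and the expansion of $\prod_i\binom{c_j}{b^{(i)}_j}$ produces falling factorials $(c_j)_{a_j}$ with $a_j\le\sum_i b^{(i)}_j$, whence $\sum_j j a_j\le \sum_{i,j} j b^{(i)}_j=2km$. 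Therefore
\[
\mathbb{E}_\sigma[M(\bs C(\sigma);m)^{2k}] = \mathbb{E}[M(\bs X;m)^{2k}] \qquad (N\ge 2km).
\]

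Combining these two steps with Theorem \ref{thm-perm}, which gives $\text{\rm Perm}(N,m;k)=c(m,k)$ for $N\ge 2mk$, yields the desired identity $c(m,k)=\mathbb{E}[M(\bs X;m)^{2k}]$. The only point requiring care is the degree bookkeeping in the falling-factorial expansion, relying on the elementary identity $\prod_{i=1}^r\binom{c}{b_i} = \sum_{a\le b_1+\cdots+b_r}\alpha_a\, (c)_a$ with non-negative integer coefficients (provable via the subset interpretation or induction); this is a routine combinatorial fact rather than a substantive obstacle.
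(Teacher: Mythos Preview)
Your proof is correct and takes a genuinely different route from the paper's. Both arguments begin the same way, rewriting the inner divisor sum in $\text{Perm}(N,m;k)$ as $M(\bs C(\sigma);m)$ with the binomial multiplicities (your phrasing here is in fact more accurate than the display \eqref{M-perm}, which omits the factors $\prod_j\binom{c_j}{b_j}$). After that, the paper proceeds by computing the exact density of permutations with prescribed small-cycle counts via a generalized Cauchy formula together with Lemma~\ref{perm-sieve} (the proportion of $\sigma\in S_{N}$ with no cycle of length $\le m$), and then lets $N\to\infty$ so that this density converges to the independent Poisson mass function $\prod_j e^{-1/j}/(j^{c_j}c_j!)$. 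Your argument instead exploits the exact equality of joint falling-factorial moments $\mathbb{E}_\sigma\bigl[\prod_j (C_j(\sigma))_{a_j}\bigr]=\prod_j j^{-a_j}=\mathbb{E}\bigl[\prod_j (X_j)_{a_j}\bigr]$ whenever $\sum_j j a_j\le N$, together with the observation that $M(\bs c;m)^{2k}$ lies in the span of such monomials with $\sum_j j a_j\le 2km$. This yields the identity directly for every $N\ge 2km$, with no limiting step and no need for Lemma~\ref{perm-sieve}; it is shorter and more algebraic. The paper's approach, by contrast, makes the Poisson approximation to the cycle structure explicit at the level of probability densities, which is a bit longer but perhaps more transparent probabilistically. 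One small remark: your factorization $\mathbb{E}\bigl[\prod_j (X_j)_{a_j}\bigr]=\prod_j \mathbb{E}[(X_j)_{a_j}]$ uses mutual independence of the $X_j$, not merely the pairwise independence stated in the proposition; this is clearly the intended hypothesis (the paper's own proof also uses the product form of the joint law).
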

In passing, we note that Proposition \ref{perm-prob} is purely a statement about Poisson random variables and not immediately related to permutations or polynomials over finite fields, but our proof makes use of this connection.

Before we prove Proposition \ref{perm-prob}, we need a lemma.

\begin{lem}\label{perm-sieve} Let $N\ge m\ge1$. The proportion of permutations $\sigma\in S_N$ that have no cycles of length $\le m$ is
\[
\prod_{j=1}^m e^{-1/j} 
		+ O\left(\frac{m^2}{N} \right) .
\]
\end{lem}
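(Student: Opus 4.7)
The plan is to exploit the well-known fact that the joint factorial moments of the cycle counts $a_j(\sigma):=\#\{j\text{-cycles of }\sigma\}$ match exactly those of a family of independent Poisson variables, subject only to the natural combinatorial truncation $\sum_j jk_j\le N$, and then to apply inclusion--exclusion. A direct double count of pairs $(\sigma,\{\text{a choice of } k_j \text{ distinguished } j\text{-cycles of }\sigma \text{ for each } j\})$ yields the classical identity
\[
\frac{1}{N!}\sum_{\sigma\in S_N}\prod_{j=1}^m \binom{a_j(\sigma)}{k_j}
= \begin{cases}\displaystyle\prod_{j=1}^m \frac{(1/j)^{k_j}}{k_j!}, &\text{if } \sum_j j k_j\le N,\\ 0, &\text{otherwise;}\end{cases}
\]
the non-vanishing value is precisely $\prod_j\mathbb{E}[\binom{X_j}{k_j}]$ for independent $X_j\sim \mathrm{Poisson}(1/j)$.

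Next, the elementary identity $\mathbf{1}_{n=0}=\sum_{k\ge 0}(-1)^k\binom{n}{k}$, applied simultaneously to each $a_j$, converts the proportion in question into
\[
\frac{\#\{\sigma\in S_N:\,a_1=\cdots=a_m=0\}}{N!}
=\sum_{\substack{k_1,\dots,k_m\ge 0\\ \sum_j jk_j\le N}} (-1)^{k_1+\cdots+k_m}\prod_{j=1}^m \frac{(1/j)^{k_j}}{k_j!}.
\]
The analogous sum without the truncation $\sum_j jk_j\le N$ factors over $j$ and equals $\prod_{j=1}^m e^{-1/j}$, so everything reduces to bounding the tail contribution from tuples with $\sum_j jk_j > N$.

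The main step is this tail bound, which I expect to be the only point requiring any real care. Its absolute value is at most
\[
\sum_{\substack{k_1,\dots,k_m\ge 0\\ \sum_j jk_j>N}}\prod_j \frac{(1/j)^{k_j}}{k_j!}
= e^{H_m}\cdot \mathbb{P}(Y>N),
\]
where $H_m=\sum_{j=1}^m 1/j$ and $Y=\sum_{j=1}^m jX_j$ with independent $X_j\sim\mathrm{Poisson}(1/j)$. Since $\mathbb{E}[Y]=\sum_{j=1}^m j\cdot(1/j)=m$, Markov's inequality gives $\mathbb{P}(Y>N)\le m/N$, while $e^{H_m}\le em$. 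Multiplying the two estimates produces the claimed error term $O(m^2/N)$, completing the proof.
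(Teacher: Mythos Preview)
Your proof is correct and essentially the same as the paper's. The paper carries out the inclusion--exclusion over distinguished cycles explicitly to arrive at the identical alternating sum $\sum_{\sum_j j k_j\le N}\prod_j(-1/j)^{k_j}/k_j!$, and bounds the tail by the same Markov-type inequality $\mathbf{1}_{\sum_j jk_j>N}\le N^{-1}\sum_j jk_j$ followed by the same expectation computation; your presentation packages the first step as the known factorial-moment identity and the second as Markov's inequality, but the mathematics is identical.
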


\begin{proof} Note that this lemma was proven by the first author in \cite{Gra} for large $m$, but here we are mainly interested in the case when $m$ is very small compared to $N$. We apply inclusion-inclusion. If $\CC_j$ denotes the $j$-cycles in $S_N$, we write $|\pi|=j$ for an element $\pi$ of $\CC_j$, and we let $\CC$ be the union of $\CC_1,\dots,\CC_m$. Then
\als{
&\#\{\sigma\in S_N : \mbox{$\sigma$ has no cycles of length $\le m$}\} \\
	&\quad
		= N! - \sum_{\pi \in \CC} (N-|\pi|)! 
			+ \sum_{\substack{\pi_1,\pi_2\in \CC \\ \pi_1,\pi_2\ \text{disjoint}} }(N-|\pi_1|-|\pi_2|)!
		\mp \cdots \\
	&\quad  =
		\sum_{\substack{c_1,\dots,c_m \ge 0 \\ c_1+2c_2+\cdot+mc_m\le n}} (-1)^{c_1+\cdots+c_m} 
			(N-c_1-2c_2\cdots-mc_m)! 
			\sum_{\substack{ \pi_1,\pi_2,\dots \in \CC\ \text{disjoint} \\ \#\{i:|\pi_i|=j\}=c_j\ \forall j}} 1 .
}
In order to count the inner quantity, we note that if $r=c_1+\cdots+c_m$ is the total number of disjoint cycles we are choosing, and we have fixed our choice for $\pi_1,\pi_2,\dots,\pi_{r-1}$, then there are 
\[
\frac{(N-|\pi_1|-\cdots-|\pi_{r-1}|)!}{|\pi_r|! (N-|\pi_1|-\cdots-|\pi_{r-1}|-|\pi_r|)!}
\] 
choices for the set of size $|\pi_r|$ fixed by $\pi_r$, and then $(|\pi_r|-1)!$ possibilities for a cycle on $|\pi_r|$ given elements. Inductively, we then find that the total number of possibilities for $\pi_1,\dots,\pi_r$ should be
\[
\frac{N!}{(N-|\pi|-\cdots-|\pi_r|)!} \cdot \frac{1}{|\pi_1|\cdots|\pi_r|} 
	= \frac{N!}{(N-c_1-2c_2-\cdots-mc_m)!} \prod_{j=1}^m \frac{1}{j^{c_j}} .
\]
Note though we have overcounted: each possibility of $j$-cycles occurs $c_j!$ times, depending on the order they are picked, so we must divide the above expression by $c_1!\cdots c_m!$. We then find that
\als{
\frac{\#\{\sigma\in S_N : \mbox{$\sigma$ has no cycles of length $\le m$}\}}{N!} 
	&=	\sum_{\substack{c_1,\dots,c_m \ge 0 \\ c_1+2c_2+\cdot+mc_m \le n}} 
		\prod_{j=1}^m \frac{(-1/j)^{c_j}}{c_j!} \\
	&= \prod_{j=1}^m e^{-1/j} 
		+ O\left(\frac{m^2}{N} \right) ,
} 
where the error term is obtained by noting that ${\bf 1}_{c_1+2c_2+\cdots+mc_m\le N} \le (c_1+2c_2+\cdots+mc_m)/N$.
\end{proof}

\begin{proof}[Proof of Proposition \ref{perm-prob}] We recall that we have already proved that
\[
c(m,k) = 
	\frac{1}{N!} \sum_{\sigma\in S_N} \Biggl(\sum_{\substack{T\subset[n] \\ \sigma(T)=T \\ \#T=m}}\mu(\sigma\big|_T) 			\Biggr)^{2k} 
\]
for any $N\ge 2mk$. We will now rewrite the right hand side for $n$ much larger than $m$ and $k$. Note that if $\sigma$ has $c_j$ cycles of length $j$ for each $j\in\{1,\dots,m\}$, then
\[
\sum_{\substack{T\subset[n] \\ \sigma(T)=T \\ \#T=m}}\mu(\sigma\big|_T)
= M(\bs c ;  m) = \sum_{\substack{0\le b_j\le c_j \\ 1\le j\le m \\ \sum_j j b_j =m }} (-1)^{b_1+\cdots+b_m}
\]
where $\bs c=(c_1,\dots,c_m)$. Moreover, a generalization of Cauchy's formula (see Lemma 2.2 in \cite{EFG}) implies that if $t:=c_1+2c_2+\cdots+mc_m\le N$, then
\als{
&\frac{\#\{\sigma\in S_N : \mbox{$\sigma$ has $c_j$ $j$-cycles of length $j$ ($1\le j\le m$)}\}}{N!} \\
	&\quad = \left(\prod_{j=1}^m \frac{1}{j^{c_j} c_j!} \right) 
		\cdot \frac{\#\{\sigma\in S_{N-t} : \mbox{$\sigma$ has no cycles of length $\le m$}\}}{(N-t)!} .
}
Applying Lemma \ref{perm-sieve}, it is then easy to conclude that
\als{
c(m,k)	
	&= \lim_{N\to\infty} \frac{1}{N!} \sum_{\sigma\in S_N} \Biggl(\sum_{\substack{T\subset[n] \\ \sigma(T)=T \\ \#T=m}}\mu(\sigma\big|_T) \Biggr)^{2k} \\
	&=\sum_{\substack{c_1,\dots,c_m \ge 0}} M(\bs c; m)^{2k}  \prod_{j=1}^m \frac{e^{-1/j}}{j^{c_j}c_j!}  .
}
Since $\mathbb{P}(X_j=c_j)=e^{-1/j}/(j^{c_j}c_j!)$, this is $\mathbb{E}[M(\bs X;m)^{2k}]$, and so completes the proof.
\end{proof}
 
\section{The analogy for polynomials over finite fields} \label{polysect}

\begin{proof}[Proof of Theorem \ref{thm-poly}]
Throughout this proof all polynomials we consider are monic, and $P$ denotes a generic monic irreducible polynomial over $\F_q$. Note that
\als{
\text{\rm Poly}_q(n,m;k)
	&= \frac{1}{q^n} \sum_{\deg(F)=n} \Biggl( \sum_{\substack{G|F \\ \deg(G)=m}} \mu(G) \Biggr)^{2k} \\
	&= \prod_{\deg(P)\le m} \Biggl(1-q^{-\deg(P)}\Biggr) 
		\sum_{P|F\ \Rightarrow\ \deg(P)\le m} \frac{1}{q^{\deg(F)}} 
		\Biggl( \sum_{\substack{G|F \\ \deg(G)=m}} \mu(G) \Biggr)^{2k} 
}
for $n\ge 2km$, as can be proven by expanding the $2k$-th power in both sides, and noticing that if $G_j|F$ for each $j\le 2k$, then we may write $F=[G_1,\dots,G_{2k}]H$ for some monic polynomial $H$.

Next, note that if $F=P_1^{n_1}\cdots P_r^{n_r}$ is the factorisation of $F$ into monic irreducible factors, and we write $c_j=\#\{i: \deg(P_i)=j\}$ for $1\le j\le m$, then
\[
\sum_{\substack{G|F \\ \deg(G)=m}} \mu(G) 
	= M(\bs c;m) 
\]
with $M(\bs c;m)$ defined by \eqref{M-perm}. In particular, we see that $\sum_{G|F,\ \deg(G)=m} \mu(G)$ is a function of the vector $c(F):=(c_1,\dots,c_m)$. Moreover, given a fixed vector $\bs c$, we see that
\als{
\prod_{\deg(P)\le m} \left(1-q^{-\deg(P)}\right) 
	 \sum_{\substack{F:\, c(F)= \bs c \\ P|F\ \Rightarrow\ \deg(P)\le m}}
 	\frac{1}{q^{\deg(F)}}
 	& = \prod_{\deg(P)\le m} \left(1-q^{-\deg(P)}\right) 
	  	\prod_{j=1}^m \frac{\binom{N_j}{c_j}}{(q^j-1)^{c_j}} \\
	& = \prod_{j=1}^m \frac{\binom{N_j}{c_j}  (1-q^{-j})^{N_j} }{(q^j-1)^{c_j}} ,
}
where
\[
N_j:=\#\{P\in\F_q[t]: P\ \text{irreducible},\ \deg(P)=j\} .
\]
(Note that we have $(q^j-1)^{c_j}$ and not $q^{jc_j}$ in the denominator because we have to sum over powers of $P_j$ too.) 
Galois theory implies that $q^j = \sum_{j'|j} j'N_{j'}$, whence
\eq{irr-formula}{
N_j	= \frac{1}{j} \sum_{j'|j} \mu(j') q^{j/j'} 
	= \frac{q^j}{j} \left( 1+ O\left(\frac{{\bf1}_{j\ge 2}}{(q^j/j)^{1/2}}\right)\right) 
}
and
\eq{irr-bound}{
q+jN_j\le q^j \quad(j\ge 2) .
}

Our next task is to control the quantity
\[
 \prod_{j=1}^m \frac{\binom{N_j}{c_j}  (1-q^{-j})^{N_j} }{(q^j-1)^{c_j}}
\]
and remove the dependence on $q$. First, note that
\[
\prod_{j=1}^m (1-q^{-j})^{N_j}  = (1+O(1/q)) \prod_{j=1}^m e^{-1/j} .
\]
Furthermore, 
\[
\frac{\binom{N_j}{c_j}}{(q^j-1)^{c_j}}
	= \frac{N_j^{c_j}(1+O(c_j/N_j))^{c_j}}{c_j! (q^j-1)^{c_j}}
	= \frac{1}{c_j! j^{c_j}} \left( 1+ O\left( \frac{{\bf 1}_{j\ge2} \cdot c_j}{(q^j/j)^{1/2}} + \frac{{\bf 1}_{j=1} c_j}{q^j} \right) \right) ,
\]
provided that $c_1\le q$ and that $c_j\le \sqrt{q^j/j}$ if $j\ge2$. Therefore, if $c_1\le q$ and $\sum_{2\le j\le m} c_j \cdot (j/q^j)^{1/2} \le 1$, then 
\[
 \prod_{j=1}^m \frac{\binom{N_j}{c_j}  (1-q^{-j})^{N_j} }{(q^j-1)^{c_j}}
 	= \left(1+O\left(\frac{c_1+1}{q}+ \sum_{j=2}^m \frac{c_j j^{1/2}}{q^{j/2}} \right) \right) 
		\prod_{j=1}^m \frac{e^{-1/j}}{c_j! j^{c_j}} . 
\]
Together with Proposition \ref{perm-prob}, this implies that
\[
\text{\rm Poly}_q(n,m;k)
	= c(m,k) + O(R_1+R_2+R_3) \quad(n\ge 2mk) ,
\]
where
\[
R_1 = \sum_{c_1,\dots,c_m\ge0}
				M(\bs c;m)^{2k}	
			\left(\frac{c_1+1}{q}+ \sum_{j=2}^m \frac{c_j j^{1/2}}{q^{j/2}} \right)  
			\prod_{j=1}^m \frac{e^{-1/j}}{c_j! j^{c_j}} ,
\]
\als{
R_2 	&= \sum_{\substack{c_1,\dots,c_m\ge0 \\ c_1>q\ \text{or}\ \sum_{j>1} c_j j^{1/2}/q^{j/2} >1}} 
					M(\bs c;m)^{2k}	
			\prod_{j=1}^m \frac{e^{-1/j}}{c_j! j^{c_j}}  \\
	&\le \sum_{\substack{c_1,\dots,c_m\ge0}} 
					M(\bs c;m)^{2k}	
			 \left( \frac{c_1}{q}+ \sum_{j=2}^m \frac{c_j j^{1/2}}{q^{j/2}}\right)\prod_{j=1}^m \frac{e^{-1/j}}{c_j! j^{c_j}} 
	\le R_1,
}
and
\als{
R_3 &= \sum_{\substack{c_1,\dots,c_m\ge0 \\ c_1>q\ \text{or}\ \sum_{j>1} c_j j^{1/2}/q^{j/2} >1}} 
					M(\bs c;m)^{2k}	 \prod_{j=1}^m \frac{e^{-1/j} \binom{N_j}{c_j}  }{(q^j-1)^{c_j}}  .
}
For $R_3$, we note that $c_j\le N_j$ in its range; otherwise, $\binom{N_j}{c_j}=0$. In particular, $c_1\le N_1=q$. Moreover, \eqref{irr-bound} implies that
\[
\binom{N_j}{c_j}\le \frac{N_j^{c_j}}{c_j!} 
\le 	\begin{cases}
		\ds \frac{(q^j-1)^{c_j}}{c_j!j^{c_j}} &\text{if}\ j\ge2,\\
		\\
		\ds \frac{q^{c_1}}{c_1} \le (1-1/q)^{-q}\cdot \frac{(q-1)^{c_1}}{c_1!} 
			&\text{if}\ j=1,
		\end{cases}
\]
Therefore
\[
R_3	\ll  \sum_{c_1,\dots,c_m\ge0}
					M(\bs c;m)^{2k}	  \left(\sum_{j=2}^m \frac{c_j j^{1/2}}{q^{j/2}}\right)
					\prod_{j=1}^m \frac{e^{-1/j} }{c_j! j^{c_j}} 
		\le R_1 .
\]

We thus see that Theorem \ref{thm-poly} is reduced to proving that $R_1\ll_k c(m,k)/q$. It suffices to show that
\[
T_i:= \sum_{c_1,\dots,c_m\ge0} c_i
		M(\bs c;m)^{2k} \prod_{j=1}^m \frac{e^{-1/j}}{j^{c_j} c_j!} \ll c(m,k)   \quad(1\le i\le m) .
\]
Indeed, we note that the term with $c_i=0$ does not contribute, and we replace $c_i$ by $c_i+1$ to find that
\[
T_i = \frac{1}{i}\sum_{c_1,\dots,c_m\ge0}
		M(\bs e_i+\bs c;m)^{2k} \prod_{j=1}^m \frac{e^{-1/j}}{j^{c_j} c_j!} ,
\]
where $\bs e_i$ denotes the $m$-th dimensional vector all of whose coordinates are 0 except for the $i$-th coordinate that equals 1. Note that
\[
M(\bs e_i+\bs c;m)
	= \sum_{\substack{0\le b_j\le c_j\  \forall j\neq i \\ 0\le b_i\le c_i+1 \\ \sum_j j b_j =m }} (-1)^{b_1+\cdots+b_m}
	= M(\bs c;m) +(-1)^{c_i+1} M(\bs c_i;m-i(c_i+1)),
\]
where $\bs c_i=(c_1,\dots,c_{i-1},0,c_{i+1},\dots,c_m)$, so that
\[
M(\bs e_i+\bs c;m)^{2k}
	\le 2^{2k-1} \left( M(\bs c;m) +  M(\bs c_i;m-i(c_i+1)) \right)^{2k},
\]
by H\"older's inequality. We thus conclude that
\als{
T_i 	&\le \frac{2^{2k-1}}{i} c(m,k) + \frac{2^{2k-1}}{i} \sum_{c_i=0}^\infty \frac{e^{-1/i}}{c_i! i^{c_i} }
	\sum_{(c_j)_{j\le m,\, j\neq i} }M(\bs c_i;m-i(c_i+1))^{2k}  \prod_{j\neq i} \frac{e^{-1/j}}{c_j!j^{c_j}} \\
	&\le \frac{2^{2k-1}}{i} c(m,k) + \frac{2^{2k-1}}{i} \sum_{c_i=0}^\infty \frac{e^{-1/i}}{c_i! i^{c_i} } c(m-i(c_i+1),k) ,
}
since the $M(\bs c_i;m-i(c_i+1))^{2k}$ is independent of the value of the $c_j$'s with $j>m-i(c_i+1)$. Recalling that $c(\ell,k)$ is an increasing function of $\ell$ by Theorem \ref{thm-perm}, we arrive to the claimed bound $T_i\ll_k c(m,k)$, whence Theorem \ref{thm-poly} follows.
\end{proof}

\section{The support of $M_{f_0}(n;R)$}\label{support}

We prove here \eqref{NumberOfnonZeros}, which we recall is the statement that 
\[
\#\{  n\leq x:\  M_{f_0}(n;R)  \neq 0 \}	
	\asymp \frac{x}{(\log R)^{\delta}(\log\log R)^{3/2}}  \quad (x \ge R^4 )  .
	\]
The lower bound was proven in the introduction, so we are left to show the upper bound. We recall the relation \eqref{differencing1}
\[
	M_f(n;R)= \sum_{d|p_2\cdots p_rm} \mu(d) \left\{ f\left( \frac{\log d}{\log R}\right) 
			-  f\left( \frac{\log p_1}{\log R} + \frac{\log d}{\log R}\right) \right\} ,
\]
where $n=p_1^{\alpha_1}\cdots p_r^{\alpha_r}m$, where $p_1<\cdots<p_r$, $\alpha_i\ge 1$ and all of the prime divisors of $m$ are $>p_r$. Taking $r=2$, letting $q$ be the smallest prime dividing $n$ and writing $n=q^j m$ with $q\nmid m$, we see that
\[
M_{f_0}(n;R) = \sum_{\substack{d|m \\  R/q<d\leq R}} \mu(d) .
\]
Therefore,
\eq{divisors-0}{
\#\{n\le x: M_{f_0}(n;R)\neq0\}
	\le \sum_{q^j \le y} H(x/q^j, q; R/q,R)+ O\left( \frac{x}{\log y} \right),
}
for any parameter $y\le R^{1/3}$ to be chosen later, where
\[
H(X,Y ;Z,W) := \#\{n\le X: P^-(n)>Y,\ \exists d|n\ \text{with}\ Z<d\le W\}  .
\]
We have the following estimate, that is useful in its own right.

\begin{prop}\label{div-prop}
Uniformly for $1\le Y\le Z\le W\le X/(2Z)$ and $2Z\le W\le Z^2$, we have
\[
H(X,Y;Z,W) \ll \frac{X}{\log Y} \cdot \frac{1}{\lambda^\delta(1+\log\lambda)^{3/2}},
\]
where $\lambda$ is defined by the relation $W=Z^{1+1/\lambda}$ and $\delta=1-\frac{1+\log\log2}{\log 2}=0.086071\dots$
\end{prop}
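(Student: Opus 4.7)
The proof should proceed by adapting Ford's argument \cite[Theorem 4]{Ford}, which yields the case $Y=1$ of our bound (no sieve factor), and then inserting the sieve factor $1/\log Y$ throughout. As a first attempt, one might hope to bound $H(X,Y;Z,W)$ by the naive sum
\[
H(X,Y;Z,W) \le \sum_{\substack{Z<d\le W\\ P^-(d)>Y}} \#\{q\le X/d : P^-(q)>Y\}
\ll \frac{X}{\log Y} \sum_{\substack{Z<d\le W\\ P^-(d)>Y}} \frac{1}{d},
\]
using the fundamental lemma of the sieve for the inner count. The $1/\log Y$ factor appears as desired, but this overcounts each $n$ by $\tau(n;Z,W)$, which is on average of order $1/\lambda$, and the resulting bound is too weak by a power of $\lambda$. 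So a more structural argument is required.

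Following Ford, one would instead classify the integers $n$ contributing to $H(X,Y;Z,W)$ by the combinatorial ``profile'' of the sequence $(\log p)_{p \mid n}$ on a logarithmic scale. Ford shows that most $n$ have prime logarithms too spread out to produce any divisor in a short interval $(Z,W]$, and the exceptional $n$ are counted by a combinatorial problem whose answer produces exactly the denominator $\lambda^\delta(1+\log\lambda)^{3/2}$. When we impose $P^-(n)>Y$, all primes involved are restricted to $(Y,X]$, so every Mertens-type sum $\sum_{p\le T}1/p$ in Ford's estimates is replaced by $\sum_{Y<p\le T}1/p$, and every inner count over $q$ with $P^-(q)>Y$ is controlled by the fundamental lemma as above. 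The cumulative effect of these substitutions is the additional factor $1/\log Y$ in the final bound, without disturbing the $\lambda^\delta(1+\log\lambda)^{3/2}$ denominator.

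The main obstacle is the technical book-keeping: one must go through Ford's entire argument -- in particular the analogues of Lemmas 4.1, 4.3, 4.5, 4.8, and 4.9 of \cite{Ford}, as flagged in the paper's footnote for the lower bound -- and verify that each estimate survives the insertion of the restriction $P^-(n)>Y$ with the savings of $1/\log Y$ being realized rather than absorbed into error terms. No conceptually new ideas beyond Ford's framework and the fundamental lemma of the sieve are needed, but the verification is lengthy. An alternative, potentially cleaner route would be to use Ford's theorem as a black box inside a Selberg upper bound sieve: one would need an estimate of the form
\[
\sum_{\substack{n\le X\\ [d_1,d_2]\mid n}} \mathbf{1}\bigl[\exists\,e\mid n,\ Z<e\le W\bigr] \ll \frac{X}{[d_1,d_2]}\cdot\frac{\tau([d_1,d_2])^{O(1)}}{\lambda^\delta(1+\log\lambda)^{3/2}}
\]
for $[d_1,d_2]$ squarefree with all prime factors $\le Y$, obtained by splitting the divisor $e$ as $e=e_1e_2$ with $e_1\mid[d_1,d_2]$ and applying Ford at the appropriate (slightly perturbed) scale. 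Feeding this into a Selberg sieve of level $D=Y$ would then recover the $1/\log Y$ factor directly. Either route should work; the first is more robust but technical, the second is more conceptual but requires checking that Ford's bound holds uniformly at the perturbed parameters $Z/e_1,W/e_1$ for all $e_1\mid [d_1,d_2]$.
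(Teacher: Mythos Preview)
Your first route would work in principle, but the paper takes a shorter and more elegant path that avoids re-running Ford's argument with the constraint $P^-(n)>Y$ inserted throughout. Instead it adapts only the setup of \cite[Lemma~6.1]{Ford} and then uses a simple trick to manufacture the $1/\log Y$ factor while reducing to Ford's \emph{existing} estimates as a black box.

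Concretely: after a dyadic decomposition in $X$ and writing each $n$ as $apb$ with the prime factors of $a$ in $(Y,p)$ and those of $b$ at least $p$, one arrives (just as in Ford) at a sum of the shape
\[
X\sum_{a\in\mathcal{P}(Y,W)} \frac{L(a;\eta')}{a\log^2 Q_j(a)},
\]
where $\mathcal{P}(Y,W)$ is the set of integers with all prime factors in $(Y,W]$, $L(a;\sigma)$ is the Lebesgue measure of $\bigcup_{d\mid a}[\log d-\sigma,\log d)$, and $Q_j(a)=\max\{P^+(a),Z_j/a\}$. The trick is now to insert the harmless factor
\[
1 \ll \frac{1}{\log Y}\sum_{P^+(a')\le Y}\frac{1}{a'}
\]
and observe the monotonicities $Q_j(a)\ge Q_j(aa')$ and $L(a;\eta')\le L(aa';\eta')$. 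Setting $m=aa'$ collapses the double sum into a single sum over all $m$ with $P^+(m)\le W$, with \emph{no} lower restriction on the prime factors. At that point Ford's Lemma~3.3, equation~(3.8) and Lemma~3.7 apply verbatim and produce the factor $\lambda^{-\delta}(1+\log\lambda)^{-3/2}$. This buys you the result in a page rather than a full reworking of Ford.

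Two small corrections to your write-up. First, the lemmas you cite (4.1, 4.3, 4.5, 4.8, 4.9 of \cite{Ford}) are the ones the paper's footnote flags for the \emph{lower} bound in Ford's Theorem~4; for the upper bound the relevant inputs are Lemma~6.1 together with the Section~3 lemmas just mentioned. Second, your Selberg-sieve alternative would need to absorb a $\tau([d_1,d_2])$ from the union bound over $e_1\mid[d_1,d_2]$, which does not feed cleanly into the standard quadratic-form minimisation; making that route rigorous is less straightforward than you suggest.
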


\begin{rem*}
	In the special case when $W=2Z$, Ford \cite{Ford-rough} used a more refined argument and determined the exact order of magnitude of $H(X,Y;Z,W)$. The exact statement is a bit complicated, so we refer the interested reader to Ford's paper.
\end{rem*}

\begin{proof} We adapt the proof of Lemma 6.1 in Ford's paper \cite{Ford}.  By a dyadic decomposition argument, it suffices to upper bound the difference $H(X,Y;Z,W)-H(X/2,Y;Z,W)$. Let $n$ being counted by this difference, so that it can be written as $n=n_1n_2$ with $n_1\in(Z,W]$. We thus have that $n_2\in(X/2W,X/Z]$. If $p=\min\{P^+(n_1),P^+(n_2)\} \in(Y,W]$, then we may write $n=apb$, where:
\begin{itemize}
\item[(i)] all prime factors of $a$ are in $(Y,p)$;
\item[(ii)] all prime factors of $b$ are $\ge p$ (and there is at least one such prime factor);
\item[(iii)] there is a divisor $d|a$ such that $pd\in(Z,W]\cup(X/2W,X/Z]$.
\end{itemize}
If we set $\CL(a;\sigma):=\bigcup_{d|a}[\log d-\sigma,\log d)$ and $\eta=\log(W/Z)$, the last condition can be also written as:
\begin{itemize}
\item[(iii')] either $\log(Z/p)\in \CL(a;\eta)$, or $\log(X/(2Wp)) \in \CL(a;\eta+\log2)$.
\end{itemize}
Let $\eta'=\eta+\log2$, and note that $\eta'\asymp\eta$ by our assumption that $W\ge2Z$. Moreover, let $Z_1=Z$ and $Z_2=X/2W$, so that condition (iii') yields condition
\begin{itemize}
\item[(iii'')] $\log(Z_j/p)\in \CL(a;\eta')$ for some $j\in\{1,2\}$.
\end{itemize}
Finally, note that since there is $d|a$ with $dp>Z_j$, we must have that $p>Z_j/d\ge Z_j/a$. We thus conclude that we must have the condition
\begin{itemize}
\item[(iv)]
$p>Q_j(a):= \max\{P^+(a),Z_j/a\}$.
\end{itemize}

Given $a$ and $p$ satisfying conditions (i), (iii'') and (iv), the number of $b\in(1,X/ap]$ such that $P^-(b)>p$ is $\ll X/(ap\log p)$. Indeed, notice that if there is one such $b$, then $X/ap\ge b\ge p$, so that the claimed estimated follows by a standard sieve bound, such as Theorem 4.3 of \cite{opera}. We thus conclude that
\[
H(X,Y;Z,W)-H(X/2,Y;Z,W) 
	\ll X\sum_{j=1}^2 \sum_{a\in\CP(Y,W)}	\frac{1}{a}
		 \sum_{\substack{p>Q_j(a) \\ \log(Z_j/p)\in \CL(a;\eta')}}
		 	\frac{1}{p\log p} ,
\]
where $\CP(Y,W)$ denotes the set of integers all of whose prime factors are in $(Y,W]$. As in the proof of Lemma 6.1 in \cite{Ford}, we have that the sum over $p$ is $\ll L(a;\eta')/\log^2Q_j(a)$, where $L(a;\sigma)$ denotes the Lebesgue measure of $\CL(a;\sigma)$. We conclude that
\als{
H(X,Y;Z,W)-H(X/2,Y;Z,W) 
		&\ll X\sum_{j=1}^2\sum_{a\in\CP(Y,W)}	\frac{L(a;\eta')}{a\log^2Q_j(a)} \\
		&\ll \frac{X}{\log Y} \sum_{j=1}^2\sum_{P^+(a')\le Y} \sum_{a\in\CP(Y,W)}\frac{L(a;\eta')}{aa'\log^2Q_j(a)} .
}
Since $Q_j(a)\ge Q_j(aa')$ and $L(a;\eta')\le L(aa';\eta')$, we have the estimate
\[
H(X,Y;Z,W)-H(X/2,Y;Z,W) 
	\ll \frac{X}{\log Y} \sum_{j=1}^2\sum_{P^+(m)\le W} \frac{L(m;\eta')}{m\log^2Q_j(m)} .
\]
Since $Z_2=X/2W\ge Z=Z_1$, the contribution for $j=2$ is bounded by the contribution from $j=1$, and so it suffices to just consider $Z_j=Z$. In this case the contribution is $\ll 1/(\lambda^\delta(1+\log\lambda)^{3/2}$ by Lemma 3.3, equation (3.8) and Lemma 3.7 of \cite{Ford}. This completes the proof. 
\end{proof}

Proposition \ref{div-prop} implies that
\[
H(x/q^j, q; R/q,R)
	\ll \frac{x}{q^j\log q} \cdot \left(\frac{\log q}{\log R}\right)^\delta 
	\left(\log\frac{\log R}{\log q}\right)^{-3/2} ,
\]
uniformly in $2\le q^j\le y\le R^{1/2}$ and $x\ge R^{5/2}$. Inserting this bound to \eqref{divisors-0}, we deduce that
\[
\#\{n\le x: M_{f_0}(n;R)\neq0\}
	\ll \frac{x}{(\log R)^\delta(\log\log R)^{3/2}} + \frac{x}{\log y}.
\]
Selecting $y=\exp( (\log R)^{\delta} { (\log\log R)^{3/2}} )$ completes the proof of \eqref{NumberOfnonZeros}. 


\begin{rem}\label{large values}
It is possible to construct integers $n$ for which $M_{f_0}(n;R)$ is quite large. Indeed, let $y\ge3$ and $k\in\Z_{\ge 1}$ be two parameters such that the interval $(y,2^{1/k}y)$ contains at least $2k$ primes, and let $q_1<\cdots <q_{2k}$ be the smallest such primes. Then we set $n=2q_1\cdots q_{2k}$ and $R = 2y^k$. By \eqref{differencing1},
\[
M_{f_0}(n;R) = \sum_{\substack{d|q_1\cdots q_{2k} \\ R/2<d\le R}}\mu(d) .
\]
The choice of $R$ implies that the above sum runs over all divisors $d$ of $q_1,\dots,q_{2k}$ with precisely $k$ prime factors, so that
\[
M_{f_0}(n;R) = (-1)^k \binom{2k}{k}  .
\]
Optimizing the choice of $k$ and $y$, and using the fact that there infinitely many $y$'s such that $\pi(y+\sqrt{y\log y})-\pi(y) \gg\sqrt{y/\log y}$ (see, for example, \cite[Exercice 5, p. 266]{IK}), we find that there exist arbitrarily large integers $n$ such that $|M_f(n;R)|\gg n^{c/\log\log n}$, for any fixed $c<\frac{\log 2}{2}$ with $R\approx n^{1/2}$. 

On the other hand, such extreme values of $M_{f_0}(n;R)$ are very rare, as Theorem \ref{mainthm} indicates.
\end{rem}


\section{Inversion formulas}\label{mellin}

Given $f:\R\to\R$, $R\ge2$ and $s\in\C$, we set
\[
\hat{f}_R(s) = \int_0^\infty f\left( \frac{\log x}{\log R} \right) x^{s-1} \dee x
	= (\log R) \int_{-\infty}^\infty f(u) R^{su} \dee u,
\]
provided that the above integral converges. If $f$ is Lebesgue measurable, supported in $(-\infty,1]$ and bounded, which will always be the case for us, then $\hat{f}_R$ defines an analytic function for $\Re(s)>0$. If, in addition, $f\in C^j(\R)$ for some $j\ge1$ and the derivatives $f',f'',\dots,f^{(j)}$ are all bounded, then integrating by parts $j$ times yields the formula
\eq{IBP-0}{
\hat{f}_R(s) = \frac{(-1)^j}{s^j(\log R)^{j-1}} \int_{-\infty}^\infty f^{(j)}(u) R^{su} \dee u .
}
In particular, we see that 
\eq{IBP}{
\abs{ \hat{f}_R(s) } \le
	 \|f^{(j)}\|_\infty \cdot \frac{R^{\Re(s)}}{\Re(s) (|s|\log R)^j}
}
for $\Re(s)>0$, where we used our assumption that $\text{supp}(f)\subset(-\infty,1]$.

Now, for $m\in\Z_{\ge 1}$, the Mellin inversion formula implies that for $c>0$
\eq{perron}{
f\left( \frac{\log m}{\log R} \right) 
	= \frac{1}{2\pi i} \int_{\Re(s)=c} \hat{f}_R(s) m^{-s} \dee s .
}
In the proof of Theorem \ref{mainthm} with $A\ge1$ and of Theorem \ref{thm-factors}, our assumption that $f$ is a few times differentiable in $\R$ allows us to apply \eqref{IBP} and write $M_f(a;R)$ in terms of an absolutely convergent integral, which can easily be truncated at some appropriate height. However, when $A=0$ in Theorem \ref{mainthm}, we have $f_0=\chi_{(-\infty,1]}$, so that $\hat{(f_0)}_R(s)=R^s/s$. Truncating Perron's formula is still feasible but rather technical. Instead, we perform a technical manoeuvre and smoothen $f_0$ a bit. We consider a smooth function $h:\R\to\R$ such that 
\[
\begin{cases}
h(x)=1 		&\text{if}\ x\le 1-\eta,\\
0\le h(x)\le 1	&\text{if}\ 1-\eta\le x\le 1,\\
h(x)=0 		&\text{if}\ x\ge1,
\end{cases}
\]
where $\eta=1/(\log R)^C$ for some constant $C>0$ that will be chosen appropriately later. We choose $h$ so that $h^{(j)}(x) \ll_j \eta^{-j}$, for all $j\in\Z_{\ge0}$. We claim that, for any fixed $L>0$ and $k\ge1$, there is $C=C(k,L)$ such that
\eq{f_0-g}{
\CM_{f_0,2k}(R) = \CM_{h,2k}(R) + O\left(\frac{1}{(\log R)^L} \right) .
}
Indeed, we have that
\als{
|\CM_{f_0,2k}(R)-\CM_{h,2k}(R)|
	&\le 2k \sum_{\substack{d_1,\dots,d_{2k-1}\le R \\ R^{1-\eta}<d_{2k}\le R}} 
		\frac{\prod_{j=1}^{2k}\mu^2(d_j)}{[d_1,\dots,d_{2k}]}  
	\le 2k \sum_{m\le R^{2k}} \frac{\tau(m)^{2k-1}}{m} 
		\sum_{\substack{d|m \\ R^{1-\eta}<d\le R}}1 ,
}
by setting $m=[d_1,\dots,d_{2k}]$ and $d=d_{2k}$. We split the above sum according to whether $\tau(m)\le (\log R)^B$ or not, where $B$ is some parameter. We then find that
\als{
|\CM_{f_0,2k}(R)-\CM_{h,2k}(R)|
	&\le 2k \sum_{\substack{m\le R^{2k} \\ \tau(m)\le (\log R)^B}} 
		\frac{(\log R)^{(2k-1)B}}{m}
		\sum_{\substack{d|m \\ R^{1-\eta}<d\le R }}1  \\
	&\quad  +2k \sum_{\substack{m\le R^{2k} \\ \tau(m)>(\log R)^B}} 
		\frac{\tau(m)^{2k+1}(\log R)^{-B}}{m} \\
	&\ll_k (\log R)^{(2k-1)B+2-C}+ (\log R)^{2^{2k+1} - B} .
}
We choose $B=L+2^{2k+1}$ and $C\ge (2k-1)2^{2k+1}+2kL+2$ to complete the proof of our claim.  For the purposes of Theorem \ref{mainthm}, we may take $L=1$, so that having $C\ge (2k-1)2^{2k+1}+2k+2$ suffices.  We also note that
\[
\hat{h}_R(s) = -\frac{1}{s} \int_{1-\eta}^1 h'(u) R^{su} \dee u 
		= - \frac{R^s}{s} \int_0^{\eta} h'(1-u) R^{-su} \dee u,
\]
by \eqref{IBP-0} and the fact that $h$ is constant outside $[1-\eta,1]$. In particular, this relation implies that $\hat{h}_R$ has a meromorphic continuation to $\C$ with only a simple pole at $s=0$ of residue $-\int_0^{\eta} h'(1-u) \dee u = 1$. We further note that
\eq{h-bound-prelim}{
\frac{\dee^j}{\dee s^j} \left(\frac{s\hat{h}_R(s)}{R^s}\right) 
	&=  (-1)^{j-1} \int_0^{\eta} h'(1-u) (u\log R)^j R^{-su} \dee u \\
	&\ll \eta \cdot \eta^{-1} \cdot (\eta\log R)^j 
	=  (\eta\log R)^j \quad(s\in \C,\ \Re(s)\ge-1) .
}
Moreover, we have that
\eq{h-mellin}{
\hat{h}_R(s) 
	&= (\log R)\int_{-\infty}^1 R^{us} du + O(\eta (\log R) R^{\Re(s)}) \\
	&= \frac{R^s}{s} +O((\log R)^{-C+1}R^{\Re(s)}) 
	\quad(\Re(s)\ge0) ,
}
a relation that we will use at the very end of the proof of Theorem \ref{mainthm}.


\section{A combinatorial problem in linear algebra}\label{combinatorial}

Recall the notations from Section \ref{notation}. Consider the $2k$-dimensional vector space (over $\Q$) of linear forms in the free variables $s_1,\dots,s_{2k}$, which we denote by $W_k$. Given a subspace $V$ of $W_k$, we define
\[
\SA(V)  =  \sum_{\substack{ J\in \CS^*(2k) \\ s_J \in V}} (-1)^{\#J}  .
\]
(We recall that in our notation $s_J=\sum_{j\in J}s_j$.) We will prove the following result.

\begin{prop} \label{CombProp} Let $k\ge1$ and $V$ be a subspace of $W_k$ containing the form $s_{[2k]}=\sum_{i=1}^{2k}s_i$. 
\begin{enumerate}
\item If $s_j\in V$ for some $j\in[2k]$, then $\SA(V)=-1$.
\item If $\dim(V)=2k-1$, then
\[
\SA(V)-\dim(V) \le \binom{2k}{k} - 2k ,
\]
with equality if, and only if, there is a set $J\subset[2k]$ such that $\#J=k$, $1\in J$, and $V=\Span_\Q(\{s_j-s_1\}_{j\in J},\{s_j+s_1\}_{j\in[2k]\setminus J} )$.
\item If $\dim(V)\le 2k-2$, then 
\[
\SA(V)-\dim(V) \le \binom{2k}{k} - 2k - 2 .
\]
\end{enumerate}
\end{prop}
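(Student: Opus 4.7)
Part (a) follows from an involution argument. Write $\CI(V) := \{I \subset [2k] : s_I \in V\}$. If $s_j \in V$, then $s_I \in V$ if and only if $s_{I \triangle \{j\}} \in V$, because $s_{I \triangle \{j\}} = s_I + \epsilon s_j$ with $\epsilon = \pm 1$. Thus $I \mapsto I \triangle \{j\}$ is a fixed-point-free involution on $\CI(V)$ that reverses the parity of $\#I$, so $\sum_{I \in \CI(V)} (-1)^{\#I} = 0$. Removing the empty-set term, which contributes $+1$, yields $\SA(V) = -1$.

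For parts (b) and (c), by (a) we may assume $s_j \notin V$ for every $j$; otherwise $\SA(V) = -1$, which trivially satisfies both bounds. Dualize: write $V = \ker B$ for some $c \times 2k$ matrix $B$ of rank $c = 2k - \dim V$, with columns $b_1, \dots, b_{2k}$. After clearing denominators, take $b_j \in \Z^c$. The hypothesis $s_{[2k]} \in V$ forces $\sum_j b_j = 0$, and $s_j \notin V$ forces $b_j \neq 0$. Since $s_I \in V \iff \sum_{j \in I} b_j = 0$, we get
\[
\SA(V) + 1 = \sum_{I \subset [2k]} (-1)^{\#I}\, \mathbf{1}\bigl\{\textstyle\sum_{j \in I} b_j = 0\bigr\} = [\mathbf{x}^0] \prod_{j=1}^{2k}\bigl(1 - \mathbf{x}^{b_j}\bigr),
\]
the constant Laurent coefficient in formal variables $\mathbf{x} = (x_1, \dots, x_c)$.

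For part (b), where $c = 1$, the $b_j$'s are nonzero integers summing to $0$. Split $[2k] = P \sqcup N$ by sign; using $1 - x^m = (1-x)(1 + x + \cdots + x^{m-1})$ and $1 - x^{-m} = x^{-m}(1-x)(1 + x + \cdots + x^{m-1})$ for $m \ge 1$, one obtains
\[
\prod_{j=1}^{2k}(1 - x^{b_j}) = (1-x)^{2k}\, x^{-S}\, R(x), \quad R(x) := \prod_{j=1}^{2k}\bigl(1 + x + \cdots + x^{|b_j|-1}\bigr),
\]
where $S = \sum_{j \in N}|b_j| = \sum_{j \in P} b_j$. Here $R(x)$ is a polynomial with nonnegative integer coefficients, palindromic about degree $S - k$, satisfying $R(1) = \prod_j |b_j|$. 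Extracting the constant coefficient of the original product becomes $[x^S](1-x)^{2k} R(x)$, and pairing the palindromic coefficients of $R$ about the midpoint together with the unimodality of $\binom{2k}{\cdot}$ yields $\bigl|[x^0]\prod_j(1 - x^{b_j})\bigr| \le \binom{2k}{k}$, with equality iff $R \equiv 1$, i.e.\ $|b_j| = 1$ for all $j$ and $|P| = |N| = k$. Translating via the duality, the equality case gives exactly $V = \Span_\Q(\{s_j - s_1\}_{j \in J}, \{s_j + s_1\}_{j \in [2k] \setminus J})$ with $J = \{j : b_j = b_1\}$ of size $k$ and $1 \in J$.

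For part (c), where $c \ge 2$, the $c$ independent linear constraints cut the count of balanced subsets further. The cleanest route combines the (b)-bound with a reducibility analysis: if $V$ splits as $V_1 \oplus V_2$ with $V_i$ supported on disjoint subsets of $[2k]$, then $\SA(V)+1 = (\SA(V_1)+1)(\SA(V_2)+1)$, which allows induction on $2k$. For codimension-$\ge 2$ subspaces that are not of this reducible form, an additional factor of at least $2$ is lost relative to the extremal bound, giving the claimed drop. The main obstacle throughout is the sharp inequality in (b) with its precise equality characterization: the naive analytic bound from the Fourier representation gives only $\bigl|[x^0]\prod_j(1-x^{b_j})\bigr| \le 4^k$, weaker than $\binom{2k}{k}$ by a factor of order $\sqrt{k}$, so the argument must exploit the integrality and palindromic symmetry of $R(x)$ rather than relying on size estimates of the integrand alone.
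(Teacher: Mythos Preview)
Your argument for part (a) is correct and identical to the paper's.

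For part (b), your setup via the dual vectors $b_j\in\Z$ with $\sum_j b_j=0$ matches the paper exactly, but the crucial inequality step has a genuine gap. You claim that ``pairing the palindromic coefficients of $R$ about the midpoint together with the unimodality of $\binom{2k}{\cdot}$'' yields $\bigl|[x^0]\prod_j(1-x^{b_j})\bigr|\le\binom{2k}{k}$. This does not follow from those two ingredients alone. After centring, the quantity becomes
\[
(-1)^k\Bigl[\rho_0\tbinom{2k}{k}+2\sum_{m\ge1}(-1)^m\rho_m\tbinom{2k}{k-m}\Bigr],
\qquad \rho_m=r_{S-k+m},
\]
and the leading term $\rho_0\binom{2k}{k}$ can vastly exceed $\binom{2k}{k}$ (for instance $\rho_0=4$ already when $(b_1,\dots,b_4)=(3,1,-2,-2)$); it is delicate cancellation in the alternating tail, specific to the multiplicative structure $R=\prod_j(1+x+\cdots+x^{|b_j|-1})$, that brings the total down. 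Palindromy and unimodality of the binomial coefficients do not by themselves control this cancellation, as one sees by testing an arbitrary palindromic $R$ with large middle coefficient.

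More importantly, your closing remark that the Fourier route ``gives only $4^k$'' is a misconception. The paper writes $\SA(V)+1=\int_0^1\prod_{j}(1-e(b_j\theta))\,d\theta$ and applies H\"older's inequality to the $2k$ factors, obtaining
\[
\SA(V)+1\le \prod_{j=1}^{2k}\Bigl(\int_0^1|1-e(b_j\theta)|^{2k}\,d\theta\Bigr)^{1/2k}=\binom{2k}{k},
\]
which is sharp, and the equality case of H\"older immediately forces $|b_j|$ constant, hence $\pm1$ in equal numbers. So the analytic approach you dismissed is precisely what delivers the sharp bound and its characterisation.

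For part (c), what you have written is not a proof but a plan with an unjustified assertion (``an additional factor of at least $2$ is lost''). The paper's argument is substantive: it writes $\SA(V)+1$ as an $n$-fold integral with $n=2k-\dim V\ge2$, applies H\"older iteratively variable by variable to obtain $\SA(V)+1\le\prod_i M(\lambda_i)$ with $\lambda_i\ge1$ and $\sum\lambda_i=2k$, then uses convexity of $\lambda\mapsto\log M(\lambda)$ (where $M(\lambda)=\int_0^1|1-e(\theta)|^\lambda\,d\theta$) to push the maximum to a corner $M(1)^{n-1}M(2k-n+1)$, and finally checks numerically that this is strictly below $\binom{2k}{k}-n$ for $n\ge2$. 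Your reducibility/factorisation idea does not obviously handle the generic irreducible codimension-$2$ case, and no mechanism is given for the claimed loss.
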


\begin{proof} (a) If $s_j\in V$, then we immediately see that $\SA(V)=-1$ by pairing $s_J$ with $s_{J\cup\{j\}}$ for each $J\subset[2k]\setminus\{j\}$.

\medskip

(b) We may assume that $s_1,\dots,s_{2k}\notin V$, by part (a). Since $\dim(V)=2k-1$ and $s_1\notin V$, for each $j=1,\dots,2k$ we have that $s_j\equiv r_js_1\mod V$, for some $r_j\in \Q\setminus\{0\}$. We may write $r_j=b_j/q$ for some $b_j\in\Z\setminus\{0\}$ and $q\in\Z_{\ge 1}$,  so that $s_J\in V$ if, and only if, $b_J=\sum_{j\in J}b_j=0$. Therefore
\[
\SA(V)=-1 + \int_0^1 \prod_{j=1}^{2k} (1-e(b_j \theta))\dee \theta
	\le -1 + \int_0^1 \prod_{j=1}^{2k} |1-e(b_j \theta)| \dee \theta.
\]
H\"older's inequality then implies that
\[
\SA(V) \le -1 + \prod_{j=1}^{2k} \left( \int_0^1| 1-e(b_j\theta)|^{2k} \dee \theta \right)^{\frac{1}{2k}} 
	= -1 + \binom{2k}{k} ,
\]
whence
\eq{CS-comb}{
\SA(V) \le \binom{2k}{k}-1  .
}

Finally, we claim that \eqref{CS-comb} is an equality if, and only if, the multiset $\{b_1,\dots,b_{2k}\}$ is of the form $\{b,-b,\dots,b,-b\}$ with $b=b_1$ (which must equal $q$). This claim immediately implies $(b)$ of the Proposition. 

If the multiset $\{b_1,\dots,b_{2k}\}$ is of the form $\{b,-b,\dots,b,-b\}$, then the integral formula for $\SA(V)$ becomes $\SA(V)=-1+\int_0^1|1-e(b\theta)|^{2k}d\theta=\binom{2k}{k}-1$. Conversely, we know that H\"older's inequality above is an equality if, and only if, there exist real numbers $\lambda_1,\dots,\lambda_{2k}$ such that $|1-e(b_j\theta)| = \lambda_j|1-e(b_1\theta)|$ for $\theta\in[0,1]$ and $j\in\{1,\dots,2k\}$. Since $\int_0^1|1-e(b\theta)|d\theta=4/\pi$ for $b\neq0$, we must have that $\lambda_j=1$ for all $j$. Moreover, taking $\theta$ close enough to 0, we find that the condition $|1-e(b_j\theta)|=|1-e(b_1\theta)|$ implies that $|b_j|=|b_1|$ for all $j$. So $\{b_1,\dots,b_{2k}\}$ has $\ell$ copies of $b_1$ and $2k-\ell$ copies of $-b_1$, for some $\ell\in\{1,\dots,2k\}$. Since $b_{[2k]}=0$ by our assumption that $s_{[2k]}\in V$, we must have that $\ell=k$, which completes the proof of our claim.

\medskip

(c) Write $\dim(V)=2k-n$, where $n\ge2$. By part (a), we may assume that $s_1,\dots,s_{2k}\notin V$. 

We first deal with the case $n=2,k=2$ by direct computation. In this case, we have $s_1+s_2+s_3+s_4\in V$ and $s_1,\dots,s_4\notin V$, by assumption. It is thus easy to see that either $V\cap\{s_I:I\in\CS^*(2k)\} =\{s_1+\cdots+s_4\}$ or $V\cap\{s_I:I\in\CS^*(2k)\} = \{s_1+\cdots+s_4,s_J\}$, for some $J$ containing two elements. (Here we recall that $\CS^*(2k)=\{I\subseteq[2k]:\,I\ne 0\}$.) In any case, $\SA(V)\le2$, as required. This completes the proof of part (c) when $n=2$ and $k=2$.

We now assume that either $n>2$ or $k>2$. Choose a maximal subset of linear forms $\{s_{j_1},\dots,s_{j_{n'}}\}$ that are linearly independent when reduced mod $V$. Clearly, $n'=n$. Moreover, a permutation of the variables $s_1,\dots,s_{2k}$ allows to assume without loss of generality that $j_i=i$ for each $i$. Then
\[
s_j \equiv  \sum_{i=1}^n r_{i,j} s_i \quad \mod V
	 \qquad(1\le j\le 2k),
\]
for certain $r_{i,j}\in\Q$. We write $r_{i,j}=b_{i,j}/q$, where $b_{i,j}\in\Z$ and $q\in\Z_{\ge 1}$, so that $s_J\in V$ if, and only if, $b_{i,J}:=\sum_{j\in J} b_{i,j}=0$ for each $i\in[n]$. Thus
\[
\SA(V)+1
	=\int_{[0,1]^n} 	
	\prod_{j=1}^{2k} (1-e(b_{1,j}\theta_1+\cdots+b_{n,j}\theta_n)) 	
			\dee \theta_1\cdots \dee \theta_n .
\]
We set
\[
J_m= \{1\le j\le 2k: b_{n-m+1,j}=\cdots = b_{n,j}=0\} \quad(0\le m\le n)
\]
to be the set of $j$ such that $s_j$  is in the span of $\{s_1,\dots,s_{n-m}\} \mod{V}$. In particular, $J_0=[2k]$ and $J_n=\emptyset$. By construction, $s_i$ is a basis vector of $W_k/V$ for $1\le i\le n$, so for $0\le m\le n-1$ we have $n-m\in J_m$ but $n-m\notin J_{m+1}$. In particular, $\#(J_m\setminus J_{m+1})\ge 1$ for $0\le m\le n-1$. Then
\als{
\SA(V)+1 
	&\le \int_{[0,1]^{n-1}} \prod_{j\in J_1} |1-e(b_{1,j}\theta_1+\cdots+b_{n-1,j}\theta_{n-1}))| \\
	&\quad \times  \left(\int_0^1 \prod_{j\in [2k]\setminus J_1} |1-e(b_{1,j}\theta_1+\cdots+b_{n,j}\theta_n)| 
			\dee \theta_n\right)
			\dee \theta_1\cdots \dee \theta_{n-1} .
}
By H\"older's inequality, the innermost integral is bounded by
\[
\prod_{j\in[2k]\setminus J_1} 
				\left(\int_0^1 |1-e(b_{1,j}\theta_1+\cdots+b_{n,j}\theta_n)|^{2k-\#J_1}\dee \theta_n 				\right)^{\frac{1}{2k-\#J_1}} .
\]
Since $b_{n,j}\neq0$ for $j\notin J_1$, we make the change of variables $\theta_n\to b_{1,j}\theta_1+\cdots+b_{n,j}\theta_n$ and use periodicity to find that
\als{
\int_0^1 |1-e(b_{1,j}\theta_1+\cdots+b_{n,j}\theta_n)|^{2k-\#J_1}\dee \theta_n 
	&= \frac{1}{|b_{n,j|}|} \int_0^{|b_{n,j}|} |1-e(\theta)|^{2k-\#J_1} d\theta  \\
	&=\int_0^1 |1-e(\theta)|^{2k-\#J_1} d\theta .
}
We set 
\[
M(\lambda) = \int_0^1 |1-e(\theta)|^\lambda d\theta =2^\lambda\int_0^1|\sin(\pi\theta)|^\lambda\dee\theta.
\]
Note that $M(2k)=\binom{2k}{k}$. Thus we find that
\[
\SA(V)+1 \le M(2k-\#J_1)
	\int_{[0,1]^{n-1}} \prod_{j\in J_1} |1-e(b_{1,j}\theta_1+\cdots+b_{n-1,j}\theta_{n-1}))|
		\dee \theta_1\cdots \dee \theta_{n-1}.
\]
We repeat the same process to obtain
\als{
\SA(V)+1
	&\le M(2k-\#J_1)M(\#J_1-\#J_2)  \\
	&\quad \times \int_{[0,1]^{n-1}} \prod_{j\in J_2} |1-e(b_{1,j}\theta_1+\cdots+b_{n-1,j}\theta_{n-1}))|
		\dee \theta_1\cdots \dee \theta_{n-2} \\
&\le M(2k-\#J_1)M(\#J_1-\#J_2) \cdots M(\#J_{n-2}-\#J_{n-1}) M(\#J_{n-1}) .
}
Thus
\eq{S_N-maj}{
\SA(V)+1
	\le \sup\{ M(\lambda_1)\cdots M(\lambda_n) :
	\lambda_1+\cdots+\lambda_n=2k,\, \lambda_j\ge1\ (1\le j\le n) \} .
}
By Cauchy-Schwarz, for any positive reals $x,y$ we have
\[
2(xy)^{(A+B)/2}=2(xy)^B(xy)^{(A-B)/2}\le (xy)^B(x^{A-B}+y^{A-B})= x^Ay^B+x^By^A.
\]
Thus, applying this with $x=|\sin{\theta_1}|$, $y=|\sin{\theta_2}|$ we find
\als{
M(\lambda_1)M(\lambda_2) &= \frac{2^{\lambda_1+\lambda_2}}{2}\int_0^1\int_0^1 \Bigl(|\sin(\pi\theta_1)^{\lambda_1}\sin(\pi\theta_2)^{\lambda_2}|+|\sin(\pi\theta_1)^{\lambda_1}\sin(\pi\theta_2)^{\lambda_2}|\Bigr)\dee\theta_1\dee\theta_2 \\
&	\ge 2^{\lambda_1+\lambda_2} \Bigl(\int_0^{1} |\sin (\pi \theta)|^{(\lambda_1+\lambda_2)/2} \dee \theta\Bigr)^2=M\Bigl(\frac{\lambda_1+\lambda_2}{2}\Bigr)^2.
}
In particular, $\log{M(\lambda)}$ is a convex function. It is then easy to see that supremum in \eqref{S_N-maj} is attained when $\lambda_j=1$ for $n-1$ of the indices $j\in[n]$, and with the remaining $\lambda_j$ being equal to $2k-n+1$. Indeed, without loss of generality $\lambda_1,\dots,\lambda_{n-1} \le \lambda_{n}$, and if $\lambda_j\ne 1$ for some $j<n$, then we can increase the size of $M(\lambda_1)\dots M(\lambda_n)$ by replacing $\lambda_j$ with $\lambda_j-1$ and $\lambda_n$ with $\lambda_n+1$.  So
\[
\SA(V) \le M(1)^{n-1} M(2k-n+1)  -1  .
\]
Thus, it suffices to show that
\eq{comb-goal}{
M(1)^{n-1} M(2k-n+1) < \binom{2k}{k} -n  = M(2k) -n 
}
for $2\le n\le 2k-1$ and $k\ge2$. 

Firstly, consider $n=2$ and $k\ge 3$. The function $k\mapsto M(1)M(2k-1)/M(2k)$ is decreasing in $k$ by the convexity of $\log{M(\lambda)}$. Thus
\[
M(1) M(2k-1) \le \frac{M(1)M(3)}{M(4)} M(2k)
	= \frac{64}{9\pi^2} \binom{2k}{k}<\binom{2k}{k}-2.
\]
Here we have used the fact $M(1)=4/\pi$, $M(3)=32/3\pi$ and performed a quick computation to verify $64\binom{2k}{k}/(9\pi^2)<\binom{2k}{k}-2$ for all $k\ge3$.

Now consider $3\le n\le 2k-1$. The function $n\mapsto M(1)^{n-1}M(2k-n+1)$ is decreasing in $m$ since 
\[
\frac{M(1)^{n-1}M(2k-n+1)}{M(1)^{n-2}M(2k-n+2)}=\frac{M(1)M(2k-n+1)}{M(2k-n+2)}\le\frac{M(1)^2}{M(2)}< 1.
\]
Similarly $k\mapsto M(2k-2)/M(2k)$ is decreasing in $k$ respectively by the convexity of $\log{M(\lambda)}$. Thus we have
\als{
M(1)^{n-1}M(2k-n+1)&\le M(1)^2 M(2k-2) \\
&\le \frac{M(1)^2M(2)}{M(4)}M(2k)\\
&=\frac{16}{3\pi^2}\binom{2k}{k}\\
&< \binom{2k}{k} - 2k+1=M(2k)-2k+1.
}
Here we have performed a short computation to verify the final inequality. This completes the proof of the proposition.
\end{proof}


\section{Contour integration}\label{contour}

In this section we begin our attack on Theorem \ref{mainthm}. All implied constants might depend on $k$ and on $A$. We will actually prove a result that is a little weaker than Theorem \ref{mainthm}: we will show that there exist constants $c_{k,A}$ and $c_k'$ for which
\eq{kthmoments}{
\CM_{f_A,2k}(R)= 
		c_{k,A}  (\log R)^{\CE_{k,A}} +O((\log R)^{\CE_{k,A}-1}),
}
and
\eq{kthmoments-dyadic}{
\CM_{\tilde{f_0},2k}(R)= 
		c_k'  (\log R)^{\binom{2k}{k} - 2k } +O((\log R)^{\binom{2k}{k} - 2k-1}).
}
Here we recall that $\CE_{k,A}=\max(\binom{2k}{k}-2k(A+1),-1)$, and we have defined $\tilde{f_0}(x)=f_0(x)-f_0(x+\frac{\log2}{\log R})$, so that 
\[
M_{\tilde{f_0}}(n;R) =  \sum_{\substack{d|n \\ R/2<d \le R}} \mu(d)  .
\]
Notice that we do not claim here that $c_{k,A}\neq 0$ and $c_k\neq0$, as is required in order to prove Theorem \ref{mainthm}. We do obtain a very complicated expression for these constants, but we are unable to prove they are non-zero (or evaluate them at all). Showing that $c_{k,A}>0$ and $c_k'>0$ is the objective of Section \ref{lb}. 

\subsection{Initial preparations}
We will first prove relation \eqref{kthmoments}. The proof of relation \eqref{kthmoments-dyadic} is very similar, and we indicate the necessary changes in the end of Section \ref{contour}. 

We note that
\eq{f_A-mellin}{
\hat{(f_A)}_R(s) = \frac{A!R^s}{(\log R)^A s^{A+1} } .
}
This function is absolutely integrable on vertical lines $\Re(s)=c\neq0$ when $A\ge1$, but this is not the case when $A=0$. However, recall from relation \eqref{f_0-g} that 
\[
\CM_{f_0,2k}(R)  =  \CM_{h,2k}(R) + O\left(\frac{1}{(\log R)^2}\right),
\]
where $h$ is a smooth function such that $h(x)=1$ for $x\le 1-1/(\log R)^C$ and $h(x)=0$ for $x\ge1$, for some constant $C\ge (2k-1)2^{2k+1}+2k+2$ to be chosen later. Therefore relation \eqref{kthmoments} is reduced to showing that
\eq{kthmoments-goal}{
\CM_{g,2k}(R) = c_{k,A} (\log R)^{\CE_{k,A}} + O\left((\log R)^{\CE_{k,A}-1}\right) ,
}
where $g=h$ when $A=0$, and $g=f_A$ when $A\ge1$. 

For any $\lambda>1$, which will be chosen to be sufficiently large in terms of $k$, relation \eqref{perron} implies that
\als{
\CM_{g,2k}(R)	
	&=    \sum_{\substack{m_j \in \Z_{\ge 1} \\ 1\le j\le 2k}} 
		  \frac{\prod_{j=1}^{2k} \mu(m_j)}{ [m_1,\ldots ,m_{2k}]} \cdot \frac 1 {(2i\pi)^{2k}} \idotsint\limits_{\substack{\Re(s_j)=\lambda^j/\log R \\ 1\le j\le 2k}}
    \prod_{j=1}^{2k}m_j^{-s_j} 
	\left( \prod_{j=1}^{2k} \hat{g}_R(s_j) \right) ds_{2k}\cdots ds_1  .
}
To this end, we introduce the multiple Dirichlet series 
\[
D(\bs s):=  \sum_{\substack{m_j \in \Z_{\ge 1} \\ 1\le j\le 2k}} 
		  \frac{\prod_{j=1}^{2k}m_j^{-s_j} \mu(m_j)}{ [m_1,\ldots ,m_{2k}]} ,
\]
which converges absolutely when $\Re(s_j)>0$ for all $j$ as can be seen, for example, by the Euler product expansion
\al{
D(\bs s) 
	&= \prod_p \Bigg( \sum_{\substack{\nu_1,\dots,\nu_k\in\{0,1\}}} \frac{(-1)^{\nu_1+\cdots+\nu_k}}{p^{\nu_1s_1+\cdots+\nu_ks_k}} \cdot \frac{1}{[p^{\nu_1},\dots,p^{\nu_k}]} \Bigg) \nn
	&= \prod_p \left(1+\frac{1}{p} \sum_{\emptyset \neq I\subset[2k]} \frac{(-1)^{\#I}}{p^{s_I}} \right) \label{D(s)} \\
	&=\prod_p \left(1-\frac{1}{p} + \frac{1}{p} \prod_{j=1}^{2k} \left(1-\frac{1}{p^{s_j}}\right)\right), \label{D(s)2}
}
where we have used the notation $s_I=\sum_{i\in I}s_i$. (Similar computations are performed in \cite{BNP}.) We thus see that
\als{
\CM_{g,2k}(R)	
	&=    \frac 1 {(2i\pi)^{2k}} \idotsint\limits_{\substack{\Re(s_j)=\lambda^j/\log R \\ 1\le j\le 2k}}
    D(\bs s)	\left( \prod_{j=1}^{2k} \hat{g}_R(s_j) \right) ds_{2k}\cdots ds_1  .
}
for any $\lambda>1$. 

We shall truncate all variables of integration at height
\[
T: = \exp\{(\log\log R)^2\}  .
\]
To do so, we notice that $\hat{g}_R(s) \ll (\log R)^{O(1)}/|s|^2$ for $\Re(s) =\lambda^j/\log R$, a consequence of \eqref{IBP} when $A=0$ and of \eqref{f_A-mellin} when $A\ge1$, as well as that $D(\bs s) \ll (\log R)^{O(1)}$, an estimate that follows by formula \eqref{D(s)} and the Prime Number Theorem. We conclude that
\[
\CM_{g,2k}(R)= I_{g,2k}(R) + O\left(\frac{1}{(\log R)^2}\right) ,
\]
where
\[
 I_{g,2k}(R):= \frac 1 {(2i\pi)^{2k}} \idotsint\limits_{\substack{\Re(s_j)=\lambda^j/\log R
		\\ |\Im(s_j)|\le T \\ 1\le j\le 2k}}
    D(\bs s) 
   \left( \prod_{j=1}^{2k} \hat{g}_R(s_j) \right) ds_{2k}\cdots ds_1  .
\]
Motivated by \eqref{D(s)} and \eqref{D(s)2}, we set 
\als{
P(\bs s)
	& :=  D(\bs s) \prod_{I \in\CS^*(2k) } \zeta(1+s_I)^{(-1)^{1+\#I}}  \\
	 &= \prod_{p} 
		\left\{ 
		 \left( 1 -\frac 1p + \frac 1p \prod_{j=1}^{2k} \left( 1 -\frac{1}{p^{s_j}}\right) \right) 
		 	\prod_{I\in\CS^*(2k) }
		 	\left( 1 -\frac{1}{p^{1+s_I}}\right)^{(-1)^{\#I}}
		 	\right\} ,
}
which is analytic when $\Re(s_j)>-1/(4k)$ for all $j$, as well as
\[
F(\bs s) : =  P(\bs s) \prod_{j=1}^{2k} \frac{(\log R)^A\hat{g}_R(s_j)}{R^{s_j} \zeta(1+s_j)^{A+1}} 
\]
and
\[
e_I:= \begin{cases}
		A &\text{if}\ \#I=1,\\
		(-1)^{\#I} &\text{if}\ \#I\ge2 ,
	\end{cases} 
\]
so that
\[
I_{g,2k}(R)
	= \frac 1 {(2i\pi)^{2k}}  \idotsint\limits_{\substack{\Re(s_j)=\lambda^j/\log R\\ |\Im(s_j)|\le T \\ 1\le j\le 2k}}
		 \frac{F(\bs s)R^{s_1+\cdots+s_{2k}} }{(\log R)^{2kA}} 
		 \prod_{I\in \CS^*(2k)} \zeta(1+s_I)^{e_I} 
		ds_{2k}\cdots ds_1 .
\]

Given $\ell\in\N$, we now let 
\[
\Omega_\ell:=\{\bs s\in\C^\ell : |\Re(s_j)|< 2/(\log T)^{4/3},\ |\Im(s_j)|<T+1\ (1\le j\le \ell)\}
\]
and define $\CC_\ell$ to be the class of complex-valued functions $f$ such that: (a) $f$ is defined over a complex domain containing $\Omega_\ell$; (b) $f$ is analytic in $\Omega_\ell$; (c) the derivatives of $f$ satisfy the bound 
\eq{F-bound}{
\frac{\partial^{j_1+\cdots+j_\ell}f}{\partial s_1^{j_1}\cdots \partial s_\ell^{j_\ell}}(\bs s)
	\ll_{j_1,\dots,j_\ell}  \frac{(\log\log R)^{O(j_1+\cdots+j_\ell)}}{(|s_1|+1)\cdots(|s_\ell|+1)}  
}
for all $j_1,\dots,j_\ell\ge0$ and all $\bs s=(s_1,\dots,s_\ell)\in\Omega_\ell$. 

We claim that $F\in\CC_{2k}$. Indeed, there are absolute constants $\delta,c_0>0$ such that $\zeta(s)(s-1)$ is analytic and non-vanishing for $|s-1|\le\delta$ and
\eq{zeta-bound}{
\zeta^{(j)}(s), \left(\frac{1}{\zeta}\right)^{(j)}(s) 
	\ll_j  \log^{j+1}(|t|+2) \qquad \text{when}\quad
	\sigma \ge 1- \frac{c_0}{\log(|t|+2)} ,\  |s-1|\ge \delta,
}
with \eqref{zeta-bound} being a consequence\footnote{The claimed bound follows by \cite[Theorems 3.8 and 3.11]{Tit} and the fact that if $f$ is analytic in a neighbourhood of the circle $|z|\le r$, then $f^{(j)}(z_0)=\frac{1}{2\pi i}\oint_{|z|=r}f(z) dz/z$ for any $z_0$ with $|z_0|<r$.}  of the classical zero-free region for $\zeta$. Moreover,
\eq{g-bound}{
\frac{\dee^j}{\dee s^j} \left( \frac{s^{A+1}\hat{g}_R(s)}{R^s} \right)
	 \ll \frac{1}{(\log R)^A}	\quad(\Re(s)\ge-1,\ j\in\Z_{\ge0}) ,
}
an estimate that follows from \eqref{h-bound-prelim} when $A=0$ and from the formula \eqref{f_A-mellin} for $\hat{(f_A)}_R$ otherwise. Our claim that $F\in\CC_{2k}$ then follows.

\subsection{Contour shifting}\label{contour-shifting}
We will simplify $I_{g,2k}(R)$ and prove \eqref{kthmoments} by a $2k$-dimensional contour shifting argument that we will demonstrate in an iterative fashion. The general idea is to move the variables $s_j$ to the left in a certain order. When we move the contour corresponding to the variable $s_j$, we will pick up contributions from poles of the integrand (coming from solutions to linear equations of the form $s_I=0$, $I\in\CS^*(2k)$ with $e_I>0$), and be left with a residual contour (which will be negligible in size). Thus we only need to consider the contributions from the poles, and these contributions will all be multi-integrals similar to $I_{g,2k}(R)$ but involving one fewer variable. By iterating this, we show that $I_{g,2k}(R)$ is (up to a small error term) given by the total contribution of all the successive poles we have encountered having shifted all $2k$ variables. We will show that provided one moves the contours in a suitable order, all the contributions from all the multi-poles and all the residual integrals give a contribution $c_{k,A} (\log R)^{\CE_{k,A}} + O\left((\log R)^{\CE_{k,A}-1}\right)$.

When we consider poles we encounter equations of the form $s_I=0$, where we think of $s_I=\sum_{i\in I}s_i$ as a linear form in the variables $s_1,\dots,s_{2k}$. To avoid any ambiguity when we consider multiple such equations, we will let $L_{0,I}\in \mathbb{Q}[x_1,\dots,x_{2k}]$ be the linear form corresponding to $s_I$, that is to say
\[
L_{0,I}(\bs x) := \sum_{i\in I} x_i .
\]
Before we setup the necessary notation to keep track of all the terms we encounter when performing the multiple contour shifting, we first describe the first two contour shifting steps to help motivate the basic idea.

\medskip

The first variable we move to the left is $s_{2k}$. When doing so, we pick up the contribution from some poles in the integrand. Such a pole must occur when $L_{0,I_1}(\bs{s})=0$ for some $I_1\subset[2k]$ with $e_{I_1}>0$ and $2k\in I_1$ (a possible pole from $\prod_{I\in \CS^*(2k)}\zeta^{e_I}(1+L_{0,I}(\bs{s}))$.)  Having fixed such a pole and the corresponding set $I_1$, we use this equation $L_{0,I_1}(\bs{s})=0$ to rewrite $s_{2k}$ in terms of $s_j$, $j\in[2k]\setminus\{2k\}$. Imposing the same condition on the $x_j$'s, we find that for each $I\subset[2k]$, the linear form $L_{0,I}(x_1,\dots,x_{2k})$ becomes a linear form $L_{1,I}$ in the variables $x_j$ for $j\in[2k]\setminus\{2k\}$. Trivially, $L_{1,I}=0$ if, and only if, $I\in\CI_1:=\{\emptyset,I_1\}$. This pole contribution can be written as an integral over $s_1,\dots,s_{2k-1}$, with an integrand that has poles only when $L_{1,I}(\bs{s})=0$.

Next, for this integral over $s_1,\dots,s_{2k-1}$, we choose some other variable $s_{j_2}$ (precisely how we choose $s_{j_2}$ will be specified later), and move the $s_{j_2}$ contour. This produces a residual contour (which will be negligible) and contributions from further poles in the integrand which occur only when $s_{j_2}$ satisfies a linear equation $L_{1,I_2}(\bs s)=0$ for some $I_2\in\CS^*(2k)\setminus \CI_1$ with $e_{I_2}>0$ and with $L_{1,I_2}(\bs x)$ having a non-zero $x_{j_2}$ coefficient. We use this to write $s_{j_2}$ in terms of $s_j$, $j\in[2k]\setminus\{2k,j_2\}$. Imposing the corresponding condition on the variables $x_j$ makes $L_{1,I}$ a linear form $L_{2,I}$ in the variables $x_j$, $j\in[2k]\setminus\{2k,j_2\}$. Some of these new linear forms will vanish identically, and the total number will determine the order of this pole.

Continuing in this manner, we eventually write our original integral $I_{g,2k}(R)$ in terms of $O(1)$ contributions from repeatedly encountered poles (all of which will be of the form $c(\log{R})^m$ for some $c,m$) or from terms which correspond to encountering a residual integral (which will always be small).  In order to control this process, we need to keep track of which poles we encounter, the order of the poles, and the integrands of the new multi-integrals corresponding to these poles. To do this we introduce some notation and terminology. 
\begin{itemize}
	\item Let us be given an integer $N\in\{0,1,\dots,2k\}$, which we shall often refer to as the {\bf level}. It describes how many iterations we have performed (i.e. how many variables $s_j$ we have shifted). The case $N=0$ corresponds to the initial integral $I_{g,2k}(R)$. 
	
	\item Let us be given sets $I_1,\dots,I_N\subset[2k]$ and indices $j_1,\dots,j_N$ such that:
	\begin{enumerate}[label=(\roman*)]
		\item $j_n\in I_n$ for each $1\le n \le N$.
		\item $j_1,\dots,j_N$ are distinct.  
		\item $e_{I_n}>0$ for each $1\le n\le N$.
	\end{enumerate}
	Here the sets $I_1,\dots I_N$ correspond to the sequence of poles which we have encountered from performing $N$ contour shifts, and the indicex $j_n$ corresponds to the variable we have chosen to use to shift the $n^{th}$ contour.
\end{itemize}

Since the $j_i$ are distinct, the linear forms $L_{0,I_1},\dots,L_{0,I_N}$ are linearly independent over $\Q$. We let $V_N$ be their $\Q$-span and $\CI_N$ to be those forms that vanish identically subject to the conditions $L_{0,I_1}=\cdots=L_{0,I_N}=0$. More generally, for $0\le n\le N$ let
\eq{V_n-I_n}{
V_n = \Span_\Q (L_{0,I_1}(\bs x),\dots,L_{0,I_n}(\bs x))
\quad\text{and}\quad
\CI_n = \{I\in\CS(2k): L_{0,I}(\bs x)\in V_n\} ,
}
with the conventions that $V_0=\{0\}$ and $\CI_0=\{\emptyset\}$. Since $j_r\in I_r$ for all $r$, and $j_1,\dots,j_n$ are distinct integers,  if we impose the conditions $\sum_{i\in I_n}x_i=0$ on the variables $x_i$, then we may write $x_{j_1},\dots,x_{j_n}$ as $\Q$-linear combinations of the other variables. Hence the linear form $L_{0,I}$ becomes a linear form $L_{n,I}$ in the variables $x_j$, $j\in[2k]\setminus\{j_1,\dots,j_n\}$. Clearly, $L_{0,I}\in V_n$ if and only if $L_{0,I}=0$ after we have ``quotiented'' the space of linear forms in the variables $x_j$ with the relations $L_{0,I_1}=\cdots=L_{0,I_n}=0$, if and only if $L_{n,I}=0$.

\begin{rem*}
We will show later on that the variables $s_1,\dots,s_{2k}$ can be permuted in a way that allows us to assume that $j_n = 2k-n+1$ for all $n$. 
\end{rem*}

\begin{dfn}\label{defn of h} Let $N$ be a level. The triplet $(\bs I,\bs h,d)$ is called a {\it type of level $N$} if:
	\begin{enumerate}
		\item[(a)] $\bs I=(I_1,\dots,I_N)$ is an $N$-tuple of sets such that $2k-n+1\in I_n$ and $e_{I_n}>0$ for all $n=1,2,\dots,N$.
		\item[(b)] $\bs h= (h_{n,I})_{0\le n\le N,\, I\in\CS^*(2k)}$ is a tuple of non-negative integers such that:
			\begin{enumerate}[label=(\roman*)]
			\item $h_{n,I}=0$ for $0\le n\le N$ if $e_I=0$ (i.e. if $A=0$ and $\#I=1$);
			\item $0=h_{0,I}\le h_{1,I}\le \cdots \le h_{N,I}$ for $I\in\CS^*(2k)$;
			\item If $I\in \CI_n\setminus\CI_{n-1}$ for some $n\in[N]$, then $h_{m,I}=h_{n,I}$ for all $m\ge n$.
			\end{enumerate}
The integers $h_{n,I}$ will describe the different terms coming up in poles of high order, corresponding to taking many derivatives of different parts of the integrand. (The $h_{N,I}^{th}$ derivative of $\zeta^{e_I}(1+L_{N,I}(\bs{s}))$ will occur in the integrand of the term we are considering.) 
	\item[(c)] $d$ is a non-negative integer.
\end{enumerate}
We will further say that the triplet $(\bs I,\bs h,d)$ is an {\it admissible type of level $N$} if
\[
H_N\ge N+ d,
\]
where
\[
H_N = H_N(\bs{h},\CI_N,A) :=\sum_{I\in\CI_N \setminus\{\emptyset\} } (-1)^{\#I} 
		- \sum_{I\in\CS(2k)\setminus \CI_N} h_{N,I}
		+ (A+1) \sum_{j\in[2k],\, \{j\}\in\CI_N} 1   .
\]
\end{dfn}

\begin{rem} We must have that $H_N\ge N$ if $(\bs I,\bs h,d)$ is an admissible type of level $N$. We will see that the quantity $H_N$ is related to the total order of a the poles we have picked up from the first $N$ contour shiftings. We further note that, in the notation of Section \ref{combinatorial}, it can be written as
\[
H_N= \SA(V_N) - \sum_{I\in\CS(2k)\setminus \CI_N} h_{N,I}
		+ (A+1) \sum_{j\in[2k],\, \{j\}\in\CI_N} 1   .
\]
\end{rem}

The data in a type of level $N$ will keep track of all the relevant information on terms we encounter from poles having shifted $N$ contours. Given a type, we can now define the key objects we wish to consider:
\begin{dfn} \label{defn of J} Let $(\bs I,\bs h,d)$ be a type of level $N$. 
A function $J:\R_{\ge2}\to\C$ is called a {\it fundamental component of level $N$ and of type $(\bs I,\bs h,d)$} if:
\begin{itemize}
\item the type  $(\bs I,\bs h,d)$ is admissible, that is to say, we have $H_N\ge N+d$;
\item when $N=2k$, we have $J(R) = (\log R)^{H_N-N-d-2kA}$;
\item when $N<2k$, we have
\als{
J(R)
	&= \frac{(\log R)^{H_N - N - d - 2kA}}{(2i\pi)^{2k-N}}  
		 \idotsint \limits_{\substack{\Re(s_j)=\lambda_j/\log R\\ |\Im(s_j)|\le T \\ 1\le j\le 2k-N}}
		 G(\bs s)  R^{E_N(\bs s)} \\
	 &\qquad \times \prod_{I\in\CS(2k)\setminus\CI_N }  
		 	\left(\zeta^{e_I}\right)^{(h_{N,I})}(1+L_{N,I}(\bs s)) 
				 	\dee s_{2k-N}\cdots \dee s_1  
}
where $\lambda_j/\lambda_{j-1}\ge\lambda$, 
\[
E_N(s_1,\dots,s_{2k-N}):=L_{N,[2k]}(s_1,\dots,s_{2k-N}),
\]
and $G$ is a function in the variables $s_1,\dots,s_{2k-N}$ that belongs to the class $\CC_{2k-N}$. 
Moreover, if we have additionally that $d=0$, then $G$ is given by
\[
G(\bs s) =  F(L_{N,\{1\}}(\bs s),\dots,L_{N,\{2k\}}(\bs s)).
\]
\end{itemize}
\end{dfn}
We note that when $d=0$, we have that $G$ is non-vanishing in $\Omega_{2k-N}$ by \eqref{zeta-bound} and the preceding discussion.
\begin{dfn}
A fundamental component of level $N$ and type $(\bs I,\bs h,d)$ is called {\it irreducible} if either $N=2k$ or $E_N=0$. Otherwise, it is called {\it reducible}.
\end{dfn}
 With the above notation, the integral $I_{g,2k}(R)$ is a reducible fundamental component of level 0 and of type $(\emptyset,\emptyset,0)$. 

If we say that $J(R)$ is a fundamental component of level $N$, we mean that there exists an admissible type $(\bs{I},\bs{h},d)$ of level $N$ such that $J(R)$ is a fundamental component of level $N$ and type $(\bs{I},\bs{h},d)$.

We begin with a lemma that justifies the terms {\it irreducible vs. reducible}, showing how reducible components are a linear combination of irreducible ones (up to a very small error term). First, we need to introduce a last piece of notation. Notice that if $E_N\neq0$, then we may uniquely write
\[
E_N(\bs x) = \gamma_1 x_1+\gamma_2x_2+\cdots + \gamma_{j_{N+1}} x_{j_{N+1}}
\]
for some $\gamma_j\in \Q$ with $\gamma_{j_{N+1}}\neq0$. If $\lambda$ is big enough, then the sign of $\Re(E_N(\bs s))$ throughout the region of integration is constant and equal to the sign of $\gamma_{j_{N+1}}$. The behaviour of reducible fundamental components differs according to this sign:

\begin{lem}\label{contour-lem1} Assume the above setup. Let $J(R)$ be a reducible fundamental component of level $N<2k$ and type $(\bs{I},\bs{h},d)$, and let $\gamma_1,\dots,\gamma_{j_{N+1}}$ be as above. Assume that $\lambda$ is large enough in terms of  $(\bs{I},\bs{h},d)$.
\begin{enumerate}
\item If $\gamma_{j_{N+1}}>0$, then $J(R)$ is a linear combination of $O(1)$ fundamental components of level $N+1$ with coefficients of size $O(1)$, up to an error term of size $\ll T^{-1+o(1)}$. Each of these fundamental components has (admissible) type $(\bs{I}',\bs{h}',d')$ that depends only on $N,\bs{I},\bs{h},d$.
\item If $\gamma_{j_{N+1}}<0$, then $J(R)\ll T^{-1+o(1)}$.
\end{enumerate}
The implied constants depend at most on $(\bs{I},\bs{h},d)$, $A$ and the function $G$ in the definition of $J$, and are independent of $R$.
\end{lem}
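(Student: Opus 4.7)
The plan is to apply Cauchy's residue theorem to the variable $s_{j_{N+1}}$, shifting its contour from $\Re(s_{j_{N+1}}) = \lambda_{j_{N+1}}/\log R$ to $\Re(s_{j_{N+1}}) = \sigma$, with $\sigma := -1/(\log T)^{4/3}$ in case (a) and $\sigma := +1/(\log T)^{4/3}$ in case (b). In both cases $\sigma$ lies inside $\Omega_{2k-N}$ and satisfies $\gamma_{j_{N+1}}\sigma < 0$. The deformation expresses $J(R)$ as a sum of three contributions: (i) the horizontal connector segments at $\Im(s_{j_{N+1}}) = \pm T$, (ii) the integral along the new vertical contour at $\Re(s_{j_{N+1}}) = \sigma$, and (iii) the residues of the integrand at any poles $s_{j_{N+1}} = s_{j_{N+1}}^*(I)$ crossed in the shift. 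Such poles occur only for $I \in \CS^*(2k) \setminus \CI_N$ with $e_I > 0$ and nonzero coefficient of $s_{j_{N+1}}$ in $L_{N,I}$; writing $L_{N,I}(\bs x) = \sum_i c_{I,i} x_i$, the pole is at $s_{j_{N+1}}^*(I) = -\sum_{i \neq j_{N+1}}(c_{I,i}/c_{I,j_{N+1}}) s_i$.

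The contributions (i) and (ii) are handled uniformly in both cases and each is $\ll T^{-1+o(1)}$. On (i), the decay estimate \eqref{F-bound} for $G$ yields a factor $\ll 1/T$ once $|s_{j_{N+1}}| \ge T$, while the standard zero-free-region bound \eqref{zeta-bound} controls the zeta derivatives by $(\log T)^{O(1)}$ and $|R^{E_N(\bs s)}| = T^{o(1)}$; integrating over the remaining variables on their unchanged contours adds at most $(\log T)^{O(1)}$. On (ii), the sign condition $\gamma_{j_{N+1}}\sigma < 0$ together with the hierarchy $\lambda_i/\lambda_{i-1} \ge \lambda$ (with $\lambda$ large) forces $\gamma_{j_{N+1}}\sigma$ to dominate $\Re(E_N(\bs s))$, yielding $|R^{E_N(\bs s)}| \le R^{-c/(\log T)^{4/3}}$, which is super-polynomially smaller than $T^{-1}$; the rest of the integrand is again bounded by $(\log T)^{O(1)}$.

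The residues (iii) produce the structured output of the lemma in case (a). For each pole $I$ crossed, the integrand has a pole of order $e_I + h_{N,I}$ in $s_{j_{N+1}}$ at $s_{j_{N+1}}^*(I)$, and the residue equals $\tfrac{1}{(e_I + h_{N,I} - 1)!}\partial_{s_{j_{N+1}}}^{e_I + h_{N,I} - 1}$ of the remaining integrand, evaluated at $s_{j_{N+1}}^*(I)$. Expanding by the Leibniz rule distributes these derivatives across $G$ and the other factors $(\zeta^{e_J})^{(h_{N,J})}$ for $J \neq I$, producing a bounded sum of terms. After substituting $s_{j_{N+1}} = s_{j_{N+1}}^*(I)$ and re-indexing the remaining variables, each term takes the form of a fundamental component of level $N+1$ with $\bs I' = (\bs I, I)$, with $\bs h'$ obtained from $\bs h$ by incrementing entries according to how the derivatives are distributed among the $(\zeta^{e_J})^{(h_{N,J})}$, and with $d'$ tracking the net power of $\log R$ produced; in particular $d' = 0$ precisely when no derivative falls on $G$, which preserves the canonical form $G = F(L_{N,\{1\}},\ldots,L_{N,\{2k\}})$. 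By the hierarchy of the $\lambda_i$'s, the set of $I$'s whose poles are crossed depends only on $(\bs I, \bs h, d, N)$ and not on $R$.

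The main obstacle is case (b): proving $J(R) \ll T^{-1+o(1)}$ when $\gamma_{j_{N+1}} < 0$. In principle the rightward shift could cross poles at $\Re(s_{j_{N+1}}^*(I)) \in (\lambda_{j_{N+1}}/\log R, \sigma)$, and one must show either that no such pole exists or that the residue sum vanishes. The required claim is that for every $I \in \CS^*(2k) \setminus \CI_N$ with $e_I > 0$ and $c_{I,j_{N+1}} \neq 0$, the sign of $\gamma_{j_{N+1}}$ in $E_N = L_{N,[2k]}$ propagates to a sign constraint on the coefficients $c_{I,i}$ of $L_{N,I}$ that forces the pole $s_{j_{N+1}}^*(I)$ out of the relevant strip. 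This sign-propagation is the crux of the argument: it is a combinatorial lemma about the forms $L_{N,I}$ inherited from the chain of substitutions $L_{0,I_1} = \cdots = L_{0,I_N} = 0$, and must be proved inductively on $N$ alongside the existence of the permutation realizing $j_n = 2k - n + 1$.
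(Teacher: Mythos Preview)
Your part~(a) has the right architecture, but the residue computation is too coarse in a way that matters. When you shift $s_{j_{N+1}}$ leftward, distinct sets $I$ can yield the \emph{same} pole location: this happens exactly when $L_{N,I}$ is a nonzero scalar multiple of $L_{N,I_{N+1}}$, i.e.\ for every $I\in\CI_{N+1}\setminus\CI_N$. The order of that common pole is therefore not $e_{I_{N+1}}+h_{N,I_{N+1}}$ coming from a single factor, but the net order coming from \emph{all} such $I$; in particular the factors with $e_I=-1$ (odd $I$ with $\#I\ge3$) contribute zeros that can lower it. The paper records the correct order $m$ in \eqref{pole-order}, and the subsequent bookkeeping---the new exponent in \eqref{poweroflog}, the nonnegativity of $d'$, and the check that $d'=0$ forces the specific canonical form of $G$---all depends on having $m$ right. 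With your single-factor formula you cannot verify that the output integrals are genuine fundamental components of the required type.

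Part~(b) has a more serious gap: the ``sign-propagation'' claim you defer to is false. Shifting \emph{only} $s_{j_{N+1}}$ rightward destroys the $\lambda$-hierarchy whenever $j_{N+1}<2k-N$, and then a pole is crossed as soon as some $L_{N,I}$ with $e_I>0$ has opposite-signed coefficients at $j_{N+1}$ and at some $j_0>j_{N+1}$. A concrete instance with $2k=6$: take $I_1=\{2,6\}$, $I_2=\{3,5\}$, $I_3=\{2,4\}$ (all even, with $j_1=6,\,j_2=5,\,j_3=4$). The remaining variables are $x_1,x_2,x_3$, and one computes $E_3=x_1-x_2$, so $j_4=2<3$ and $\gamma_2=-1$. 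But $L_{3,\{2,5\}}=x_2-x_3$ has coefficient $+1$ at $j_4=2$ and $-1$ at $j_0=3$; shifting $s_2$ alone from $\lambda_2/\log R$ to $1/(\log T)^{4/3}$ flips the sign of $\Re L_{3,\{2,5\}}$ and crosses the pole of $\zeta(1+L_{3,\{2,5\}})$. The paper's fix is different and clean: in case~(b) it shifts \emph{all} of $s_{2k-N},\,s_{2k-N-1},\dots,s_{j_{N+1}}$, in that order, to $\Re s_j=\lambda_j/(\log T)^{3/2}$. Because the ratios $\lambda_j/\lambda_{j-1}\ge\lambda$ are preserved at the new scale, the sign of each $\Re L_{N,I}$ is governed throughout by its highest-index nonzero coefficient and never changes, so no poles are met; at the end $\Re E_N\approx\gamma_{j_{N+1}}\lambda_{j_{N+1}}/(\log T)^{3/2}<0$ gives the claimed decay.
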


We iterate the above lemma until all the fundamental components we are dealing with are irreducible. For such components, we have the following asymptotic formula.

\begin{lem}\label{contour-lem2} Assume the above setup. If $J(R)$ is an irreducible fundamental component, then there is some $c\in\C$ such that
\[
J(R) = c(\log R)^{\CE_{k,A}}+ O((\log R)^{\CE_{k,A}-1}) ,
\]
where we recall that $\CE_{k,A}=\max(\binom{2k}{k}-2k(A+1),-1)$.  The implied constant and the constant $c$ are independent of $R$.
\end{lem}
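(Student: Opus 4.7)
The plan is to split into two main cases according to the level $N$.

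When $N=2k$, the $2k$ linear forms $L_{0,I_1},\dots,L_{0,I_{2k}}$ are linearly independent, so $V_{2k}=W_k$ and $\CI_{2k}$ contains every subset of $[2k]$. Every singleton therefore lies in $\CI_{2k}$, no index remains outside, and $\SA(V_{2k})=\sum_{\emptyset\ne I\subset[2k]}(-1)^{\#I}=-1$. Hence $H_{2k}=2k(A+1)-1$ and $J(R)=(\log R)^{-1-d}=O((\log R)^{-1})$, which matches the claimed form since $\CE_{k,A}\ge -1$.

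When $N<2k$, irreducibility forces $E_N=0$, i.e.\ $[2k]\in\CI_N$, so $s_{[2k]}\in V_N$; we then apply Proposition~\ref{CombProp} to $V=V_N$. If some singleton is in $\CI_N$, part (a) gives $\SA(V_N)=-1$, and since the number $m$ of such singletons satisfies $m\le\dim V_N=N\le 2k$, we obtain
\begin{equation*}
H_N-N-d-2kA\le -1+(A+1)m-N-2kA\le A(N-2k)-1\le -1\le\CE_{k,A}.
\end{equation*}
If no singleton is in $\CI_N$, parts (b) and (c) give $H_N-N-2kA\le\binom{2k}{k}-2k(A+1)=\CE_{k,A}$ when $N=2k-1$, and $H_N-N-2kA\le\CE_{k,A}-2$ when $N\le 2k-2$.

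In every case where $H_N-N-d-2kA<\CE_{k,A}$ strictly, it remains to bound the integral by $(\log\log R)^{O(1)}$. Since $G\in\CC_{2k-N}$ decays like $\prod_j(|s_j|+1)^{-1}$ up to polylog factors in $\log\log R$, and each $(\zeta^{e_I})^{(h_{N,I})}(1+L_{N,I}(\bs s))$ is controlled by \eqref{zeta-bound} away from $s=1$ and by the Laurent expansion $\zeta(1+w)=1/w+O(1)$ near $w=0$, the integrand is dominated by an integrable function on the truncated contours, producing the desired $O((\log R)^{\CE_{k,A}-1})$ bound.

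The crux is the saturating case $N=2k-1$, no singleton in $\CI_N$, $d=0$, and $\CE_{k,A}\ge 0$, in which a bona fide constant $c$ must be extracted. Proposition~\ref{CombProp}(b) tightly classifies the admissible $V_N$ as $\Span_\Q(\{s_j-s_1\}_{j\in J},\{s_j+s_1\}_{j\notin J})$ for some $J\ni 1$ with $\#J=k$. The remaining one-dimensional contour integral in $s_1$ has $G(s_1)=F(L_{N,\{1\}}(s_1),\dots,L_{N,\{2k\}}(s_1))$ analytic at the origin with $G(0)\ne 0$, while $\prod_{I\notin\CI_N}\zeta^{e_I}(1+c_Is_1)$ has a zero at $s_1=0$ of order exactly $-\sum_{I\notin\CI_N}e_I=\CE_{k,A}$. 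Shifting the contour to $\Re(s_1)=0$ is therefore unimpeded by any singularity; the horizontal pieces at $|\Im s_1|=T$ contribute $O(1/T)$, and the limit as $R\to\infty$ converges to an absolutely convergent integral on $i\R$. Multiplying by the outside factor $(\log R)^{\CE_{k,A}}$ yields the form $c(\log R)^{\CE_{k,A}}+O((\log R)^{\CE_{k,A}-1})$. The main technical obstacle is the uniform bounding in the non-saturating cases where $\{j\}\in\CI_N$, since there $\prod_{I\notin\CI_N}\zeta^{e_I}(1+L_{N,I}(\bs s))$ has a pole at the origin rather than a zero, and the factor structure must be exploited (rather than the crude multiplicative bound) to keep the multi-variable integral bounded despite the contours lying close to the singular locus.
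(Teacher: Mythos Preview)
Your combinatorial bookkeeping via Proposition~\ref{CombProp} is correct and matches the paper: the exponent $H_N-N-d-2kA$ is always $\le\CE_{k,A}$, with the only possible saturation for $N<2k$ occurring at $N=2k-1$ along the special subspaces of part~(b). Your treatment of $N=2k$ and of the saturating $N=2k-1$ case (shift to $\Re(s_1)=0$, absolutely convergent integral) is also essentially the paper's.

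The genuine gap is precisely the ``main technical obstacle'' you flag but do not resolve. On the original contours $\Re(s_j)=\lambda_j/\log R$, the factors $\zeta_q^{e_I}(1+L_{N,I}(\bs s))$ with $e_I>0$ are of size $\asymp\log R$ near the origin, so the integral is \emph{not} $(\log\log R)^{O(1)}$ there; one cannot simply invoke integrability. The paper's remedy is to move the contours away from the singular locus. For $N\le 2k-2$, one shifts every variable to $\Re(s_j)=\lambda^j/(\log T)^{3/2}$; the key (and only nontrivial) point is that if $\lambda$ is large enough, each $\Re(L_{N,I}(\bs s))$ keeps a fixed sign throughout the shift, so no pole of any $\zeta$-factor is crossed. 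On the new contours the bounds \eqref{zeta-bound} and \eqref{F-bound} give an integrand $\ll(\log\log R)^{O(1)}\prod_j(1+|s_j|)^{-1}$, hence an integral $\ll(\log\log R)^{O(1)}$. For $N=2k-1$ there is an additional subcase you omit: when the one-variable integrand has a genuine pole at $s_1=0$ (order $m\ge1$, which can occur e.g.\ when $A>\tfrac1{2k}\binom{2k}{k}-1$), one cannot shift to $\Re(s_1)=0$; the paper instead shifts to the curved contour $\sigma_1=1/(\log(2+|t_1|))^{3/2}$, which stays in the zero-free region, avoids the pole, and makes the integral converge to a constant. The computation \eqref{pole-calc} then shows the resulting exponent is $\le -1$, so this subcase contributes to the main term only when $\CE_{k,A}=-1$.

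In short: your outline becomes a proof once you add the contour shifts to $\Re(s_j)\asymp(\log T)^{-3/2}$ (for $N\le 2k-2$) and to the curved contour (for $N=2k-1$, $m\ge1$), together with the observation that large $\lambda$ prevents pole-crossing during these shifts.
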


Since $I_{g,2k}(R)$ is a reducible fundamental component of level 0, we apply Lemma \ref{contour-lem1} repeatedly to write it as a linear combination of $O(1)$ irreducible fundamental components, and then estimate these components by Lemma \ref{contour-lem2}. This establishes \eqref{kthmoments-goal}. We now prove the above two key lemmas.

\subsection{Proof of the auxiliary Lemmas \ref{contour-lem1} and \ref{contour-lem2}}

\begin{proof}[Proof of Lemma \ref{contour-lem1}]
Note that if $E_N\neq0$, then we must have that either $N=0$, either $k\ge2$ or $A\ge1$: when $N=k=1$ and $A=0$, the only $I\subset\{1,2\}$ with $e_I>0$ is $I=\{1,2\}$. But if $x_{\{1,2\}}=0$, we must have that $E_1=0$, a contradiction.

\medskip

(a) Here $\gamma_{j_{N+1}}>0$. For notational simplicity, we make the change of variables
\[
s_j' = s_j\ (1\le j<j_{N+1}),\ s_j'=s_{j+1}\ (j_{N+1} \le j<2k-N),\ s_{2k-N}'=s_{j_{N+1}},
\]
which corresponds to a cyclic permutation of the variables $s_{j_{N+1}},\dots,s_{2k-N}$. We similarly define the linear forms $x_j'$, using the corresponding permutation of the forms $x_j$, as well as the parameters $\lambda_j'$. We shift the $s_{2k-N}'$ contour to the line $\Re(s_{2k-N}')=-1/(\log T)^{3/2}$. The contribution of the horizontal integrals is $\ll(\log R)^{O(1)}/T$. Moreover, when $\Re(s_{2k-N}')=-1/(\log T)^{3/2}$ and $\Re(s_j')=O(1/\log R)$ for $j<2k-N$, we have that
\[
\Re(E_N(\bs s'))  =   - \frac{\gamma_{j_{N+1}}}{(\log T)^{3/2}} + O\left(\frac{1}{\log R}\right) .
\]
It thus follows that the contribution of the integral with $\Re(s_{2k-N}')=-1/(\log T)^{3/2}$ is $\ll e^{-\sqrt{\log R}}$, say, which is of negligible size. So we need only worry about the poles that the contour shifting introduces.

The poles occur when $L_{N,I_{N+1}}(\bs s')=0$ for some $I_{N+1}\in\CS(2k)\setminus\CI_N$ with $e_{I_{N+1}}>0$ such that the coefficient of $s_{2k-N}'$ in $L_{N,I_{N+1}}$ is non-zero. As we discussed in Section \ref{contour-shifting}, imposing the relation $L_{N,I_{N+1}}(\bs x')=0$ allows us to write $x_{2k-N}'$ as a linear combination of the forms $x_1',\dots,x_{2k-N-1}'$, say $x_{2k-N}'= C(x_1',\dots,x_{2k-N-1}')$. We then define the sets $V_{N+1}$ and $\CI_{N+1}$ as in \eqref{V_n-I_n}, and similarly let $E_{N+1}=L_{N+1,[2k]}$. 

We need to understand the order of the pole at $s_{2k-N}'=C(s_1',\dots,s_{2k-N-1}')$. We only look at {\it generic points} $(s_1',\dots,s_{2k-N-1}')$: it could be the case that for some measure-zero subset of points, we get a different pole order. For example, for fixed $s_1\in\C$, the function $s_2\mapsto s_1/s_2$ has generically a pole of order 1 at $s_2=0$, unless $s_1=0$, when there is no pole. This reduced pole order however would not affect an integral over $s_1$, because it only occurs for a measure-zero set of $s_1$ values. 

With the above discussion in mind, we note that the generic order of the zero of the analytic function
\[
\prod_{I\in\CS(2k)\setminus\CI_{N+1} }(\zeta^{e_I})^{(h_{N,I})}(1+L_{N,I}(\bs s')) 
\]
at $s_{2k-N}'=C(s_1',\dots,s_{2k-N-1}')$ is 0. Indeed, for this product to vanish we must have that $L_{N+1,I}(\bs s')=0$, which happens non-generically when $I\in\CS(2k)\setminus \CI_{N+1}$.

Next, let $\nu$ be the generic order of the zero of the analytic function 
\eq{nu-dfn}{
 G(\bs s')
	 \prod_{\substack{I\in \CI_{N+1}\setminus\CI_N \\ e_I=-1,\, h_{N,I}\ge2}} 
		\left(\frac{1}{\zeta}\right)^{(h_{N,I})}(1+L_{N,I}(\bs s')) 
}
at $s_{2k-N}'=C(s_1',\dots,s_{2k-N-1}')$. If $d=0$ and $h_{N,I}=0$ for all $I\in\CS^-(2k)\cap(\CI_{N+1}\setminus\CI_N)$, then the function in \eqref{nu-dfn} equals $F(L_{N,\{1\}}(\bs s'),\dots,L_{N,\{2k\}}(\bs s'))$, which does not vanish in $\Omega_{2k}$, so that $\nu=0$.

From the above discussion, we conclude that the generic order of the pole of the integrand of $J(R)$ at $s_{2k-N}'=C(s_1',\dots,s_{2k-N-1}')$ is
\eq{pole-order}{
m &= \sum_{\substack{ I\in\CI_{N+1}\setminus\CI_N \\ \#I=\text{even} }} (h_{N,I}+1)
	- \sum_{\substack{ I\in \CI_{N+1}\setminus\CI_N \\ e_I=-1,\, h_{N,I}=0}} 1
	+ \sum_{\substack{ 1\le j\le 2k \\ \{j\}\in\CI_{N+1}\setminus\CI_N}} (h_{N,\{j\}}+A) - \nu \\
	&=\sum_{I\in\CI_{N+1}\setminus \CI_N} (h_{N,I}  + (-1)^{\#I}) 
		+ (A+1)\sum_{j\in[2k], \, \{j\}\in \CI_{N+1}\setminus\CI_N} 1 \\
	&\qquad - \nu -  
	\sum_{\substack{ I\in\CS^-(2k)\cap(\CI_{N+1}\setminus\CI_N) \\ h_{N,I}\ge2,\,\#I\ge3 }} (h_{N,I}-1) .
}
Note that it could be the case that $m\le 0$, in which case there is no pole contribution to $J(R)$ from the pole with $L_{N,I_{N+1}}(\bs s')=0$. 

Assume, now, that $m\ge1$. Then, $m+H_N\ge 1+N+d$ by our assumption that $H_N\ge N+d$. Moreover,
\eq{poweroflog}{
m+H_N
	&= \sum_{I\in\CI_{N+1} \setminus\{\emptyset\} } (-1)^{\#I} 
		- \sum_{I\in\CS(2k)\setminus \CI_{N+1}} h_{N,I}
			+  (A+1)\sum_{\substack{j\in[2k], \, \{j\}\in\CI_{N+1}}} 1 \\
			&\qquad 	- \nu
	-  \sum_{\substack{ I\in\CS^-(2k)\cap(\CI_{N+1}\setminus\CI_N) \\ h_{N,I}\ge1,\,\#I\ge3 }} (h_{N,I}-1) .
}
In order to continue, we separate two subcases depending on whether $N=2k-1$ or $N\le 2k-2$.

\medskip

\noindent 
{\bf Case 1 of the proof of Lemma \ref{contour-lem1}: $N=2k-1$.} 
In this case, we have that $s_j' = L_{2k-1,\{j\}}(s_1') = a_j s_1'$ for all $j$, where $a_j\in\Q$. Thus, the only potential pole is at $s_1'=0$. If $m\ge1$, so that there a genuine pole at $s_1'=0$, then we obtain an evaluation of $J(R)$ as a finite linear combination of powers of $\log R$ (up to an error term of size $O((\log R)^{O(1)}/T)$), the highest of which has exponent
\als{
H_{2k-1}+m-2k-2kA -d 
	&= \sum_{I\in\CS^*(2k)} (-1)^{\#I} 
		- \nu -  \sum_{\substack{ I\in\CS^-(2k) \setminus\CI_{2k-1}\\ h_{2k-1,I}\ge2,\,\#I\ge3 }} (h_{2k-1,I}-1)- d \le - 1 ,
}
since $\CI_{2k}=\CS(2k)$ in this case. We have thus written $J(R)$ as a linear combination of irreducible fundamental components of level $2k$ and suitable type (taking $h_{2k,I}=h_{2k-1,I}$ and $I_{2k}=\{1\}$), up to a small error term. This proves Lemma \ref{contour-lem1} in this case. 

As an amusing remark, we note that the above exponent equals $\CE_{k,A}$ only when $A>\frac{1}{2k}\binom{2k}{k}-1$, $d=0$, $\nu=0$, $h_{2k-1,I}\in\{0,1\}$ for $I\in\CS^-(2k) \setminus\CI_{2k-1}$, and  $G(s)=F(a_1s,\dots,a_{2k}s)$, in which case the residue is
\[
G(0) = A!^{2k} F(0,\dots,0) = A!^{2k} .
\]
Otherwise, these poles contribute towards the error term of $I_{g,2k}(R)$. 

\medskip

\noindent
{\bf Case 2 of the proof of Lemma \ref{contour-lem1}: $N\le 2k-2$.} 
Then the contribution of the pole $s_{2k-N}'=C(s_1',\dots,s_{2k-N-1}')$ to $J(R)$ equals
\als{
\frac{(\log R)^{H_N - N - d - 2kA}}{(2i\pi)^{2k-N-1}m!}  
		 \idotsint\limits_{\substack{ \Re(s_j')= \lambda_j'/\log R,
		 \\  |\Im(s_j')|\le T \\ 1\le j\le 2k-N-1}}
	 \left.	\frac{\dee^{m-1}}{\dee (s_{2k-N}')^{m-1}} \right|_{s_{2k-N}'=C(s_1',\dots,s_{2k-N-1}')}  
	 	(Z(\bs s'))
		 	\dee s_{2k-N-1}'\cdots \dee s_1' ,
}
where
\[
Z(\bs s'):=G(\bs s')R^{E_N(\bs s')} \\
		  (s_{2k-N}'-C(s_1',\dots,s_{2k-N-1}'))^m
		  \prod_{I\in\CS(2k)\setminus\CI_N}  
		 	\left(\zeta^{e_I}\right)^{(h_{N,I})}(1+L_{N,I}(\bs s')) .
\]
Applying the generalized product rule and writing $s_j$ in place of $s_j'$, we claim that the above integral can be expressed as a finite sum of terms of the form
\als{
c\cdot \frac{(\log R)^{H_N +m - h - N-1 - d - 2kA}}{(2i\pi)^{2k-N-1}}  
	&  \idotsint\limits_{\substack{ \Re(s_j')= \lambda_j'/\log R,\,  |\Im(s_j')|\le T \\ 1\le j\le 2k-N-1}}
		\tilde{G}(\bs s) R^{E_{N+1}(\bs s)}  \\
		&\times \prod_{I\in\CS(2k)\setminus\CI_{N+1}}
		 	\left(\zeta^{e_I}\right)^{(h_{N+1,I})}(1+L_{N+1,I}(\bs s)) 
		 \dee s_{2k-N-1}\cdots \dee s_1  ,
}
where:
\begin{itemize}
\item $c\ll1$; 
\item $h\in\{0,\dots,m-1\}$; 
\item $h_{N+1,I}\ge h_{N,I}$ with equality if $I\in \CI_{N+1}\setminus\{\emptyset\}$;
\item $\sum_{I\in \CS(2k)\setminus\CI_{N+1}} (h_{N+1,I}-h_{N,I}) \le h $;
\item $\tilde{G}$ is in the class $\CC_{2k-N-1}$. 
\end{itemize}

The first four claims are easy to verify, but our claim about $\tilde{G}$ requires some justification. To simplify the notation, we make the change of variables $s_{2k-N}'=\tau+C(s_1',\dots,s_{2k-N-1}')$. Let $\delta_{N,I}$ denote the coefficient of $x_{2k-N}'$ in the linear form $L_{N,I}$. If $I\in \CI_{N+1}$, so that $L_{N+1,I}=0$, we find that $L_{N,I}(s_1',\dots,s_{2k-N}')=\delta_{N,I}\tau$. So, if $I\in\CI_{N+1}\setminus\CI_N$, then $\delta_{N,I}\neq0$. Finally, we let $G_1(s_1',\dots,s_{2k-N-1}',\tau)$ to be the function $G(s_1',\dots,s_{2k-N}')$ after our change of variables. If $\nu_1$ denotes the generic order of the zero of $G_1(s_1',\dots,s_{2k-N-1}',\tau)$ at $\tau=0$, then the function $\tilde{G}$ will simply be a linear combination the functions
\[
\left.\frac{\partial^j}{\partial \tau^j}\right|_{\tau=0}(\tau^{-\nu_1}G_1(s_1',\dots,s_{2k-N-1}',\tau))
	= \frac{j!}{(j+\nu_1)!} \cdot \frac{\partial^{j+\nu_1}G_1}{\partial \tau^{j+\nu_1}}(s_1',\dots,s_{2k-N-1}',0). 
\]
Since $G$ is in the class $\CC_{2k-N}$, the above functions are in the class $\CC_{2k-N-1}$, which proves our claim about $\tilde{G}$.

It is straightforward to verify that $\bs h'$ and $\bs I'$ satisfy the required properties. Thus it remains to show that there is a suitable $d'$.

Now, relation \eqref{poweroflog} implies that the exponent of $\log R$ is $H_{N+1}-(N+1)-d'-2kA$, with 
\[
d' = d+\nu + \sum_{\substack{ I\in\CS^-(2k)\cap(\CI_{N+1} \setminus\CI_N) \\ h_{N,I}\ge2,\,\#I\ge3 }} (h_{N,I}-1)
	+ h-\sum_{I\in \CS(2k)\setminus\CI_{N+1}} (h_{N+1,I}-h_{N,I}) \ge 0 .
\]
Moreover, we have that
\[
H_{N+1}-d' = H_N-d+m-h\ge N+1
\quad\implies\quad H_{N+1}\ge d'+N+1,
\]
as needed. Finally, if $d'=0$, it is easy to check that $G(\bs s)=F(L_{N+1,\{1\}}(\bs s),\dots,L_{N+1,\{2k\}}(\bs s))$.

\medskip

(b) Here $\gamma_{j_{N+1}}<0$. We then shift the contours of $s_{2k-N},s_{2k-N-1},\dots,s_{j_{N+1}}$ in this order to the lines $\Re(s_j) = \lambda_j/(\log T)^{3/2}$, $j_{N+1}\le j\le 2k-N$. If $\lambda$ is large enough, then we do not encounter any poles and the horizontal lines contribute $\ll (\log R)^{O(1)}/T$ when we make this shift. Finally, when $\Re(s_j) = \lambda_j/(\log T)^{3/2}$ for $j_{N+1}\le j\le 2k-N$, and $\Re(s_j) = \lambda^j/\log R$ for $1\le j<j_{N+1}$, then we have that
\[
\Re(E(\bs s)) = - \frac{|\gamma_{j_{N+1}}|\lambda_{j_{N+1}}(1+O(1/\lambda))}{(\log T)^{3/2}} ,
\]
so that our integrand is $\ll e^{-\sqrt{\log R}}$ if $\lambda$ is large enough. We thus find that in this case
\[
J(R) \ll T^{-1+o(1)} ,
\]
as needed.
\end{proof}

\begin{rem}
Case 1 is feasible for some choice of $I_1,\dots,I_{2k-1}$. Indeed, if $k=1$ and $A\ge1$, so that $N=1$, then we note that at least one of the zeta factors must have survived in the numerator after shifting the $s_2$ contour, and there are none in the denominator, so there is a pole at $s_1'=0$. On the other hand, if $k\ge2$, then if $I_1=\{2k-1,2k\}$, $I_2=\{2k-1,2k-2\}$ and $I_3=\{2k-2,2k\}$, then $a_{2k}=a_{2k-1}=a_{2k-2}=0$, and $a_1\neq0$. Taking $h_{2k-1,I}=0$ for all $I$, we see that
\als{
m = \sum_{I\in\CS(2k)\setminus \CI_{2k-1}}(-1)^{\#I} +(A+1) \sum_{j\in[2k],\, a_j\neq0}1 
	=  (A+1) \sum_{j\in[2k],\, a_j\neq0}1\ge 1,
}
where we used Proposition \ref{CombProp}(a). Thus we see indeed that there is a genuine pole at $s_1'=0$. 
\end{rem}

It remains to prove the second intermediate step in the proof of \eqref{kthmoments-goal}:

\begin{proof}[Proof of Lemma \ref{contour-lem2}] We separate into three cases depending on whether $N=2k$, $N=2k-1$ or $N\le 2k-2$.

\medskip

\noindent {\bf Case 1 of the proof of Lemma \ref{contour-lem2}: $N=2k$.} Here there is no integral and we have that $J(R)=c(\log{R})^{H_{2k}-2k(A+1)-d}$.
Since $\CI_{2k}=\{I\subset[2k]\}$, we find that $H_{2k}=-1+2k(A+1)$, whence $J(R) = (\log R)^{-d-1}$. If $d=0$ and $\CE_{k,A}=-1$, the lemma follows with $c=1$; otherwise, we take $c=0$.

\medskip

\noindent {\bf Case 2 of the proof of Lemma \ref{contour-lem2}: $N=2k-1$.} 
In this case $J(R)$ is given by a one-dimensional integral over $s_1$. Moreover, there exist coefficients $a_j\in\Q$ such that $x_j= L_{2k-1,\{j\}}(x_1) = a_j x_1$ for all $j$. Therefore the only possible pole of the integrand in $J(R)$ is when $s_1=0$. If there is such a pole, then it means that $\{1\}\notin \CI_{2k-1}$. In this case, the order of this potential pole, say $m$, would be given by \eqref{pole-order} with $N=2k-1$, $\CI_{2k}=\CS(2k)$ and $\nu$ defined analogously, so that \eqref{poweroflog} implies that
\eq{pole-calc}{
m+H_{2k-1}-(2k-1)-2kA  
	& = 1+ \sum_{I\in\CS^*(2k)} (-1)^{\#I} 
		-  \sum_{\substack{ I\in\CS^-(2k) \setminus\CI_{2k-1} \\ \#I\ge3 ,\, h_{2k-1,I}\ge2 }} (h_{2k-1,I}-1) 
		- \nu \\
	&\le 0  ,
}
First, let us assume $m\ge1$ (i.e. there is a genuine pole at $s_1=0$). We find that $H_{2k-1}+1-2k(A+1) \le -1$. We then move the line of integration of $s_1=\sigma_1+it_1$ to the contour $\sigma_1=1/(\log(2+|t_1|))^{3/2}$, $|t_1|\le T$. No poles are encountered and the horizontal integrals contribute $\ll(\log R)^{O(1)}/T$. Moreover, the integral converges fast enough now (even when $A=0$) that we may remove the condition $|t_1|\le T$ at the cost of an error term of size $\ll (\log R)^{O(1)}/T$. We thus conclude that
\als{
J(R)
	&= c\cdot (\log R)^{H_{2k-1}-2k(A+1)+1-d } + O(T^{-1+o(1)})  ,
}
where
\[
c=  \int\limits_{\sigma_1=1/(\log(2+|t_1|))^{3/2}}
	G(s_1) \prod_{I\in\CS(2k)\setminus\CI_{sk-1}}
		 	\left(\zeta^{e_I}\right)^{(h_{2k-1,I})}(1+a_Is_1) 
		 \dee s_1  
\]
is some constant. This contributes towards the error term if $m\ge2$, $d\ge1$ or $A\le\frac{1}{2k}\binom{2k}{k}-1$, and towards the main term if $m=1$, $d=0$, $A>\frac{1}{2k}\binom{2k}{k}-1$ and $h_{2k-1,I}\in\{0,1\}$ for each $I\in\CS^-(2k)$, in which case $H_{2k-1}-2k(A+1)+1=-1$ by \eqref{pole-calc}. 

Alternatively, assume that $m\le 0$, so that there is no pole at $s_1=0$. We move $s_1$ to the line $\Re(s_1) = 0$. The horizontal lines contribute $\ll (\log R)^{O(1)}/T$. Furthermore, we note that we may extend the range of integration to all $s_1\in\C$ with $\Re(s_1)=0$ at the cost of an error term of size $\ll(\log R)^{O(1)}/T$. Consequently,
\[
J(R) = 
	c\cdot (\log R)^{H_{2k-1}+1-2k(A+1)-d} 
		+ O((\log R)^{O(1)}/T) ,
\]
where
\[
c = \frac{1}{2\pi} 
	\int_{-\infty}^\infty G(it) 
		\prod_{I\in\CS(2k)\setminus\CI_{2k-1}}  
		 	\left(\zeta^{e_I}\right)^{(h_{2k-1,I})}(1+ i a_I t) 
				 	\dee t 
\]
with $a_I:=\sum_{j\in I} a_j$. The power of $\log R$ is
\als{
H_{2k-1}+1-2k(A+1) -d 
	&= 1+ \sum_{I\in \CS_{2k-1}\setminus\{0\}} (-1)^{\#I} 
		- \sum_{I\in\CS(2k)\setminus \CI_{2k-1}} h_{2k-1,I} \\
	&\qquad	- (A+1)\cdot \#\{1\le j\le 2k:  a_j\neq0 \}  - d\\
	&=  1+ \SA(V_{2k-1}) 
	- \sum_{I\in\CS(2k)\setminus \CI_{2k-1}} h_{2k-1,I}  \\
	&\qquad - (A+1)\cdot \#\{1\le j\le 2k:  a_j\neq0 \}  -d 
}
in the notation of Proposition \ref{CombProp}. Clearly, this is maximized when $h_{2k-1,I}=0$ for all $I\in\CS(2k)\setminus \CI_{2k-1}$ and $d=0$, in which case 
\eq{G-formula}{
G(s)=F(a_1s,\dots,a_{2k}s)
	= \frac{P(a_1s,\dots,a_{2k}s)}{\prod_{j=1}^{2k} (a_js\zeta(1+a_js))^{A+1}} 
		+ O\left( \frac{(\log(2+|s|))^{O(1)}}{(1+|s|)^{(2k-1)(A+1)}}\right) 
}
when $\Re(s)=0$, by \eqref{h-mellin}.

Now, if $a_j=0$ for some $j\in[2k]$, then $\SA(V_{2k-1}) =-1$, by Proposition \ref{CombProp}(a). Note that there is at least one $j$ such that $a_j\neq0$; otherwise, the dimension of $V_{2k-1}$ would be $2k$, as it would contain the independent forms $s_1,\dots,s_{2k}$, which is a contradiction. We conclude that the power of $\log R$ is $\le -(A+1)\cdot \#\{1\le j\le 2k:  a_j\neq0 \}\le -A-1$. Consequently,
\[
J(R)  \ll (\log R)^{-A-1}
\]
in this case, which contributes towards the error term (i.e. $c=0$ in this case).

Finally, assume that $a_j\neq0$ for all $j\in[2k]$. Since $\dim(V_{2k-1})=2k-1$, Proposition \ref{CombProp}(b) implies that the power of $\log R$ is 
\[
H_{2k-1}+1-2k(A+1) -d \le \SA(V_{2k-1})-\dim(V_{2k-1})- 2kA \le \binom{2k}{k} - 2kA,
\]
with the second inequality being an equality when half of the $a_j$'s equal $+1$ and the other half $-1$. 

Even though this not needed for the proof, we remark that when $a_j\neq0$ for all $j$, we can give an asymptotic formula for $J(R)$. For simplicity, let as assume that $a_j=1$ for $j\le k$ and $a_j=-1$ for $j>k$. Then $a_I=\#(I\cap[1,k])-\#(I\cap(k,2k])$, which has the same parity as $\#I$. In particular, $\CI^-(2k)\cap\CI_{2k-1}=\emptyset$, so that $h_{2k-1,I}=0$ for all $I\in\CS^-(2k)$. Moreover, given $\ell\in\Z$, we have that $a_I=\ell$ for exactly $\binom{2k}{k+|\ell|}$ sets $I\subset[2k]$. Therefore
\als{
J(R)
	&=  \frac{(\log R)^{\binom{2k}{k}-2kA}}{2\pi} 
	\int_{-\infty}^\infty \frac{P(it,-it,\dots,it,-it)}{t^{2k(A+1)}}
		 \cdot \frac{\prod_{\substack{  \ell \ \text{even}, \ge 2}} |\zeta(1+i\ell t)|^{2\binom{2k}{k+\ell}}}
 {\prod_{\substack{  \ell \ \text{odd}, \ge 1 }}  |\zeta(1+i\ell t)|^{2\binom{2k}{k+\ell}}}	\dee t  \\
 	&\qquad
					+ O(T^{-1+o(1)}). 
}
This completes the study of Case 2.

\medskip

\noindent {\bf Case 3 of the proof of Lemma \ref{contour-lem2}: $N\le 2k-2$.} 
We shift the contours of $s_{2k-N},s_{2k-N-1},\dots,s_1$ in this order to the lines $\Re(s_j) = \lambda^j/(\log T)^{3/2}$, $1\le j\le 2k-N$. If $\lambda$ is large enough in terms of $k$ (but independently of $R$), then the functions $\Re(L_{N,I}(\bs s))$ with $I\in\CS(2k)\setminus \CI_N$ have constant sign in the entire domain where the contour shifting is performed, so that no poles are encountered. The horizontal lines contribute $\ll (\log R)^{O(1)}/T$. Finally, we note that the integrand on the new lines of integration is $\ll(\log\log R)^{O(1)}/[(1+|s_1|)\cdots(1+|s_{2k-N}|)]$, by \eqref{zeta-bound} and our assumption that $G$ is in the class $\CC_{2k-N}$. We thus find that
\[
J(R) \ll (\log R)^{H_N-N-2kA-d} (\log\log R)^{O(1)} 
\]
in this case. We need to understand the power of $\log R$. Firstly, note that
\als{
H_N-N-2kA-d
	&\le 2k-N  + \sum_{I\in\CI_N\setminus\{\emptyset\}}(-1)^{\#I} 	
		- (A+1) \cdot \#\{1\le j\le 2k: \{j\}\notin \CI_N\}  .
}
To continue, we separate two cases.

If there is $\{j\}\in \CI_N$, then $\sum_{I\in\CI_N\setminus\{\emptyset\}}(-1)^{\#I} =-1$ by Proposition \ref{CombProp}(a). Since $\dim(V_N)=N$ by construction, there are $\le N$ integers $j$ with $\{j\}\in\CI_N$. The power of $\log R$ is thus
\[
\le 2k-N -1 - (A+1)(2k-N) =- 1 - A(2k-N) \le -1 - 2\cdot {\bf 1}_{A\ge1} ,
\]
and we can see that $J(R)$ satisfies the conclusion of the lemma with no main term (i.e. $c=0$). So assume that there is no $\{j\}\in \CI_N$. Then we find that
\[
H_N-N-2kA
	\le 2k-N  + \sum_{I\in\CI_N\setminus\{0\}}(-1)^{\#I} - 2k(A+1) 	
	\le \binom{2k}{k} - 2k(A+1)-2,
\]
by Proposition \ref{CombProp}(c), which again means that the lemma holds with $c=0$.
\end{proof}


\subsection{Dyadic intervals}\label{dyadic-section} We conclude this section with a brief explanation of the proof of \eqref{kthmoments-dyadic}. We have that
\[
\CM_{\tilde{f_0},2k}(R) = \CM_{\tilde{h},2k}(R) +O\left(\frac{1}{\log R}\right) ,
\]
where $\tilde{h}(x) = h(x) - h(x+\frac{\log 2}{\log R})$, by the argument leading to \eqref{f_0-g}. We then note that 
\[
\hat{(\tilde{h})}_R(s) = \hat{h}_R(s)(1-2^{-s}),
\]
so that Perron's inversion formula and relation \eqref{IBP} imply that 
\als{
\CM_{\tilde{f_0},2k}(R)	
	&=    \frac{1}{(2i\pi)^{2k}} \idotsint\limits_{\substack{\Re(s_j)=\lambda^j/\log R \\ 1\le j\le 2k}}
   D(\bs s) 
	\left( \prod_{j=1}^{2k} \hat{h}_R(s_j)(1-2^{-s_j}) \right) ds_{2k}\cdots ds_1  
	+  O\left(\frac{1}{\log R}\right) ,
}
where $\lambda$ and $T$ are as before. We thus find that
\eq{dyadic-start}{
\CM_{\tilde{f_0},2k}(R)	
	&=    \frac{1}{(2i\pi)^{2k}} \idotsint\limits_{\substack{\Re(s_j)=\lambda^j/\log R \\ 1\le j\le 2k}}
			 \tilde{F}(\bs s)R^{s_1+\cdots+s_{2k}}
		 \prod_{I\in \CS^*(2k)}  \zeta(1+s_I)^{(-1)^{\#I}} \dee s_1\cdots \dee s_{2k} \\
	&\qquad\qquad	+  O\left(\frac{1}{\log R}\right) ,
}
where
\[
\tilde{F}(\bs s) : =  P(\bs s) \prod_{j=1}^{2k} \frac{\hat{h}_R(s_j)(1-2^{-s_j})}{R^{s_j}} 
\]
and $P$ is defined as above. The function $\tilde{F}$ is in the class $\CC_{2k}$, since the factor $1-2^{-s_j}$ annihilates the pole of $\hat{h}_R(s_j)$ at $s_j=0$. We thus see that the above integral has the same shape as the integral $I_{g,2k}(R)$ with $A=0$, with the difference that $e_I=-1$ when $\#I=1$. We thus follows the argument leading to \eqref{kthmoments} when $A=0$ with the obvious modifications. The only difference is that in the analogue of \eqref{pole-order} we have instead
\als{
m &= -\nu + \sum_{\substack{ I\in\CI_{N+1}\setminus\CI_N \\ \#I=\text{even} }} (h_{N,I}+1)
	- \sum_{\substack{ I\in \CI_{N+1}\setminus\CI_N \\ \#I=\text{odd},\, h_{N,I}=0}} 1  \\
	&=-\nu+ \sum_{I\in\CI_{N+1}\setminus \CI_N} (h_{N,I}  + (-1)^{\#I}) 
	-  \sum_{\substack{ I\in\CS^-(2k)\cap(\CI_{N+1}\setminus\CI_N) \\ h_{N,I}\ge2,\,\#I\ge3 }} (h_{N,I}-1) ,
}
with $\nu$ defined as in the proof of Lemma \ref{contour-lem1}. We thus find that $m$ has the same expression as when $A=-1$, and relation \eqref{kthmoments-dyadic} follows by the proof of \eqref{kthmoments-goal} when $A=-1$. An important remark is that when $A=-1$ there is a power of $(\log R)^{-2k}$ in the denominator of the integrand of $I_{g,2k}(R)$ that is not present in the denominator of the right hand side of \eqref{dyadic-start}.

 \section{Lower bounds} \label{lb}

In this section we complete our proof of  Theorem \ref{mainthm} by showing that, for fixed $k\in\Z_{\ge 1}$, $A\in\Z_{\ge0}$ and $\epsilon>0$, there are positive constants $c_{k,A}'>0$ and $c_k''>0$ such that
\eq{kthmoments-lb}{
\CM_{f_A,2k}(R) \ge 
	c_{k,A}' (\log R)^{\CE_{k,A}-\epsilon}  + O_\epsilon((\log R)^{\CE_{k,A}-1}),
}
and
\eq{kthmoments-lb-dyadic}{
\CM_{\tilde{f}_0,2k}(R) \ge 
	c_k'' (\log R)^{D_{k,0}-\epsilon}  + O_\epsilon((\log R)^{D_{k,0}-1}) ,
}
where $\tilde{f_0}(x)=f_0(x)-f_0(x+\frac{\log 2}{\log R})$, as before. Evidently, this completes the proof of Theorem \ref{mainthm}, since if the constants in the leading terms in \eqref{kthmoments} or \eqref{kthmoments-dyadic} were 0, we would obtain a contradiction to the above lower bounds by letting $R\to\infty$.

As in the previous section, there is some smooth smooth function $h$ such that $h(x)=1$ for $x\le 1-1/(\log R)^C$ and $h(x)=0$ for $x\ge1$, with $C=(2k-1)2^{2k+1}+2k+2$, so that
\[
\CM_{f_0,2k}(R) = \CM_{h,2k}(R) +O\left(\frac{1}{\log R}\right) 
\quad\text{and}\quad 
\CM_{\tilde{f_0},2k}(R) = \CM_{\tilde{h},2k}(R) +O\left(\frac{1}{\log R}\right) ,
\]
where $\tilde{h}(x)=h(x)-h(x+\frac{\log 2}{\log R})$. So, it suffices to prove that
\eq{kthmoments-lb-unified-1}{
\CM_{g,2k}(R) \ge 
	c_{k,A}' (\log R)^{\CE_{k,A}-\epsilon}  + O((\log R)^{\CE_{k,A}-1}),
}
where $g \in \{h,\tilde{h}\}$ when $A=0$, and $g=f_A$ when $A\ge1$.

Positivity is a key to this proof: we will consider the sum restricted to those integers with a convenient prime factorization, which clearly provides a lower bound. The fact that these integers have a convenient prime factorization means the corresponding sum is technically easier to analyze.

\subsection{First manipulations}
To ease notation, let
\[
\Pi_R:=\prod_{p\le R} \left(1-\frac{1}{p}\right)
\]

We start by observing that
\[
\CM_{g,2k}(R) = \Pi_R
		\sum_{P^+(n)\le R}  \frac{1}{n} 
			\left( \sum_{ d| n }
			\mu(d)g\left(\frac{\log d}{\log R}\right) \right)^{2k} ,
\]
since $\text{supp}(g)\subset(-\infty,1]$. So \eqref{kthmoments-lb} follows immediately when $A>-1+\frac{1}{2k}\binom{2k}{k}$ by noticing that $\CE_{k,A}=-1$ in this case and that we always have that $\CM_{g,2k}(R)\ge\Pi_R \gg1/\log R$. 

For the rest of the proof, we assume that $A\le-1+\frac{1}{2k}\binom{2k}{k}$, so that
\[
\CE_{k,A} = \binom{2k}{k} - 2k(A+1) \ge 0  .
\]
Let $q_1<q_2<\cdots$ be the sequence of all prime numbers and set 
\[
Q = \prod_{j=1}^{A+1} q_j .
\]
If $A\ge1$, or if $A=0$ and $g=h$, then we restrict our attention to integers of the form $Qn$ with $P^-(n)>Q$, so that
\[
\CM_{g,2k}(R) \ge \frac{\Pi_R}{Q}
		\sum_{p|n\,\Rightarrow\, q_{A+1}<p\le R} \frac{1}{n}\left( \sum_{J\subset[A+1]} (-1)^{\#J}
			\sum_{d|n} \mu(d)  g\left(\frac{\log(q_Jd)}{\log R} \right) \right)^{2k} ,
\]
where $q_J:=\prod_{j\in J} q_j$. We define
\eq{w-def}{
w(x) := \sum_{J\subset[A+1]} (-1)^{\#J} g\left(x+ \frac{\log q_J}{\log R}\right) ,
}
so that
\eq{lb-e1}{
\CM_{g,2k}(R) \ge \frac{\Pi_R}{Q}
		\sum_{p|n\,\Rightarrow\, q_{A+1}<p\le R} 
			\frac{1}{n}\left( \sum_{d|n} \mu(d)w\left(\frac{\log d}{\log R} \right) \right)^{2k} .
}
We further note that $w=\tilde{h}$ when $A=0$, so that the right hand side of \eqref{lb-e1} is a trivial lower bound for $\CM_{\tilde{h},2k}(R)$. Therefore, relation \eqref{kthmoments-lb-unified-1} will follow in all cases if we can show that
\eq{kthmoments-lb-unified}{
W&:=  \Pi_R \sum_{p|n\ \Rightarrow\ q_{A+1}<p\le R}
	\frac{1}{n}\left( \sum_{d|n} \mu(d)w\left(\frac{\log d}{\log R} \right) \right)^{2k}  \\
&\ge c_{k,A}'' (\log R)^{\CE_{k,A}-\epsilon}  + O_\epsilon((\log R)^{\CE_{k,A}-1}) 
}
for some $c_{k,A}''>0$, where $w$ is defined by \eqref{w-def} with $g=h$ if $A=0$ and $g=f_A$ if $A\ge1$. 

\medskip

Next, observe that
\eq{w-mellin}{
\hat{w}_R(s) = (\log R)\sum_{J\subset[A+1]} (-1)^{\#J} 
	\int_{-\infty}^{\infty} g\left(u + \frac{\log q_J}{\log R}\right) R^{su} \dee u
	= \hat{g}_R(s) \prod_{j=1}^{A+1}(1-q_j^{-s}) .
}
In particular, we see that $\hat{w}_R$ has an analytic continuation to $\C$ and it satisfies the bound
\eq{w-bound-1}{
\hat{w}_R(s) \ll \frac{R^{\Re(s)}}{(|s|+1)(\log R)^A} \quad(\Re(s)\ge -1 ) ,
}
which follows by the definition of $f_A$ when $A\ge1$ and by \eqref{h-bound-prelim} otherwise. Finally, we note that we also have the bound
\eq{w-bound-2}{
\hat{w}_R(s) \ll \frac{R^{\Re(s)}(\log R)^{O(1)}}{1+|s|^2} \quad(\Re(s)\ge 1/\log R),
}
which follows from \eqref{IBP} when $A=0$. (This bound can be shown to hold in a larger range, but the above range is good enough for our purposes.)

Before we apply Perron's inversion formula to write the right hand side of \eqref{lb-e1} in terms of $\hat{w}_R$, we use positivity again to focus on integers $n$ of a certain convenient form. We set
\[
y=\exp\{(\log R)^{1-\epsilon'}\}
\quad\text{and}\quad
Y=\exp\{(\log R)^{1-\epsilon'/2}\},
\]
where $\epsilon'>0$ will be taken to be small enough in terms of $\epsilon$, and write 
\[
\CN = \{n\in\Z_{\ge 1} : p|n\,\Rightarrow\, q_{A+1}<p\le y\}.
\]
We then focus on integers of the form $n=mp_1\cdots p_k$, where $m\in\CN$ with $m\le Y$, and $p_1,\dots,p_k$ are distinct primes from the interval $(R^{1/2},R]$. For such an integer $n$, if $d|n$, then either $d=d'$ or $d=d'p_\ell$, for some $d'|m$ and some $\ell\in\{1,\dots,k\}$. So, we conclude that
\als{
W     & \ge
			\frac{\Pi_R}{k!} 
			\sum_{\substack{m\in\CN \\ m\le Y}}
			\sum_{\substack{\sqrt{R}<p_1,\dots,p_k \le R \\ \text{distinct primes}}}
				\frac{1}{mp_1\cdots p_k} 	\\
	&\qquad\times	\left( \sum_{\ell=1}^k \sum_{d|m} \mu(d) w\left( \frac{\log(p_\ell d)}{\log R} \right) 
			-  \sum_{d|m} \mu(d) w\left( \frac{\log d}{\log R} \right) \right)^{2k} .
}
Note that the condition that $m\le Y=R^{o(1)}$ certainly implies that $d<R/(2q_{A+1})$ for all divisors $d$ of $m$ (since $q_{A+1}\ll_A 1$), so that 
\als{
\sum_{d|m} \mu(d) w\left( \frac{\log d}{\log R} \right) 
	&= \sum_{J\subset[A+1]} (-1)^{\#J} 
		 \sum_{d|m} \mu(d) \left(1-\frac{\log(q_Jd)}{\log R}\right)^A  \\
	&= \sum_{d|Qm} \mu(d) \left(1-\frac{\log d}{\log R}\right)^A  =0 
}
by observing that $\Delta^{(A+1)}(1-x)^A=0$ by \eqref{differencing3}, and by applying \eqref{differencing2} with $r=A+1$, since $\omega(Qm)\ge \omega(Q)\ge A+1$ here. We thus conclude that
\als{	W &\ge
		\frac{\Pi_R}{k!}
			\sum_{\substack{m\in\CN \\ m\le Y}}
			\sum_{\substack{\sqrt{R}<p_1,\dots,p_k \le R \\ \text{distinct primes}}}
				\frac{1}{mp_1\cdots p_k}
		\left( \sum_{\ell=1}^k \sum_{d|m} \mu(d) w\left( \frac{\log(p_\ell d)}{\log R} \right)\right)^{2k} \\
	&\ge \frac{\Pi_R}{k! (\log R)^k } 
			\sum_{\substack{m\in\CN \\ m\le Y}}
			\sum_{\substack{\sqrt{R}<p_1,\dots,p_k\le R \\ \text{distinct primes}}}
				\frac{\prod_{j=1}^k \log p_j}{mp_1\cdots p_k}
		\left( \sum_{\ell=1}^k \sum_{d|m} \mu(d) w\left( \frac{\log(p_\ell d)}{\log R} \right)\right)^{2k} .
}
We note that
\[
\sum_{n\in\CN,\, n>Y} \frac{\tau_r(n)}{n} \le Y^{-1/\log y}
	\sum_{n\in\CN} \frac{\tau_r(n)}{n^{1-1/\log y}}
	 \ll \frac{1}{(\log R)^B},
\]
for any fixed $B\ge1$ and $r\in\Z_{\ge 1}$. Therefore, we may drop the conditions that $m\le Y$ and that the $p_\ell$'s are distinct at the cost of an admissible error term, finding that
\als{
W
	&\ge \frac{\Pi_R}{k!(\log R)^k } 
			\sum_{m\in\CN}
			\sum_{\substack{\sqrt{R}<p_1,\dots,p_k\le R}} 
				\frac{\prod_{j=1}^k \log p_j}{mp_1\cdots p_k}
		\left( \sum_{\ell=1}^k \sum_{d|m} \mu(d) w\left( \frac{\log(p_\ell d)}{\log R} \right)\right)^{2k} \\
	&\qquad + O\left( \frac{1}{(\log R)^{100}} \right) .   
}

Next, we expand the $2k$-th power as follows:
\[
\left( \sum_{\ell=1}^k \sum_{d|m} \mu(d) w\left( \frac{\log(p_\ell d)}{\log R} \right)\right)^{2k} 
	= \sum_{\substack{J_1\cup\cdots\cup J_k=[2k]\\ J_i\cap J_j=\emptyset\text{ for }i\ne j}}
		\prod_{\ell=1}^k \prod_{j\in J_\ell} 
			\sum_{ d_j|m} \mu(d_j)  
				w\left( \frac{\log(p_\ell d_j)}{\log R} \right),
\]
with the convention that if $J_\ell=\emptyset$, then the corresponding factor equals 1. Clearly, the summands corresponding to those $k$-tuples $\bs J=(J_1,\dots,J_k)$ all of whose components have an even number of terms are non-negative. We write $\mathcal{J}$ for the set of $k$-tuples $\bs J$ such that $(J_1,\dots,J_k)$ is a partition of $[2k]$ and either $\#J_\ell=2$ for all $\ell$, or there is some $\ell$ such that $\#J_\ell$ is an odd number. Then
\[
\left( \sum_{\ell=1}^k \sum_{d|m} \mu(d) w\left( \frac{\log(p_\ell d)}{\log R} \right) \right)^{2k} 
	\ge \sum_{\bs J\in\mathcal{J}} 
		\prod_{\ell=1}^k 
			\prod_{j \in J_\ell}
				\sum_{ d_j|m} \mu(d_j)  w\left( \frac{\log(p_\ell d_j)}{\log R} \right).
\]
Moreover, if $D\in\CN$, then
\[
\sum_{\substack{m\in \CN \\ D|m}} \frac{1}{m} 
	= \frac{1}{D} \prod_{q_{A+1}<p\le y} \left(1-\frac{1}{p}\right)^{-1} 
		= \frac{c_1\prod_{p\le y}(1-1/p)^{-1}}{D} 
\]
with $c_1=\prod_{p\le q_{A+1}}(1-1/p)\asymp1$. So, we conclude that
\eq{lb-e2}{
W  &\ge \frac{c_1\prod_{y<p\le R}(1-1/p)}{k!(\log R)^k} 
		 \sum_{\bs J\in\mathcal{J}} 
		\sum_{d_1,\dots,d_{2k}\in \CN} \frac{\mu(d_1)\cdots \mu(d_{2k})}{[d_1,\dots,d_{2k}]} \\
		&\qquad\qquad\times \prod_{\ell=1}^k \sum_{\sqrt{R}<p_\ell\le R} \frac{\log p_\ell}{p_\ell}
				\prod_{j\in J_\ell} w\left( \frac{\log(p_\ell d_j)}{\log R} \right) 
		- O\left( \frac{1}{(\log R)^{100}} \right) .
}
For the convenience of notation, set
\[
W(\bs J)
	= \frac{1}{(\log R)^k} 	
		\sum_{d_1,\dots,d_{2k}\in \CN} \frac{\mu(d_1)\cdots \mu(d_{2k})}{[d_1,\dots,d_{2k}]} \\
		 \prod_{\ell=1}^k \sum_{\sqrt{R}<p_\ell\le R} \frac{\log p_\ell}{p_\ell}
				\prod_{j\in J_\ell} w\left( \frac{\log(p_\ell d_j)}{\log R} \right),
\] 
so that
\[
W \ge \frac{c_1\prod_{y<p\le R}(1-1/p)}{k!} 
		 \sum_{\bs J\in\mathcal{J}} W(\bs J) - O\left( \frac{1}{(\log R)^{100}} \right) .
\]
We will show that the dominant contribution comes from the terms $\bs J$ with $\#J_\ell=2$ for all $\ell$.

\subsection{Mellin transformation}
Fix a choice of sets $\bs J=(J_1,\dots,J_k)\in\mathcal{J}$ and let
\[
\CL = \{1\le \ell\le k: J_\ell\neq\emptyset\} .
\]

For $\ell\notin\CL$, the sum over $p_\ell$ is
\[
\sum_{\sqrt{R}<p_\ell\le R} \frac{\log p_\ell}{p_\ell} 
	= \frac{\log R}{2} + O\left(e^{-c\sqrt{\log R}}\right) .
\]
So 
\als{
W(\bs J) = \frac{2^{\#\CL-k}}{(\log R)^{\#\CL}}
		\sum_{d_1,\dots,d_{2k}\in \CN} \frac{\mu(d_1)\cdots \mu(d_{2k})}{[d_1,\dots,d_{2k}]}
			\prod_{\ell \in \CL} 
				\sum_{p_\ell>\sqrt{R}} \frac{\log p_\ell}{p_\ell}
					\prod_{j\in J_\ell} w\left( \frac{\log(p_\ell d_j)}{\log R} \right) 
\\
					+O\left(\frac{1}{(\log R)^{100}}\right) ,
}
where the condition that $p_\ell\le R$ was dropped because it is implied by the fact that $\text{supp}(w)\subset(-\infty,1]$. 

Next, we use Perron's formula $2k$ times to write each appearance of $w$ as an integral of $\hat{w}_R$. We thus find that
\al{
W(\bs J) = \frac{2^{\#\CL-k}(\log R)^{-\#\CL} }{(2\pi i)^{2k}}
		\idotsint\limits_{\substack{\Re(s_j)=1/\log R \\ 1\le j\le 2k}}\ &
		\sum_{d_1,\dots,d_{2k}\in \CN} \frac{\prod_{j=1}^{2k}\mu(d_j)d_j^{-s_j}}{[d_1,\dots,d_{2k}]}
			\left(\prod_{\ell\in\CL} \sum_{p_\ell>\sqrt{R}} \frac{\log p_\ell}{p_\ell^{1+s_{J_\ell}}} \right)\nonumber\\
		&\quad\times	
			\left( \prod_{j=1}^{2k} \hat{w}_R(s_j) \right) 
					\dee s_1\cdots \dee s_{2k} +O\left(\frac{1}{\log R}\right),
\label{eq:WJ}
}
with the notational convention that $s_J:=\sum_{j\in J}s_j$. By possibly re-indexing the variables $s_1,\dots,s_{2k}$, we may assume that $\CL=\{1,\dots,L\}$, where $L=\#\CL$, and that $\max J_\ell= 2k-L+\ell$ for all $\ell\in\{1,\dots,L\}$. We want to move the variables $s_{2k-L+1},\dots,s_{2k}$ to the left. First, we need some bounds on the sum over $p_\ell$. We note that
\[
 \sum_{p>\sqrt{R}} \frac{\log p}{p^{1+s}}
 	= -\frac{\zeta'}{\zeta}(1+s) + O(1) - \sum_{p\le R^{1/2}} \frac{\log p}{p^{1+s}} 
\]
for $\Re(s)\ge-1/3$. Using standard bounds on the Riemann zeta function (see, for example, Titchmarsch \cite[Theorem 3.11]{Tit}), we find that
\als{
 \sum_{p>\sqrt{R}} \frac{\log p}{p^{1+s}}
 	& \ll \frac{1}{|s|} + \log(2+|t|) + R^{\max\{0,-\sigma\}/2} \sum_{p\le \sqrt{R}} \frac{\log p}{p} \\
	&\ll R^{\max\{0,-\sigma\}/2} \log(R+|t|)  ,
}
where $s=\sigma+it$, as usual. Moreover, note that if $\sigma_j\ge -1/\log y$ for all $j\in\{1,\dots,2k\}$, then
\als{
\abs{\sum_{d_1,\dots,d_{2k}\in \CN} \frac{\prod_{j=1}^{2k}\mu(d_j)d_j^{-s_j}}{[d_1,\dots,d_{2k}]}}
	&\le \sum_{d_1,\dots,d_{2k}\in \CN} 
		\frac{\prod_{j=1}^{2k}\mu^2(d_j)d_j^{1/\log y}}{[d_1,\dots,d_{2k}]}	\\
	&\le \prod_{p\le y} \left( 1+ \frac{p^{1/\log y}(4^k-1)}{p}\right) \ll  (\log y)^{4^k-1},
}
using the estimate $p^{1/\log y}=1+O(\log p/\log y)$ for $p\le y$. 

We are now ready to move the variables $s_{2k-L+1},\dots,s_{2k}$ in \eqref{eq:WJ} to the left. First, we move the variable $s_{2k}$ to the line $\Re(s_{2k}) = -1/\log y$. (Here we can use \eqref{w-bound-2} to justify the convergence required for this manoeuvre.) We pick up a simple pole from the sum over $p_{2k}$ when $s_{J_L}=0$. The integrand when $\Re(s_{2k})=-1/\log y$ is
\[
\ll (\log y)^{4^k-1} (\log R)^{L-1} \log(R+|t|) R^{1/(2\log y)} \prod_{j=1}^{2k} \abs{\hat{w}_R(s_j)}
	\ll \frac{(\log R)^{O(1)}\log(2+|t|) R^{-1/(2\log y)}}{(|s_1|^2+1)\cdots(|s_{2k}|^2+1)} 
\]
by \eqref{w-bound-2}. So we find that
\als{
W(\bs J) = \frac{2^{L-k}(\log R)^{-L} }{(2\pi i)^{2k-1}}
		\idotsint\limits_{\substack{\Re(s_j)=1/\log R \\ 1\le j\le 2k-1 \\ s_{J_L}=0}}\ 
		\sum_{d_1,\dots,d_{2k}\in \CN} \frac{\prod_{j=1}^{2k}\mu(d_j)d_j^{-s_j}}{[d_1,\dots,d_{2k}]}
			\left(\prod_{\ell=1}^{L-1} \sum_{p_\ell>R^{1/2}} \frac{\log p_\ell}{p_\ell^{1+s_{J_\ell}}} \right)\\
		\times	
			\left( \prod_{j=1}^{2k} \hat{w}_R(s_j) \right) 
					\dee s_1\cdots \dee s_{2k-1} 
				+O\left(\frac{1}{(\log R)^{100}}\right) .
}
Now the product $\prod_{\ell=1}^{L-1} \sum_{p_\ell>R^{1/2}} (\log p_\ell)p_\ell^{-1-s_{J_\ell}}$ doesn't depend on the variables $s_j\in J_L\setminus \{2k\}$, and so we encounter no poles if we move all of these variables to the lines $\Re(s_j)=0$. Having done this, by \eqref{w-bound-1} the growth of $\hat{w}_R(s_j) $ only depends weakly on $R$.

Then we repeat the same argument by moving $s_{2k-1}$ to the left, then $s_{2k-2}$, and so on and so forth, until all the sums over primes have been removed and replaced by contributions coming from poles. Writing $s_j=it_j$, we conclude that
\al{
W(\bs J) =  \frac{2^{L-k}(\log R)^{-L} }{(2\pi)^{2k-L}}
		\idotsint\limits_{\substack{t_1,\dots,t_{2k}\in\R \\ t_{J_\ell} = 0\,(1\le \ell\le L)}} 
		\sum_{d_1,\dots,d_{2k}\in \CN} \frac{\prod_{j=1}^{2k}\mu(d_j)d_j^{-it_j}}{[d_1,\dots,d_{2k}]}
			\left( \prod_{j=1}^{2k} \hat{w}_R(it_j) \right)
					\dee t_1\cdots \dee t_{2k-L} \nn
		 + O\left(\frac{1}{(\log R)^{100}}\right) .
		\label{W(J)}
}

 We note that, for any $t_1,\dots,t_{2k}$, we have
\[
\sum_{d_1,\dots,d_{2k}\in \CN} \frac{\prod_{j=1}^{2k}\mu(d_j)d_j^{-it_j}}{[d_1,\dots,d_{2k}]}
	= \prod_{q_{A+1}<p\le y} \left(1+ \frac{\lambda_p(\bs t)}{p}\right) ,
\]
where
\eq{lambda-def}{
\lambda_p(\bs t) 
	:= \sum_{I\in \CS^*(2k)}  (-1)^{\#I} p^{-it_I}  
	= -1 + \prod_{j=1}^{2k} (1-p^{-it_j}).
}

\subsection{An auxiliary result}
Before we continue with the estimation of $W(\bs J)$, we establish a preliminary (and fairly standard) result.

\begin{lem}\label{zeta} For $z\ge y\ge3$ and $t\in\R$, we have that
\[
\sum_{y<p\le z} \frac{1}{p^{1+it}} 
	= \begin{cases}
		\ds \log\left(\frac{\log z}{\log y}\right) +O(1) &\text{if}\ |t|\le 1/\log z ,\\
		\\
		\ds \log\left(\frac{1}{|t|\log y}\right) +O(1) &\text{if}\ 1/\log z<|t|\le 1/\log y ,\\
		\\
		O(1)  &\text{if}\ y\ge |t|\ge 1/\log y .
	\end{cases}
\]
Finally, if $|t|\ge y$, then
\[
\abs{\sum_{y<p\le z} \frac{1}{p^{1+it}} }
	\le \log\left(\frac{\log( \min\{|t|,z\}) }{\log y}\right) +O(1) .
\]
\end{lem}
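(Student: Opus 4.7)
The plan is to reduce $\sum_{y<p\le z} p^{-1-it}$ to the oscillatory integral $J(t) := \int_{\log y}^{\log z} e^{-itv}/v\, dv$ via partial summation, and then estimate $J(t)$ by splitting at the critical scale $v_0 = 1/|t|$ and integrating by parts once whenever $|tv| \gtrsim 1$.

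First I would combine Chebyshev's estimate $\sum_{p \le x}(\log p)/p = \log x + O(1)$ with the Prime Number Theorem in the form $\pi(x) = \mathrm{li}(x) + O_K(x/\log^K x)$ and partial summation (with the substitution $v=\log u$) to establish
\[
\sum_{y < p \le z} \frac{1}{p^{1+it}} = \int_{\log y}^{\log z} \frac{e^{-itv}}{v}\,dv + O(1)
\]
uniformly in $y \le z$ and real $t$. Upon integrating by parts, the PNT error contributes $O(|t|/\log^{K-1} y)$, which is $O(1)$ when $K$ is chosen large enough relative to $|t|$. For $|t|$ too large for this to go through (in particular $|t|\ge y$), I would abandon the reduction and simply invoke the triangle bound $\bigl|\sum_{y<p\le z} p^{-1-it}\bigr| \le \sum_{y<p\le z} 1/p = \log(\log z/\log y) + O(1)$, which is already the content of case~(d) when $z\le|t|$.

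With the reduction in hand, I would analyze $J(t)$ case by case. In case~(a), $|t|\le 1/\log z$, so $|tv|\le 1$ throughout and Taylor expanding $e^{-itv} = 1 + O(|tv|)$ gives $J(t) = \log(\log z/\log y) + O(|t|\log z) = \log(\log z/\log y) + O(1)$. In case~(b), $1/\log z < |t| \le 1/\log y$, so $v_0 \in [\log y, \log z]$; splitting $J(t)$ at $v_0$, the Taylor expansion on $[\log y, v_0]$ yields $\log(v_0/\log y) + O(1) = \log(1/(|t|\log y)) + O(1)$, while on $[v_0, \log z]$ a single integration by parts using $e^{-itv} = (d/dv)(-e^{-itv}/(it))$ produces a boundary term and a remaining integral each of size $O(1/(|t|v_0)) = O(1)$. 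In case~(c), $1/\log y \le |t| \le y$, the integration-by-parts estimate applies on the whole interval and gives $|J(t)| \ll 1/(|t|\log y) \ll 1$. For case~(d) with $z > |t|$, I would split the sum at $p = |t|$: the low part is bounded trivially by $\log(\log|t|/\log y) + O(1)$, and the high part is $O(1)$ by the same integration-by-parts argument on $[\log|t|,\log z]$, where $|tv|\ge |t|\log|t|$ is comfortably large. The main obstacle I anticipate is making the partial-summation reduction uniform in $t$: naive integration by parts introduces a factor of $|t|$ multiplying the PNT error, so one must either use a polynomial-strength PNT error (easily handled by taking $K$ large) or bypass the reduction entirely for very large $|t|$, as above.
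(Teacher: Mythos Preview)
Your approach via the oscillatory integral $J(t)$ is natural and works cleanly for cases (a), (b), and for the part of (c) with $|t|$ moderate. But there is a genuine gap in case (c) when $|t|$ is close to $y$, and correspondingly in the ``high part'' of case (d).

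The reduction $\sum_{y<p\le z}p^{-1-it}=J(t)+O(1)$ is not uniform in $|t|$ up to $y$. Partial summation against $E(u)=\pi(u)-\mathrm{li}(u)$ produces an error of size $(1+|t|)\int_y^z |E(u)|\,u^{-2}\,du$. With the polynomial PNT error $E(u)\ll_K u/(\log u)^K$, this is $\ll_K |t|/(\log y)^{K-1}$; to make it $O(1)$ for $|t|$ as large as $y$ you would need $K\asymp \log y/\log\log y$, but the implied constant $C_K$ then blows up --- so ``taking $K$ large'' does not rescue uniformity. Even with the exponential error $E(u)\ll u\exp(-c\sqrt{\log u})$, the reduction error is $\gg |t|\exp(-c'\sqrt{\log y})$, which is unbounded once $|t|\gg\exp(c'\sqrt{\log y})$, far short of $y$. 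The same issue bites in your case (d) high part, where you apply the reduction on $[\log|t|,\log z]$.

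The paper does not attempt a self-contained argument for case (c): it simply cites relation (4.4) of \cite{Kou13}, which is the bound $\sum_{y<p\le z}p^{-1-it}\ll 1$ for $1/\log y\le|t|\le y$, and then builds (b) and (d) from (a) and (c) by splitting the sum at $e^{1/|t|}$ and at $|t|$ respectively --- exactly parallel to your splits of $J(t)$. The missing ingredient is a direct use of the zero-free region, e.g.\ via $(\zeta'/\zeta)(\sigma+it)\ll\log(|t|+2)$ for $\sigma\ge 1$: comparing the prime sum to $\log\zeta(1+1/\log z+it)-\log\zeta(1+1/\log y+it)$ and integrating $\zeta'/\zeta$ gives a bound $\ll(\log|t|)/\log y\le 1$ for $|t|\le y$. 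The cancellation you need here lives in the primes themselves and is not visible from the smooth integral $J(t)$; your partial-summation route loses exactly a factor of $|t|$ at this step.
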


\begin{proof}
If $|t|\le 1/\log z$, then we note that $p^{-it}=1+O(\log p/\log z)$ for $p\le z$, so that
\[
\sum_{y<p\le z} \frac{1}{p^{1+it}} = \sum_{y<p\le z} \frac{1}{p} + O(1) 
	=  \log\left(\frac{\log z}{\log y}\right) + O(1),
\]
as claimed. For $|t|\ge1/\log y$ and $y\ge|t|$, then we note that
\[
\sum_{y<p\le z} \frac{1}{p^{1+it}} \ll 1
\]
by relation (4.4) in \cite{Kou13}. Next, if $1/\log z\le |t| \le 1/\log y$, then we apply the results we just proved to deduce that 
\[
\sum_{y<p\le e^{1/|t|}} \frac{1}{p^{1+it}}  
	= \log\left(\frac{\log e^{1/|t|}}{\log y}\right) + O(1) 
	= \log\left(\frac{1}{|t|\log y}\right) + O(1)
\]
and that
\[
\sum_{e^{1/|t|}<p\le z} \frac{1}{p^{1+it}}  \ll 1 .
\]
Finally, if $|t|\ge y$, then we note that
\[
\sum_{|t|<p\le z} \frac{1}{p^{1+it}} \ll 1,
\]
so that
\als{
\abs{\sum_{y<p\le z} \frac{1}{p^{1+it}} }
	= \abs{\sum_{y<p\le \min\{|t|,z\}} \frac{1}{p^{1+it}} }+O(1) 
		&\le\sum_{y<p\le \min\{|t|,z\}} \frac{1}{p} +O(1) \\
		&\le \log\left(\frac{\log( \min\{|t|,z\}) }{\log y}\right) +O(1) . \qedhere
}
\end{proof}

\subsection{Lower bound for the main term}\label{lb-even}
We now return to the study of the quantity $W(\bs J)$, defined by \eqref{W(J)}.
First, we show that the term when $\#J_\ell=2$ for all $\ell$ contributes what we claim to be our main term. In this case $L=\#\CL=k$ and, by possibly permuting the $t_j$'s, we may assume that $J_\ell=\{\ell,k+\ell\}$ for all $\ell\in\{1,\dots,k\}$. Thus for these terms we have $t_{k+\ell} = - t_\ell$. We want to show that the integrand in \eqref{W(J)} is non-negative for all choices of $t_1,\dots,t_k$.  We have that
\[
\prod_{j=1}^{2k} \hat{w}_R(it_j) 
	= \prod_{j=1}^{k} \hat{w}_R(it_j)  \hat{w}_R(-it_j) 
	= \prod_{j=1}^{k} \abs{\hat{w}_R(it_j)}^2
	= \prod_{j=1}^{k} \abs{t_j^{A+1} \hat{g}_R(it_j)}^2 \prod_{a=1}^{A+1} \frac{|1-q_a^{-it_j}|^2}{t_j^2} ,
\]
which is clearly non-negative for all $t_1,\dots,t_k$. Moreover, if $t_j\ll1$ for all $j$, then relations \eqref{w-mellin} and \eqref{h-mellin} imply that
\[
\prod_{j=1}^{2k} \hat{w}_R(it_j) \ge \frac{c_2}{(\log R)^{2kA}}
\]
for some $c_2>0$. Furthermore, the definition \eqref{lambda-def} implies that in this case we have 
\[
\lambda_p(\bs t) =- 1+ \prod_{j=1}^k|1-p^{-it_j}|^2 \ge -1,
\]
 so 
\[
\sum_{d_1,\dots,d_{2k}\in \CN} \frac{\prod_{j=1}^{2k}\mu(d_j)d_j^{-it_j}}{[d_1,\dots,d_{2k}]}
	= \prod_{q_{A+1}<p\le y} \left(1+ \frac{\lambda_p(\bs t)}{p}\right) \ge 0
\]
for such $\bs t$. 

Since the integrand in \eqref{W(J)} is non-negative, we may obtain a lower bound by restricting the range of integration to any region we wish. We restrict to $t_1\in[1,2]$ and $t_j \in[t_1,t_1+1/\log y]$ for $2\le j\le k$. The volume of this region is $\asymp 1/(\log y)^{k-1}$. Moreover, in this region we find that
\[
\lambda_p(\bs t)   = -1 + |1-p^{it_1}|^{2k} + O\left(\frac{\log p}{\log y}\right) .
\]
Therefore
\begin{align*}
\sum_{d_1,\dots,d_{2k}\in \CN} \frac{\prod_{j=1}^{2k}\mu(d_j)d_j^{-it_j}}{[d_1,\dots,d_{2k}]}
	&= \exp\left\{\sum_{p\le y} \frac{-1+|1-p^{it_1}|^{2k}}{p}+O\Bigl(\frac{1}{p^2}\Bigr) +O\Bigl(\frac{\log{p}}{p\log{y}}\Bigr)\right\}\\
	&\gg \frac{1}{\log{y}}\exp\left\{ \sum_{p\le y} \frac{|1-p^{it_1}|^{2k}}{p} \right\}.
\end{align*}
By binomial expansion
\[
|1-p^{i t_1}|^{2k}  =  \sum_{j+j'=2k}\binom{2k}{k} (-1)^{j'} p^{i(j-j')t_1}.
\]
The terms with $j=j'$ contribute a factor
\[
\exp\Bigl(\sum_{p\le y}\binom{2k}{k}\frac{1}{p}\Bigr)\asymp (\log y)^{\binom{2k}{k}}.
\]
By the final part of Lemma \ref{zeta}, since $t_1\in[1,2]$ the terms with $j\neq j'$ contribute a factor
\[
\exp\Bigl(O\Bigl(\sup_{|j|\le 2k}\Bigl|\sum_{p\le y}\frac{p^{i j t_1}}{p}\Bigr|\Bigr)\Bigr)\asymp 1.
\]
Thus we obtain a lower bound of $\gg (\log{y})^{\binom{2k}{k}-1}$ for our sum over $d_1,\dots d_{2k}$. Since the region of integration has volume $\gg (\log{y})^{-(k-1)}$, this gives
\[
W(\bs J) \gg \frac{(\log y)^{\binom{2k}{k} - k}}{(\log R)^{k+2kA}} 
\]
in this case. So we find that the total contribution to the right hand side of \eqref{lb-e2} from such $\bs J$ is $\ge c_2(\log y)^{\binom{2k}{k}-k+1}/(\log R)^{(2A+1)k}$ for some $c_2>0$, which is greater than the claimed main term in \eqref{kthmoments-lb-unified} if $\epsilon'$ is small enough in terms of $\epsilon$ and $k$. 

\medskip

\subsection{Upper bound for the error term}\label{lb-odd}
It remains to show that the contribution of the $\bs J$'s for which at least one of the $\#J_a$'s is odd, is smaller than what we have above. Before we get started, we note that
\als{
\lambda_p(\bs t) 
	= -1 + \prod_{j=1}^{2k} (1-p^{-it_j}) 
	= - 1+ \prod_{j=1}^{2k} (p^{it_j/2}-p^{-it_j/2}) 
	= - 1 +(-4)^k \prod_{j=1}^{2k} \sin\left(\frac{t_j\log p}{2}\right) 
}
whenever $t_1+\cdots+t_{2k}=0$, which is the case here. In particular, $-4^k\le\lambda_p(\bs t)+1\le 4^k$, whence
\[
\abs{\sum_{d_1,\dots,d_{2k}\in \CN} \frac{\prod_{j=1}^{2k}\mu(d_j)d_j^{-it_j}}{[d_1,\dots,d_{2k}]}}
	=\abs{ \prod_{q_{A+1}<p\le y} \left(1+ \frac{\lambda_p(\bs t)}{p}\right) }
	\ll  \prod_{4^k<p\le y} \left(1+ \frac{\lambda_p(\bs t)}{p}\right) .
\]

Next, we split the region of integration in \eqref{W(J)} into various subsets. First, we note that, by a dyadic decomposition argument and \eqref{w-bound-2}, we have that
\[
W(\bs J) \ll \frac{(\log T)^{2k}}{(\log R)^{L+2kA}} \max_{1\le T_1,\dots,T_{2k}\le T}
		\frac{ \Lambda(\bs T)}{T_1\cdots T_{2k-L}}
			+ \frac{(\log R)^{O(1)}}{T} ,
\]
where
\[
\Lambda(\bs T) := \idotsint\limits_{\substack{T_j\le |t_j|+1\le 2T_j \\ 1\le j\le 2k-L \\ 
			 t_{J_\ell} = 0\ (\ell\in\CL)}} 
		\prod_{4^k<p\le y}\left(1+\frac{\lambda_p(\bs t)}{p}\right) 
					dt_1\cdots dt_{2k-L} .
\]
We take
\[
T= \exp\{(\log\log R)^2\} ,
\]
and fix a choice of $T_1,\dots,T_{2k-L}$ as above. We will further break the region of integration according to which sums $t_J=\sum_{j\in J}t_j$ are small. Indeed, by Lemma \ref{zeta} the product $\prod_{q_{A+1}<p\le y} (1+\lambda_p(\bs t)/p)$ can become large only if there are such configurations. Note that if $t_{J_1}$ and $t_{J_2}$ are both small, so is any linear combination of $t_{J_1}$ and $t_{J_2}$. Thus, we are naturally led to the following definition: given free variables $x_1,\dots,x_{2k}$ and $J\subset[2k]$, we define the linear form
\[
L_J(x_1,\dots,x_{2k}) := \sum_{j\in J} x_j,
\]
(thinking of the linear form as acting on the space $\Q^{2k}$) and, given $\CI\subset \CS(2k)$, we set
\[
V_B(\CI) := \left\{ \sum_{I\in\CI} \frac{a_I}{q_I} \cdot L_I \, : \,a_I,q_I\in\Z\cap[-B,B]  \right\}
\]
(thought of as a subspace in the dual of $\Q^{2k}$.) For the simplicity of notation, we also set
\[
V(\CI) := V_\infty(\CI) =\Span_{\Q}(\{ L_I: I\in\CI \}) .
\]
Since there are only finitely many linear forms $L_I,\, I\subset[2k]$, there is some finite $B_0=B_0(k)$ such that, for any $\CI$, if $L_J\in V(\CI)$, then $L_J\in V_B(\CI)$ for all $B\ge B_0$. We assume that $B\ge B_0$ from now on (we will eventually choose $B$ to be large enough in terms of $k$).

We set $m=\fl{\log\log y}$ and, to each $\bs t$, we associate the sets
\[
\CI_j  = \CI_j(\bs t) :=\{ I\subset[2k] : e^m |t_I|+1 \le e^{j+1} \} \quad(0\le j\le m).
\]
Note that if $L_I\notin V_B(\CI_m)$, then $|t_I|>1$. Since $t_I\ll T$ for all $I$, Lemma \ref{zeta} implies that
\begin{align*}
\prod_{4^k<p\le y} \left(1+\frac{\lambda_p(\bs t)}{p}\right) 
&=\exp\Bigl(\sum_{I\in \CS^*(2k)}(-1)^{\#I}\sum_{4^k<p\le y}\Bigl(p^{-1-it_I}+O\Bigl(\frac{1}{p^2}\Bigr)\Bigr)\Bigr)\\
&=\exp\Bigl(\sum_{\substack{I\in \CS^*(2k)\\ |t_I|\le 1}}(-1)^{\#I}\min\Bigl\{\log\log{y},\log\Bigl(\frac{1}{|t_I|}\Bigr)\Bigr\}\Bigr)(\log\log{R})^{O(1)}\\
&\ll (\log\log R)^{O(1)} \prod_{\substack{I\in \CS^*(2k) \\ L_I \in V_B(\CI_m) }}
		\min\left\{ \log y, \frac{1}{|t_I|} \right\}^{(-1)^{\#I}} .
\end{align*}
If $L_I\in V_B(\CI_j)\setminus V_B(\CI_{j-1})$, where $\CI_{-1}=\{\emptyset\}$ so that $V_B(\CI_{-1})=\{0\}$, then $I\notin \CI_{j-1}$, which means that $e^m|t_I|+1>e^j$. On the other hand, since $L_I\in V_B(\CI_j)$, then we find that $e^m|t_I|\le B\cdot \#\CI_j\cdot e^{j-m}$. We thus conclude that
\[
\min\left\{ \log y, \frac{1}{|t_I|} \right\} \asymp e^{m-j} 
	\quad\text{for}\quad L_I\in V_B(\CI_j)\setminus V_B(\CI_{j-1}),\ 0\le j\le m.
\]
Therefore
\[
\prod_{4^k<p\le y} \left(1+\frac{\lambda_p(\bs t)}{p}\right) 
	\ll (\log\log R)^{4^k}  \prod_{j=0}^m  e^{(m-j)F_j},
\]
where
\[
F_j := \sum_{\substack{I\subset[2k] \\ L_I\in V_B(\CI_j)\setminus V_B(\CI_{j-1})}} (-1)^{\#I} .
\]
(Here we use the fact that there are only finitely many indices $j$ for which $\CI_j\neq \CI_{j-1}$.) 
Summing over the $\ll(\log\log R)^{O(1)}$ choices for $\CI_0,\CI_1,\dots,\CI_m$, we conclude that
\[
\Lambda(\bs T) \ll (\log\log R)^{O(1)}
	 \max_{\substack{ \CI_m\supset\cdots\supset\CI_0\supset\{J_1,\dots,J_L\} }} 
		\left(\nu(\bs T,\bs \CI) \cdot   \prod_{j=0}^m e^{(m-j)F_j } \right) ,
\]
where $\nu(\bs T,\bs \CI)$ denotes the $(2k-L)$-dimensional Lebesgue measure of $(t_1,\dots,t_{2k-L})\in\R$ such that $T_j\le|t_j|+1\le 2T_j$ for $j\le 2k-L$ and 
\[
\{ I\subset [2k]   :  e^m|t_I|+1 < e^{j+1} \} = \CI_j 
	\quad(0\le j\le m) ,
\]
where the variables $t_{2k-L+1},\dots,t_{2k}$ are defined via the equations $t_{J_\ell}=0$ for $\ell\in\{1,\dots,2k\}$. (In particular, $\CI_0\supset \{J_1,\dots,J_L\}$.)

Next, we use linear algebra to understand $\nu(\bs T,\bs \CI)$. If
\[
D_j = \dim V(\CI_j) ,
\]
then we may find sets $I_1,\dots,I_{D_m}$ such that, for each $j\le m$, $\{L_{I_1},\dots,L_{I_{D_j}}\}$ is a basis of $V(\CI_j)$. We may also assume that $\{J_\ell: 1\le \ell\le L\}$ is contained in $\{I_1,\dots,I_{D_1}\}$. 

Eliminating variables from linear combinations of the asymptotic formulas
\[
t_I = O(e^{j-m})   \quad(I \in  \{I_1,\dots,I_{D_j} \}) 
\]
(for example, as in Gaussian elimination), yields  $D_j$ of the variables $t_i$, say the variables $\{t_i: i\in \CD_j\}$ (where $\#\CD_j=D_j$), in terms of linear combinations of the other variables, up to an error of  $O(e^{j-m})$. 
We can also arrange the sets $\CD_0,\dots,\CD_L$ to satisfy 
$\CD_0\subset\cdots\subset \CD_L$. (Recall that $\CI_j\neq \CI_{j-1}$ for only finitely many $j$'s.) We may therefore prove that
\[
\nu(\bs T,\bs I) \ll (\log y)^L \left(\prod_{j=0}^m e^{(D_j-D_{j-1})(j-m)} \right)
	\Biggl(\prod_{\substack{1\le j\le 2k \\ j \notin \CD_0\cup\cdots \cup \CD_L}} T_j\Biggr),
\]
where the extra factor $(\log y)^L$ is included because we are not integrating over the variables $t_{2k-L+1},\dots,t_{2k}$, which are fixed via the conditions $t_{J_\ell}=0$, $1\le \ell\le L$. By the above discussion, we find that
\[
\Lambda(\bs T) \ll T_1\cdots T_{2k-L} (\log y)^L (\log\log R)^{O(1)}
	 \max_{\substack{ \CI_m\supset\cdots\supset\CI_0\supset\{J_1,\dots,J_L\} }} 
		  \prod_{j=0}^m e^{(m-j)( F_j - (D_j-D_{j-1}) ) }  .
\]
We note that
\als{
\sum_{j=0}^m (m-j)( F_j - (D_j-D_{j-1}) )
	&= \sum_{j=0}^{m-1} \sum_{i=1}^{m-j}( F_j - (D_j-D_{j-1}) )   \\
	&= \sum_{i=1}^m \sum_{j=0}^{m-i} ( F_j - (D_j-D_{j-1}))  \\
	&= \sum_{i=1}^m ( \SA(V(\CI_{m-i}))  -   \dim( V(\CI_{m-i})) )
}
in the notation of Section \ref{combinatorial}, provided that $B$ is large enough. Since $\exp$ is a convex function, we have that
\[
e^{\sum_{j=0}^m (m-j)( F_j -(D_j-D_{j-1}) ) }
\le \frac{1}{m} \sum_{j=0}^{m-1} e^{ m(\SA(V(\CI_j)) - \dim(V(\CI_j))} 
	\asymp \frac{1}{m} \sum_{j=0}^{m-1} (\log y)^{\SA(V(\CI_j) ) - \dim(V(\CI_j))}
\]
We then conclude that
\[
W(\bs J) 
	\ll \frac{(\log\log R)^{O(1)}(\log y)^L}{(\log R)^{L+2kA} }
		\max_{\CI \supset\{J_1,\dots,J_L\} } (\log y)^{\SA(V(\CI)) - \dim(V(\CI))}.
\]
The above discussion reduces \eqref{kthmoments-lb-unified} to proving that
\eq{lb-goal}{
\SA(V(\CI))- \dim(V(\CI)) \le \binom{2k}{k} -2k -1 
}
whenever $\CI$ contains the sets $J_1,\dots,J_L$ and at least one of the $J_i$'s has an odd number of elements. This follows directly by Proposition \ref{CombProp}, thus completing the proof of \eqref{kthmoments-lb} and, hence, of Theorem \ref{mainthm}.



\section{On the anatomy of integers contributing to $\CM_{f,2k}(R)$}\label{anatomy}

This section is devoted to establishing Theorems \ref{thm-factors} and \ref{thm-smooth}. Throughout this section, given $n\in\Z_{\ge 1}$ and $R\ge1$, we adopt the notations
\[
\Omega(n;R):= \sum_{\substack{p^\alpha\|n,\, p\le R}} \alpha
\quad\text{and}\quad
\Omega(n;r,R) :=\sum_{\substack{p^\alpha\|n,\, r<p\le R}} \alpha  .
\]

A key observation, that we will use several times, is that that if $(a,b)=1$, then
\eq{submult-1}{
M_{f_A}(ab;R) 
	&= \sum_{d|a}\sum_{d'|b} \mu(d) \mu(d') f_A\left(\frac{\log d}{\log R}+ \frac{\log d'}{\log R} \right) 	 \\
	&= \sum_{\substack{ d'|b \\ d\le R}} \mu(d') \left( \frac{\log(R/d')}{\log R}\right)^A
		\sum_{d|a} \mu(d) f_A\left(\frac{\log d}{\log(R/d')}\right) 
}
by the definition of $f_A$.

\subsection{An estimate for the logarithmic means}

We start by proving a preliminary result, where the integer $n$ is weighted by $1/n$. The transition to the uniform weights will be accomplished in the subsequent section.

\begin{thm} \label{thm-factors-log}
Let $R\ge2$, $k\in\Z_{\ge 1}$ and $A\in\Z_{\ge0}$.
\begin{enumerate} 
\item If $A>\frac{1}{2k}\binom{2k}{k}-1$ and $\eta\in[\log2/\log R,1]$, then
\als{
\prod_{p\le R}\left(1-\frac{1}{p}\right)\sum_{P^+(n)\le R}
	\frac{\Omega(n;R^\eta) M_{f_A}(n;R)^{2k}}{n} 
	&\ll_{k,A} \frac{\eta^{2k}}{\log R} + (\log R)^{\binom{2k}{k}-2k(A+1)} \log\log R .
}
\item If $A\le \frac{1}{2k}\binom{2k}{k}-1$ and $1-1/\binom{2k}{k}\le v\le 2-\epsilon$, then
\[
\prod_{p\le R}\left(1-\frac{1}{p}\right)
	\sum_{P^+(n)\le R} \frac{v^{\Omega(n)} M_{f_A}(n;R)^{2k}}{n} 
	\ll_{\epsilon,k,A} (\log R)^{v\binom{2k}{k}-2k(A+1)} (\log\log R)^{O_k(1)}.
\]
\end{enumerate}
\end{thm}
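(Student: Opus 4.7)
Plan of proof. For both parts, the analysis centers on the $v$-weighted generating function
$$\Phi(v;\eta):=\prod_{p\le R}\left(1-\frac1p\right)\sum_{P^+(n)\le R}\frac{v^{\Omega(n;R^\eta)}M_{f_A}(n;R)^{2k}}{n},$$
with part (b) corresponding to $\eta=1$. Expanding $M_{f_A}(n;R)^{2k}$ as a multiple divisor sum and summing the geometric series at each prime yields the factorization
$$\Phi(v;\eta)=\prod_{p\le R^\eta}\frac{1-1/p}{1-v/p}\cdot\tilde{\CM}_{f_A,2k}(R;v,\eta),$$
where $\tilde{\CM}_{f_A,2k}(R;v,\eta):=\sum_{d_1,\ldots,d_{2k}}v^{\omega_\eta([d_1,\ldots,d_{2k}])}\prod_j\mu(d_j)f_A(\log d_j/\log R)/[d_1,\ldots,d_{2k}]$ and $\omega_\eta(m)$ counts distinct prime factors of $m$ at most $R^\eta$. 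By Mertens, the first factor is $\asymp_v(\eta\log R)^{v-1}$ for $0<v<2$, and $\tilde{\CM}_{f_A,2k}(R;1,\eta)=\CM_{f_A,2k}(R)\asymp(\log R)^{\CE_{k,A}}$ by Theorem \ref{mainthm}.

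For part (b), we take $\eta=1$ and adapt the Mellin--Perron contour argument of Section \ref{contour} to the modified Dirichlet series
$$D_v(\bs s)=\prod_p\left(1+\frac{v}{p}\Bigl(\prod_{j=1}^{2k}(1-p^{-s_j})-1\Bigr)\right)=P_v(\bs s)\prod_{I\in\CS^*(2k)}\zeta(1+s_I)^{v(-1)^{\#I}},$$
where $P_v$ is analytic and uniformly bounded near the origin. Combined with the Mellin transforms $\hat{f_A}_R(s_j)\sim R^{s_j}\zeta(1+s_j)^{A+1}/(\log R)^A\cdot(\text{analytic})$, iterated contour shifting contributes, from each pole configuration indexed by a subspace $V\subset W_k$, an exponent of $\log R$ equal to $v\SA(V)-\dim V+2k-2k(A+1)$. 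The key new input is a $v$-weighted generalization of Proposition \ref{CombProp}: for all admissible $V$,
$$v\SA(V)-\dim V\le v\Bigl(\binom{2k}{k}-1\Bigr)-(2k-1),$$
with equality only on the extremal $(2k-1)$-dimensional configurations of Proposition \ref{CombProp}(b). The hypothesis $v\ge1-1/\binom{2k}{k}$ is exactly what ensures that the sub-extremal case $\dim V\le 2k-2$ of Proposition \ref{CombProp}(c) does not dominate after $v$-weighting, while $v\le 2-\epsilon$ controls the local factor at $p=2$; for non-integer $v$, the branch-cut singularities of $\zeta(1+s_I)^{v(-1)^{\#I}}$ are handled via Hankel contours in the Selberg--Delange style. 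Combining with the Mertens factor $(\log R)^{v-1}$ then yields the claim.

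For part (a), we observe that $\Omega(n;R^\eta)=\partial_v v^{\Omega(n;R^\eta)}|_{v=1}$, so the LHS equals $\Phi'(1;\eta)$. Logarithmic differentiation of the factorization produces two contributions. The first, coming from $\partial_v\log\prod_{p\le R^\eta}(1-1/p)/(1-v/p)|_{v=1}=\sum_{p\le R^\eta}1/(p-1)\asymp\log\log R$, yields $(\log\log R)\CM_{f_A,2k}(R)\asymp(\log R)^{\binom{2k}{k}-2k(A+1)}\log\log R$, the second term in the claim. The second, from $\partial_v\log\tilde{\CM}_{f_A,2k}(R;v,\eta)|_{v=1}$, is the main contribution; we analyze it by adapting the contour argument of Section \ref{contour} with the inserted $v$-derivative, which introduces an additional sum $\sum_{p\le R^\eta}(1/p)(\prod_j(1-p^{-s_j})-1)/D_p(\bs s)$ into the integrand. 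Taylor-expanding $\prod_j(1-p^{-s_j})\sim s_1\cdots s_{2k}(\log p)^{2k}$ near the origin and summing $\sum_{p\le R^\eta}(\log p)^{2k}/p\asymp(\eta\log R)^{2k}$ by the prime number theorem, the extra factor $\prod_j s_j$ cancels one order of vanishing from each $\hat{f_A}_R(s_j)$, effectively reducing by one the pole order in each variable. Under the hypothesis $A>\binom{2k}{k}/2k-1$ the resulting bound is $\eta^{2k}/\log R$.

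The main technical obstacle is the $v$-weighted generalization of Proposition \ref{CombProp}: verifying the inequality $v\SA(V)-\dim V\le v(\binom{2k}{k}-1)-(2k-1)$ uniformly for admissible $V$ requires careful case analysis across the three regimes of the proposition, and the lower bound $v\ge1-1/\binom{2k}{k}$ is tight precisely at the boundary of Proposition \ref{CombProp}(c). A secondary difficulty in part (a) is correctly tracking the contribution of the $v$-derivative through the contour argument to extract the sharp factor $\eta^{2k}$; the naive per-prime bound via the identity $M_{f_A}(p^\alpha m;R)=M_{g_p}(m;R)$ with $g_p(x)=f_A(x)-f_A(x+\log p/\log R)$ only yields $\eta^{2k}$, whereas the refined generating-function analysis is needed to harvest the additional $1/\log R$ saving coming from the joint pole structure of the $\zeta$-factors and the Mellin transforms.
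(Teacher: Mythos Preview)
Your approach to part (a) contains a genuine gap. You write that the Mertens contribution is $(\log\log R)\,\CM_{f_A,2k}(R)\asymp(\log R)^{\binom{2k}{k}-2k(A+1)}\log\log R$, but in the regime $A>\frac{1}{2k}\binom{2k}{k}-1$ of part (a) we have $\CE_{k,A}=-1$, not $\binom{2k}{k}-2k(A+1)$; hence $\CM_{f_A,2k}(R)\asymp(\log R)^{-1}$ and your first piece is actually $\asymp(\log\log R)/\log R$. For $\eta$ of constant size (say $\eta=1/2$) the claimed bound is $\asymp 1/\log R$, so your Mertens piece alone already exceeds it by a $\log\log R$ factor. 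Consequently the two pieces in your decomposition $\Phi'(1;\eta)=(\text{Mertens})\cdot\CM+\partial_v\tilde{\CM}$ must cancel to leading order, and bounding them separately cannot succeed; in particular your claim that $\partial_v\tilde{\CM}\ll\eta^{2k}/\log R$ is incompatible with the theorem. The paper avoids this by expanding $\Omega(n;R^\eta)=\sum_{q^j\|n,\,q\le R^\eta}j$ prime by prime and using the differencing identity $M_{f_A}(mq^j;R)=\sum_{d\mid m}\mu(d)\{f_A(\frac{\log d}{\log R})-f_A(\frac{\log(qd)}{\log R})\}$; under Mellin inversion this inserts a factor $\prod_j(1-q^{-s_j})$, effectively lowering each singleton exponent $e_{\{j\}}$ from $A$ to $A-1$, and the contour argument of Section~\ref{contour} then gives the per-prime bound $(\log q)^{2k}/(\log R)^{2k+1}+(\log R)^{\binom{2k}{k}-2k(A+1)}$, which sums to the claim.

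Your outline for part (b) is a reasonable alternative and does lead to the correct $v$-weighted combinatorial inequality (indeed the paper proves the bound $v(1+\SA(V))-\dim V-1\le v\binom{2k}{k}-2k$ in the course of its argument, using $v\ge 1-1/\binom{2k}{k}$ exactly at the boundary between Proposition~\ref{CombProp}(b) and (c)). However, the paper takes a rather different route that avoids fractional powers of $\zeta$ altogether: it writes $n=ab$ with $P^+(a)\le y=R^{c/\log\log R}<P^-(b)$, extracts $A+1$ small primes from $a$ to form a differencing weight $W_q$, controls the rough part $b$ via an upper bound sieve, and is left with an integral involving only integer powers of $\zeta$ to which the methods of Section~\ref{lb} apply directly. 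Your proposed Selberg--Delange route would instead require navigating $2^{2k}-1$ distinct branch cuts (one on each hyperplane $s_I=0$) in a $2k$-dimensional contour shift, which is not standard and which you do not explain how to carry out; this is the main technical obstacle you would need to overcome.
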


\begin{proof}To ease notation, for this proof we let all implied constants depend on $k,A$ and $\epsilon$ without explicitly stating this. 

\medskip

(a) First of all, we claim we may restrict our attention to small enough $\eta$. To see this, it suffices to show that
\eq{largeprimesbound}{
\sum_{P^+(n)\le R}
	\frac{\Omega(n;R^\delta,R) M_{f_A}(n;R)^{2k}}{n} 
	\ll_\delta 1,
}
for any fixed $\delta>0$.  Now, H\"older's inequality applied to \eqref{submult-1}  yields
\eq{submult-2}{
M_{f_A}(ab;R)^{2k} \le \tau(b)^{2k-1} \sum_{\substack{d'|b \\ d'\le R}} \mu^2(d') \left( \frac{\log(R/d')}{\log R}\right)^{2kA} M_{f_A}(a; R/d')^{2k} .
}
Therefore, writing $n=ab$, where $b=\prod_{p^v\|n,\,R^\delta<p\le R}p^v$, we find that
\als{
&\sum_{P^+(n)\le R}
	\frac{\Omega(n;R^\delta,R) M_{f_A}(n;R)^{2k}}{n} \\
	&\quad\le \sum_{\substack{p|b\,\Rightarrow R^\delta<p\le R}} 
		\frac{\tau(b)^{2k-1}  \Omega(b)}{b} \sum_{\substack{d'|b \\ d'\le R}} \mu^2(d') 
		\left( \frac{\log(R/d')}{\log R}\right)^{2kA} 
		\sum_{P^+(a)\le R} \frac{M_{f_A}(a;R/d')^{2k}}{a}  \\
	&\quad\ll  \sum_{\substack{p|b\,\Rightarrow R^\delta<p\le R}} 
		\frac{\tau(b)^{2k-1} \Omega(b)}{b} \sum_{\substack{d'|b \\ d'\le R}} \mu^2(d') 
		\left( \frac{\log(R/d')}{\log R}\right)^{2kA}  \frac{\log R}{\log(R/d')} 
		\ll_\delta 1
}
by Theorem \ref{mainthm}, since $A>\frac{1}{2k}\binom{2k}{k}-1$ and so $\CE_{k,A}=-1$ and $A\ge 1$. This proves \eqref{largeprimesbound}, so for the rest of the proof, we may assume that $\eta$ is sufficiently small. 

Expanding $\Omega(n,R^\eta)$, we have that
\als{
S&:=\prod_{p\le R}\left(1-\frac{1}{p}\right)\sum_{P^+(n)\le R}
	\frac{\Omega(n;R^\eta) M_{f_A}(n;R)^{2k}}{n} \\
 	&= \prod_{p\le R}\left(1-\frac{1}{p}\right)\sum_{\substack{q\le R^\eta \\ j\ge1}} \frac{j}{q^j} 
	\sum_{\substack{P^+(m)\le R \\ q\nmid m}} \frac{M_{f_A}(mq^j;R)^{2k}}{m} \\
	&\le \prod_{p\le R}\left(1-\frac{1}{p}\right)\sum_{q\le R^\eta} \frac{q}{(q-1)^2} 
	\sum_{P^+(m)\le R} 
		\frac{\left( \sum_{d|m} \mu(d) \left\{f_A\left( \frac{\log d}{\log R}\right) 
		- f_A\left( \frac{\log(qd)}{\log R}\right)
		\right\}\right)^{2k}}{m} .
}
where we used \eqref{submult-1} with $a=q^j$ and $b=m$. Expanding the $2k^{th}$ power, we find that
\[
S \le \sum_{q\le R^\eta} \frac{q}{(q-1)^2}
	\sum_{\substack{P^+(d_j)\le R \\ 1\le j\le 2k}} 
		\frac{\prod_{j=1}^{2k}\mu(d_j) \left\{f_A\left( \frac{\log d_j}{\log R}\right) 
			- f_A\left( \frac{\log(qd_j)}{\log R}\right)\right\}}
			{[d_1,\dots,d_{2k}]} ,
\]
Here $A\ge1$, so that $\hat{(f_A)}(s)=A!R^s/(s^{A+1}(\log R)^A)$ decays fast, and Perron inversion implies that
\eq{S-e1}{
S\le  \sum_{q\le R^\eta} \frac{q}{(q-1)^2}
			\idotsint\limits_{\substack{\Re(s_j)=1/\log R \\ 1\le j\le 2k}}
			\sum_{d_1,\dots,d_{2k}\ge1}
		\frac{\prod_{j=1}^{2k}\mu(d_j)d_j^{-s_j}}{[d_1,\dots,d_{2k}]} 
			\prod_{j=1}^{2k} \frac{A!R^{s_j}(1-q^{-s_j})}{(\log R)^A s_j^{A+1}} \dee s_1\cdots \dee s_{2k} .
}
The above integral is amenable to the methods of Section \ref{contour}. Precisely, we note that the integrand is of the form
\[
A!^{2k}  \left(\frac{\log q}{\log R}\right)^{2k} 
	\frac{	P(\bs s)R^{s_{[2k]}} }{(\log R)^{2k(A-1)}}
 \prod_{I\in\CS^*(2k)} \zeta^{e_I}(1+s_I)
		 \prod_{j=1}^{2k} \frac{(1-q^{-s_j})/(s_j\log q)}{(s_j\zeta(1+s_j))^A},
\]
where $P(\bs s)$ is as in Section \ref{contour}, $e_I=+1$ for $I\in\CS^+(2k)$, $e_I=-1$ for $I\in\CS^-(2k)$ with $\#I>1$, and $e_I=A-1\ge0$ for $\#I=1$. If $q\le R^\delta$ with $\delta$ small enough, then the argument leading to \eqref{kthmoments} yields that
\eq{kthmoments-p}{
&\idotsint\limits_{\substack{\Re(s_j)=1/\log R \\ 1\le j\le 2k}}
			\sum_{d_1,\dots,d_{2k}\ge1}
		\frac{\prod_{j=1}^{2k}\mu(d_j)d_j^{-s_j}}{[d_1,\dots,d_{2k}]} 
			\prod_{j=1}^{2k} \frac{A!R^{s_j}(1-q^{-s_j})}{(\log R)^A s_j^{A+1}} \dee s_1\cdots \dee s_{2k}  \\
	&\qquad\ll \frac{(\log q)^{2k}}{(\log R)^{2k+1}} + (\log R)^{\binom{2k}{k}-2k(A+1)} ,
}
with the first term coming from Case 1a and the second one from Case 2. Inserting the above inequality into \eqref{S-e1} completes the proof of part (a).

\medskip

(b) We will use a variation of the argument of Section \ref{lb}. The fact that Proposition \ref{CombProp} requires $s_{[2k]}=0$ complicates the proof; otherwise, a direct application of the method of Section \ref{lb-odd} would be possible. 

Call $S'$ the sum in question. As usual, we may replace replace $f_0$ by a sufficiently smooth function $h$. So write $g=h$ if $A=0$, and $g=f_A$ otherwise. Note that, since $M_g(n;R)$ depends only on the square-free kernel of $n$, we have that
\[
S' = \prod_{p\le R}\left(1-\frac{1}{p}\right)
	\sum_{P^+(n)\le R} \frac{\mu^2(n)v^{\omega(n)}}{\phi_v(n)} M_{f_A}(n;R)^{2k}
\]
with $\phi_v(n)=\prod_{p|n}(p-v)$. Set $y=R^{c/\log\log R}$ for a small enough but fixed $c$. Given an integer $n$, we decompose it as $n=ab$ with $P^+(a)\le y<P^-(b)$. If $\omega(a)\le A+1$, then $M_{f_A}(n;R)\ll 4^{k\omega(b)}$, and we immediately find that such $n$'s contribute at most $\ll(\log\log R)^{O(1)}/\log R$. Otherwise, we sum over all possible choices $a=qa'$ with $\omega(q)=A+1$ to deduce that 
\eq{S'(q)}{
S'\le v^{A+1}\sum_{\substack{P^+(q)\le y \\ \omega(q)=A+1}} \frac{\mu^2(q)}{\phi_v(q)} S'(q)
	+ O\left(\frac{(\log\log R)^{O(1)}}{\log R}\right) ,
}
where
\[
S'(q) := \prod_{p\le R}\left(1-\frac{1}{p}\right)
	 \sum_{\substack{P^+(a')\le y<P^-(b)\le R \\ (a',q)=1}} \frac{\mu^2(a'b)v^{\omega(a'b)}}{\phi_v(a'b)} M_{f_A}(qa'b;R)^{2k}.
\]
Next, we apply \eqref{submult-2} with $a=a'q$ to find that
\[
S'(q) \le  \prod_{p\le R}\left(1-\frac{1}{p}\right)
	 \sum_{\substack{P^+(a')\le y \\ y<P^-(b)\le R \\ (a',q)=1}} \frac{\mu^2(a'b)v^{\omega(a'b)}\tau(b)^{2k-1}}{\phi_v(a'b)} 
	 \sum_{\substack{d|b \\ d\le R}} \left( \frac{\log(R/d)}{\log R}\right)^{2kA} M_{f_A}(a'q; R/d)^{2k} .
\]
We write $b=cd$ and note that the sum over $c$ is $\le(\log\log R)^{O(1)}$ by our choice of $y$. Moreover, $\phi_v(n)\gg n/(\log\log n)^v$. Thus,
\[
S'(q) \ll (\log\log R)^{O(1)}  
	\sum_{\substack{d\le R\\ P^-(d)>y}} \frac{4^{k\Omega(d;y,R)}}{d} 
		S''(d,q)
		\le (\log\log R)^{O(1)}  
	\sum_{\substack{d\le R \\ P^-(d)>y}} \frac{S''(d,q)}{d^{1-2k/\log y}} ,
\]
where
\als{
S''(d,q) 
	&:= \prod_{p\le R}\left(1-\frac{1}{p}\right)
	 \sum_{\substack{P^+(a')\le y \\ (a',q)=1}} \frac{\mu^2(a')v^{\omega(a')}}{\phi_v(a')} 
			\left( \frac{\log(R/d)}{\log R}\right)^{2kA} M_{f_A}(a'q; R/d)^{2k} \\
	&\le \prod_{p\le R}\left(1-\frac{1}{p}\right)
	 \sum_{\substack{P^+(a)\le y \\ (a,q)=1}} \frac{v^{\Omega(a)}}{a}
			\left( \frac{\log(R/d)}{\log R}\right)^{2kA} M_{f_A}(aq; R/d)^{2k}.
}
Before we proceed to the estimation of $S''(d,q)$, we note that 
\eq{S'(q)-1}{
S'(q)\ll (\log\log R)^{O(1)}  
	\sum_{d\le R} \frac{(\lambda^+*1)(d)}{d^{1-2k/\log y}} 
		S''(d,q),
}
where $(\lambda^+(m))_{m\le R^\delta}$ is an upper bound sieve with $\delta$ small enough, constructed using the fundamental lemma of sieve methods, taking  $\kappa=1$ in \cite[Lemma 6.3, p. 159]{IK}. Then the Dirichlet series $\sum_d (1*\lambda^+)(d)/d^s$ has a simple pole at $s=1$ of residue $\sum_m\lambda^+(m)/m \asymp (\log\log R)^{O(1)}/\log R$. 

Next, if $q=p_1^{r_1}\cdots p_{A+1}^{r_{A+1}}$ is the prime factorisation of $q$, then \eqref{submult-1} implies that
\[
 \left( \frac{\log(R/d)}{\log R}\right)^{A} M_{f_A}(aq; R/d)
 	= \sum_{d'|aq} \mu(d') f_A\left( \frac{\log(dd')}{\log R}\right)
	= \sum_{d''|a} \mu(d'') w_q\left(\frac{\log(dd'')}{\log R}\right)
\]
with
\[
 w_q(x):= \sum_{J\subset[A+1]} (-1)^{\#J} f_A\left(x+\frac{\sum_{j\in J}\log p_j}{\log R}\right) .
\]
As usual, if $A=0$, we may replace replace $f_0$ by a sufficiently smooth function $h$ at the cost of a small error. Letting $g=h$ when $A=0$ and $g=f_A$ otherwise, and letting $W_q$ have the same definition as $w_q$ with $g$ in place of $f_A$, we find that
\als{
S''(d,q) 
	&\le \prod_{p\le R}\left(1-\frac{1}{p}\right)
	 \sum_{\substack{P^+(a)\le y \\ (a,q)=1}} \frac{v^{\Omega(a)}}{a} 
	 	\left( \sum_{f|a} \mu(f) W_q\left(\frac{\log(df)}{\log R}\right)\right) ^{2k} 
		+O\left(\frac{1}{\log R}\right) \\
	&= \frac{\prod_{p\le R}(1-1/p)}{\prod_{p\le y,\, p\nmid q}(1-v/p)} 
		\sum_{\substack{(f_j,q)=1 \\ 1\le j\le 2k}} 
			 \frac{v^{\Omega([f_1,\dots,f_{2k}])}}{[f_1,\dots,f_{2k}]} 
			 	\prod_{j=1}^{2k} \mu(f_j) W_q\left(\frac{\log(df_j)}{\log R}\right)
			+O\left(\frac{1}{\log R}\right) .
}
We apply Mellin inversion $2k$ times to find that
\als{
S''(d,q) \le  \frac{\prod_{p\le R}(1-1/p)}{(2\pi i)^{2k} \prod_{p\le y,\, p\nmid q}(1-v/p)} 
	\idotsint\limits_{\substack{\Re(s_j)=4k/\log y \\ |\Im(s_j)|\le T \\ 1\le j\le 2k}}
		\sum_{\substack{P^+(f_j)\le y,\, (f_j,q)=1 \\ 1\le j\le 2k}}  \frac{v^{\Omega([f_1,\dots,f_{2k}])}
		\prod_{j=1}^{2k}\mu(f_j)f_j^{-s_j}}{[f_1,\dots,f_{2k}]} \\
	 \times	\prod_{j=1}^{2k} \frac{\hat{g}_R(s_j)}{d^{s_j}} \prod_{a=1}^{A+1} (1-p_a^{-s_j}) \dee s_j 
					+ O\left(\frac{1}{\log R}\right)  .
}
Together with \eqref{S'(q)-1}, this implies that
\als{
S'(q) \le  \frac{(\log\log R)^{O(1)}\prod_{p\le R}(1-1/p)}{(2\pi i)^{2k} \prod_{p\le y,\, p\nmid q}(1-v/p)} 
	\idotsint\limits_{\substack{\Re(s_j)=4k/\log y \\ |\Im(s_j)|\le T \\ 1\le j\le 2k}}
		\sum_{\substack{P^+(f_j)\le y,\, (f_j,q)=1 \\ 1\le j\le 2k}}  \frac{v^{\Omega([f_1,\dots,f_{2k}])}
		\prod_{j=1}^{2k}\mu(f_j)f_j^{-s_j}}{[f_1,\dots,f_{2k}]} \\
	 \times	P(1+s_{[2k]}-2k/\log y) \prod_{j=1}^{2k} \hat{g}_R(s_j)  \prod_{a=1}^{A+1} (1-p_a^{-s_j}) \dee s_j 
					+ O\left(\frac{1}{\log R}\right)  ,
}
where
\[
P(s) := \sum_{d=1}^\infty \frac{(\lambda^+*1)(d)}{d^s}
	=\zeta(s) \sum_{m\le R^\delta} \frac{\lambda^+(m)}{m^s} .
\]
We fix $s_1,\dots,s_{2k-1}$ and move $s_{2k}$ to the line $\Re(s_{2k})=-8k/\log y$ to pick up the pole at $s_{[2k]}=2k/\log y$. If $c$ is small enough in the definition of $y$, and the same is true for $\delta$, the complementary contours contribute $\ll 1/\log R$. There are no other poles, since the factors $1-p_a^{-s_j}$ are annihilating the poles of $\hat{g}_R(s_j)$. We conclude that
\als{
S'(q) \ll \frac{(\log\log R)^{O(1)}}{(\log R)^{2kA+2-v}} 
	\idotsint\limits_{\substack{\Re(s_j)=4k/\log y \\ |\Im(s_j)|\le T \\ 1\le j\le 2k \\ s_{[2k]}=2k/\log y}}
	\Biggl| \sum_{\substack{P^+(f_j)\le y \\ 1\le j\le 2k}}  \frac{v^{\Omega([f_1,\dots,f_{2k}])}
		\prod_{j=1}^{2k}\mu(f_j)f_j^{-s_j}}{[f_1,\dots,f_{2k}]}\Biggr| \\
		\left(	\prod_{j=1}^{2k} \frac{1}{|s_j|^{A+1}}  \right) |\dee s_1\cdots\dee s_{2k-1}| 
					+ \frac{1}{\log R} .
}
Recall the notation $\lambda_p(\bs t)$ defined in \eqref{lambda-def}, and combine the above with \eqref{S'(q)} to find that
\als{
S' \ll \frac{(\log\log R)^{O(1)}}{(\log R)^{2kA+2-v}} 
	\idotsint\limits_{\substack{-T\le t_j\le T\ (1\le j<2k) \\ t_{[2k]}=0}}
		\left( 1+ \frac{v \cdot \lambda_p(\bs t) +(A+1)\sum_{j=1}^{2k}\cos(t_j\log p)}{p}\right)  \\
	\times \left(\prod_{j=1}^{2k} \frac{(\log(2+|t_j|))^{2k(A+1)}}{(1+|t_j|)^{A+1}}\right) \dee t_1\cdots \dee t_{2k-1} 
	+  \frac{(\log\log R)^{O(1)}}{\log R} ,
}
where we used the fact that $s\zeta(1+s)\gg 1/\log(2+|t|)$ for $\Re(s)>1$. Then, following the arguments of Section \ref{lb-odd} (with $L=1$), and recalling the notations $V(\CI)$ and $\SA(V(\CI))$, we find that
\[
S'   \ll  \max_{\CI\subset\CI(2k)} (\log R)^{v(1+\SA(V(\CI)))-\dim(V(\CI))+(A+1) U(\CI)-2kA-1} (\log\log R)^{O(1)} ,
\]
where 
\[
U(\CI):=\#\{1\le j\le 2k: L_{\{j\}}\in V(\CI)\} .
\]
If $U(\CI)\ge1$, then Proposition \ref{CombProp}(a) implies that $\SA(V(\CI))=-1$, and the exponent of $\log R$ is then
\[
(A+1)\cdot U(\CI) - \dim(V(\CI))-2kA-1\le A\cdot U(\CI) -2kA-1\le -1,
\]
since we clearly have that $2k\ge \dim(V(\CI))\ge U(\CI)$. Assume now that $U(\CI)=0$. If $\SA(V(\CI))=\binom{2k}{k}-1$ and $\dim(V(\CI))=2k-1$, then the exponent of $\log R$ is $v\binom{2k}{k}-2k(A+1)$. Finally, if this is not the case, then Proposition \ref{CombProp} (together with the argument in the end of Section \ref{lb-odd}) implies that 
\als{
v(1+\SA(V(\CI)))-\dim(V(\CI))-1
&\le \max\{0,v-1\} \binom{2k}{k} + \SA(V(\CI)) -  \dim(V(\CI)) \\
&\le \max\{0,v-1\} \binom{2k}{k} + \binom{2k}{k}-2k -1  \\
&\le v \binom{2k}{k} - 2k,
}
by our assumption that $v\ge 1-1/\binom{2k}{k}$. This completes the proof of the theorem.
 \end{proof}

\subsection{From logarithmic weights to uniform weights}

In this section, we show how to go from Theorem \ref{thm-factors-log} to the analogous result for the regular mean value and then prove Theorem \ref{thm-factors}.

\begin{thm} \label{thm-factors-prel}
Let $x\ge R\ge2$, $k\in\Z_{\ge 1}$, $A\in\Z_{\ge0}$ and $\epsilon\in(0,1/2)$.
\begin{enumerate} 
\item Assume that $A>\frac{1}{2k}\binom{2k}{k}-1$. Then uniformly for $\eta\in[\log2/\log R,1]$, we have
\[
\frac{1}{x}\sum_{n\le x}
			\Omega(n;R^\eta) M_{f_A}(n;R)^{2k}
		\ll_{k,A} \frac{\eta}{\log R} .
\]
\item If $A\le\frac{1}{2k}\binom{2k}{k}-1$ and $1-1/\binom{2k}{k}+\epsilon \cdot {\bf 1}_{k=1} \le v\le 2-\epsilon$, then
\[
\frac{1}{x}	\sum_{n\le x} v^{\Omega(n;R)} M_{f_A}(n;R)^{2k} 
	\ll_{k,A,\epsilon} (\log R)^{v\binom{2k}{k}-2k(A+1)} (\log\log R)^{O(1)} .
\]
\end{enumerate}
\end{thm}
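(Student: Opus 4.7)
My plan is to deduce Theorem \ref{thm-factors-prel} from the logarithmically-weighted bounds of Theorem \ref{thm-factors-log} via a sieve decomposition. Each $n \le x$ factors uniquely as $n = ab$ with $\gcd(a,b) = 1$, $P^+(a) \le R$, and $P^-(b) > R$. Since $f_A$ is supported in $(-\infty, 1]$, every divisor $d|n$ contributing to $M_{f_A}(n;R)$ satisfies $d \le R$, which forces $d|a$; therefore $M_{f_A}(n;R) = M_{f_A}(a;R)$, and $\Omega(n;r) = \Omega(a;r)$ for every $r \le R$. In particular, the weight $w(n)$ appearing in either part depends only on $a$.

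Next, by the fundamental lemma of sieve methods, $\Phi(y;R) := \#\{b \le y : P^-(b) > R\} \ll y/\log R$ uniformly in $y \ge 2$. Summing,
\[
\sum_{n \le x} w(n) = \sum_{P^+(a) \le R,\, a \le x} w(a)\,\Phi(x/a;R) \ll \frac{x}{\log R} \sum_{P^+(a) \le R} \frac{w(a)}{a} + \sum_{x/R < a \le x,\, P^+(a)\le R} w(a),
\]
where the leading term is $\asymp x \prod_{p \le R}(1-1/p) \sum_a w(a)/a$ by Mertens' theorem, hence $x$ times the logarithmic moment bounded by Theorem \ref{thm-factors-log}; the tail is controlled by standard $R$-smooth number estimates together with the fact that $w$ is bounded on average by the same logarithmic moment. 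Inserting the bounds from Theorem \ref{thm-factors-log}, part (b) yields $\ll (\log R)^{v\binom{2k}{k}-2k(A+1)}(\log\log R)^{O(1)}$; for $k \ge 2$ this is directly the claim, while for $k=1$ the additional $\epsilon$-margin in the $v$-range is there precisely to separate the main-term exponent from the base-case exponent $-1$ coming from integers without small prime factors. For part (a) one obtains $\eta^{2k}/\log R + (\log R)^{\binom{2k}{k}-2k(A+1)}\log\log R$; integrality of $A$ together with the hypothesis $A > \binom{2k}{k}/(2k)-1$ forces the second exponent to be $\le -2$, making that term $o(1/\log R)$, and since $\eta^{2k}\le\eta$ one gets $\ll \eta/\log R$ whenever $\eta \gg \log\log R/\log R$.

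The main obstacle is the residual range $\eta \in [\log 2/\log R,\, \log\log R/\log R]$ in part (a). Here I plan a direct argument: expand $\Omega(n;R^\eta)$ prime-by-prime, writing each contributing $n$ as $p^\alpha m'$ with $p\nmid m'$, and apply the differencing identity \eqref{differencing1} to obtain $M_{f_A}(p^\alpha m';R) = M_{g_p}(m';R)$ where $g_p(t) := f_A(t) - f_A(t + \log p/\log R)$ satisfies $\|g_p^{(j)}\|_\infty \ll \log p/\log R$ for all $0 \le j \le A$. Applying the $2k$-th moment bound underlying Theorem \ref{mainthm} to $g_p$ produces a per-$(p,\alpha)$ contribution of size $\ll (\log p/\log R)^{2k} \cdot x/(p^\alpha \log R)$; summing over $p \le R^\eta$ and $\alpha \ge 1$ yields $\ll x\eta^{2k}/\log R \le x\eta/\log R$, closing the gap. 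The delicate point throughout is ensuring that the sieve reduction does not introduce a spurious $\log\log R$ factor that would spoil the target bound at small $\eta$, which is why the direct differencing argument is essential in that regime.
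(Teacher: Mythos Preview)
Your sieve reduction has a genuine gap: it is vacuous in the critical range $R \le x \le R^{C}$ for bounded $C$, which is precisely the range in which the theorem has content beyond Theorem \ref{thm-factors-log}. The bound $\Phi(y;R) \ll y/\log R$ is \emph{not} uniform in $y \ge 2$: for $y < R$ one has $\Phi(y;R)=1$, so your decomposition produces the tail $\sum_{x/R < a \le x,\ P^+(a)\le R} w(a)$, which you claim to handle by ``standard $R$-smooth number estimates together with the fact that $w$ is bounded on average by the same logarithmic moment.'' But when $x$ is comparable to $R$ (say $x=R$), every $a\le x$ satisfies $P^+(a)\le R$ and $a>x/R$, so the tail \emph{is} the full sum you are trying to bound; nothing has been gained. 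The same circularity recurs in your ``direct argument'' for small $\eta$: you invoke a uniform $2k$th-moment bound for $M_{g_p}(m;R)$ over $m\le x/p^\alpha$, but Theorem \ref{mainthm} and \eqref{kthmoments-p} only supply the logarithmic version of this, and converting it to a uniform bound is exactly the problem at hand.

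The paper's proof avoids this circularity by a bootstrap on dyadic intervals in $x$ (in the style of \cite[Theorem III.3.5]{Ten}). One writes $1=\log(x/n)/\log x + \log n/\log x$; the first piece is controlled by $x/(n\log x)$ and hence by the logarithmic bound of Theorem \ref{thm-factors-log}, while in the second piece one expands $\log n=\sum_{p^j\| n} j\log p$, uses Minkowski's inequality together with the differencing identity you cite, and applies the \emph{inductive hypothesis} at the smaller argument $x/p$ (or $x/p^j$). The point is that the savings $\sum_{p<R}(\log p)\log(R/p)/(p\log x\log R) < 1$ (crucially $A\ge 1$ here) lets the induction close with a fixed constant. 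The proof of part (b) follows the same scheme, with an extra manoeuvre when $k=1$ to push the coefficient below $1$; this is the real reason for the $\epsilon\cdot\mathbf{1}_{k=1}$ in the hypothesis, not merely ``separating exponents'' as you suggest. Your reduction does not engage with any of this, and I do not see how to repair it without reintroducing an inductive structure of this kind.
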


\begin{proof}
We start by proving a preparatory estimate. Our goal is to show that there is some constant $C=C(k)$ such that
\eq{ind-hyp}{
\sum_{n\le x} M_{f_A}(n;R)^{2k} \le \frac{Cx}{\log R} 
	\quad(x\ge R>1) .
}
When $x\le 2^H$ for some fixed $H\in\Z_{\ge 1}$ that will be taken large enough in terms of $k$ and $A$, relation \eqref{ind-hyp} trivially holds by taking $C$ to be large enough in terms of $H$. Assume now that \eqref{ind-hyp} holds for all $x\le 2^h$, where $h\ge H$. We wish to prove that \eqref{ind-hyp} is also true when $x\in(2^h,2^{h+1}]$.

We follow a variation of the argument leading to Theorem III.3.5 in \cite[p.308]{Ten}: note that
\eq{diff-delay}{
\sum_{n\le x} M_{f_A}(n;R)^{2k} = \sum_{n\le x} M_{f_A}(n;R)^{2k} \frac{\log(x/n)}{\log x}
	+ \sum_{n\le x} M_{f_A}(n;R)^{2k} \frac{\log n}{\log x}.
}
For the first sum, we bound $\log(x/n)$ by $x/n$ to give
\eq{log-bound}{
\sum_{n\le x} M_{f_A}(n;R)^{2k} \frac{\log(x/n)}{\log x}
	&\le \sum_{n\le x} M_{f_A}(n;R)^{2k}\frac{x}{n\log x}  \\
	&\ll \frac{x}{\log R}  \sum_{P^+(n)\le R} \frac{M_{f_A}(n;R)^{2k}}{n}  \\
	&\ll \frac{x}{\log R},
}
by Theorem \ref{mainthm}. In the second sum, we write $\log n=\sum_{p^j\|n}j\log p$ to find that
\[
\sum_{n\le x} M_{f_A}(n;R)^{2k} \frac{\log n}{\log x}
	= \sum_{p^j\le x} \frac{j\log p}{\log x} \sum_{\substack{m\le x/p^j \\ p\nmid m}} M_{f_A}(mp^j;R)^{2k} .
\]
Since
\[
M_{f_A}(mp^j;R)
	= M_{f_A}(m;R)
		- {\bf 1}_{p<R} \left(\frac{\log(R/p)}{\log R}\right)^A  M_{f_A}(m;R/p) 
\]
by \eqref{submult-1}, Minkowski's inequality implies that
\[
\sum_{n\le x} M_{f_A}(n;R)^{2k} \frac{\log n}{\log x} 
	\le \left( S_1^{\frac{1}{2k}}+ S_2^{\frac{1}{2k}} \right)^{2k},
\]
where
\[
S_1 := \sum_{p^j\le x} \frac{j\log p}{\log x}
		\sum_{m\le x/p^j} M_{f_A}(m;R)^{2k}  .
\]
and
\[
S_2 := \sum_{\substack{p^j\le x \\ p<R}} \frac{j\log p}{\log x}
	\left(\frac{\log(R/p)}{\log R}\right)^{2kA} 
		\sum_{m\le x/p^j} M_{f_A}(m;R/p)^{2k}  .
\]
For $S_1$, we note that
\als{
S_1 = \sum_{m\le x } M_{f_A}(m;R)^{2k} 
	\sum_{p^j\le x/m} \frac{j\log p}{\log x}
	\ll \frac{x}{\log x}\sum_{m\le x} \frac{M_{f_A}(m;R)^{2k} }{m}
	\ll \frac{x}{\log R} ,
}
by \eqref{log-bound}. Finally, we need to bound $S_2$. First, we bound its subsum with $j\ge2$. We have that
\als{
&\sum_{\substack{p^j\le x \\ j\ge 2 \\  p<R}} \frac{j\log p}{\log x}
	\left(\frac{\log(R/p)}{\log R}\right)^{2kA} 
		\sum_{m\le x/p^j}  M_{f_A}(m;R/p)^{2k}  \\
	&\quad\le \sum_{\substack{p^j\le x \\ j\ge 2 \\  p<R }} \frac{j\log p}{\log x}
	\left(\frac{\log(R/p)}{\log R}\right)^{2kA} 
		\sum_{m\le x/p^j } M_{f_A}(m;R/p)^{2k}  \frac{x}{p^jm} \\
	&\quad\ll x  \sum_{\substack{p^j\le x \\ j\ge 2 \\  p<R}} \frac{j\log p}{p^j\log x}
	\left(\frac{\log(R/p)}{\log R}\right)^{2kA} \cdot \frac{\log x}{\log(R/p)} ,
}
by \eqref{log-bound} with $R$ replaced by $R/p$. Since $A\ge1$ here, we find that the above is 
\[
\ll \sum_{\substack{p^j\le x \\ j\ge 2}} \frac{j\log p}{p^j}
		 \cdot \frac{x}{\log R} 
		 \ll \frac{x}{\log R},
\]
where the implied constant is independent of $C$.

Finally, we need to bound the part of $S_2$ with $j=1$. We note that $x/p\le 2^h$ and that $R/p\le x/p$, so we may apply the induction hypothesis.  This gives a bound
\als{
\le \sum_{p<R}\frac{\log p}{\log x}
	\left(\frac{\log(R/p)}{\log R}\right)^{2kA} \cdot 
		\frac{Cx}{p\log(R/p)}  
	&\le \frac{Cx}{(\log{x})(\log R)^2} \sum_{p<R} \frac{(\log p)(\log(R/p))}{p} \\
	&\le \frac{2C}{3} \cdot \frac{x}{\log R} ,
}
provided that $H$ (and hence $x$) is big enough, where we used our assumptions that $A\ge1$ and $x\ge R$. Combining the above, and assuming that $C$ is big enough in terms of $k$ and $A$ completes the inductive step and hence the proof of \eqref{ind-hyp}.

\medskip

We now turn to proving part (a), that is bounding the sum
\[
S(x,R,Q):= \sum_{n\le x}  \Omega(n;Q) M_{f_A}(n;R)^{2k}.
\]
The proof is similar to the proof of \eqref{ind-hyp}, so we only give the main technical twists: we use induction on the dyadic interval in which $x$ lies to prove that there is some constant $C'=C'(k,\epsilon)$ such that
\eq{ind-hyp2}{
	S(x,R,Q) \le \frac{C'x\log Q}{(\log R)^{2}}  
	\quad(x\ge R\ge Q\ge 2) .
}
When $x\le 2^H$, for some fixed $H\in\Z_{\ge 1}$ that will be taken large enough in terms of $k$, $A$ and $\epsilon$, this trivially holds by taking $C'$ to be large enough in terms of $H$. Assume now that \eqref{ind-hyp} holds for all $x\le 2^h$, where $h\ge H$. We will prove that it also holds for $x\in(2^h,2^{h+1}]$. Note that the analogues of \eqref{diff-delay} and \eqref{log-bound} hold here as well, so let us focus on understanding the sum
\[
T:= \sum_{n\le x} \Omega(n;Q) M_{f_A}(n;R)^{2k} \frac{\log n}{\log x} .
\]
Fix a large integer $N$ and call $T_1$ the portion of this sum with $\Omega(n;Q)>2N$ and $T_2$ the remaining sum. 
Writing $\log n=\sum_{p^j\|n}j\log p$, we find that
\[
T_1 = \sum_{p^j\le x} \frac{j\log p}{\log x} \sum_{\substack{m\le  x/p^j,\, p\nmid m \\ \Omega(mp^j;Q)>2N}}
		\Omega(mp^j;Q)M_{f_A}(mp^j;R)^{2k}.
\]
If $T_1'$ is the part with $\Omega(mp^j;Q)\le 2j$ and $T_1''$ is the rest, then
\als{
T_1' &\le \sum_{\substack{p^j\le x \\ j> N}} 
	\frac{2j^2\log p}{\log x} \sum_{\substack{m\le  x/p^j \\  p\nmid m }} 
 			M_{f_A}(mp^j;R)^{2k} \frac{x}{p^jm}  \\
	&\ll x\sum_{\substack{p^j\le x,\, p\le e^{\sqrt{\log R}} \\ j>N}} \frac{j^2\log p}{p^j\log R} 
		\sum_{\substack{P^+(m)\le R \\ p\nmid m }} \frac{M_{f_A}(mp;R)^{2k}}{m} 
			+ \frac{x (\log R)^{O(1)}}{e^{\sqrt{\log R}}}  \\
	&\ll_N x\sum_{\substack{p^j\le x,\,p\le  e^{\sqrt{\log R}} \\ j>N}} \frac{j^2\log p}{p^j\log R} 
		\cdot \left\{ \left(\frac{\log p}{\log R}\right)^{2k} + (\log R)^{1+\binom{2k}{k}-2k(A+1)} \right\} \\
	&\ll \frac{x}{(\log R)^{2k+1}} + x(\log R)^{\binom{2k}{k}-2k(A+1)} ,
}
where the second to last bound follows from \eqref{kthmoments-p}. Finally, in the range of $T_1''$, we note that 
$\Omega(m;Q) \ge (\Omega(m;Q)+j)/2>N$, so that $\Omega(mp^j;Q)\le j(1+\Omega(m;Q))\le (1+1/N)\cdot j\cdot \Omega(m;Q)$. Therefore,
\eq{T_1}{
T_1	 &\le   \frac{N+1}{N}  \sum_{p^j\le x} \frac{j^2\log p}{\log x} 
		\sum_{\substack{m\le  x/p^j,\, p\nmid m \\ \Omega(mp^j;Q)>2N}}
		\Omega(m;Q)M_{f_A}(mp^j;R)^{2k}   \\
&\qquad	+O\left( \frac{x}{(\log R)^{2k+1}} +x (\log R)^{\binom{2k}{k}-2k(A+1)} \right) .
}

Next, we need to bound 
\[
T_2 =  \sum_{\substack{n\le x \\ \Omega(n;Q)\le 2N}} \Omega(n;Q) M_{f_A}(mp^j;R)^{2k} \frac{\log n}{\log x} .
\]
If $Q>R^{1/(2N^2)}$, then we simply note that
\[
T_2\le 2N \sum_{n\le x} M_{f_A}(n;R)^{2k}  \ll_N \frac{x}{\log R} 
	\ll_N \frac{x\log Q}{(\log R)^2} .
\]
by \eqref{ind-hyp}. Here the implied constant depends on $N$ but does not depend on $C'$.

On the other hand, if $Q\le R^{1/2N^2}$, then we have that $\sum_{p^j\|n,\,p\le Q} j\log p \le (\log x)/N$, so that 
\als{
T_2	&\le \frac{S(x,R,Q)}{N} 
	+ \sum_{\substack{n\le x \\ \Omega(n;Q)\le 2N}} 
		\Omega(n;Q) M_{f_A}(n;R)^{2k} \sum_{p^j\|n,\,p>Q} \frac{j\log p}{\log x} \\
	&\le \frac{S(x,R,Q)}{N} 
		+\sum_{\substack{p^j\le x \\ p>Q}} \frac{j\log p}{\log x} 
			\sum_{\substack{m\le x/p^j ,\, p\nmid m\\ \Omega(mp^j;Q)\le 2N}} 
		\Omega(m;Q) M_{f_A}(mp^j;R)^{2k} .
}
Combining the above inequality and \eqref{T_1}, we deduce that
\als{
T& \le \frac{S(x,R,Q)}{N} 
		+\frac{N+1}{N} \sum_{p^j\le x} \frac{j^2\log p}{\log x} 
			\sum_{m\le x/p^j,\, p\nmid m}
		\Omega(m;Q) M_{f_A}(mp^j;R)^{2k}
			 + O_N\left(\frac{x\log Q}{(\log R)^{2}}  \right) .
}
We thus conclude that
\als{
S(x,R,Q) &\le \frac{N+1}{N-1} \sum_{p^j\le x  } \frac{j^2\log p}{\log x} 
			\sum_{m\le x/p^j,\, p\nmid m}
		\Omega(m;Q) M_{f_A}(mp^j;R)^{2k}    
			 + O_N\left(\frac{x\log Q}{(\log R)^{2}}  \right) .
}
We can bound the sum on the right hand side in an entirely analogous way to the proof of \eqref{ind-hyp}. Choosing $N$ sufficiently large, and then $C'$ large enough in terms of $N$ completes the inductive step and thus the proof of \eqref{ind-hyp2}. This proves part (a) of the theorem.

\medskip

(b) The proof of part (b) is, for the most part, similar to the proof of \eqref{ind-hyp}. An important detail is that, after applying the induction hypothesis, we use the fact that
\eq{ind-critical}{
&\sum_{p<R}\frac{v\log p}{\log x}
	\left(\frac{\log(R/p)}{\log R}\right)^{2kA} \cdot 
		\frac{(\log(R/p))^{v\binom{2k}{k}-2k(A+1)}(\log\log(R/p))^D}{p} \\
&\qquad\sim \frac{v}{v\binom{2k}{k}-2k+1} \cdot \frac{\log R}{\log x} \cdot (\log R)^{\binom{2k}{k}-2k(A+1)}(\log\log R)^D 
}
for any fixed $D\ge0$, as $R\to\infty$. This is sufficient when $k\ge2$, because 
\[
\frac{v}{v\binom{2k}{k}-2k+1} 
	\le \frac{1-1/\binom{2k}{k}}{\binom{2k}{k}-2k} \le \frac{1}{2} 
\]
in this case. 

However, when $k=1$, the situation is more tricky. First of all, we note that the above argument allows to establish the theorem for $x\ge R^{2v/(2v-1)}$. (Note that if $p<R$ and $x\ge R^{2v/(2v-1)}$, then we also have that $x/p\ge (R/p)^{2v/(2v-1)}$, so that the inductive hypothesis can be applied.) Finally, it remains to treat the case when $x\le R^{2v/(2v-1)}$. We then observe that $\Omega(n;R^\delta,R)\ll_{\epsilon,\delta}1$, for any fixed $\delta$. It would thus suffice to prove that
\eq{ind-hyp3}{
\sum_{n\le x} v^{\Omega(n;R^\delta)} \delta^{\Omega(n;R^\delta,R)} M_{f_0}(n;R)^2 
	\le C''x (\log R)^{2v-2} (\log\log R)^D \quad( x\ge R \ge 2) ,
}
for some appropriate constants $C'',D>0$. Then, in place of \eqref{ind-critical}, we note that
\als{
&\sum_{p<R}\frac{(v\cdot{\bf1}_{p\le R^\delta} + \delta\cdot {\bf 1}_{R^\delta<p\le R})\log p}{\log x}
	\left(\frac{\log(R/p)}{\log R}\right)^{2kA} \cdot 
		\frac{(\log(R/p))^{v\binom{2k}{k}-2k(A+1)}(\log\log(R/p))^D}{p} \\
&\qquad< \frac{1}{2} \cdot (\log R)^{\binom{2k}{k}-2k(A+1)}(\log\log R)^D
}
as $R\to\infty$, as long as $\delta$ is small enough. This allows us to complete the inductive step and establish \eqref{ind-hyp3}, thus completing the proof of the theorem.
\end{proof}

Given the above result, proving Theorem \ref{thm-factors} is quite easy:

\begin{proof}[Proof of Theorem \ref{thm-factors}]
(a) This an immediate consequence of Theorem \ref{thm-factors-prel}(a).

\medskip

(b) We use Rankin's trick: Given a small $\alpha>0$ and $1\le v\le 3/2$, we have that
\[
\sum_{\substack{n\le x \\ \Omega(n;R)>\binom{2k}{k}(1+\alpha)\log\log R}}  M_{f_A}(n;R)^{2k} 
	\le 	\sum_{n\le x}  v^{\Omega(n;R)-\binom{2k}{k}(1+\alpha)\log\log R}   M_{f_A}(n;R)^{2k} .
\]
Theorem \ref{thm-factors-prel}(b) then implies that
\[
\sum_{\substack{n\le x \\ \Omega(n;R)>\binom{2k}{k}(1+\alpha)\log\log R}}M_f(n;R)^{2k} 
	\ll (\log R)^{\binom{2k}{k}-2k(A+1) + \binom{2k}{k}(v-1-(1+\alpha)\log v)} ,
\]
provided that $v$ is close enough to 1. We optimize this by choosing $v=1+\alpha$, so that $1-v+(1+\alpha)\log v = \int_1^{1+\alpha}(\log t)\dee t>0$. 

Similarly, we have that
\als{
\sum_{\substack{n\le x   \\ \Omega(n;R)<\binom{2k}{k}(1-\alpha)\log\log R}}   M_{f_A}(n;R)^{2k}		
	\le    \sum_{n\le x} v^{\Omega(n;R)-\binom{2k}{k}(1-\alpha)\log\log R} M_{f_A}(n;R)^{2k} ,
}
for any $1-1/\binom{2k}{k}+\epsilon \cdot {\bf 1}_{k=1}v\le 1$. Applying Theorem \ref{thm-factors-prel}(b) and choosing $v=1-\alpha$ for small enough $\alpha$ completes the proof of the theorem.
\end{proof}


\subsection{Estimates for general weight functions}

It is not so hard to go from estimates for the moments of $M_{f_A}(n;R)$ to the moments of $M_f(n;R)$ for a general weight function $f$. The following lemma provides the key link.

\begin{lem}\label{holder}
Let $f:\R\to\R$ be supported in $(-\infty,1]$. Assume further that $f\in C^A(\R)$ and that all functions $f,f',\dots,f^{(A)}$ are uniformly bounded for some integer $A\ge 1$. Then
\[
M_f(n;R)^{2k} \ll_{A,k,f} 
			\int_{\frac{\log2}{\log R}}^1 u^{2k(A-1)} M_{f_{A-1}}(n;R^u )^{2k} \dee u + \frac{1}{(\log R)^{2kA}}.
\]
\end{lem}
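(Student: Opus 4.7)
The plan is to start from a Taylor-with-integral-remainder representation of $f$ around the point $x=1$. Since $f\in C^A(\R)$ is supported in $(-\infty,1]$, all derivatives $f^{(j)}$ vanish identically on $(1,\infty)$, and continuity at $1$ forces $f^{(j)}(1)=0$ for $j=0,1,\dots,A$. Hence Taylor's formula collapses to the pure remainder: for every $x\le 1$,
\[
f(x) = \frac{(-1)^A}{(A-1)!}\int_x^1 f^{(A)}(u)\,(u-x)^{A-1}\,\dee u.
\]

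Next, I would set $x=\log d/\log R$ for a divisor $d$ of $n$, and use the identity $u - \log d/\log R = u\bigl(1 - \log d/(u\log R)\bigr)$ to factor out $u^{A-1}$; the surviving factor is exactly $f_{A-1}(\log d/\log R^u)$ when $d\le R^u$, and $0$ otherwise. Summing against $\mu(d)$ (and pulling the finite sum through the integral) gives the clean identity
\[
M_f(n;R) = \frac{(-1)^A}{(A-1)!}\int_0^1 f^{(A)}(u)\,u^{A-1}\,M_{f_{A-1}}(n;R^u)\,\dee u.
\]

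I would then split this integral at $u_0 := \log 2/\log R$. On $[0,u_0)$, the only divisor $d$ of $n$ with $d\le R^u$ is $d=1$, so $M_{f_{A-1}}(n;R^u)=1$; the boundedness of $f^{(A)}$ yields a contribution of size $O(u_0^{A}) = O(1/(\log R)^A)$, which after taking the $2k$-th power gives the term $1/(\log R)^{2kA}$. On $[u_0,1]$, I would apply Jensen's inequality (equivalently H\"older with the constant weight on a subset of $[0,1]$, which has measure $\le 1$) to the function $u\mapsto u^{A-1}f^{(A)}(u)M_{f_{A-1}}(n;R^u)$, obtaining
\[
\left|\int_{u_0}^1 u^{A-1}f^{(A)}(u)\,M_{f_{A-1}}(n;R^u)\,\dee u\right|^{2k}
\le \|f^{(A)}\|_\infty^{2k}\int_{u_0}^1 u^{2k(A-1)}\,M_{f_{A-1}}(n;R^u)^{2k}\,\dee u.
\]
Combining the two pieces via the elementary bound $(a+b)^{2k}\ll a^{2k}+b^{2k}$ gives exactly the stated estimate.

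No step looks seriously difficult: the Taylor identity and the Jensen bound are standard, and the only point requiring minor care is the justification that $f^{(j)}(1)=0$ for all $j\le A$ and the separate handling of $u<u_0$, both of which are straightforward given the hypotheses on $f$.
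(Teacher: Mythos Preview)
Your proposal is correct and follows essentially the same route as the paper: Taylor's formula with integral remainder at $x=1$ (using $f^{(j)}(1)=0$), the identification of the inner sum with $u^{A-1}M_{f_{A-1}}(n;R^u)$, the observation that only $d=1$ survives when $u<\log 2/\log R$, and then H\"older/Jensen to finish. The paper compresses the last two steps into a single line, but the argument is identical.
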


\begin{proof}
Since $f(x)=0$ for $x>0$ and $f\in C^A(\R)$, we must have that $f^{(j)}(1)=0$ for $j\le A$. Taylor's theorem with the integral form of the remainder term implies that
\[
f(x) = \int_1^x \frac{f^{(A)}(u)}{(A-1)!} (x-u)^{A-1} \dee u
	= \frac{(-1)^A}{(A-1)!} \int_{[x,1]} f^{(A)}(u) (u-x)^{A-1} \dee u ,
\]
for all $x$, since both sides vanish if $x>1$. Therefore,
\als{
M_f(n;R) 
	&=  \frac{(-1)^A}{(A-1)!}
			\sum_{d|n} \mu(d) \int_{\frac{\log d}{\log R}\le u\le 1} f^{(A)}(u) 
				\left(u- \frac{\log d}{\log R}\right)^{A-1} \dee u   \\
	&=   \frac{(-1)^A}{(A-1)!}
			\int_0^1 f^{(A)}(u) \sum_{\substack{d|n \\ d\le R^u}} \mu(d)\left(u- \frac{\log d}{\log R}\right)^{A-1} \dee u   \\
	&=    \frac{(-1)^A}{(A-1)!}
			\int_{\frac{\log2}{\log R}}^1 f^{(A)}(u) u^{A-1} M_{f_{A-1}}(n;R^u ) \dee u  
					+ O\left(\frac{1}{(\log R)^A}\right) ,
}
by noting that $d=1$ if $d\le R^u<2$. H\"older's inequality then completes the proof.
\end{proof}

We use the above lemma to show the analogue of Theorem \ref{thm-factors-prel} for general weight functions $f$.

\begin{thm}\label{thm-factors-prel-general}
Let $k\in\Z_{\ge 1}$, $x\ge R\ge 2$ and $f:\R\to\R$ be supported in $(-\infty,1]$. Assume further that $f\in C^A(\R)$ and that all functions $f,f',\dots,f^{(A)}$ are uniformly bounded for some integer $A\ge1$, and fix some $\epsilon\in(0,1)$.
\begin{enumerate} 
\item Let $A>\frac{1}{2k} \binom{2k}{k}$. Uniformly for $\eta\in[\log2/\log R,1]$, we have
\[
\frac{1}{x}\sum_{n\le x}
			\Omega(n;R^\eta) M_{f}(n;R)^{2k}
	\ll \frac{\eta}{\log R} .
\]
\item If $A\le \frac{1}{2k}\binom{2k}{k}$ and $1-1/\binom{2k}{k}+\epsilon\cdot{\bf 1}_{k=1} \le v\le 2-\epsilon$, then
\[
\frac{1}{x}	\sum_{n\le x} v^{\Omega(n;R)} M_{f}(n;R)^{2k} 
	\ll (\log R)^{v\binom{2k}{k}-2kA} (\log\log R)^{O(1)}.
\]
\end{enumerate}
All implied constants depend at most on $k$, $f$ and $\epsilon$.
\end{thm}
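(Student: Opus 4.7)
The plan is to use Lemma~\ref{holder} to replace $M_f(n;R)^{2k}$ with an integral of $M_{f_{A-1}}(n;R^u)^{2k}$ against $u^{2k(A-1)}\,du$, and then apply Theorem~\ref{thm-factors-prel} (for the $\Omega$-weighted sum in part (a)) or Theorem~\ref{thm-factors-log} (for the $v^\Omega$-weighted sum in part (b)) at the shifted level $R^u$. The remainder $(\log R)^{-2kA}$ coming from Lemma~\ref{holder}, multiplied by $\sum_{n\le x}\Omega(n;R^\eta)\ll x\log\log R$ in case (a) and by $\sum_{n\le x}v^{\Omega(n;R)}\ll x(\log R)^{v-1}$ in case (b), is acceptably small in both regimes since $2kA\ge 4$ in (a) and $v-1\le v\binom{2k}{k}$ in (b).

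For part (a), I would split the integral at $u=\eta$. When $u\ge\eta$, Theorem~\ref{thm-factors-prel}(a) applied to $f_{A-1}$ at level $R^u$ with parameter $\eta/u\le 1$ gives $\sum_n\Omega(n;R^\eta)M_{f_{A-1}}(n;R^u)^{2k}\ll x\eta/(u^2\log R)$, and integrating $u^{2k(A-1)-2}$ over $[\eta,1]$ is $O(1)$ because $2k(A-1)\ge 2$. When $u<\eta$, I would decompose $\Omega(n;R^\eta)=\Omega(n;R^u)+\Omega(n;R^u,R^\eta)$. The first summand is handled directly by Theorem~\ref{thm-factors-prel}(a) with parameter $1$. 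For the second, factor $n=ab$ with $P^+(a)\le R^u<P^-(b)$: every divisor of $n$ bounded by $R^u$ must divide $a$, so $M_{f_{A-1}}(n;R^u)=M_{f_{A-1}}(a;R^u)$, while $\Omega(n;R^u,R^\eta)=\Omega(b;R^u,R^\eta)$. A one-dimensional upper-bound sieve for the $b$-sum, combined with the bound $\sum_{P^+(a)\le R^u}M_{f_{A-1}}(a;R^u)^{2k}/a=O(1)$ from Theorem~\ref{mainthm} (valid since $\CE_{k,A-1}=-1$), will yield the inner sum $\ll x\log(\eta/u)/(u\log R)$, and the ensuing $u$-integral is then $\ll x\eta/\log R$.

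For part (b), I would employ the same factorization $n=ab$ with $P^+(a)\le R^u<P^-(b)$, so that $v^{\Omega(n;R)}=v^{\Omega(a)}v^{\Omega(b;R)}$ and $M_{f_{A-1}}(n;R^u)=M_{f_{A-1}}(a;R^u)$. A Shiu-type estimate should give
\[
\sum_{\substack{b\le y\\ P^-(b)>R^u}}v^{\Omega(b;R)}\ll \frac{y}{u^v\log R},
\]
while Theorem~\ref{thm-factors-log}(b), applicable because $A-1\le \tfrac{1}{2k}\binom{2k}{k}-1$, controls the $a$-sum by $(u\log R)^{v\binom{2k}{k}-2kA+1}(\log\log R)^{O(1)}$. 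Multiplying through and integrating against $u^{2k(A-1)}\,du$ produces an integrand of size $u^{v(\binom{2k}{k}-1)-(2k-1)}$ times $(\log R)^{v\binom{2k}{k}-2kA}(\log\log R)^{O(1)}$. A short algebraic check using $\binom{2k}{k}(\binom{2k}{k}-2k)\ge 0$ shows that the exponent of $u$ is strictly greater than $-1$ throughout the allowed range of $v$, so the $u$-integral is $O(1)$ and the claimed bound follows.

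The main obstacle will be this decoupling in part (b): the medium primes in $(R^u,R]$ contribute multiplicatively to $v^{\Omega(n;R)}$ but are invisible to the sieve weight $M_{f_{A-1}}(n;R^u)$, so they must be isolated via the $n=ab$ decomposition and treated by a Shiu/Selberg--Delange estimate that is entirely separate from the divisor-sum machinery. The hypothesis $v\ge 1-1/\binom{2k}{k}+\epsilon\cdot\mathbf{1}_{k=1}$ is inherited from Theorem~\ref{thm-factors-prel}(b) (via Theorem~\ref{thm-factors-log}(b)) and controls the implied constants uniformly as $v$ approaches its threshold, which for $k=1$ is the delicate point $v=1/2$.
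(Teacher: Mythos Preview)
Your approach is essentially the same as the paper's: apply Lemma~\ref{holder} and then invoke the corresponding results for $f_{A-1}$. Part~(a) matches the paper closely and is correct.

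For part~(b) your strategy is right, but the Shiu bound as stated is too optimistic when $v<1$. The estimate $\sum_{b\le y,\,P^-(b)>R^u}v^{\Omega(b;R)}\ll y\,u^{-v}/\log R$ is valid for $y\ge R$, but fails in the range $y\in[R^u,R]$ and for very small $y$ (for instance at $y=R^u=2$ it would assert $1\ll (\log R)^{v-1}$). The paper fixes this in two ways: it crudely replaces $v^{\Omega(b;R^u,R)}$ by $w^{\Omega(b;R^u,R)}$ with $w=\max(v,1)$, so that Tenenbaum's Theorem~III.3.5 gives the uniformly valid bound $y\,u^{-w}/\log R$ when $x/a\ge R^u$; and it isolates the tail contribution from $a$ with $x/a<R^u$ as an extra term $\sum_{a\le x}v^{\Omega(a;R^u)}M_{f_{A-1}}(a;R^u)^{2k}$, which is then bounded by Theorem~\ref{thm-factors-prel}(b) rather than the logarithmic Theorem~\ref{thm-factors-log}(b). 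With $u^{-w}$ in place of $u^{-v}$, the exponent of $u$ in your integrand becomes $v\binom{2k}{k}-w-2k+1$; for $v<1$ this equals $v\binom{2k}{k}-2k$, and the convergence condition $v\binom{2k}{k}>2k-1$ reduces, for $k=1$, to $v>1/2$. This is exactly where the hypothesis $v\ge 1/2+\epsilon$ (the $\epsilon\cdot\mathbf{1}_{k=1}$ term) becomes essential --- your convergence check based on $u^{-v}$ gives only $v>0$ and so misses this.
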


\begin{proof}
(a) Lemma \ref{holder} implies that
\eq{int-e0}{
\sum_{n\le x} \Omega(n;R^\eta) M_f(n;R)^{2k}
	&\ll \int_{\frac{\log 2}{\log R}}^1 u^{2k(A-1)} \sum_{n\le x} \Omega(n;R^\eta) M_{f_{A-1}}(n;R^u)^{2k} \dee u  \\
	&\qquad 	+ O\left(\frac{x\log\log R}{(\log R)^{2k A}}\right) ,
}
When $u\ge\eta$, Theorem \ref{thm-factors-prel}(a) implies that
\eq{int-e1}{
\int_\eta^1 u^{2k(A-1)} \sum_{n\le x} \Omega(n;R^\eta) M_{f_{A-1}}(n;R^u)^{2k} \dee u 
	\ll \frac{\eta x}{\log R} .
}
Finally, we consider the integral over $u\in[\log2/\log R,\eta]$. Observe that
\[
\sum_{n\le x} \Omega(n;R^u) M_{f_{A-1}}(n;R^u)^{2k} \ll \frac{x}{u\log R} ,
\]
by Theorem \ref{thm-factors-prel}(a). If $x\le R^{100}$, then we also have that $\Omega(n;R^u,R^\eta) \le 100/u$ for each $n\le x$, so that
\[
\sum_{n\le x} \Omega(n;R^u,R^\eta) M_{f_{A-1}}(n;R^u)^{2k} \ll \frac{x}{u^2\log R} 
\]
by Theorem \ref{thm-factors-prel}. We thus find that
\[
\int_{\frac{\log2}{\log R}}^\eta u^{2k(A-1)} \sum_{n\le x} \Omega(n;R^\eta) M_{f_{A-1}}(n;R^u)^{2k} \dee u 
	\ll \int_{\frac{\log2}{\log R}}^\eta \frac{u^{2k(A-1)-2}x}{\log R} \dee u \ll \frac{\eta x}{\log R} .
\]
Together with \eqref{int-e0} and \eqref{int-e1}, this proves part (a) in the case when $x\le R^{100}$. 

Finally, let us consider the case when $x>R^{100}$. Then
\[
\sum_{n\le x} \Omega(n;R^u,R^\eta)  M_{f_{A-1}}(n;R^u)^{2k} 
	\le \sum_{\substack{p^j\le x \\ R^u<p\le R^\eta}} j \sum_{m\le  x/p^j} M_{f_{A-1}}(m;R^u)^{2k} .
\]
When $p^{j-1}\ge \sqrt{x}$, then we bound the inner sum trivially by $\ll (x/p^j)(\log R)^{O(1)}$, so that the total contribution of such summands is $\ll\sqrt{x}(\log R)^{O(1)}$. Finally, when $p^{j-1}\le \sqrt{x}$, then we see that $x/p^j\ge \sqrt{x}/R\ge x^{0.499} \ge R^u$, so that the sum over $m$ is $\ll (x/p^j)/\log(R^u)$, by Theorem \ref{thm-factors-prel}(a). We thus conclude that
\[
\sum_{n\le x} \Omega(n;R^u,R^\eta)  M_{f_{A-1}}(n;R^u)^{2k} 
	\ll \frac{x\log(\eta/u)}{u\log R} +\frac{x}{(u\log R)^2}  \ll \frac{x}{u\log R} .
\]
Therefore, 
\[
\int_{\frac{\log2}{\log R}}^\eta u^{2k(A-1)} \sum_{n\le x} \Omega(n;R^\eta) M_{f_{A-1}}(n;R^u)^{2k} \dee u 
	\ll \frac{\eta x}{\log R} .
\]
in this case as well, thus completing the proof of part (a) of the theorem.

\medskip

(b) This proof of this part is similar. We start again with Lemma \ref{holder} to find that
Lemma \ref{holder} implies that
\eq{int-e0-b}{
\sum_{n\le x} v^{\Omega(n;R)} M_f(n;R)^{2k}
	&\ll \int_{\frac{\log2}{\log R}}^1 u^{2k(A-1)} \sum_{n\le x}v^{\Omega(n;R)} M_{f_{A-1}}(n;R^u)^{2k} \dee u \\
	&\qquad 	+ O\left(x(\log R)^{v-1-2kA} \right) ,
}
where we also used Theorem III.3.5 in \cite[p.308]{Ten}. Next, if $w=\max\{v,1\}$ and $\log2/\log R\le u\le1$, then note that
\als{
\sum_{n\le x} v^{\Omega(n;R)} M_{f_{A-1}}(n;R^u)^{2k}
	&\le \sum_{\substack{a\le x \\ P^+(a)\le R^u}} v^{\Omega(a;R^u)} M_{f_{A-1}}(a;R^u)^{2k}
		 \sum_{\substack{b\le x/a \\ P^-(b)>R^u}} w^{\Omega(b;R^u,R)}  .
}
When $x/a\ge R^u$, the sum over $b$ is $\ll u^{1-w} x/(a\log(R^u))$ by  Theorem III.3.5 in \cite[p.308]{Ten}. Hence,
\als{
\sum_{n\le x} v^{\Omega(n;R)} M_{f_{A-1}}(n;R^u)^{2k}
	&\ll \frac{u^{1-w} x}{\log(R^u)} \sum_{\substack{a\le x \\ P^+(a)\le R^u}} 
		\frac{v^{\Omega(a;R^u)} M_{f_{A-1}}(a;R^u)^{2k}}{a} \\
	&\qquad	+ \sum_{\substack{a\le x \\ P^+(b)\le R^u}} v^{\Omega(a;R^u)} M_{f_{A-1}}(a;R^u)^{2k} .
}
We bound the first sum by Theorem \ref{thm-factors-log}(b) and the second one by Theorem \ref{thm-factors-prel}(b) to find that
\als{
\sum_{n\le x} v^{\Omega(n;R)} M_{f_{A-1}}(n;R^u)^{2k}
	\ll x u^{1-w+v\binom{2k}{k}-2kA} (\log R)^{v\binom{2k}{k} -  2kA} .
}
Inserting the above bound into \eqref{int-e0-b} completes the proof of the theorem.
\end{proof}

We conclude this section with the proof of Theorem \ref{thm-smooth}. We need a preliminary lemma.

\begin{lem}\label{prime-sum}
If $m\in \Z_{\ge 1}$, $g\in C^1(\R^m)$ and $z\ge y\ge3$, then there is a positive constant $c>0$ such that
\als{
\sum_{y<p_1<\cdots<p_m\le z} \frac{g(\log p_1,\dots,\log p_m)}{(p_1-1)\cdots (p_m-1)} 
	= \frac{1}{m!} \int_{[\log y,\log z]^m} \frac{g(t_1,\dots,t_m)}{t_1 \cdots t_m} \dee t_1\cdots \dee t_m \\
		+ O\left(\frac{\|g\|_\infty + \| \nabla g\|_\infty}{e^{c\sqrt{\log y}}}
			\cdot \frac{(\sum_{y<p\le z}1/p)^{m-1}+(\int_y^z\dee t/t\log t)^{m-1}}{m!/m^2} \right) .
}
\end{lem}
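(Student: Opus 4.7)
The plan is to establish this by iterated partial summation, using the Prime Number Theorem with the de la Vall\'ee Poussin error term $\pi(t) = \mathrm{li}(t) + O(t e^{-c\sqrt{\log t}})$. The one-variable base case is the key building block: for $h \in C^1([y,z])$, partial summation combined with the PNT yields
\[
\sum_{y<p\le z}\frac{h(p)}{p-1} = \int_{\log y}^{\log z} \frac{h(e^u)}{u}\,\dee u + O\Bigl((\|h\|_\infty + (\log z)\|h'\|_\infty)\,e^{-c\sqrt{\log y}}\Bigr),
\]
where the main term arises from the identity $\int_y^z h(t)\,\dee\mathrm{li}(t)/(t-1) = \int_y^z h(t)/((t-1)\log t)\,\dee t$ and a change of variables, after absorbing $1/(t-1) - 1/t = O(1/t^2)$ into the error.

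For general $m$, I would induct on $m$. Fix $y<p_1<\cdots<p_{m-1}$ and apply the base case to the innermost sum over $p_m \in (p_{m-1}, z]$, with $h(t) = g(\log p_1,\dots,\log p_{m-1},\log t)$; the hypotheses $\|h\|_\infty \le \|g\|_\infty$ and $\|h'\|_\infty \le \|\nabla g\|_\infty$ hold. This converts the innermost discrete sum into an integral over $u_m \in (\log p_{m-1},\log z]$, introducing an error of size $\ll (\|g\|_\infty + \|\nabla g\|_\infty)e^{-c\sqrt{\log y}}$ uniformly in $p_1,\dots,p_{m-1}$. Iterating from the innermost to the outermost variable, the leading contribution becomes the integral over the ordered simplex
\[
\idotsint\limits_{\log y < u_1 < \cdots < u_m \le \log z} \frac{g(u_1,\dots,u_m)}{u_1\cdots u_m}\,\dee u_1\cdots \dee u_m,
\]
which (after reordering, using the symmetry of the denominator and treating $g$ as the symmetrized object determined by its values on the simplex) equals $(1/m!)\int_{[\log y,\log z]^m}g/(t_1\cdots t_m)$.

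The main obstacle is bookkeeping the $m$ accumulated errors. At the $k$th stage of the iteration (for $k=1,\dots,m$), the error picked up is bounded by $(\|g\|_\infty + \|\nabla g\|_\infty)e^{-c\sqrt{\log y}}$ times a product of $m-1$ one-dimensional factors: each factor is either a summed prime factor $\sum_{y<p\le z}1/(p-1) \asymp \sum_{y<p\le z}1/p$ (for variables previously summed) or an integrated factor $\int_{\log y}^{\log z}\dee u/u = \int_y^z \dee t/(t\log t)$ (for variables previously integrated), together with a combinatorial $1/(m-1)!$ from restricting to the ordered domain. Bounding each mixed term $A^jB^{m-1-j}$ by $A^{m-1}+B^{m-1}$, summing over the $m$ stages, and accounting for the extra factor of $m$ from passing between the ordered simplex and the hypercube yields the stated error
\[
\ll \frac{\|g\|_\infty + \|\nabla g\|_\infty}{e^{c\sqrt{\log y}}} \cdot \frac{m^2}{m!}\Bigl(\bigl(\textstyle\sum_{y<p\le z}1/p\bigr)^{m-1}+\bigl(\int_y^z \dee t/(t\log t)\bigr)^{m-1}\Bigr).
\]
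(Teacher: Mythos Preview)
Your approach is correct and shares the same skeleton as the paper's: both rest on the one-variable estimate
\[
\sum_{y<p\le z}\frac{H(\log p)}{p}=\int_{\log y}^{\log z}\frac{H(u)}{u}\,\dee u+O\bigl((\|H\|_\infty+\|H'\|_\infty)e^{-c\sqrt{\log y}}\bigr)
\]
(via Mertens with the classical error term) and then induct on $m$. The difference is purely in the order of operations. You stay on the ordered simplex $p_1<\cdots<p_m$ throughout, converting variables from the inside out, so that after one step the remaining integrand depends on $p_{m-1}$ both through $g$ and through the lower limit $u_m>\log p_{m-1}$. The paper instead symmetrizes at the outset: it rewrites the ordered sum as $\frac{1}{m!}$ times the sum over all (not necessarily ordered or distinct) prime $m$-tuples in $(y,z]$, absorbs the diagonal and the passage from $1/(p-1)$ to $1/p$ into the error, and only then inducts. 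After symmetrization the $m$ prime variables are fully decoupled, so each inductive step is a clean application of the one-variable lemma and the mixed error terms $(\int)^{j-1}(\sum)^{m-j}$ fall out directly with no extra work.

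Your route is workable but the bookkeeping is heavier than your sketch lets on: differentiating intermediate functions like $v\mapsto\int_v^{\log z}g(\ldots,v,u_m)/u_m\,\dee u_m$ produces boundary terms of size $\|g\|_\infty/\log y$ in addition to the $\|\nabla g\|_\infty$ contribution, and these need to be carried through the iteration (they are harmless, but they are not the ``one factor per variable'' picture you describe). Also, your base-case error $(\log z)\|h'\|_\infty$ is not quite right for general $h\in C^1([y,z])$; what actually matters is $\sup_u|\frac{\dee}{\dee u}h(e^u)|$, which for $h(t)=g(\ldots,\log t)$ is exactly $\|\partial_m g\|_\infty$. Finally, both arguments implicitly use that $g$ is symmetric when identifying the simplex integral with $\frac{1}{m!}\int_{[\log y,\log z]^m}$; in the paper's application this holds because $g=g_m^{2k}$ is symmetric by construction.
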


\begin{proof} First of all, note that
\als{
\sum_{y<p_1<\cdots<p_m\le z} \frac{g(\log p_1,\dots,\log p_m)}{(p_1-1)\cdots (p_m-1)} 
	&= \frac{1}{m!}\sum_{\substack{y<p_1,\dots,p_m \le z \\ \text{distinct}}} 
		\frac{g(\log p_1,\dots,\log p_m)}{(p_1-1)\cdots (p_m-1)} \\
	&= \frac{1}{m!}\sum_{ y<p_1,\dots,p_m \le z } 
		\frac{g(\log p_1,\dots,\log p_m)}{p_1\cdots p_m}   \\
	&\quad	+ O\left( \frac{m^2\|g\|_\infty(\sum_{y<p\le z}1/p)^{m-1}}{m! y} \right) .
}
So, if we can show that
\als{
\sum_{y<p_1,\dots, p_m\le z} \frac{g(\log p_1,\dots,\log p_m)}{p_1\cdots p_m} 
		= \int_{[\log y,\log z]^m} \frac{g(t_1,\dots,t_m)}{t_1 \cdots t_m} \dee t_1\cdots \dee t_m   \\
		\quad+ O\left( \sum_{j=1}^m  \frac{\|g\|_\infty+\| \partial g/\partial x_j\|_\infty}{e^{c\sqrt{\log y}}} 
			\left(\int_y^z \frac{\dee t}{t\log t}\right)^{j-1} \left( \sum_{y<p\le z}\frac{1}{p}\right)^{m-j} \right)  ,
}
then the lemma will follow. But this estimate can be easily proved by induction on $m$ and the Prime Number Theorem, and the proof is completed.
\end{proof}

Let us now see how we can deduce Theorem \ref{thm-smooth} from the above results:

\begin{proof}[Proof of Theorem \ref{thm-smooth}]
The first estimate of part (a) follows by \ref{thm-factors-prel-general}(a) and part (b) follows by Theorem \ref{thm-factors-prel}(b).  It remains to prove the second estimate of part (a). Note that it suffices to prove that
\[
\prod_{p\le R}\left(1-\frac{1}{p}\right) 
	\sum_{P^+(n)\le R } \frac{M_f(n;R)^{2k}}{n} 
	= \frac{c_{k,f}}{\log R} + O\left(\frac{1}{(\log R)^{2-\epsilon} }\right),
\]
since $x\ge R^{2k}\log^2R$ here. Fix $\eta\in[\log 2/\log R,1]$ to be chosen later. Then Theorem \ref{thm-factors-prel-general}(a) implies that
\[
\sum_{\substack{P^+(n)\le R \\ P^-(n)\le R^\eta}} \frac{M_f(n;R)^{2k}}{n} 
	\le \sum_{P^+(n)\le R} \frac{\Omega(n;R^\eta)M_f(n;R)^{2k}}{n} 
		\ll \eta.
\]
Moreover, for each positive integer $m$, we have that
\als{
\sum_{\substack{p|n\,\Rightarrow R^\eta<p\le R \\  \omega(n) =m }} 
		\frac{M_f(n;R)^{2k} }{n}
		&= \sum_{R^\eta<p_1<\cdots<p_m \le R} \frac{M_f(p_1\cdots p_m;R)}{(p_1-1)\cdots(p_m-1)}
}
We note that
\[
M_f(p_1\cdots p_m;R)
	= g_m\left(\frac{\log p_1}{\log R},\dots,\frac{\log p_m}{\log R}\right),
\]
where
\[
g_m(t_1,\dots,t_m) 
	:= \sum_{J\subset\{1,\dots,m\}} (-1)^{\#J}
		f\left( \sum_{j\in J} t_j  \right) 
\]
which is a smooth function satisfying the estimates $\|g_m^{2k}\|_\infty\le 4^{km} \|f\|_\infty^{2k}$ and $\|\nabla g_m^{2k}\|_\infty\le 2k 4^{km} \|f'\|_\infty\|f\|_\infty^{2k-1}$. Therefore Lemma \ref{prime-sum} implies that
\als{
\sum_{\substack{p|n\,\Rightarrow R^\eta<p\le R \\  \omega(n) =m }} 
		\frac{M_f(n;R)^{2k} }{n}
		= \frac{1}{m!} \int_{[\eta,1]^m} \frac{g_m(t_1,\dots,t_m)^{2k}}{t_1\cdots t_m} dt_1\cdots dt_m  
			+O\left( \frac{(4^k\log(1/\eta)+O(1))^m}{e^{c\sqrt{\log(R^\eta)}} m!/m^2} \right) 
}
for all $m\in\Z_{\ge 1}$. We thus conclude that
\eq{smooth-e1}{
\sum_{P^+(n)\le R } \frac{M(n;R)^{2k}}{n} 
	= F(\eta) 
		+ O\left( \eta + \frac{\log^2(1/\eta)}{\eta^{4^k} e^{c\sqrt{\log(R^\eta)}} } \right) 
		\quad(0<\eta\le 1,\ R^\eta\ge 2) ,
}
where
\[
F(\eta):= 
	1+ \sum_{m=1}^\infty \frac{1}{m!}  \int_{[\eta,1]^m} \frac{g_m(t_1,\dots,t_m)^{2k}}{t_1\cdots t_m} \dee 		t_1\cdots \dee t_m  .
\]

Completing the proof is now an exercice in real analysis. We start by proving that $\lim_{\eta\to 0^+} F(\eta)$ exists. Indeed, applying \eqref{smooth-e1} twice, we deduce that
\[
F(\eta_1) - F(\eta_2) \ll \max_{j\in\{1,2\}} \left\{ \eta_j + \frac{\log^2(1/\eta_j)}{\eta_j^{4^k} e^{c\sqrt{\log(R^{\eta_j})}} } \right\}
\]
whenever $0<\eta_1\le \eta_2\le 1$ and $R^{\eta_1}\ge 2$. Letting $R\to\infty$, we find that
\eq{smooth-e2}{
F(\eta_1) - F(\eta_2) \ll  \eta_2 \quad(0<\eta_1\le \eta_2\le 1).
}
In particularly, Cauchy's convergence criterion implies that $\lim_{\eta\to0^+}F(\eta)$ exists. Call $F$ this limit, which clearly equals $F(0)$, and note that letting $\eta_1\to0^+$ in \eqref{smooth-e2} implies that 
\[
F(\eta) = F + O(\eta ) \quad(0<\eta\le 1) .
\]
Together with \eqref{smooth-e1}, this yields the estimate 
\[
\sum_{P^+(n)\le R } \frac{M(n;R)^{2k}}{n} 
	= F + O\left( \eta + \frac{\log^2(1/\eta)}{\eta^{4^k} e^{c\sqrt{\log(R^\eta)}} } \right) 
		\quad(0<\eta\le 1,\ R^\eta\ge 2) .
\]
Selecting $\eta$ such that
\[
R^\eta = e^{(\log\log R)^3}
\]
completes the proof of Theorem \ref{thm-smooth}.
\end{proof}

We conclude this section with the proof of Theorem \ref{thm-smooth-factors}.

\begin{proof}[Proof of Theorem \ref{thm-smooth-factors}]
The first estimate of part (a) can be proven following {\it mutatis mutandis} the proof of Theorem \ref{thm-factors}(a) above, using Theorem \ref{thm-factors-prel-general} in place of Theorem \ref{thm-factors-prel}. Similarly, part (b) follows from the proof of Theorem \ref{thm-factors}(b).
\end{proof}



\section{The analogy for non-exceptional Dirichlet characters}\label{nonexcepsec}

In the section, we study the sum 
\[
\CX_{2k}(R)   = \prod_{p\le R} \left(1-\frac{1}{p}\right) 
		  \sum_{P^+(n)\le R} \frac{1}{n} \left( \sum_{\substack{d|n \\  R/2<d\leq R}} \chi(d) \right)^{2k} 
\]
when $k\ge2$ and $L(1,\chi)$ is not very small and prove Theorem \ref{thm-chars}(b). 

We have that
\[
\CX_{2k}(R)  =  \sum_{R/2<d_1,\ldots ,d_{2k}\leq R} 
		 \frac{\chi(d_1)\chi(d_2)\cdots \chi(d_{2k}) }
		 		{ [d_1,\ldots ,d_{2k}]}    .
\]
We want to introduce new variables $D_I$, $I\in\CS^*(2k)$, as in Section \ref{heuristics}, but first we perform a technical manoeuvre to simplify the situation. We write $d_i=d_i'd_i''$, where $P^+(d_i')\le y<P^-(d_i'')$, 
where
\[
y:=(\log R)^{4^k+1} .
\]
The contribution to $\CX_{2k}(R)$ of $\bs d$'s for which $d_i''$ is not square-free for some $i$ is $\ll (\log R)^{4^k-1}/y$ by a crude upper bound, and so is the contribution of those $\bs d$'s with $\max_i d_i'>B$, where
\[
B:=e^{(\log\log R)^3},
\] 
by Rankin's trick. Then we let $D_I$, $I\in\CS^*(2k)$, be the product of those primes that divide $d_i''$ when $i\in I$, and are coprime to the other $d_i''$'s. The numbers $D_I$ are pairwise coprime and square-free, and $d_i''=\prod_{I\in\CS^*(2k),\ I\ni i}D_I$, so that
\[
 \frac{\chi(d_1'')\chi(d_2'')\cdots \chi(d_{2k}'') }
		 		{ [d_1'',\ldots ,d_{2k}'']}  =  
	 \frac{\prod_{I\in\CS^+(2k)}\chi_0(D_I) \prod_{I\in\CS^-(2k)}\chi(D_I)}{\prod_{I\in\CS^*(2k)}D_I}  .
\]
We may now drop the condition that the $D_I$'s are square-free and coprime, since the contribution to $\CI_{2k}(R)$ of the $D_I$'s not satisfying these conditions is $\ll (\log R)^{4^k-1}/y$. Finally, we may drop the condition that $(D_I,q)=1$ for $I\in\CS^+(2k)$, encoded in the notation $\chi_0(D_I)$, since the contribution of $D_I$'s with $P^-(D_I)>y $ and $(D_I,q)>1$ is $\ll(\log R)^{4^k-1}\sum_{p|q,\,p>y} 1/p \ll (\log R)^{4^k}/y$. The above discussion implies that
\als{
\CX_{2k}(R)  
	&=  \sum_{\substack{P^+(d_i')\le y,\, d_i'\le B \\ 1\le i\le 2k }}
	 \frac{\chi(d_1')\cdots \chi(d_{2k}') }{ [d_1',\ldots ,d_{2k}']}    
		\sum_{\substack{P^-(D_I)>y\ (I\in\CS^*(2k)) \\ 
			R/(2d_i')<\prod_{I\in\CS^*(2k),\, I\ni i}D_I\le R/d_i' \\ 1\le i\le 2k}}
		 	 \frac{\prod_{I\in\CS^-(2k)}\chi(D_I)}{\prod_{I\in\CS^*(2k)}D_I}  \\
		&\qquad+ O\left(\frac{1}{\log R} \right) .
}

Next, we note that
\[
\sum_{\substack{ n\le x \\ P^-(n)>y}} \chi(n)
	\ll x^{1-1/(30\log y)}\quad (x\ge \max\{q^4,y\}) ,
\]
by Lemma 2.4 in \cite{Kou13}. Therefore,
\eq{chi-sieve}{
\sum_{\substack{ n>q^4B \\ P^-(n)>y}} \frac{\chi(n)}{n} \ll \frac{1}{y} .
}
This implies that the contribution to $\CX_{2k}(R)$ with $D_I>q^4B$ for some $I\in\CS^-(2k)$ is $\ll (\log R)^{4^k-1}/y$. To conclude, we have shown
\eq{before-lattice}{
\CX_{2k}(R)  
	&=  \sum_{\substack{P^+(d_i') \le y,\, d_i'\le B \\ 1\le i\le 2k}} 
		\sum_{\substack{D_I\le q^4B \\ P^-(D_I)>y  \\ I\in\CS^-(2k)}}  
			 \frac{\chi(d_1')\cdots \chi(d_{2k}') }{ [d_1',\ldots ,d_{2k}']}    
			 \cdot \frac{\prod_{I\in\CS^-(2k)}\chi(D_I)}{\prod_{I\in\CS^-(2k)}D_I}  
				\cdot T( R_1,\dots,R_{2k}) \\
		&\qquad+ O\left(\frac{1}{\log R}\right) .		
}
where $R_i=R/(d_i'\prod_{I\in\CS^-(2k),\, I\ni i} D_I)$ and
\[
T(\bs R):= 
	\sum_{\substack{P^-(D_I)>y\ (I\in\CS^+(2k)) \\ 
			R_i/2<\prod_{I\in\CS^+(2k),\, I\ni i}D_I\le R_i \\ 1\le i\le 2k }}
		 	 \frac{1}{\prod_{I\in\CS^*(2k)}D_I}  .	
\]

Our task now becomes estimating $T(\bs R)$. Let $d=2^{2k-1}-2k$ and recall the definition of $V_d(\cdot)$ from the statement of Theorem \ref{thm-chars}(b). The proof of Theorem \ref{thm-perm} shows that $V_k(m)\asymp m^d$. We claim that
\eq{lattice}{
T(\bs R)  = V_k(\log R)\left(1 + O\left(\frac{\log B}{\log R}\right)\right) 
		 \prod_{p\le y} \left(1-\frac{1}{p}\right)^{2^{2k-1}-1}   
}
whenever $R/B\le R_i\le R$ for all $i$, as is the case here. Proving \eqref{lattice} can be accomplished easily using a lattice point count and the fundamental lemma of sieve methods. First of all, note that the part of $T(\bs R)$ where $D_I\le B$ for some $I\in \CS^+(2k)$ is trivially $\ll (\phi(P)/P)^{2^{2k-1}-1} (\log R)^{d-1}\log B$ by an upper bound sieve, where we have set $P:=\prod_{p\le y} p$ for simplicity. In the rest of the range, we set $\rho=1+1/\log R$ and divide the variables $D_I$ into boxes of the form $(\rho^{m_I},\rho^{m_I+1}]$, $m_I\ge0$. Replacing $D_I$ by $\rho^{m_I}$ in the conditions $R_i/2<\prod_{J\in\CS^+(2k),\, J\ni i}D_J\le R_i$ creates a total error of size $\ll (\phi(P)/P)^{2^{2k-1}-1} (\log R)^{d-1}$. In addition, if $\rho^{m_I}\ge B = e^{(\log\log R)^3}$, then we have that
\[
\sum_{\substack{\rho^{m_I}<D_I\le \rho^{m_I+1} \\ P^-(D_I)>y}} \frac{1}{D_I}
	= (\log\rho)\frac{\phi(P)}{P}  + O_C\left(\frac{1}{(\log R)^C}\right) 
\]
for any fixed $C$, by the fundamental lemma of sieve methods (see, for example, \cite[Theorem I.4.3]{Ten}). We thus conclude that
\als{
T(\bs R) & = \left(  (\log\rho)\phi(P) / P \right)^{2^{2k-1}-1}
				\sum_{\substack{m_I\ge \log B/\log\rho \ (I\in\CS^+(2k)) \\ 
					\frac{\log(R_i/2)}{\log\rho}< \sum_{I\in\CS^+(2k),\,I\ni i} m_I\le 
						\frac{\log R_i}{\log\rho}}} 1 \\
	&\qquad			+ O\left( (\phi(P)/P)^{2^{2k-1}-1}(\log R)^{d-1}\log B\right)  .
}
A straightforward lattice point counting argument implies that the sum on the right hand side equals
\[
\frac{W_k(\log R_1,\dots,\log R_{2k})}{(\log\rho)^{2^{2k-1}-1}} \left(1+O\left(\frac{\log B}{\log R}\right)\right) ,
\]
where $W_k(\bs m)$ is the volume of the polytope $\{(x_I)_{I\in\CS^+(2k)} : x_I\ge0\ \forall I,\ m_i-\log2\le\sum_{I\ni i}x_I\le m_i\ \forall i\}$. Since $m_i=\log R_i=\log R+O(\log B)$ here, we may show using the Mean Value Theorem that $W_k(\bs m) = V_k(m) +O((\log R)^{d-1}\log B)$. Relation \eqref{lattice} then follows.

We are now ready to complete the proof of Theorem \ref{thm-chars}(a): inserting the estimate \eqref{lattice} into \eqref{before-lattice}, we conclude that
\als{
\CX_{2k}(R)  
	&=  \frac{V_k(\log R)}{(P/\phi(P))^{2^{2k-1}-1}} 
		\sum_{\substack{P^+(d_i') \le y,\, d_i\le B \\ 1\le i\le 2k}} 
		\sum_{\substack{D_I\le q^4B \\ P^-(D_I)>y  \\ I\in\CS^-(2k)}}  
			 \frac{\chi(d_1')\cdots \chi(d_{2k}') }{ [d_1',\ldots ,d_{2k}']}    
			 \cdot \frac{\prod_{I\in\CS^-(2k)}\chi(D_I)}{\prod_{I\in\CS^-(2k)}D_I}  \\
		&\qquad+ O((\log R)^{d-1}(\log(q\log R))^{O(1)} ) .
}
We now remove the conditions $D_I\le q^4B$ and $d_i'\le B$ via \eqref{chi-sieve} and Rankin's trick, respectively. We conclude that
\als{
\CX_{2k}(R)  
	&=  \frac{V_k(\log R)}{(P/\phi(P))^{2^{2k-1}-1}} 
		\sum_{\substack{P^+(d_i') \le y \\ 1\le i\le 2k}} 
		\sum_{\substack{P^-(D_I)>y  \\ I\in\CS^-(2k)}}  
			 \frac{\chi(d_1')\cdots \chi(d_{2k}') }{ [d_1',\ldots ,d_{2k}']}    
			 \cdot \frac{\prod_{I\in\CS^-(2k)}\chi(D_I)}{\prod_{I\in\CS^-(2k)}D_I}  \\
		&\qquad+ O((\log R)^{d-1}(\log(q\log R))^{O(1)} ) .
}
Finally, we note that 
\[
\sum_{\substack{P^+(d_i') \le y \\ 1\le i\le 2k}}  \frac{\chi(d_1')\cdots \chi(d_{2k}') }{ [d_1',\ldots ,d_{2k}']}    
	= \prod_{p\le y} \left( 1+ \sum_{j=1}^\infty \sum_{\substack{j_1,\dots,j_{2k}\ge0 \\ \max\{j_1,\dots,j_{2k}\}=j}}
		\frac{\chi(p)^{j_1+\cdots+j_{2k}}}{p^j} \right) .
\]
The coefficient of $1/p$ is $(2^{2k-1}-1)\chi_0(p)+2^{2k-1}\chi(p)$. We thus conclude that
\als{
&(\phi(P)/P)^{2^{2k-1}-1} 
		\sum_{\substack{P^+(d_i') \le y \\ 1\le i\le 2k}} 
		\sum_{\substack{P^-(D_I)>y  \\ I\in\CS^-(2k)}}  
			 \frac{\chi(d_1')\cdots \chi(d_{2k}') }{ [d_1',\ldots ,d_{2k}']}    
			 \cdot \frac{\prod_{I\in\CS^-(2k)}\chi(D_I)}{\prod_{I\in\CS^-(2k)}D_I} \\
	&\qquad=
		 \prod_{p\le y} \left( 1+ \sum_{j=1}^\infty \sum_{\substack{j_1,\dots,j_{2k}\ge0 \\ \max\{j_1,\dots,j_{2k}\}=j}}
		\frac{\chi(p)^{j_1+\cdots+j_{2k}}}{p^j} \right) \left(1-\frac{1}{p}\right)^{2^{2k-1}-1} 
			\prod_{p>y}\left(1- \frac{\chi(p)}{p}\right)^{-2^{2k-1}}\\
	&\qquad=\prod_p
		\left( 1+ \sum_{j=1}^\infty \sum_{\substack{j_1,\dots,j_{2k}\ge0 \\ \max\{j_1,\dots,j_{2k}\}=j}}
		\frac{\chi(p)^{j_1+\cdots+j_{2k}}}{p^j} \right) \left(1-\frac{1}{p}\right)^{2^{2k-1}-1} 
		+ O\left(\frac{1}{y}\right) .
}
An easy calculation then completes the proof of Theorem \ref{thm-chars}(b).

\section{The analogy for exceptional Dirichlet characters}\label{excepsec}

In this section, we consider the quantity $\CX_{2k}(R)$ when $k=1$, or when when $\chi(p)=-1$ for most primes $p\le R$, and complete the proof of Theorem \ref{thm-chars}. Our arguments here resemble closely the ones of Section \ref{contour},  so we only highlight the key points here. Throughout the proof, we assume that $R\ge q^{2c_1}$, the complimentary case being trivial.

\subsection{Initial preparations}
Arguing as in Section \ref{dyadic-section}, we find that
\als{
\CX_{2k}(R)
	&= \frac 1 {(2i\pi)^{2k}} \idotsint\limits_{\substack{\Re(s_j)=\lambda^j/\log R \\ |\Im(s_j)|\le T \\ 1\le j\le 2k}}  
		\sum_{d_1,\ldots ,d_{2k}\geq 1}  \frac{\prod_{j=1}^{2k} \chi(d_j)d_j^{-s_j}}{[d_1,\ldots ,d_{2k}]} 
\prod_{j=1}^{2k} \hat{h}_R(s_j) (1-2^{-s_j}) 
	\dee s_1\cdots \dee s_{2k} \\
	&\qquad + O\left( \frac{1}{(\log R)^{100}}\right) ,
}
where $h$ is a smooth function with $h(x)=1$ for $x\le 1-1/(\log R)^{(2k-1)2^{2k+1}+200k+2}$ and $h(x)=0$ for $x>1$, $T=\exp\{(\log\log R)^2\}$, and $\lambda$ is some large parameter $>1$ to be chosen later.

By expanding as an Euler product, we find that for $\Re(s_1),\dots\Re(s_{2k})\ge -1/4k$ we have
\als{
\sum_{d_1,\ldots ,d_{2k}\geq 1}  \frac{\prod_{j=1}^{2k} \chi(d_j)d_j^{-s_j}}{ [d_1,\ldots ,d_{2k}]} 
	&=\prod_{p}\left(1+\sum_{I\in \CS^*(2k)}\frac{\chi(p)^{\#I}}{p^{1+s_I}}+O\left(\frac{1}{p^{2-2k/4^k}}\right)\right)\\
	&= P(\bs s) \prod_{I\in\CS^*(2k)} L(1+s_I,\chi^{\#I}),
}
where $P(\bs s)$ is given by an Euler product which converges absolutely in the region $\Re(s_j)>-1/4k$ for all $j$. Next, we set
\[
F(\bs s) = P(\bs s) \prod_{j=1}^{2k} \frac{\hat{h}_R(s_j)(1-2^{-s_j})}{R^{s_j}} 
\]
and
\[
\zeta_q(s):= L(s,\chi_0) = \zeta(s)\prod_{p|q}\left(1-\frac{1}{p^s}\right) ,
\]
so that
\eq{X-e1}{
\CX_{2k}(R) &= 
		\frac 1 {(2i\pi)^{2k}} \idotsint\limits_{\substack{\Re(s_j)=\lambda^j/\log R \\ |\Im(s_j)|\le T \\ 1\le j\le 2k}}  
	 F(\bs s) R^{s_{[2k]}}  \prod_{I\in \CS^-(2k)}L(1+s_I,\chi) \\
	 &\qquad \times \prod_{I\in \CS^+(2k)} 
	 	\zeta_q(1+s_I)
		\dee s_1\cdots \dee s_{2k}  
	+ O\left( \frac{1}{(\log R)^{100}}\right) .
}

Similarly to Section \ref{contour}, we let $\tilde{\CC}_\ell$ denote the class of complex-valued functions $f$ defined in a domain containing
\[
\tilde{\Omega}_\ell:=\{\bs s\in\C^\ell : |\Re(s_j)|< 1/5k, |\Im(s_j)|<T+1\ (1\le j\le \ell)\},
\]
it is analytic in $\tilde{\Omega}_\ell$, and its derivatives satisfy the bound 
\eq{F'-bound}{
\frac{\partial^{j_1+\cdots+j_\ell}f}{\partial s_1^{j_1}\cdots \partial s_\ell^{j_\ell}}(\bs s)
	\ll_{j_1,\dots,j_\ell}  \prod_{m=1}^\ell \frac{[(1+(qT)^{-\Re(s_m)})(\log\log R)^{j_m}]^{O(1)}}{|s_m|+1} 
}
for all $j_1,\dots,j_\ell\ge0$ and all $\bs s=(s_1,\dots,s_\ell)\in\tilde{\Omega}_\ell$. 

Since there is an absolute constant $c_1>0$ such that
\eq{L-bound}{
L^{(j)}(s,\psi) \ll_m (1+(q+|t|)^{c_1(1-\sigma)}) \log^{j+1}(q+|t|)  +  \frac{{\bf 1}_{\psi=\chi_0}}{|s-1|^{j+1}} 
}
for $j\in\Z_{\ge0}$, $\psi\in\{\chi,\chi_0\}$ and $j\in\{0,1\}$, a standard consequence of bounds on the exponential sum $\sum_{n\le N,\ n\equiv a\mod q} n^{it}$ (see, for example, Lemma 4.1 in \cite{Kou13}), we have that $F$ is in the class $\tilde{\CC}_{2k}$.

\subsection{The case $k=1$} 
We first deal with the case $k=1$ that is easy and will help us clarify some of the technical details of the argument.  When $k=1$, we move the variable $s_2$ to the line $\Re(s_2)=-\epsilon$, for a sufficiently small $\epsilon$. The contribution of the horizontal contours is $\ll (\log R)^{O(1)}/T$, and the contribution of the contour $\Re(s_2)=-\delta$ is $\ll R^{-\delta}$, for some positive $\delta=\delta(\epsilon)$ by \eqref{L-bound}, and by our assumptions that $F\in\CC_2$ and that $R\ge q^{2c_1}$. In conclusion,
\[
\CX_2(R) = 
		\frac{1}{2i\pi} \int\limits_{\substack{\Re(s_1)=\lambda/\log R \\ |\Im(s_1)|\le T \\ 1\le j\le 2k}}  
	 F(s_1,-s_1)L(1+s_1,\chi)L(1-s_1,\chi) \dee s_1 + O\left( \frac{1}{(\log R)^{100}}\right) .
\]
Finally, we move $s_1$ to the line $\Re(s_1)=0$. No poles are encountered, and the contribution of the horizontal lines is easily seen to be $\ll (\log R)^{O(1)}/T$, so that
\als{
\CX_2(R) 
	&= 
	\frac{1}{2\pi} \int_{-T}^T 
	 F(it,-it) |L(1+it,\chi)|^2 \dee t + O\left( \frac{1}{(\log R)^{100}}\right)  \\
	& =\frac{1}{2\pi} \int_{-(\log R)^{102}}^{(\log R)^{102}}
	 P(it,-it) \abs{ L(1+it,\chi) \hat{h}_R(it) (1-2^{it}) }^2 \dee t
	+ O\left( \frac{1}{(\log R)^{100}}\right) ,
}
since $\hat{h}_R(it) (1-2^{it})\ll 1/(1+|t|)$. Finally, using \ref{h-mellin} to replace $\hat{h}_R(s)$ by $R^s/s$, choosing $C$ to be large enough, and then extending the range of integration to $\R$ yields the estimate
\als{
\CX_2(R) 
	& =\frac{1}{2\pi} \int_{-\infty}^\infty 
	  P(it,-it)\abs{ L(1+it,\chi) \cdot \frac{\sin(t(\log 2)/2)}{t}  }^2 \dee t
	+ O\left( \frac{1}{(\log R)^{100}}\right) ,
}
which proves Theorem \ref{thm-chars}(a). (Obviously, we can obtain a much stronger error term, but we have chosen to content ourselves with a more qualitative result.)

\subsection{Contour shifting}
Next, we focus on the case $k\ge2$ and prove Theorem \ref{thm-chars}(c). 
From now on, we will always be working under the assumptions and notations
\[
L(\beta,\chi)=0,\quad \beta>1-1/(100\log q),\quad Q=e^{1/(1-\beta)} .
\]
As it is well-known, we have that $\sum_{q<p\le Q}(1+\chi(p))/p\ll1$ and $\sum_{p>Q}\chi(p)/p \ll 1$ (see, for example, Theorems 2.1 and 2.4 in \cite{kou}). As a consequence, we note once and for all that
\eq{L(1,chi)}{
L(1,\chi) \asymp \frac{1}{\log Q}  \prod_{p\le q} \left(1+\frac{1+\chi(p)}{p} \right)  .
}

As in Section \ref{contour}, we shift the contours of the variables $s_1,\dots,s_{2k}$ in a certain order. As in that section, to describe the general contour shifting argument after $N$ steps, $0\le N\le 2k$, we fix sets $I_1,\dots,I_N$, and distinct integers $j_n\in I_n$ for each $n$. Recall, also, that $s_j$ denotes a variable and $x_j$ denotes a linear form. We then define 
\[
V_n= \Span_\Q (x_{I_1},\dots,x_{I_n}) 
\quad\text{and}\quad
\CI_N = \{I\in\CS(2k) : x_I\in V_n\}\quad(0\le n\le N) .
\]
Imposing the conditions $x_{I_1}=\cdots=x_{I_n}=0$, we may write $x_{j_1},\dots,x_{j_n}$ in terms of the variables $s_j$ with $j\in[2k]\setminus\{j_1,\dots,j_N\}$. Hence $x_I$ becomes a linear form $L_{N,I}$ in the variables $s_j$ with $j\in[2k]\setminus\{j_1,\dots,j_N\}$. Moreover, $x_I\in V_N$ if and only if $L_{N,I}=0$. 

As we will see, we will always be able to assume that $j_n=2k-n+1$. Let $d\in\Z_{\ge0}$ and Given the above set-up with $j_n=2k-n+1$, an integer $d\in\Z_{\ge0}$ and $\bs h= (h_{n,I})_{0\le n\le N,\, I\in\CS^*(2k)}$ be a vector of non-negative integers such that:
\begin{itemize}
\item $0=h_{0,I}\le h_{1,I}\le \cdots \le h_{N,I}$ for $I\in\CS^*(2k)$;
\item If $I\in \CI_n\setminus\CI_{n-1}$ for some $n\in\{1,\dots,N\}$, then $h_{m,I}=h_{n,I}$ for all $m\ge n$;
\item $X_N\ge N+d$, where
\[
X_N :=  \#(\CI_N\cap\CS^+(2k)) - \sum_{I\in\CS^-(2k)\cup(\CS^+(2k)\setminus \CI_N)} h_{N,I} .
\]
\end{itemize}

A function $J:\R_{\ge2}\to \C$ is a called a {\bf fundamental component of level $N$ and of type $(\bs I,\bs h,d)$} if:
\begin{itemize}
\item when $N=2k$, it equals 
\[
J(R) =  (\log R)^{X_N-N-d} \prod_{I\in\CS^-(2k)}
		 	L^{(h_{N,I})}(1,\chi)   \,; 
\]
\item when $N<2k$, it is of the form
\als{
J(R)&=\left(\frac{\phi(q)}{q}\right)^{M_N}  \frac{(\log R)^{X_N - N-d}}{(2i\pi)^{2k-N}}
	 \prod_{I\in\CS^-(2k)\cap \CI_N}  
		 	L^{(h_{N,I})}(1,\chi) 	\\
	&\qquad \times \idotsint \limits_{\substack{\Re(s_j)=\lambda_j/\log R\\ |\Im(s_j)|\le T \\ 1\le j\le 2k-N}}
		 G(\bs s)R^{E_N(\bs s)} 
		 	\prod_{I\in\CS^-(2k)\setminus\CI_N }  
		 	L^{(h_{N,I})}(1+L_{N,I}(\bs s),\chi)    \\
	 &\qquad\qquad\times \prod_{I\in\CS^+(2k)\setminus\CI_N }  
		 	\zeta_q^{(h_{N,I})}(1+L_{N,I}(\bs s)) 
				 	\dee s_{2k-N}\cdots \dee s_1  ,
}
where $\lambda_j/\lambda_{j-1}\ge \lambda$, $E_N(\bs s) := L_{N,[2k]}(\bs s)$, 
\[
M_N:=  \sum_{n=1}^N \sum_{I\in(\CI_n\setminus\CI_{n-1})\cap \CS^+(2k) } (h_{n-1,I}+1) ,
\]
and $G$ is a function in the variables $s_1,\dots,s_{2k-N}$ that belongs to the class $\CC_{2k-N}$, given by
\[
G(\bs s)=   F(L_{N,\{1\}}(\bs s),\dots,L_{N,\{2k\}}(\bs s)) 
\]
when $d=0$. In particular, $G$ is non-vanishing in $\Omega_{2k-N}$ when $d=0$ by \eqref{L-bound}. 
\end{itemize}

As in Section \ref{contour-shifting}, a fundamental component of level $N$ is called reducible when $N<2k$ and $E_N\neq0$. Otherwise, it is called irreducible. With this above terminology, the integral on the right hand side of \eqref{X-e1} is a reducible fundamental component of level 0 and of type $(\emptyset,\emptyset,0)$.

Again as in Section \ref{contour-shifting}, when $E_N\neq0$ there are some $\gamma_j\in \Q$ with $\gamma_{j_{N+1}}\neq0$ such that 
\[
E_N(\bs x) = \gamma_1 x_1+\gamma_2x_2+\cdots + \gamma_{j_{N+1}} x_{j_{N+1}}
\]
If $\lambda$ is big enough, then the sign of $\Re(E_N(\bs s))$ throughout the region of integration is constant and equal to the sign as $\gamma_{j_{N+1}}$. 

The analogies of Lemmas \ref{contour-lem1} and \ref{contour-lem2} can be proven in this setting:

\begin{lem}\label{chi-contour-lem1} Assume the above setup, let $J(R)$ be a reducible fundament component of level $N<2k$, and let $\gamma_{j_{N+1}}$ be as above. Suppose, further, that $k\ge2$ and that $e^{(\log q)^2}\le R\le Q$. All implied constants below depend at most on $k$.
\begin{enumerate}
\item If $\gamma_{j_{N+1}}>0$, then $J(R)$ is a linear combination of fundamental components of level $N+1$, up to an error term of size $\ll 1/\log R$. Moreover, the coefficients of this linear combination are $\ll (\log q)^{O(1)}$.
\item If $\gamma_{j_{N+1}}<0$, then $J(R)\ll T^{-1+o(1)}$.
\end{enumerate}
\end{lem}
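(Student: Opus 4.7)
The proof of Lemma \ref{chi-contour-lem1} mirrors that of Lemma \ref{contour-lem1} in Section \ref{contour-shifting}, with modifications to accommodate the two distinct kinds of factors in the integrand: the $\zeta_q(1+L_{N,I}(\bs s))$ with $I\in\CS^+(2k)\setminus\CI_N$, which have simple poles at $L_{N,I}=0$, and the entire factors $L(1+L_{N,I}(\bs s),\chi)$ with $I\in\CS^-(2k)\setminus\CI_N$. After a cyclic permutation of the variables $s_{j_{N+1}},\dots,s_{2k-N}$ as in Lemma \ref{contour-lem1}, one may assume $j_{N+1}=2k-N$, and one then shifts the $s_{2k-N}$-contour either to the left (when $\gamma_{j_{N+1}}>0$) or to the right (when $\gamma_{j_{N+1}}<0$), fixing $s_1,\dots,s_{2k-N-1}$.

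For part~(a), I would shift $s_{2k-N}$ to the line $\Re(s_{2k-N})=-1/(\log T)^{3/2}$. The horizontal segments contribute $\ll(\log R)^{O(1)}/T$ thanks to \eqref{F'-bound} and the polynomial bounds \eqref{L-bound}. On the shifted vertical line, $\Re(E_N(\bs s))=-\gamma_{j_{N+1}}/(\log T)^{3/2}+O(1/\log R)$, yielding a decay $R^{\Re E_N}\ll e^{-c(\log R)/(\log T)^{3/2}}$. The $L$-function factors now live in the strip $0<1-\sigma\le 1/(\log T)^{3/2}$, and by \eqref{L-bound} are bounded by $(qT)^{c_1/(\log T)^{3/2}}(\log qT)^{O(1)}$; the assumption $R\ge e^{(\log q)^2}$ together with $T=\exp((\log\log R)^2)$ makes this polynomial growth negligible against the exponential $R$-decay. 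The only poles crossed arise from $\zeta_q^{(h_{N,I})}(1+L_{N,I}(\bs s))$ with $I\in\CS^+(2k)\setminus\CI_N$ and nonzero $s_{2k-N}$-coefficient in $L_{N,I}$, giving the admissible candidates for $I_{N+1}$. No poles come from $L(1+L_{N,I}(\bs s),\chi)$ since these functions are entire.

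Fixing such a candidate $I_{N+1}$ determines the pole location $s_{2k-N}=C(s_1,\dots,s_{2k-N-1})$, and the generic order $m$ of the pole equals $\sum_{I\in(\CI_{N+1}\setminus\CI_N)\cap\CS^+(2k)}(h_{N,I}+1)-\nu$, where $\nu$ is the order of vanishing of the accompanying analytic factor (as in the proof of Lemma \ref{contour-lem1}). Computing the residue by the generalized Leibniz rule and imposing $L_{N,I_{N+1}}=0$ produces: (i) one factor of $\phi(q)/q$ for each $I\in(\CI_{N+1}\setminus\CI_N)\cap\CS^+(2k)$, since $\mathrm{Res}_{s=1}\zeta_q(s)=\phi(q)/q$, which increments the prefactor exponent $M_N$ to $M_{N+1}$; (ii) one factor of $L^{(h_{N,I})}(1,\chi)$ for each $I\in(\CI_{N+1}\setminus\CI_N)\cap\CS^-(2k)$, extending the product $\prod_{I\in\CS^-(2k)\cap\CI_N}L^{(h_{N,I})}(1,\chi)$ to range over $I\in\CS^-(2k)\cap\CI_{N+1}$; and (iii) new multiplicities $h_{N+1,I}\ge h_{N,I}$ on the surviving factors, with the extra differentiations absorbed into a suitable increase of $d$. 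The verification that $X_{N+1}\ge(N+1)+d'$ and the coefficient bound $(\log q)^{O(1)}$ both follow from the same bookkeeping as in Lemma \ref{contour-lem1}, combined with $L^{(h)}(1,\chi)\ll(\log q)^{O(1)}$ from \eqref{L-bound}. For part~(b), I would shift the contours of $s_{2k-N},s_{2k-N-1},\dots,s_{j_{N+1}}$ successively to the right to $\Re(s_j)=\lambda_j/(\log T)^{3/2}$; for $\lambda$ large the $\Re(L_{N,I})$'s retain a fixed sign throughout, so no poles are crossed, and on the final contour $R^{\Re E_N}\ll e^{-c\sqrt{\log R}}$, yielding $J(R)\ll T^{-1+o(1)}$.

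The main obstacle is calibrating the various growth rates in part~(a): one must verify uniformly, as the process iterates over $N=0,1,\dots,2k-1$, that the $(qT)^{O(1)}$ factors picked up from $L(s,\chi)$ bounds on shifted contours, together with the $(\log q)^{O(1)}$ coefficients extracted at each residue step, remain controlled by the decay from $R^{E_N}$ and by the gap between $\log R$ and $\log q$ afforded by the assumption $R\ge e^{(\log q)^2}$. Once this uniformity is in place, the combinatorial structure of the induction on $N$ is formally identical to that of Section \ref{contour-shifting}.
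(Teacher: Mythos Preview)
Your proposal is correct and follows essentially the same approach as the paper; the only differences are cosmetic (the paper shifts to $\Re(s_{2k-N}')=-\epsilon$ for fixed small $\epsilon$ in (a) and to $\Re(s_j)=\lambda^j/((\log q)+(\log T)^{3/2})$ in (b), and it treats the sub-case $N=2k-1$ separately, bounding that pole contribution directly by $\ll(\log q)^{O(1)}/(\log R)^{2k+1}\ll 1/\log R$ so that the linear combination there is empty). One minor slip in your bookkeeping: the factors $L^{(h)}(1,\chi)$ for $I\in\CS^-(2k)\cap\CI_{N+1}$ are, by definition, part of the level-$(N+1)$ fundamental components rather than coefficients of the linear combination; the $(\log q)^{O(1)}$ bound on those coefficients comes instead from the lower Laurent coefficients of $\zeta_q^{(h)}$ at $s=1$.
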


We iterate the above lemma till all the fundamental components we are dealing with are irreducible. For such components, we have the following asymptotic formula.

\begin{lem}\label{chi-contour-lem2} Assume the above setup. Suppose, further, that $k\ge2$ and that $e^{(\log q)^2}\le R\le Q$. If $J(R)$ is an irreducible fundamental component, then there is some complex number $c\ll(\log q)^{O(1)}$ such that
\[
J(R) = c(\log R)^{\binom{2k}{k}-2k}+ O((\log(q\log R))^{O(1)}(\log R)^{\binom{2k}{k}-2k-1}) .
\]
All implied constants depend at most on $k$.
\end{lem}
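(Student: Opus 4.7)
The plan is to follow the three-case structure of Lemma \ref{contour-lem2}, with two modifications that accommodate the exceptional character setting. First, the $\zeta(1+s_I)^{-1}$ factors for odd $I$ appearing in Section \ref{contour} are now replaced by the analytic factors $L(1+s_I, \chi)$, so odd indices in $\CI_N$ contribute evaluations $L^{(h_{N,I})}(1, \chi)$ at $s = 1$ rather than being sources of poles. Second, we exploit the smallness $L(1, \chi) \asymp 1/\log Q$, combined with the hypothesis $R \le Q$, to convert each undifferentiated evaluation $L(1, \chi)$ into a saving of $(\log R)^{-1}$, while higher derivatives $L^{(h)}(1, \chi)$ with $h \ge 1$ are bounded by $(\log q)^{O(1)}$ and absorbed into the implicit constant.

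The key combinatorial bridge is the inequality $X_N - r_1 \le \SA(V_N)$, where
\[
r_1 := \#\{I \in \CS^-(2k) \cap \CI_N : h_{N,I} = 0\} .
\]
This follows from the definitions: one computes $X_N - \SA(V_N) = \#(\CI_N \cap \CS^-(2k)) - \sum_{I \in \CS^-(2k)} h_{N,I} - \sum_{I \in \CS^+(2k) \setminus \CI_N} h_{N,I}$, and since each $I \in \CS^-(2k) \cap \CI_N$ with $h_{N,I} \ge 1$ subtracts at least $1$ from the count $\#(\CI_N \cap \CS^-(2k))$, only the $r_1$ undifferentiated indices survive. Combined with the $(\log R)^{-r_1}$ saving from the prefactor $\prod_{I \in \CS^-(2k) \cap \CI_N} L^{(h_{N,I})}(1, \chi)$ and the polynomial loss $(\log(q \log R))^{O(1)}$ arising when one bounds the integrand using \eqref{L-bound} and the membership $G \in \CC_{2k-N}$, the effective power of $\log R$ in $|J(R)|$ is at most $\SA(V_N) - \dim(V_N)$, a quantity controlled by Proposition \ref{CombProp}.

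The three cases then unfold in parallel to Lemma \ref{contour-lem2}. For $N = 2k$, the inequality $\sum_{I \in \CS^-(2k)} h_{2k, I} \ge 2^{2k-1} - r_1$ gives $X_{2k} - 2k - d - r_1 \le -1 - 2k$, so this case always contributes to the error term and we may take $c = 0$. For $N = 2k - 1$, we write $s_j = a_j s_1$ and move the $s_1$ contour across $0$, picking up a pole of order $m = \sum_{I \in \CS^+(2k) \setminus \CI_{2k-1}}(h_{2k-1, I} + 1)$ from the factors $\zeta_q^{(h_{2k-1,I})}(1 + a_I s_1)$. Proposition \ref{CombProp}(b) bounds the resulting exponent by $\binom{2k}{k} - 2k$, with equality attained only in the balanced configuration $a_j \in \{\pm 1\}$ (with $k$ indices of each sign), in which $\CI_{2k-1} \cap \CS^-(2k) = \emptyset$ and no $L(1, \chi)$ smallness is in play. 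The main term is then the residue, which is a constant expressible via derivatives of $\zeta_q$ and evaluations of $L(\cdot, \chi)$. For $N \le 2k - 2$, we shift each $s_j$ to $\Re(s_j) = \lambda^j/(\log T)^{3/2}$, encounter no poles (since $E_N = 0$ and $\lambda$ is large), and bound the resulting integral by $(\log(q \log R))^{O(1)} (\log R)^{X_N - N - d - r_1}$. Proposition \ref{CombProp}(c) (or \ref{CombProp}(a) when a singleton lies in $V_N$) shows that this exponent is at most $\binom{2k}{k} - 2k - 2$, hence negligible.

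The main obstacle will be the detailed book-keeping in the $N = 2k-1$ case: one must pin down precisely the configurations $(\bs I, \bs h, d)$ yielding a non-negligible contribution, verify that the resulting constant (a combination of the residue of $\prod_{I \in \CS^+(2k) \setminus \CI_{2k-1}} \zeta_q^{(h)}(1 + a_I s_1)$ at $s_1 = 0$, the prefactor $(\phi(q)/q)^{M_{2k-1}}$, and the evaluations $L^{(h_{2k-1,I})}(1, \chi)$ with $h \ge 1$) satisfies the bound $\ll (\log q)^{O(1)}$, and aggregate across the finitely many sequences of poles leading to irreducible level-$(2k-1)$ components to produce the single constant $c$ asserted by the lemma. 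This parallels the analysis of Case 2 in the proof of Lemma \ref{contour-lem2}, with all $\zeta$-factors replaced by $\zeta_q$ or $L(\cdot, \chi)$ and careful attention paid to the scaling $L(1, \chi) \asymp 1/\log Q$.
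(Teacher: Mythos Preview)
Your treatment of Cases $N=2k$ and $N\le 2k-2$ is essentially correct and parallels the paper; the combinatorial bridge $X_N - r_1 \le \SA(V_N)$ you set up is exactly the content of the paper's estimate \eqref{L-bound-contour}. The gap is in Case $N = 2k-1$, in the balanced configuration where the exponent equals $\binom{2k}{k} - 2k$.

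You propose to move the $s_1$-contour across $0$ and declare the residue to be the constant $c$. This does not work: since $E_{2k-1} = 0$, there is no factor $R^{E(\bs s)}$ providing decay on the far side, so the integral on the shifted contour $\Re(s_1) = -\epsilon$ is of the same size as the residue (both are $\asymp (\log q)^{O(1)}$, not $o(1)$ in $R$). The constant $c$ must incorporate both pieces, and you give no mechanism for combining them into a single $R$-independent quantity bounded by $(\log q)^{O(1)}$. The paper in fact flags this explicitly: ``we cannot shift the contour to the line $\Re(s_1)=0$ as in Case 2 of Section \ref{contour-shifting}.''

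The missing idea is to exploit the Siegel zero directly. In the balanced subcase the paper writes $L(1 + a_I s_1, \chi) = L(\beta + a_I s_1, \chi) + \Delta(a_I s_1)$ with $\Delta(s) = \int_\beta^1 L'(u+s,\chi)\,du \ll \log(q+|t|)/\log Q$. Terms with at least one $\Delta$ contribute $O((\log(q\log R))^{O(1)}(\log R)^{\binom{2k}{k}-2k-1})$ since $1/\log Q \le 1/\log R$. In the remaining main term each of the $2^{2k-1}$ factors $L(\beta + a_I s_1, \chi)$ vanishes at $s_1 = 0$ (because $L(\beta,\chi)=0$ and $a_I\ne 0$ for odd $I$), and these zeros annihilate the $2^{2k-1}-\binom{2k}{k}$ poles from the $\zeta_q$-factors. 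The integrand is now analytic at $s_1=0$, the contour may be moved to $\Re(s_1)=0$, and after replacing $\hat h_R$ by $R^s/s$ via \eqref{h-mellin} one reads off the $R$-independent constant $c\ll(\log q)^{O(1)}$ as an integral over $\R$.
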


Since the integral on the right hand side of \eqref{X-e1} is a reducible fundamental component of level 0, we apply Lemma \ref{contour-lem1} repeatedly to write it as a linear combination of irreducible fundamental components, and then estimate these components by Lemma \ref{contour-lem2}. This proves that there is a constant $c_k(\chi)\ll(\log q)^{O(1)}$ such that
\eq{chi-thm-goal}{
\CX_{2k}(R) = c_k(\chi) \cdot (\log R)^{\binom{2k}{k}-2k}
	+ O\left( (\log(q\log R))^{O(1)}  (\log R)^{\binom{2k}{k} - 2k -1 } \right)   
}
when $k\ge2$ and $e^{(\log q)^2}\le R\le Q$. We will show that $c_k(\chi) \gg (\log q)^{-O(1)}$ in Section \ref{chi-lb} and complete the proof of Theorem \ref{thm-chars}. The key intermediate Lemmas \ref{chi-contour-lem1} and \ref{chi-contour-lem2} are proven in the next section.

\subsection{Proof of the auxiliary Lemmas \ref{chi-contour-lem1} and \ref{chi-contour-lem2}}

For easy reference, we record the following bound that we will repeatedly use: for $R\le Q$, we have
\eq{L-bound-contour}{
(\log R)^{X_N - N-d} \prod_{I\in\CS^-(2k)\cap \CI_N}L^{(h_{N,I})}(1,\chi)
	\ll (\log q)^{O(1)} (\log R)^{\SA(V_N) - N - D} ,
}
where
\[
D = d + \sum_{I\in\CS(2k)\setminus\CI_N} h_{N,I} +\sum_{\substack{I\in \CS^-(2k)\cap\CI_N\\ h_{N,I}\ge1}} (h_{N,I}-1)  
\]
and $\SA(V_N)$ is defined in Section \ref{combinatorial}. Indeed, when $h_{N,I}=0$ with $I\in\CS^-(2k)\cap\CI_N$, we use \eqref{L(1,chi)} to find that have that 
\[
L(1,\chi)\ll \frac{(\log q)^2}{\log Q} \le \frac{(\log q)^2}{\log R} .
\]
Otherwise, we use the bound $L^{(h_{N,I})}(1,\chi) \ll (\log q)^{h_{N,I}+1}$. Putting these estimates together yields \eqref{L-bound-contour}.

\begin{proof}[Proof of Lemma \ref{chi-contour-lem1}]
(a) Here $\gamma_{j_{N+1}}>0$. We make the change of variables
\[
s_j' = s_j\ (1\le j<j_{N+1}),\ s_j'=s_{j+1}\ (j_{N+1} \le j\le 2k-N),\ s_{2k-N}'=s_{j_{N+1}},
\]
and similarly for the forms $x_j$ and the parameters $\lambda_j$. 

Next, we shift the $s_{2k-N}'$ contour to the line $\Re(s_{2k-N}')=-\epsilon$ for a small enough $\epsilon>0$. The integral on the new contour is $\ll (\log R)^{O(1)}/T$, which is negligible, and we are left with having to analyze the pole contributions. The poles occur when $L_{N,I_{N+1}}(\bs s')=0$ for some $I_{N+1}\in\CS^+(2k)\setminus\CI_N$ such that the coefficient of $s_{2k-N}'$ in $L_{N,I_{N+1}}$ is non-zero. As we discussed in the previous section, imposing the relation $L_{N,I_{N+1}}(\bs x')=0$ to write the form $x_{2k-N}'$ as a linear combination of $x_1',\dots,x_{2k-N-1}'$, say $x_{2k-N}'= C(x_1',\dots,x_{2k-N-1}')$. In particular, $L_{N,I}(\bs x')$ becomes a linear form $L_{N+1,I}$ in the variables $x_1',\dots,x_{2k-N-1}'$. We also set $E_{N+1}=L_{N+1,[2k]}$ and let $\CI_{N+1}$ be the set of $I\subset[2k]$ such that $L_{N+1,I}=0$. 

The generic order of the pole at $s_{2k-N}'=C(s_1',\dots,s_{2k-N-1}')$ is 
\eq{pole-order-chi}{
m = \sum_{I\in(\CI_{N+1}\setminus\CI_N)\cap \CS^+(2k) } (h_{N,I}+1)  - \nu,
}
where $\nu$ is the generic order of the zero of
\[
G(\bs s) \prod_{I\in\CS^-(2k)\cap(I_{N+1}\setminus I_N)} L^{(h_{N,I})}(1+L_{N,I}(\bs s),\chi) 
\]
at the same point. In particular, $\nu=0$ if $d=0$ (so that $G(\bs s)=F(L_{N,\{1\}}(\bs s),\dots,L_{N,\{2k\}}(\bs s))$) and $h_{N,I}=0$ for all $I\in\CS^-(2k)\cap(\CI_{N+1}\setminus \CI_N)$. By a direct computation, we then find that
\eq{poweroflog-chi}{
X_{2k-1}+m = \#(\CI_{N+1}\cap\CS^+(2k)) 
	-  \sum_{I\in\CS^-(2k)\cup(\CS^+(2k)\setminus \CI_{N+1})} h_{N,I}  - \nu .
}
We note that $m\ge1$ for all $N\le 2k-1$ when $k\ge2$ and $\nu=0$; otherwise, we would have that $\CS^+(2k)\subset\CI_N$, which is impossible because the dimension of $V_N$ is $N$, whereas the dimension of the span of the linear forms $s_I$, $I\in\CS^+(2k)$, is $2k$.

We want to understand the pole contribution when $m\ge1$. We separate two subcases:

\medskip

\noindent
{\bf Case 1 of the proof of Lemma \ref{chi-contour-lem1}: $N=2k-1$.} 
In this case, we have that $s_j' = L_{2k-1,\{j\}}(s_1') = a_j s_1'$ for all $j$, where $a_j\in\Q$. Then the pole occurs necessarily when $s_1'=0$. Thus $\CI_{2k}=\CS(2k)$, and we obtain an evaluation of $J(R)$ as finite linear combination of powers of $\log R$, the highest of which has exponent
\als{
X_{2k-1}+m-2k 
	&= 2^{2k-1} - 2k-1 - \sum_{I\in \CS^-(2k)} h_{2k-1,I} -\nu ,
}
up to an admissible error. The coefficients of the polynomial in $\log R$ are given in terms of the derivatives $L^{(j)}(1,\chi)$. Specifically, the coefficient of $(\log R)^{X_{2k-1}+m-2k-h}$, $0\le h\le m-1$, is a linear combination of products of the form
\[
\left(\frac{\phi(q)}{q}\right)^{M_{2k}} \prod_{I\in\CS^-(2k)}
		 	L^{(h_{2k,I})}(1,\chi)    ,
\]
with the coefficients of this linear combination being $\ll1$, and with the parameters $h_{2k,I}$ satisfying $h_{2k,I}\ge h_{2k-1,I}$ with equality if $I\in\CI_{2k-1}\setminus\{0\}$, and $\sum_{I\in \CS^-(2k)}(h_{2k,I} - h_{2k-1,I}) \le h$. Arguing as in the proof of \eqref{L-bound-contour}, we find that
\[
J(R) \ll \frac{(\log q)^{O(1)} (\log R)^{X_{2k-1}+m-2k -h}}{(\log Q)^{\#\{I\in \CS^-(2k) : h_{2k,I}=0\}}} 
	\le\frac{(\log q)^{O(1)} (\log R)^{2^{2k-1} - 2k-1 - \sum_{I\in \CS^-(2k)} h_{2k,I} }}{(\log R)^{\#\{I\in \CS^-(2k) : h_{2k,I}=0\}}} ,
\]
where we used that $Q\le R$, $\nu\ge0$ and $h\ge \sum_{I\in \CS^-(2k)}(h_{2k,I} - h_{2k-1,I})$. We thus conclude that
\[
J(R) \ll \frac{(\log q)^{O(1)}}{(\log R)^{2k+1}} \ll \frac{1}{\log R} .
\]
This completes the proof of the lemma in this case (the linear combination is empty).

\medskip

\noindent
{\bf Case 2 of the proof of Lemma \ref{chi-contour-lem1}: $N\le 2k-2$.} Arguing as in the proof of part (b) of Lemma \ref{contour-lem1}, the contribution of the pole $s_{2k-N}'=C(s_1',\dots,s_{2k-N-1}')$ to $J(R)$ can be seen to be a linear combination of terms of the form
\als{
	\left(\frac{\phi(q)}{q}\right)^{M_{N+1}} 
		\frac{(\log R)^{X_N +m - h - N-1-d}}{(2i\pi)^{2k-N-1}}  
	\idotsint\limits_{\substack{ \Re(s_j)= \lambda_j'/\log R,\,  |\Im(s_j)|\le T \\ 1\le j\le 2k-N-1}}
		\tilde{G}(\bs s) R^{E_{N+1}(\bs s)} \\
		\times
		\prod_{I\in\CS^-(2k)}L^{(h_{N+1,I})}(1+L_{N+1,I}(\bs s),\chi)  \\
		\times \prod_{I\in\CS^+(2k)\setminus\CI_{N+1}}
		 	\zeta_q^{(h_{N+1,I})}(1+L_{N+1,I}(\bs s)) 
		 \dee s_{2k-N-1}\cdots \dee s_1  
}
plus an error term of size $O(1/\log R)$, where $h\in\{0,\dots,m-1\}$, $h_{N+1,I}\ge h_{N,I}$ with equality if $I\in \CI_{N+1}\setminus\{0\}$, and $\sum_{I\in \CS^-(2k)\cup(\CS^+(2k)\setminus\CI_{N+1})} (h_{N+1,I}-h_{N,I}) \le h $. Relation \eqref{poweroflog-chi} then implies that the power of $\log R$ is then $X_{N+1}-N-1-d'$ with
\[
d' =  d +\nu + h - \sum_{I\in \CS^-(2k)\cup(\CS^+(2k)\setminus\CI_{N+1})} (h_{N+1,I}-h_{N,I}) \ge 0.
\]
Moreover, $X_{N+1}-d' = X_N+m-h\ge N+1$. This completes the proof of part (a).

\medskip

(b) Here $\gamma_{j_{N+1}}<0$. We treat this case using the same argument as in part (b) of Lemma \ref{contour-lem1}, with the difference that the contours of  $s_{2k-N},s_{2k-N-1},\dots,s_{j_{N+1}}$ are shifted to the lines $\Re(s_j) = \lambda^j/((\log q)+(\log T)^{3/2})$, $j_{N+1}\le j\le 2k-N$. Since $q\le e^{\sqrt{\log R}}$ by assumption, we find that
\[
J(R) \ll T^{-1+o(1)} ,
\]
as needed.
\end{proof}

\begin{proof}[Proof of Lemma \ref{chi-contour-lem2}] We separate three cases.

\medskip

\noindent
{\bf Case 1 of the proof of Lemma \ref{chi-contour-lem2}: $N=2k$.} Here $\CI_{2k}=\CS(2k)$ and thus $J(R)\ll (\log q)^{O(1)}/(\log R)^{2k+1}$ by \eqref{L-bound-contour}, which proves Lemma \ref{chi-contour-lem2} in this case.

\medskip

\noindent
{\bf Case 2 of the proof of Lemma \ref{chi-contour-lem2}: $N=2k-1$.} 
As in Case 1 of the proof of Lemma \ref{chi-contour-lem1}, we have that $s_j= L_{2k-1,\{j\}}(s_1) = a_j s_1$ for all $j$, where $a_j\in\Q$. Then
\als{
J(R)
	&= \left(\frac{\phi(q)}{q}\right)^{M_{2k-1}}  \frac{(\log R)^{X_{2k-1} - 2k+1-d}}{2\pi i} 
		\prod_{I\in\CS^-(2k)\cap\CI_{2k-1}} L^{(h_{2k-1,I})}(1,\chi)  \\
	&\quad\times	\int \limits_{\substack{\Re(s_1)=\lambda_1/\log R\\ |\Im(s_1)|\le T}} 
		 G_{2k-1}(s_1)
		 	\prod_{I\in\CS^-(2k)\setminus\CI_{2k-1}}
		 	L^{(h_{2k-1,I})}(1+a_Is_1, \chi)  \\
	&\qquad \times			\prod_{I\in\CS^+(2k)\setminus\CI_{2k-1} }  
		 	\zeta_q^{(h_{2k-1,I})}(1+a_Is_1) \dee s_1 ,
}

We first show a crude bound on $J(R)$, that will allow us to focus on a more convenient subcase. We move the line of integration to $\Re(s_1)=\lambda/\log(qT)$ and use \eqref{L-bound} to find that the integral over $s_1$ is $\ll(\log(q\log R))^{O(1)}$. Together with \eqref{L-bound-contour} and Proposition \ref{CombProp}, this implies that
\[
J(R) \ll (\log(q\log R))^{O(1)} \cdot (\log R)^{\binom{2k}{k} - 2k -1 } ,
\]
unless $d=0$, $h_{2k-1,I}\in\{0,1\}$ for all $I\in\CS^-(2k)\cap\CI_{2k-1}$, $h_{2k-1,I}=0$ for all $I\in\CS(2k)\setminus \CI_{2k-1}$, half of the $a_j$'s are $+b$ and the other half are $-b$, for some $b\neq0$.

We have thus reduced proving the lemma to the case when $d=0$, $h_{2k-1,I}\in\{0,1\}$ for all $I\in\CS^-(2k)\cap\CI_{2k-1}$, $h_{2k-1,I}=0$ for all $I\in\CS(2k)\setminus \CI_{2k-1}$, half of the $a_j$'s are $+b$ and the other half are $-b$, where $b\neq0$. In particular, we find that $\CI_{2k-1}\cap\CS^-(2k)=\emptyset$ and that $\#(\CI_{2k-1}\cap\CS^+(2k))=\binom{2k}{k}-1$, so that
\als{
J(R)
	= \left(\frac{\phi(q)}{q}\right)^{M_{2k-1}}  \frac{(\log R)^{\binom{2k}{k}-2k}}{2\pi i} 
		&	\int \limits_{\substack{\Re(s_1)=\lambda/\log R\\ |\Im(s_1)|\le T}} 
		 G(s_1)
		 	\prod_{I\in\CS^-(2k)}
		 	L(1+a_Is_1, \chi)  \\
	&\qquad \times	\prod_{I\in\CS^+(2k)\setminus\CI_{2k-1} }  
		 	\zeta_q(1+a_Is_1) \dee s_1 .
}
Since $\nu=0$ here, we saw before that the integrand has a genuine pole of order $m\ge1$ at $s_1=0$ by a dimension argument. In fact, we have that $m\ge2$: indeed, we know that $[2k]\in \CI_{2k-1}$ by our assumption that $E_{2k-1}=0$, so that $I\in \CI_{2k-1}$ if and only if $[2k]\setminus I\in \CI_{2k-1}$. In particular, since we know that there is at least one $I\in\CS^+(2k)\setminus\CI_{2k-1}$, there must be at least two such $I$'s.

The presence of this pole make the estimation of $J(R)$ tricky. In particular, we cannot shift the contour to the line $\Re(s_1)=0$ as in Case 2 of Section \ref{contour-shifting}. Instead, we write 
\[
L(1+a_Is_1,\chi) = L(\beta+a_Is_1,\chi)+ \Delta(a_Is_1) ,
\]
where
\[
\Delta(s):= L(1+s,\chi)-L(\beta+s,\chi) .
\]
We thus find that
\[
\Delta(s) = \int_\beta^1 L'(u+s,\chi)\dee u \ll \frac{\log(q+|t|)}{\log Q} \quad(\sigma\ge -1/\log(q+|t|)),
\]
by \eqref{L-bound} and the assumption that $\beta>1-1/(100\log q)$. With this notation,
\[
J(R)= M+E,
\]
where
\als{
	M = \left(\frac{\phi(q)}{q}\right)^{M_{2k-1}}  \frac{(\log R)^{\binom{2k}{k}-2k}}{2\pi i} 
		&	\int \limits_{\substack{\Re(s_1)=\lambda/\log R\\ |\Im(s_1)|\le T}} 
		 G_{2k-1}(s_1)
		 	\prod_{I\in\CS^-(2k)}
		 	L(\beta+a_Is_1, \chi)  \\
	&\qquad \times	\prod_{I\in\CS^+(2k)\setminus\CI_{2k-1} }  
		 	\zeta_q(1+a_Is_1) \dee s_1,
}
and $E$ is a sum of similar expressions where at least one of the $L(\beta+a_Is_1, \chi)$ factors is replaced by $\Delta(a_Is_1)$. 

First, we bound $E$. Moving $s_1$ to the line $\Re(s_1) = 1/\log(qT)$, we find that 
\[
E\ll \frac{(\log(q\log R))^{O(1)} (\log R)^{\binom{2k}{k}-2k}}{\log Q}
	\le  (\log(q\log R))^{O(1)} (\log R)^{\binom{2k}{k}-2k-1} .
\]
Finally, we estimate $M$ by moving the line of integration of $s_1$ to $\Re(s_1)=0$, and use the fact that $G(s_1)\ll 1/(1+|s_1|)^{2k}$ (note that the integrand is now analytic, since the pole of the $\zeta_q$'s is annihilated by the zeroes of the $L(\cdot,\chi)$'s) to find that
\als{
J(R)
	&= \left(\frac{\phi(q)}{q}\right)^{M_{2k-1}}  \frac{(\log R)^{\binom{2k}{k}-2k}}{2\pi} 
		\int_{-T}^T
		 G(it)
		 	\prod_{I\in\CS^-(2k)}
		 	L(\beta+ia_It, \chi)  \\
	&\qquad \times	\prod_{I\in\CS^+(2k)\setminus\CI_{2k-1} }  
		 	\zeta_q(1+ia_It) \dee t
			+ O((\log(q\log R))^{O(1)} (\log R)^{\binom{2k}{k}-2k-1}) .
}
Note that $G(s)=F(a_1s,\dots,a_{2k}s)$ here, because $d=0$. When $|t|\le \log R$, we use \eqref{h-mellin} to replace $\hat{h}_R$ by $R^s/s$, and when $|t|>\log R$ we use the bound $\hat{h}_R(s)\ll R^\sigma/|s|$ by \eqref{h-bound-prelim}. Taking $C$ to be large enough, we thus conclude that
\als{
\frac{J(R)}{(\log R)^{\binom{2k}{k}-2k}}
	&=  \frac{(\phi(q)/q)^{M_{2k-1}}}{2\pi} 
		\int_{-\infty}^\infty
		 F(ia_1t,\dots,ia_{2k}t)
		 	\prod_{I\in\CS^-(2k)}
		 	L(\beta+ia_It, \chi)  \\
	&\qquad \times	\prod_{I\in\CS^+(2k)\setminus\CI_{2k-1} }  
		 	\zeta_q(1+ia_It) \prod_{j=1}^{2k} \frac{1-2^{-ia_jt}}{ia_jt} \dee t
			+ O\left( \frac{(\log(q\log R))^{O(1)}}{\log R}\right ) .
}
This completes the proof of Lemma \ref{chi-contour-lem2} in this case.

\medskip

\noindent
{\bf Case 3 of the proof of Lemma \ref{chi-contour-lem2}: $N\le 2k-2$.} 
As in the corresponding case of the proof of Lemma \ref{contour-lem2}, and using \eqref{L-bound-contour}, we find that
\als{
J(R)
	&\ll (\log(q\log R))^{O(1)}  (\log R)^{X_N-N-d} 
		\prod_{I\in\CS^-(2k)\cap\CI_N}|L^{(h_{N,I})}(1,\chi) |    \\
	&\ll (\log(q\log R))^{O(1)} (\log R)^{\SA(V_N)-N} .
}
Proposition \ref{CombProp}(c) then implies that $\SA(V_N)-N\le \binom{2k}{k}-2k-2$, thus completing the proof of Lemma \ref{chi-contour-lem2}.
\end{proof}

\subsection{Lower bounds}\label{chi-lb} In order to complete the proof of Theorem \ref{thm-chars}, we show that constant $c_k(\chi)$ in \eqref{chi-thm-goal} is $\gg(\log q)^{-O(1)}$. In order to do so, we follow the argument of Section \ref{lb} and prove that, for any $\epsilon>0$, there is a constant $c_k>0$ such that
\eq{chars-lb}{
\CX_{2k}(R) \ge c_k  \frac{(\log R)^{\binom{2k}{k}-2k-\epsilon}}{(\log q)^{O(1)}} 	
	- O_\epsilon\left( (\log(q\log R))^{O(1)}(\log R)^{\binom{2k}{k}-2k-1}\right) ,
}
provided that $\sqrt{\log Q}\ge \log R\ge 2c_1\log q$, where $c_1$ is the constant appearing in \eqref{L-bound}. (For this section, all constants will be independent of $\epsilon$, unless specified by a subscript, as above.)

We set
\[
y=\exp\{(\log R)^{1-\epsilon'}\}
\quad\text{and}\quad
Y=\exp\{(\log R)^{1-\epsilon'/2}\},
\]
where $\epsilon'$ will be taken to be small enough in terms of $\epsilon$, and focus our attention on integers of the form $n=ap_1\cdots p_k$ with $P^+(a)\le y$, $a\le Y$, and $p_1,\dots,p_k$ are distinct primes such that $p_\ell>\sqrt{R}$ and $\chi(p_\ell)=-1$ for all $\ell\in\{1,\dots,k\}$. Then
\[
\CX_{2k}(R) \ge \frac{\prod_{p\le R}(1-1/p) }{k!} 
	\sum_{\substack{P^+(a)\le y \\ a\le Y}}
		\sum_{\substack{p_j>\sqrt{R} \\ \chi(p_j)=-1 \\ 1\le j\le k}}\frac{\mu^2(p_1\cdots p_k)}{ap_1\cdots p_k} 
		\left( \sum_{\ell=1}^k \sum_{\substack{R/(2p_\ell)<d\le R/p_\ell \\ d|a}} \chi(d) \right)^{2k} .
\]

The next step is to drop the condition that $a\le Y$ by an application of Rankin's trick and to remove the condition that the $p_j$'s are distinct. We further replace the sharp cut-off $R/(2p_\ell)<d\le R/p_\ell$ by the smooth cut-off $h(\frac{\log(dp_\ell)}{\log R})-h(\frac{\log(2dp_\ell)}{\log R})$, where $h(x)=1$ for $x\le 1-1/(\log R)^B$ and $h(x)=0$ for $x\ge 1$, with $B$ is sufficiently large. To conclude, we have that
\als{
\CX_{2k}(R) 
	&\ge \frac{\prod_{p\le R}(1-1/p)}{k!(\log R)^k} 
	\sum_{P^+(a)\le y }
		\sum_{\substack{\sqrt{R}< p_j \le R \\  \chi(p_j)=-1 \\ 1\le j\le k}}\frac{\prod_{j=1}^k\log p_j}{ap_1\cdots p_k} 
		\left( \sum_{\ell=1}^k \sum_{d|a} \chi(d) w\left(\frac{\log(dp_\ell)}{\log R}\right)\right)^{2k}  \\
	&\qquad 	+ O\left(\frac{1}{\log R} \right) ,
}
where $w(x) = h(x) - h(x+\log 2/\log R)$. 

The rest of the proof follows the argument of Section \ref{lb}, with a small twist, as we will explain in the end. We expand the $2k$-th power and focus on a convenient subset of summands. We then conclude that
\[
\CX_{2k}(R) \ge \frac{\prod_{y<p\le R}(1-1/p)}{k!}
		 \sum_{\bs J\in\mathcal{J}} X(\bs J)    +   O\left( \frac{1}{\log R}\right) 
\]
with
\[
X(\bs J)
	= \frac{1}{(\log R)^k} 
		\sum_{\substack{P^+(d_j)\le y \\ 1\le j\le 2k}} \frac{\chi(d_1)\cdots \chi(d_{2k})}{[d_1,\dots,d_{2k}]} \\
		 \prod_{\ell=1}^k \sum_{\sqrt{R}<p_\ell\le R} \frac{(1-\chi(p_\ell))\log p_\ell}{2p_\ell}
				\prod_{j\in J_\ell} w\left( \frac{\log(p_\ell d_j)}{\log R} \right),
\] 
where $\bs J$ is as in Section \ref{lb}. We set 
\[
S:= \sum_{\sqrt{R}<p\le R} \frac{(1-\chi(p))\log p}{2p} 
	\asymp \log R 
\] 
and write $\CL$ for the set of $\ell\in\{1,\dots,k\}$ such that $J_\ell\neq\emptyset$. Then, using Perron's formula $2k$ times to write each appearance of $w$ as an integral of $\hat{w}_R$, we find that
\als{
X(\bs J) = \frac{S^{k-\#\CL}(\log R)^{-k}}{(2\pi i)^{2k}}
		\idotsint\limits_{\substack{\Re(s_j)=1/\log R \\ 1\le j\le 2k}}\ &
		\sum_{\substack{P^+(d_j)\le y \\ 1\le j\le 2k}}
			\frac{\prod_{j=1}^{2k}\chi(d_j)d_j^{-s_j}}{[d_1,\dots,d_{2k}]}
			\left(\prod_{\ell\in\CL} \sum_{p_\ell>\sqrt{R}} \frac{(1-\chi(p_\ell)) \log p_\ell}{2p_\ell^{1+s_{J_\ell}}} \right)\\
		&\times	
			\left( \prod_{j=1}^{2k} \hat{w}_R(s_j)  \right) 
					\dee s_1\cdots \dee s_{2k} +O\left(\frac{1}{\log R}\right),
}
where $s_J=\sum_{j\in J}s_j$, as usual, and the condition that $p_\ell\le R$ was dropped because it is encoded in the support of $w$. By possibly re-indexing the variables $s_1,\dots,s_{2k}$, we may assume that $\CL=\{1,\dots,L\}$, where $L=\#\CL$, and that $\max J_\ell= 2k-L+\ell$ for all $\ell\in\{1,\dots,L\}$ with $L=\#\CL$. As in Section \ref{lb}, we will move the variables $s_{2k-L+1},\dots,s_{2k}$ to the left. We note that 
\[
 \sum_{p>\sqrt{R}} \frac{(1-\chi(p))\log p}{p^{1+s}}
 	= -\frac{\zeta'}{\zeta}(1+s) + \frac{L'}{L}(1+s,\chi) + O(1) - \sum_{p\le R^{1/2}} \frac{\log p}{p^{1+s}} 
\]
for $\Re(s)\ge-1/3$. The above has simple poles at $s=0$ and $s=\beta-1$, each of residue 1. Therefore, using the argument leading to \eqref{W(J)}, we find that
\eq{X-int}{
X(\bs J) = \sum_{\substack{\epsilon_\ell \in\{0,\beta-1\} \\ 1\le \ell\le L}}
 	\frac{S^{k-L}(\log R)^{-k}}{2^L(2\pi i)^{2k-L}}
		&	\idotsint\limits_{\substack{\Re(s_j) = 1/\log R \,(1\le j\le 2k-L) \\ s_{J_\ell} = \epsilon_\ell \,(1\le \ell\le L)}} 
		\sum_{\substack{P^+(d_j)\le y \\ 1\le j\le 2k}} \frac{\prod_{j=1}^{2k}\mu(d_j)d_j^{-s_j}}{[d_1,\dots,d_{2k}]}
			\\
		&\qquad \times \left( \prod_{j=1}^{2k} \hat{w}_R(s_j) \right)
					\dee s_1\cdots \dee s_{2k-L} 
		+ O\left(\frac{1}{\log R}\right) .
}

The above expression is sufficient for handling the terms $\bs J \in \CJ$ with at least one $J_\ell$ of odd cardinality: following the argument of Section \ref{lb-odd} with the obvious modifications implies that
\eq{X-ub1}{
X(\bs J)  \ll (\log(q\log R))^{O(1)}  \frac{(\log y)^{\binom{2k}{k}-2k-1+L}}{(\log R)^L} ,
}
where we used the fact that $\sup_{q<p\le Q} (1+\chi(p))/p\ll 1$. 

However, we need to be more careful on our lower bound for the main term, that is to say for $X(\bs J)$ with $\#J_\ell=2$ for all $\ell$. First of all, by relabelling our variables, we may assume that $J_\ell=\{\ell,\ell+k\}$ for all $\ell$. In our expression \eqref{X-int} for $X(\bs J)$, we see that $s_{J_\ell}=\epsilon_\ell\in\{0,\beta-1\}$ implies that $s_{\ell+k}=-s_{\ell}+O(1/\log Q)$. We want to replace $s_{\ell+k}$ by $-s_{\ell}$. This introduces an error that we will control by an application of the mean value theorem. In particular, we need to understand the derivative of the integrand. If 
\[
G(\bs s)=\sum_{\substack{P^+(d_j)\le y \\ 1\le j\le k}}
			\frac{\prod_{j=1}^{2k}\chi(d_j)d_j^{-s_j}}{[d_1,\dots,d_{2k}]} ,
\]
then $-G'(\bs s)/G(\bs s)$ equals $\sum_{I\in\CS^*(2k)} \sum_{p\le y}\chi^{\#I}(p)\log p/p^{1+s_I}$, plus lower order terms, so that $G'(\bs s)\ll (\log y)G(\bs s)$ for the vectors $\bs s$ we are considering. Similarly, $\hat{w}_R(s+\epsilon)/R^{s+\epsilon}=\hat{w}_{R}(s)/R^s+O(\epsilon /(|s|+1) )$ by \eqref{h-bound-prelim}. Since we also have that
\[
\idotsint\limits_{\substack{\Re(s_j) = 1/\log R \\ |s_{j+k}+s_j|\le 1-\beta \\ 1\le j \le k}} 
		\abs{G(\bs s)}  \prod_{j=1}^{k} \frac{|\dee s_j|}{1+|s_j|^2} 
			  \ll (\log(q\log R))^{O(1)} \frac{(\log y)^{\binom{2k}{k}-k}}{(\log R)^k} ,
\]
by the argument leading to \eqref{X-ub1}, we conclude that
\als{
X(\bs J) = 
 	\frac{(1+R^{\beta-1})^k}{(2\pi i\log R)^k}
		&	\idotsint\limits_{\substack{\Re(s_j) = 1/\log R \\ s_{j+k} = -s_j \\ 1\le j \le k}} 
		G(\bs s)  \left( \prod_{j=1}^{2k} \hat{w}_R(s_j) \right)
					\dee s_1\cdots \dee s_{k} \\
		&\qquad + O\left(\frac{\log y}{\log Q}\cdot  \frac{(\log(q\log{R}))^{O(1)} (\log y)^{\binom{2k}{k}-k}}{(\log R)^k} \right) .
}
The main term can now be bounded from below as in Section \ref{lb-even}. We thus arrive to the lower bound
\[
X(\bs J)  \ge c_k \frac{(\log y)^{\binom{2k}{k}-k}(\log R)^{-k}}{(\log q)^{O(1)}} 
	-  O\left(\frac{(\log(q\log{R}))^{O(1)} (\log y)^{\binom{2k}{k}-k+1}}{(\log R)^{k+2}} \right) ,
\]
using that $\sum_{q<p\le Q}(1+\chi(p))/p\ll 1$ and $y\le R\le e^{\sqrt{\log Q}}$ here. This completes the proof of \eqref{chars-lb}, and thus of Theorem \ref{thm-chars}(c).


\end{document}